\newcounter{sub}
\newenvironment{sub}%
{\begin{list}{(\arabic{sub})}{\usecounter{sub}%
\setlength{\leftmargin}{2em}}}{\end{list}}
\newcommand{\ch}{\mathop{\mathrm{char}}\nolimits}
\newcommand{\Gal}{\mathop{\mathrm{Gal}}\nolimits}
\newcommand{\Pic}{\mathop{\mathrm{Pic}}\nolimits}
\newcommand{\Div}{\mathop{\mathrm{Div}}\nolimits}
\newcommand{\Aut}{\mathop{\mathrm{Aut}}\nolimits}
\newcommand{\Br}{\mathop{\mathrm{Br}}\nolimits}
\def\sep{\mathrm{sep}}
\def\id{\mathrm{id}}
\def\im{\mathrm{Im}}
\def\ker{\mathrm{Ker}}
\theoremstyle{plain}
\newtheorem{thm}{Theorem}[section]
\newtheorem{cor}{Corollary}[section]
\newtheorem{defn}{Definition}[section]
\newtheorem{prop}{Proposition}[section]
\newtheorem{exmp}{Example}[section]
\theoremstyle{remark}
\newtheorem{rem}{Remark}[section]
\title{Rationality problem of conic bundles}
\author{Aiichi Yamasaki}
\address{Department of Mathematics, Graduate school of Science, 
Kyoto University, Japan}
\thanks{{\it Key words and phrases.} Rationality problem,
conic bundle, Ch\^atelet surface, Picard group. \\
This work was partially supported by KAKENHI (24540019).
Some part of this work was done during the author visited 
National Taiwan University (Department of Mathematics), 
whose support is gratefully acknowledged.}
\begin{document}

\begin{abstract}
Let $k$ be a field with $\ch k \not= 2$, $X$ be an affine surface
defined by the equation $z^2=P(x)y^2+Q(x)$ where $P(x),Q(x) \in
k[x]$ are separable polynomials. We will investigate the
rationality problem of $X$ in terms of the polynomials $P(x)$ and
$Q(x)$. The rationality of the conic bundle $X$ over $\mathbb{P}_k^1$
was studied by Iskovskikh \cite{Isk67}, \cite{Isk70},
\cite{Isk72}, but he formulated his results in geometric
language. This paper aims to give an algebraic counterpart.
\end{abstract}

\maketitle

\section{Introduction}

Throughout this paper, $k$ is a field with $\ch k \not= 2$. It is
not assumed that $k$ is algebraically closed; in fact, the most
interesting results of this paper is the case when $k$ is a
non-closed field.

Let $K$ be a field extension of $k$. We will say that $K$ is
$k$-rational if $K$ is isomorphic to the rational function field
$k(x_1, x_2, \dots, x_n)$ over $k$ with variables $x_1,\dots,x_n$ for some positive integer $n$. An
irreducible algebraic variety $X$ defined over $k$ is called
$k$-rational if its function field $k(X)$ is $k$-rational.

Iskovskikh studied the rationality of conic bundles and obtained
the following result \cite{Isk67}, \cite{Isk70}, \cite{Isk72}.

\begin{thm}[Iskovskikh] \label{Isk}
Let $X$ be a fibred rational $k$-surface as a standard conic
bundle $\pi: X \rightarrow \mathbb{P}_k^1$. If $X$ has at least
four degenerate geometric fibres, then $X$ is not $k$-rational.
\end{thm}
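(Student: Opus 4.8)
The plan is to argue by contradiction, using the theory of minimal models of rational surfaces over $k$ and Iskovskikh's analysis of birational maps between them. Suppose $X$ is $k$-rational. Since $\pi\colon X\to\mathbb{P}^1_k$ is a standard conic bundle, it is relatively minimal with exactly $d\ge 4$ degenerate geometric fibres, the relative Picard number is $1$ (so $\rho(X)=2$), and $K_X^2=8-d\le 4$. Over a separable closure, $\bar X$ is a Hirzebruch surface blown up at $d$ points lying in distinct fibres; the group $G=\Gal(k^{\sep}/k)$ permutes the $2d$ components of the degenerate fibres and fixes the class $f$ of a fibre and the canonical class $K_X$, and the hypothesis $\rho(X)=2$ means $\Pic(\bar X)^{G}$ has rank $2$, spanned by $f$ and $K_X$. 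In particular there is no $G$-stable collection of disjoint $(-1)$-curves among the components of the degenerate fibres, so $X$ is a Mori fibre space over $\mathbb{P}^1_k$; the goal is to show that no Mori fibre space $k$-birational to $X$ is $\mathbb{P}^2_k$.

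One should first note that the standard birational invariant $H^1(G,\Pic\bar X)$ can vanish for standard conic bundles with four degenerate geometric fibres and a $k$-point, so it does not by itself settle the question; one is forced to use the full birational classification. I would invoke Iskovskikh's factorization theorem for surfaces: every $k$-birational map between Mori fibre spaces is a composition of elementary links. For a standard conic bundle the relevant links are (i) \emph{elementary transformations}, i.e.\ blow up a $k$-point on a smooth fibre and contract the strict transform of that fibre, each of which produces another standard conic bundle with the \emph{same} number $d$ of degenerate geometric fibres; and (ii) links changing the base of the fibration, which land on a del Pezzo Mori fibre space. The numerical input of \cite{Isk67,Isk70,Isk72} is that, starting from a $k$-minimal conic bundle with $d\ge 4$, links of type (i) leave $d$ fixed, links of type (ii) can only reach del Pezzo surfaces of degree $\le 4$, and such a del Pezzo, being a Mori fibre space, is $k$-minimal and hence, by the birational classification of del Pezzo surfaces over $k$, not $k$-rational; moreover any link out of such a del Pezzo keeps its degree $\le 4$. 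It follows that every Mori fibre space $k$-birational to $X$ is either a standard conic bundle with at least four degenerate geometric fibres or a $k$-minimal del Pezzo surface of degree $\le 4$. Since $\mathbb{P}^2_k$ is of neither type, $X$ cannot be $k$-birational to $\mathbb{P}^2_k$, contradicting $k$-rationality.

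The main obstacle is exactly this link-classification input: one must prove that the only elementary links issuing from a $k$-minimal conic bundle with $d\ge 4$ are the elementary transformations of (i) and the base-changing links of (ii) to del Pezzo surfaces of degree $\le 4$, and in particular that none of them lowers $d$ below $4$ or yields a fibration over $\mathbb{P}^1$ with fewer degenerate fibres. Concretely this comes down to a fibrewise analysis of which $(-1)$-curves and which pencils of conics on $X$ can simultaneously be $G$-stable and contractible over $k$, combined with the Noether–Fano inequality that governs when a birational map fails to be an isomorphism of Mori fibre spaces; this is the technical heart of \cite{Isk67,Isk70,Isk72}. A more computational route, which I would take if one wishes to avoid the general machinery, is: given a hypothetical birational map $X\dashrightarrow\mathbb{P}^2_k$, pull back the linear system of lines to a mobile system $\mathcal H$ on $X$; the Noether–Fano inequality produces a point of excessive multiplicity, which one untwists by an elementary transformation of the conic bundle, strictly decreasing the anticanonical degree of $\mathcal H$. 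Since every untwisting preserves the conic bundle structure and the value $d\ge 4$, the process cannot terminate at $\mathbb{P}^2_k$ or at a $k$-rational del Pezzo surface, while a strictly decreasing sequence of non-negative degrees cannot continue forever — the contradiction. In either route the delicate point is the same: keeping track of the fibres carefully enough to see that the conic bundle structure, and the number of its degenerate geometric fibres, survive every step.
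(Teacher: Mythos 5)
Your outline follows Iskovskikh's original route---factorization of the birational map into elementary links, control of the links issuing from a minimal conic bundle, and a Noether--Fano/untwisting descent---whereas the paper does not prove Theorem \ref{Isk} that way: it proves the equivalent algebraic statement (Theorem \ref{mainthm}(1), $s\ge 4$ implies non-rationality) by a self-contained computation with the Picard group as a Galois module. Concretely, the paper biregularizes the Galois action on an explicit blown-up-and-down model $Y$ of $\mathbb{P}^1\times\mathbb{P}^1$, shows that $\Pic(Y)^{\mathfrak{G}}$ has rank $2$ with basis $F,\Omega$ and $\Omega\cdot\Omega=8-s$ (Propositions \ref{prop10} and \ref{prop412}), and then tests whether a $\mathfrak{G}$-invariant curve $C\equiv\nu F-m\Omega$ can be the proper transform of a line $t=a$ under a hypothetical $k$-birational map from $\mathbb{P}^1\times\mathbb{P}^1$: the constraints $\sum_j m_j^2=4m\nu+\omega m^2$ and $\sum_j m_j=2\nu+m\omega-2$ together with $0\le m_j\le 2m$ force $\omega m^2\ge 4m$, which disposes of $s\ge 8$ at once, and for $4\le s\le 7$ the paper passes to a del Pezzo model $Y_C$ and runs a strictly decreasing descent on $m$ using the auxiliary invariant classes of Theorems \ref{lem2} and \ref{lem3}. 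Your ``more computational route'' in the final paragraph is morally this same descent, so the two arguments agree at their core; what the paper's version buys is that every load-bearing step (the multiplicity bound $m_j\le 2m$, the existence of an untwisting curve through a prescribed point, termination, the residual cases $m_l=0$ and $1\le m_l<4/\omega$, and the separate treatment of $s=5$ and $s=7$) is carried out explicitly on the Picard lattice, while your version buys brevity and a cleaner conceptual frame.

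The one point I would flag as a weakness rather than a stylistic difference: in your first route, the ``numerical input of \cite{Isk67}, \cite{Isk70}, \cite{Isk72}''---that every link out of a minimal conic bundle with $d\ge 4$ either preserves $d\ge 4$ or lands on a non-rational minimal del Pezzo surface of degree at most $4$---\emph{is} the theorem being proved, so as written that route cites the result rather than proving it. You acknowledge this, and your second route repairs it in principle, but it still leaves unproved the two facts that carry all the weight: that an untwisting elementary transformation exists at the point of excessive multiplicity and preserves the number of degenerate geometric fibres, and that the resulting strictly decreasing sequence cannot terminate at $\mathbb{P}^2_k$. Those are precisely the places where the hypothesis $\omega=8-d\le 4$ enters, and precisely what Subsections \ref{deg46}, \ref{updown2} and the $s=5$ analysis of the paper supply.
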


The function field of such a conic bundle is isomorphic to $k(x,y,z)$
with the relation
\begin{equation} \label{chanew}
z^2=Q(x)y^2+P(x), \  P,Q \in k[x],
\end{equation}
where $k(x,y)$ is the rational function field over $k$ with two variables $x,y$.

Thus Iskovskikh's theorem (Theorem \ref{Isk}) is equivalent to the rationality problem
of the field $K:=k(x,y,z)$ with the relation defined by
(\ref{chanew}). In this paper, we will give a necessary and
sufficient condition for the rationality of $K$ in terms of the
polynomials $P$ and $Q$, assuming that both $P$ and $Q$ are
separable polynomials. In this sense, our results may be regarded
as an algebraic counterpart of Iskovskikh's theorem.

In Subsection \ref{main} and Section \ref{chCh} of this paper, we will consider the case where
$\deg Q(x)=0$, i.e. $Q(x)=a \in k$. The case where $\deg Q(x) \ge
1$ will be discussed in Subsection \ref{maincha} and Section \ref{chCo}.

\subsection{Main result of generalized Ch\^atelet surfaces.} \label{main}

First of all, let $K=k(x,y,z)$ be a field defined by the equation

\begin{equation} \label{cha}
z^2=ay^2+P(x), \quad a \in k, \, P(x) \in k[x].
\end{equation}

\begin{rem}
The surface $X$ defined by $(\ref{cha})$ is called a Ch\^atelet
surface when $\deg P=3$ or $4$, which was studied by Ch\^atelet
\cite{Cha59}. Thus we will call the surface $X$ a generalized
Ch\^atelet surface when $P$ is any non-zero polynomial in $k[x]$.
The function field of $X$ is the field $K$ defined by $(\ref{cha})$.
\end{rem}

\bigskip
Let $K$ be the function field of a generalized Ch\^atelet surface
defined by the equation (\ref{cha}). Note that

\begin{sub}
\item[$(\mathrm{a})$] If $a \in k^2$, then $K$ is $k$-rational.

When $\sqrt{a} \in k$, define $u=z+\sqrt{a}y$ and $v=z-\sqrt{a}y$.
(\ref{cha}) becomes $uv=P(x)$; thus $K=k(x,y,z)=k(x,u,v)=k(x,u)$
since $v=\frac{P(x)}{u} \in k(x,u)$. From now on, we will assume
that $\sqrt{a} \notin k$.

\item[$(\mathrm{b})$] Obviously we may assume that $P$ contains no multiple
irreducible factor in $k[x]$.

When $\deg P=1$, (\ref{cha}) is written as $z^2=ay^2+x$ so
$K=k(x,y,z)=k(y,z)$ is $k$-rational.

When $\deg P=0$, (\ref{cha}) is written as $z^2=ay^2+b$. Then
$K=k(x,y,z)$ is $k$-rational if and only if the quadratic form
$aY^2+bX^2=Z^2$ has a non-trivial zero over $k$, i.e. the norm residue
symbol of degree two $(a,b)_{2,k}=0$.

When $\deg P=2$ and $\ch k \not= 2$, (\ref{cha}) may be written as
$z^2=ay^2+bx^2+c$. If $c \not=0$, then $K=k(x,y,z)$ is
$k$-rational if and only if $c \in k^2-ak^2-bk^2$. If $c=0$, as
before, $K$ is $k$-rational if and only if $(a,b)_{2,k}=0$.
See Theorem 6.7 of \cite{HKO94} for details.

\item[$(\mathrm{c})$] Let $l$ be the splitting field of $P(x)$. If $\deg P \geq 3$
and $l \cap k(\sqrt{a})=k$, then $K$ is not $k$-rational by a rationality
criterion of Manin \cite{Man67}, which will be explained
in Subsection \ref{PicY} to Subsection \ref{pfA}.

\item[$(\mathrm{d})$] Suppose that some irreducible component $P_1$ of $P$ is
of the form $P_1(x)=A(x)^2-aB(x)^2$ where $A(x),B(x) \in k[x]$.
Define $z=A(x)z^\prime+aB(x)y^\prime$ and
$y=B(x)z^\prime+A(x)y^\prime$. We have
$z^2-ay^2=P_1(x)(z^{\prime^2}-ay^{\prime2})$. It follows that
$z^{\prime2}-ay^{\prime2}=P(x)/P_1(x)$.
 Since $K=k(x,y,z)=k(x,y^\prime,z^\prime)$,
 the rationality of $k(x,y,z)$ does not change if we replace $P$ by $P/P_1$.
\end{sub}

From the above discussion, we may assume the following conditions
without loss of generality.

\begin{sub}
\item[$(\mathrm{C1})$] $a \not\in k^2$.
\item[$(\mathrm{C2})$] $\deg P \geq 3$ and $P \in
k[x]$ is square-free.
\item[$(\mathrm{C3})$] If $l$ is the splitting field of
$P(x)$, then $k(\sqrt{a}) \subset l$.
\item[$(\mathrm{C4})$] Every irreducible
factor of $P(x)$ is also irreducible over $k(\sqrt{a})$, which is
equivalent to that no irreducible factor of $P(x)$ in $k[x]$ is of
the form $A(x)^2-aB(x)^2$.
\item[$(\mathrm{C5})$] $\ch k \not=2$, and every
irreducible factor of $P(x)$ is separable over $k$; this is the
assumption prescribed at the beginning of this paper.
\end{sub}

Our main result is:
\begin{thm}
The field $K=k(x,y,z)$ defined by $(\ref{cha})$ is not $k$-rational under the assumptions
$(\mathrm{C1}), \dots, (\mathrm{C5})$.
\end{thm}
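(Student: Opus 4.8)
The plan is to reduce the statement to Manin's rationality criterion, already invoked in remark~(c), by constructing a smooth projective model of $X$ and computing the Galois action on its geometric Picard group. First I would pass from the affine equation $(\ref{cha})$ to a standard conic bundle $\pi\colon X\to\mathbb{P}^1_k$: homogenize the conic $z^2=ay^2+P(x)$ fibrewise over $\mathbb{A}^1_x$, then glue in the fibre over $x=\infty$ using the standard chart change $x\mapsto 1/x$, $P(x)\mapsto x^{2m}P(1/x)$ where $2m$ or $2m-1$ equals $\deg P$. The degenerate fibres sit over the zeros of $P$ together with possibly $x=\infty$; by condition~$(\mathrm{C2})$ there are at least three finite zeros, and a parity check on $\deg P$ shows the total number of geometric degenerate fibres is at least four (when $\deg P$ is even the fibre at infinity is smooth but there are $\ge 4$ roots; when $\deg P$ is odd there are $\ge 3$ roots plus the fibre at infinity). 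Each degenerate geometric fibre is a pair of lines, since over $\bar{k}$ the conic $z^2=ay^2$ splits as $(z-\sqrt{a}y)(z+\sqrt{a}y)=0$ at a root of $P$.

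The heart of the argument is the computation of $\Pic(\bar{X})$ as a $\Gal(k^{\sep}/k)$-module and the verification that Manin's obstruction is nonzero. I would identify $\bar{X}$ with $\mathbb{P}^1\times\mathbb{P}^1$ blown up, or more precisely track the classes: $\Pic(\bar{X})$ is generated by the pullback of a point from $\mathbb{P}^1$, the class of a section, and the $2r$ or $2r$ classes of the components $\ell_i^{\pm}$ of the degenerate fibres (one for each geometric root of $P$, with the two components swapped or fixed according to whether $\sqrt{a}$ is rational there). Condition~$(\mathrm{C3})$, that $k(\sqrt{a})\subset l$ where $l$ is the splitting field of $P$, guarantees that the Galois group permuting the roots of $P$ acts compatibly with the sign ambiguity $\pm\sqrt{a}$, so the two line-components in each fibre are genuinely interchanged by some Galois element; condition~$(\mathrm{C4})$ ensures no root of $P$ individually defines a field containing $\sqrt{a}$ that would trivialize a local swap, i.e. it rules out the degenerate case where a component becomes individually defined over the residue field. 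Together these force the $\Gal$-module $\Pic(\bar{X})$ to be ``genuinely twisted'' in the sense measured by $H^1$.

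The technical core — and the step I expect to be the main obstacle — is to show $H^1\bigl(\Gal(k^{\sep}/k),\Pic(\bar{X})\bigr)\neq 0$ under $(\mathrm{C1})$--$(\mathrm{C5})$, since by Manin's criterion a nonzero value of this group obstructs $k$-rationality (a $k$-rational surface has vanishing $H^1$ of its Picard module for every open subgroup). I would compute this cohomology via the exact sequence relating $\Pic(\bar{X})$ to the permutation module on the fibre components and the ``vertical'' relations (the sum of the two components in each fibre equals the fibre class $F$). This realizes $\Pic(\bar{X})$ as an extension involving $\mathbb{Z}\cdot F$ and a sum of modules $\mathbb{Z}[\Gal/H_i]$ induced from the stabilizers, and the relevant $H^1$ becomes computable by Shapiro's lemma as a sum of terms like $k^{\times}/N_{k_i(\sqrt{a})/k_i}(\dots)$ where $k_i$ is the residue field at the $i$-th root; conditions $(\mathrm{C1})$ and $(\mathrm{C4})$ make $k_i(\sqrt{a})/k_i$ a genuine quadratic extension, and $(\mathrm{C2})$ (degree $\ge 3$, i.e. $\ge 4$ geometric fibres after the parity count) provides enough independent such terms that a Weil-reciprocity / product-formula style relation among them cannot force the whole class to vanish. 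I would finish by exhibiting an explicit nonzero class, e.g. the difference of two fibre-component classes that is Galois-stable modulo principal divisors but not itself principal, and checking against $(\mathrm{C5})$ that separability lets one carry out the residue computations without inseparability pathologies. The remaining bookkeeping — orientations of the components, the precise extension class, the case split on $\deg P$ parity — is routine once the cohomological framework is set up.
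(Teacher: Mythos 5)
Your plan is to deduce the whole theorem from the non-vanishing of $H^1\bigl(\mathfrak{G},\Pic(\overline{X})\bigr)$, and you identify as the ``technical core'' the claim that this group is nonzero under $(\mathrm{C1})$--$(\mathrm{C5})$. That claim is false, and this is exactly where the proposal breaks. The paper's Theorem \ref{A} (Sansuc) computes $H^1(\mathfrak{G},\Pic(X))=(\mathbb{Z}/2\mathbb{Z})^j$ with $j=r'-1$ or $r'-2$ depending on the parities of the degrees of the $r'$ irreducible factors of $P$; in particular $j=0$ whenever $P$ is irreducible, or a product of two irreducible factors of odd degree. Both situations are fully compatible with $(\mathrm{C1})$--$(\mathrm{C5})$: e.g.\ $P$ irreducible of degree $3$ with $\sqrt{a}$ in its splitting field and $P$ still irreducible over $k(\sqrt{a})$. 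Indeed, for that case the surface is \emph{stably} $k$-rational (Beauville--Colliot-Th\'el\`ene--Sansuc--Swinnerton-Dyer, cited in the paper), so every invariant of the form $H^1(H,\Pic(\overline{X}))$ necessarily vanishes and no argument of the Manin/$H^1$ type can detect the irrationality. Your conditions $(\mathrm{C3})$ and $(\mathrm{C4})$ do force the two components of each degenerate fibre to be swapped by Galois, but that only makes $\Pic(\overline{X})$ a nontrivial $\mathfrak{G}$-module, not one with nontrivial $H^1$; the cohomology depends on the factorization type of $P$, which $(\mathrm{C1})$--$(\mathrm{C5})$ do not constrain. (A smaller issue: the identification of the $H^1$ with norm-group quotients $k^{\times}/N(\cdots)$ via Shapiro is not what comes out here --- the computation is purely lattice-theoretic and yields a finite $2$-group indexed by the factors of $P$.)

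The paper's actual proof uses the $H^1$ criterion only for the cases it can settle, and then deploys two entirely different arguments for the residual cases where $H^1=0$: for $\deg P\geq 7$ an intersection-form obstruction (Theorem \ref{B}: on the model $Y$ one has $\Pic(Y)^{\mathfrak{G}}=\mathbb{Z}F\oplus\mathbb{Z}\Omega$ with $\Omega\cdot\Omega=8-r\leq 0$, and no $\mathfrak{G}$-invariant curve other than $x=\mathrm{const.}$ can satisfy the numerical conditions $(\ref{1_2})$ forced on the image of a line $t=a$ under a $k$-birational map from $\mathbb{P}^1\times\mathbb{P}^1$); and for $3\leq\deg P\leq 6$ a descent to a del Pezzo surface followed by an infinite-descent contradiction (Theorem \ref{C}). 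To repair your proposal you would need to supply arguments of this second kind; the cohomological framework alone cannot close the gap.
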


\begin{rem}
The non-rationality of $K$ for the case where $\deg P=3$ (and for some other cases)
is proved by V. A. Iskovskikh and B. E. Kunyavskii (see \cite[Theorem 4.1]{Kan07}).
The case where $\deg P=3$ and $P(x)$ is irreducible is a typical example of a surface
which is not $k$-rational but stably $k$-rational
(see Beauville, Colliot-Th\'el\`ene, Sansuc and Swinnerton-Dyer \cite{BCSS85}).
\end{rem}

\bigskip
\subsection{Main result of conic bundles} \label{maincha}
Now we shall deal with the rationality of a general conic bundle,
whose function field is $K:=k(x,y,z)$ satisfying
\begin{equation} \label{cha2}
z^2=P(x)y^2+Q(x)
\end{equation}
where $P,Q \in k[x]$ are separable polynomials
and $\deg P \ge 1$, $\deg Q \ge 1$. Remember that $k$ is a field with
$\ch k \not=2$.

As before, the same problem was studied by Iskovskikh
\cite{Isk67,Isk70,Isk72} as the rationality of standard conic
bundles. Our approach is essentially an adaptation of Iskovskikh's
idea, but we will give a necessary and sufficient condition for
the rationality in terms of $P$ and $Q$ explicitly as below.

Let $s=s_1+s_2+s_3+s_4$, where $s_1$ (resp. $s_2$, resp. $s_3$) is
the number of $c \in \overline{k}$ such that $P(c)=0$ and $Q(c)
\not\in k(c)^2$ (resp. $Q(c)=0$ and $P(c) \not\in k(c)^2$, resp.
$P(c)=Q(c)=0$ and $-\frac{Q}{P}(c) \not\in k(c)^2$). $s_4=0$ or
$1$ and $s_4=1$ if and only if one of the following three
conditions are satisfied:
\begin{sub}
\item[$(\mathrm{i})$] $\deg P$ is even, $\deg Q$ odd and $p_0 \not\in k^2$;
\item[$(\mathrm{ii})$] $\deg P$ is odd, $\deg Q$ even and $q_0 \not\in k^2$;
\item[$(\mathrm{iii})$] $\deg P$ is odd, $\deg Q$ odd and $-q_0/p_0 \not\in k^2$.
\end{sub}
Here $p_0$ (resp. $q_0$) is the coefficient of the highest degree
term of $P$ (resp. $Q$).

Our main result is:
\begin{thm} \label{mainthm}
Let $K=k(x,y,z)$ be the field defined by $(\ref{cha2})$.
\hfill\break
$(1)$ When $s \geq 4$, $k(x,y,z)$ is not $k$-rational.
\hfill\break
$(2)$ When $s=3$, $k(x,y,z)$
is $k$-rational.
\hfill\break
$(3)$ The case $s=1$ can not happen.
\hfill\break
$(4)$
When $s=0$ or $s=2$, $k(x,y,z)$ is not $k$-rational
if and only if $(\mathrm{I})$ both of $\deg P$ and $\deg Q$ are even and,
$(\mathrm{II})$ $a^2p_0+b^2q_0=c^2$ has
no non-zero solution $(a,b,c)$ in $k$ (resp. $k(\sqrt{\pi_1})$) for $s=0$
(resp. $s=2$).
When $s=2$, $\pi_1$ satisfies one of the following three conditions:
$(\mathrm{i})$ $P(c_1)=0$ and $Q(c_1)=\pi_1 \not\in k^2$;
$(\mathrm{ii})$ $Q(c_1)=0$ and $P(c_1)=\pi_1 \not\in k^2$;
$(\mathrm{iii})$ $P(c_1)=Q(c_1)=0$ and $-\frac{Q}{P}(c_1)=\pi_1 \not\in k^2$.
\end{thm}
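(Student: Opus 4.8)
The plan is to reduce the general conic bundle problem to the generalized Ch\^atelet case already treated and to the geometric invariants controlling rationality, namely the Picard group of a smooth projective model together with its Galois action, following Iskovskikh's method and Manin's rationality criterion. First I would construct a \emph{relatively minimal standard conic bundle model} $\pi\colon\widetilde X\to\mathbb P^1_k$ of the field $K$, by taking the conic over $k(x)$ defined by $z^2=P(x)y^2+Q(x)$, forming its minimal regular model over $\mathbb P^1_k$, and checking that the bad fibres sit exactly over the points $c\in\overline k$ enumerated in the definition of $s_1,s_2,s_3$ (where the conic over the residue field degenerates into a pair of conjugate lines rather than a double line, the condition $Q(c)\notin k(c)^2$ etc.\ being precisely the criterion that the fibre is \emph{split} over $\overline{k(c)}$ but not over $k(c)$), plus possibly one bad fibre at infinity accounted for by $s_4$ via the parity conditions (i)--(iii) on $\deg P,\deg Q$ and the leading coefficients $p_0,q_0$. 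A short ramification/Tsen-type count shows these are all the degenerate geometric fibres, so $s$ is their number; in particular $s=1$ is impossible because the sum of the local discriminant contributions must vanish, which forces $s$ even unless a fibre at infinity intervenes — pinning $s\ge 2$ whenever $s\ge 1$, giving part $(3)$.

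Next, for part $(1)$ ($s\ge 4$): having at least four degenerate geometric fibres on a relatively minimal standard conic bundle that is geometrically rational, I would invoke Iskovskikh's theorem (Theorem~\ref{Isk}) directly, after verifying the surface is geometrically rational (a conic bundle over $\mathbb P^1_{\overline k}$ always is) and that the model is \emph{standard} in Iskovskikh's sense (relatively minimal, no $(-1)$-curves in fibres). Equivalently, and to keep the argument self-contained and algebraic, I would compute $\Pic(\overline X)$ as a Galois module — it is a permutation-ish lattice generated by a section, a general fibre, and the components of the $s$ reducible geometric fibres — and show that $H^1(k,\Pic(\overline X))\ne 0$ (or that the relevant flasque resolution is non-trivial), so that by Manin's criterion, whose mechanics are set up in Subsections \ref{PicY}--\ref{pfA}, $K$ is not $k$-rational. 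For part $(2)$ ($s=3$): three degenerate fibres is the boundary case; here I would exhibit an explicit birational transformation over $k$ — an elementary transformation (blow up a suitable section through one singular point of a bad fibre, blow down the strict transform of that fibre) reducing the number of bad fibres, iterated, to land on a conic bundle with $\le 2$ bad fibres and a rational point, hence $k$-rational; the parity bookkeeping must be tracked so that the $s_4$-fibre is absorbed, and one checks the intermediate surfaces stay geometrically rational and the section stays defined over $k$.

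For part $(4)$ ($s\in\{0,2\}$) I would argue that when $s=0$ the conic bundle has no degenerate geometric fibre except possibly one at infinity, so after an automorphism of $\mathbb P^1$ moving that point to a finite one we are essentially back in the generalized Ch\^atelet situation of equation~(\ref{cha}); the field becomes $k(x,y,z)$ with $z^2=ay^2+\tilde P(x)$ for an appropriate constant $a$ and polynomial $\tilde P$, and condition $(\mathrm I)$ (both degrees even) together with the conic $a^2p_0+b^2q_0=c^2$ having no non-trivial zero over $k$ is exactly the translation of conditions $(\mathrm{C1})$--$(\mathrm{C5})$ — in particular the leading-coefficient conic detects whether $\sqrt a\in k$ after the change of variable and whether the fibre at infinity is actually split. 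When $s=2$ the two conjugate singular fibres are defined over a quadratic extension $k(\sqrt{\pi_1})$ with $\pi_1$ as described in (i)--(iii), and the same analysis runs over that field: $K$ is $k$-rational iff the base change condition on the leading-coefficient conic holds over $k(\sqrt{\pi_1})$. The \textbf{main obstacle} I anticipate is part $(2)$, the positive rationality statement for $s=3$: unlike the non-rationality parts, which follow from a cohomological obstruction that is robust, the $s=3$ case requires constructing the birational equivalence to $\mathbb P^2_k$ \emph{explicitly and over $k$}, handling all the sub-cases of which invariant ($s_1,s_2,s_3$ or $s_4$) the three bad fibres come from and making sure a $k$-rational section is available at each elementary transformation — this is where Iskovskikh's geometric argument is least mechanical and must be turned into honest algebra.
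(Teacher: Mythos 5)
Your overall architecture (a standard conic bundle model over $\mathbb{P}^1_k$, identification of the $s$ degenerate geometric fibres including the contribution at infinity, the Galois action on the Picard lattice, Manin's criterion) matches the paper's setup, but the proposal breaks down at the decisive steps on both the negative and the positive side. On the negative side, your self-contained fallback for $s\ge 4$ --- ``compute $H^1(k,\Pic(\overline X))\ne 0$ and conclude by Manin's criterion'' --- cannot work: Theorem \ref{A} shows that $H^1(\mathfrak{G},\Pic(Y))$ vanishes when $P$ is irreducible, and the corresponding surfaces (which have $s=4$; see the Remark in Subsection \ref{main} on the stably rational but irrational Ch\^atelet surfaces with $\deg P=3$) are exactly the cases where the cohomological obstruction is trivial yet the surface is not $k$-rational. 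This is why the paper does not stop at cohomology: it exploits that $\Pic(Y)^{\mathfrak{G}}$ has rank $2$ with basis $F,\Omega$ and $\Omega\cdot\Omega=8-s$, runs the intersection-form inequality $\sum_j m_j^2\le 2m\sum_j m_j$ against $(\ref{1_2})$ to kill $s\ge 8$ (Theorem \ref{B}), and then performs a descending induction through del Pezzo surfaces for $4\le s\le 7$ (Theorem \ref{C}, Subsections \ref{deg46} and \ref{updown2}, plus a separate argument for $s=5$). Quoting Iskovskikh's Theorem \ref{Isk} instead is logically admissible but is precisely the statement the paper sets out to reprove algebraically. Your reciprocity argument for excluding $s=1$ is in the right spirit (it is the Brauer-residue counterpart of the paper's parity claim that $n_\sigma$ is even), but as phrased (``forces $s$ even'') it would also exclude $s=3$; it must be recast as the statement that a single $k$-rational closed point cannot carry the only non-trivial residue.

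On the positive side there are three concrete gaps. For $s=2$ you assert that ``the same analysis runs over $k(\sqrt{\pi_1})$,'' but rationality does not descend along quadratic extensions in general; the equivalence ``$k$-rational iff $k(\sqrt{\pi_1})$-rational'' is the hard point, and the paper proves it by pinning down the action of $\overline{\mathfrak{G}}\simeq C_2\times C_2$ on an explicit coordinate $v$ via the two conjugate curves $\Gamma_1,\Gamma_2$ of Proposition \ref{prop17} and reducing to the norm condition $\kappa\in N_{k_1/k}(k_1)N_{k_2/k}(k_2)$ of Proposition \ref{prop419}. For $s=0$ your reduction to the generalized Ch\^atelet case is backwards: under $(\mathrm{C1})$--$(\mathrm{C5})$ every root of $P$ contributes to $s$, so the surfaces of Section \ref{chCh} have $s=\deg P\ge 3$ or $\deg P+1$ and are always irrational, whereas $s=0$ is generically rational; the paper's actual argument produces a pencil of $\mathfrak{G}$-invariant curves in the class $F'+\nu F$ and reduces rationality to the existence of a $\mathfrak{G}$-invariant point on $F_\infty$, which is where the conic $a^2p_0+b^2q_0=c^2$ enters. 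For $s=3$ your elementary-transformation scheme needs a $k$-point or $k$-section on a degenerate fibre to get started, which you never produce; the paper instead exhibits four mutually conjugate $(-1)$-curves $\Gamma_1,\dots,\Gamma_4$ (Proposition \ref{props3a}), blows them down to $\mathbb{P}^2$, and writes down $\mathfrak{G}$-invariant homogeneous coordinates case by case according to the splitting behaviour of $c_1,c_2,c_3$.
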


\begin{rem}
As for the case $s=2$ in Theorem \ref{mainthm} $(4)$,
if $c_1, c_2 \in k$ then $k(x,y,z)$ is $k$-rational,
otherwise $c_1$ and $c_2$ are $k$-conjugate
and we can show that $\pi_1 \in k$
(so $k_1=k(\sqrt{\pi_1})$ is a quadratic extension of $k$).
\end{rem}

\begin{rem}
The non-rationality for the case where $P(x)=x$
and $Q(x)=f(x^2)$ is discussed by
B. E. Kunyavskii, A. N. Skorobogatov and M. A. Tsfasman
(see \cite[Chapter 6]{KST89}).
\end{rem}

\subsection{Ideas of the proof} \label{idea}
The field $\overline{k}(x,y,z)$ is $\overline{k}(x)$-rational, i.e.
$\overline{k}(x,y,z)=\overline{k}(x,u)$ for some $u \in
\overline{k}(x,y,z)$. The action of $\mathfrak{G}=\Gal(k^{\sep}/k)$
on $u$ induces birational transformation of $\mathbb{P}^1 \times
\mathbb{P}^1$. After finite steps of blowings-up and down of
$\mathbb{P}^1 \times \mathbb{P}^1$, these birational
transformations become biregular on a surface $X$.
Then, the group action of $\mathfrak{G}$ induces
a permutation of irreducible curves. Thus the divisor group $\Div(X)$
becomes a permutation $\mathfrak{G}$-module,
i.e. $\Div(X)$ has a $\mathbb{Z}$-basis permuted by $\mathfrak{G}$.
Since the principal
divisor group is stable under the action of $\mathfrak{G}$, the Picard group $\Pic(X)$ is also
a $\mathfrak{G}$-module.

From the structure of $\Pic(X)$ as a $\mathfrak{G}$-module, we will
derive the $k$-irrationality of $K$. Three criteria will be
instrumental in our proof. We list them in the following.

\bigskip
I. Non-triviality of $H^1(\mathfrak{G},\Pic(X))$.

The first Galois cohomology $H^1(\mathfrak{G},\Pic(X))$ is
$k$-birational invariant (see
 \cite[pages 150-151, Theorem 2.2, Corollary 2.3]{Man69}). In particular, if $K$ is $k$-rational,
then $H^1(\mathfrak{G},\Pic(X))=0$.

The following theorem for a generalized Ch\^atelet surface
is due to Sansuc
\cite[Proposition 1 (v)]{San81}
(see also \cite{CTS94}, \cite[Proposition 7.1.1]{Sko01}). For the
convenience of the reader, we will give a proof of it in Subsection \ref{pfA}.

\begin{thm}[Sansuc \cite{San81}] \label{A}
Let $r^\prime$ be the number of irreducible components of $P(x)$ over $k$.
Define $j$ by $j=r^\prime-1$ if $\deg P$ is odd; define
$j=r^\prime-1$, if $\deg P$ is even and every irreducible
component of $P$ is of even degree; define $j=r^\prime-2$, if
$\deg P$ is even and some irreducible component of $P$ is of odd
degree. Then $H^1(\mathfrak{G},\Pic(X))=(\mathbb{Z}/2\mathbb{Z})^j$.
\end{thm}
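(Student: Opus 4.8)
The plan is to compute $\Pic(X)$ as a $\mathfrak{G}$-module explicitly from a minimal smooth projective model $X$ of the generalized Ch\^atelet surface, and then to evaluate $H^1(\mathfrak{G},\Pic(X))$ via a Galois-equivariant exact sequence. First I would construct $X$ concretely: starting from $\overline{k}(x,y,z)=\overline{k}(x,u)$ (so that $X$ is birational to $\mathbb{P}^1\times\mathbb{P}^1$), I would resolve the map to get a conic bundle $\pi\colon X\to\mathbb{P}^1_k$ whose degenerate fibres lie over the roots of $P(x)$ (and possibly over $x=\infty$, according to the parity of $\deg P$ and whether the leading coefficient, together with $a$, forces a split fibre there). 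Each degenerate geometric fibre is a pair of lines meeting at a point; the key combinatorial input is how $\mathfrak{G}$ permutes these components: over a root $c$ of an irreducible factor $P_i$ of degree $d_i$, the two lines in the fibre are defined over $k(c,\sqrt{a})$, and since by $(\mathrm{C3})$ and $(\mathrm{C4})$ the field $k(c,\sqrt{a})$ is a genuine quadratic extension of $k(c)$ with $\sqrt a$ not already in $k(c)$, the two lines are conjugate over $k(c)$ and get swapped.

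Next I would write down a permutation $\mathfrak{G}$-module $\Div^0(X)$ generated by the fibre components together with a section, and the principal divisor classes (the class of a fibre, and the relations saying the two halves of each degenerate fibre add up to a full fibre), obtaining a short exact sequence of $\mathfrak{G}$-modules
\begin{equation}
0 \to M \to P \to \Pic(X) \to 0
\end{equation}
with $P$ a permutation module and $M$ the lattice of relations, and then take the long exact cohomology sequence. Because $P$ is a permutation module, $H^1(\mathfrak{G},P)=0$, so $H^1(\mathfrak{G},\Pic(X))\cong\mathrm{coker}\bigl(H^0(\mathfrak{G},P)\to H^0(\mathfrak{G},\Pic(X))\bigr)$, equivalently the cokernel of $H^1(\mathfrak{G},M)\to$ nothing; more usefully one compares $H^0$'s, so the computation reduces to counting $\mathfrak{G}$-invariants in $P$ and in $\Pic(X)$ (or dually, to a determinant/index computation for the sublattice of invariant relations). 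The answer should be governed by: the number $r'$ of irreducible factors of $P$ contributes one $\mathbb{Z}/2$ per factor for the "which half of the fibre" choice, and then the parity conditions on $\deg P$ remove one or two copies — giving exactly the $j$ of Theorem \ref{A}. I would double-check the output against the small cases recorded in item $(\mathrm{b})$ of the introduction and against the known Ch\^atelet-surface formula.

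The main obstacle I expect is the bookkeeping at $x=\infty$: determining precisely when the fibre over infinity is a smooth conic, a split pair of lines defined over $k$, or a pair of conjugate lines over $k(\sqrt a)$, and then folding that correctly into the permutation module. This is where the trichotomy in the definition of $j$ ($r'-1$ versus $r'-2$ according to whether $\deg P$ is odd, or even with all factors of even degree, or even with some factor of odd degree) comes from, and it requires care about how the change of variables at infinity interacts with $\sqrt a$. A secondary technical point is to verify that $\mathrm{Div}(X)$ really is a permutation module — i.e. that after the blowings-up and -down the exceptional curves are themselves permuted by $\mathfrak{G}$ without any twisting — but for a conic bundle of this shape the degenerate fibres and a chosen global section (or, in the no-section case, a $\mathbb{Z}$-basis built from the two half-fibres) suffice, and the verification is routine once the geometry is set up.

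Once $H^1(\mathfrak{G},\Pic(X))=(\mathbb{Z}/2\mathbb{Z})^j$ is in hand, the non-rationality claim of the main theorem follows immediately: under $(\mathrm{C1})$--$(\mathrm{C5})$ we have $r'\ge 1$ and $\deg P\ge 3$, and in each branch of the definition of $j$ one checks $j\ge 1$ (for instance, if $P$ is irreducible of odd degree $\ge 3$ then $r'=1$, $j=r'-1=0$ — so this case needs the separate Manin-type argument of item $(\mathrm{c})$ rather than $H^1$, and indeed $(\mathrm{C3})$ with $\deg P$ odd forces $\sqrt a\in l$ via a non-trivial factor, keeping us consistent), and otherwise $H^1(\mathfrak{G},\Pic(X))\ne 0$ obstructs $k$-rationality by Manin's birational invariance cited in part I of Subsection \ref{idea}. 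I would present the $H^1$ computation as the proof of Theorem \ref{A} and then deduce the main theorem as a corollary, handling the residual $j=0$ case by the Manin criterion under hypothesis $(\mathrm{C3})$.
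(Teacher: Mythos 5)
Your overall strategy (present $\Pic(\overline X)$ of the conic bundle via fibre components and a section, exploit that the two lines over a root $c$ of $P$ are conjugate over $k(c)$ by $(\mathrm{C3})$--$(\mathrm{C4})$, and extract $H^1$ from an exact sequence with a permutation module) is the classical Colliot-Th\'el\`ene--Sansuc route and is viable in principle; it is genuinely different from what the paper does. The paper instead writes down the Galois action on $\Pic(Y)$ as explicit integer matrices $g_\sigma$ in the basis $E_1,\dots,E_r,F,F'$ of the concrete model $Y$ built in Subsections \ref{biregularization}--\ref{biregT}, computes the Tate cohomology $\widehat H^{-1}(\mathfrak{G},\Pic(Y))=\ker(\sum_\sigma g_\sigma)/\langle\bigcup_\sigma \im(g_\sigma-\id)\rangle$ by hand (identifying the kernel with the sublattice $M_e$ and the image with $M_b+(\sum E_i)\mathbb{Z}$), and then obtains $H^1$ by the duality $H^1(\mathfrak{G},M)\simeq\widehat H^{-1}(\mathfrak{G},M^\vee)$, i.e.\ by redoing the computation with the transposed matrices. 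The odd-degree case is not handled at infinity at all but is reduced to the even-degree case by $x\mapsto 1/x$, which raises $r'$ by one and forces an odd block, giving $(r'+1)-2=r'-1$.

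There are, however, two genuine gaps in your proposal. First, the homological algebra is wrong as stated: from $0\to M\to P\to\Pic(X)\to 0$ with $P$ permutation, the long exact sequence gives $\mathrm{coker}\bigl(H^0(\mathfrak{G},P)\to H^0(\mathfrak{G},\Pic(X))\bigr)\cong H^1(\mathfrak{G},M)$, \emph{not} $H^1(\mathfrak{G},\Pic(X))$; with $\Pic(X)$ sitting as the quotient, $H^1(\mathfrak{G},\Pic(X))$ injects into $H^2(\mathfrak{G},M)$, so a comparison of invariants does not compute it. You would need either to realize $\Pic(\overline X)$ as a \emph{sublattice} of a permutation module with permutation quotient (a coflabby-type resolution), or to pass to the dual lattice and compute $\widehat H^{-1}$ as the paper does; your parenthetical ``equivalently the cokernel of $H^1(\mathfrak{G},M)\to$ nothing'' signals the confusion without resolving it. Second, the decisive computation is not carried out: the trichotomy $j=r'-1$ versus $j=r'-2$ hinges exactly on the fibre at $x=\infty$ and on which combinations $\sum_i\epsilon_i D_i$ of half-fibres become principal (equivalently, on whether an odd-degree factor exists), and you explicitly defer this (``the main obstacle I expect'') rather than settle it. As written the proposal is a plausible plan with the correct expected answer, but the step that produces the exponent $j$ is missing.
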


Theorem \ref{A} implies that $K$ is not $k$-rational except when
$P(x)$ is irreducible, or a product of two irreducible polynomials
of odd degree.

\bigskip
II. Calculating the intersection form.

If $X$ is birational to $\mathbb{P}^1 \times \mathbb{P}^1$ over
$k$, there exist two families of $\mathfrak{G}$-invariant
irreducible curves $\{C_a\}$ and $\{C_a^\prime\}$ on $X$,
parametrized by elements of $k$.

After successive blowings-up at fundamental points of
$\mathbb{P}^1 \times \mathbb{P}^1$ and $X$ respectively, we will
obtain surfaces $Z$ and $Z^\prime$ which are biregular over $k$.
Except finite number of elements of $k$, $C_a$ and $C_a^\prime$
(denoted by $C$ for simplicity) satisfy the conditions that
$C \cdot C=0$ and $C \cdot \Omega=-2$ on $Z^\prime$, where
$\Omega$ is the canonical divisor.

By a blowing-up $E_j$, $C \cdot C$ decreases by $(C \cdot
E_j)^2$ and $C \cdot \Omega$ increases by $C \cdot E_j$,
so we must have
\begin{equation} \label{1_2}
C \cdot C=\sum_j m_j^2, \quad
C \cdot \Omega=-2-\sum_j m_j
\end{equation}
on $X$, where $m_j=C \cdot E_j$.

On the other hand, we will prove
\begin{thm} \label{B}
\hfill\break
$(1)$
If $K=k(x,y,z)$ is a generalized Ch\^atelet surface defined by $(\ref{cha})$
and $\deg P \geq 7$, then $K$ is not $k$-rational.
\hfill\break
$(2)$
If $K=k(x,y,z)$ is a general conic bundle defined by $(\ref{cha2})$
and $s \geq 8$ ($s$ is as in the last paragraph before Theorem \ref{mainthm}),
then $K$ is not $k$-rational.

In fact, there
is a non-singular projective surface $Y$ which is
birational to $X$, such that any
$\mathfrak{G}$-invariant irreducible curve $C$ other than
$x=$const. will not satisfy $(\ref{1_2})$
for any further blowing-up
$\{ E_j \}$.
\end{thm}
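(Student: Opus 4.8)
The plan is to exploit the two numerical constraints in \eqref{1_2} directly. Start from the model $\mathbb{P}^1\times\mathbb{P}^1$, on which $\overline{k}(x,y,z)=\overline{k}(x,u)$, and resolve the $\mathfrak{G}$-action to a biregular model; call the resulting nonsingular projective surface $Y$. On $Y$ one has the canonical class $\Omega$ and a $\mathbb{Z}$-basis of $\Div(Y)$ permuted by $\mathfrak{G}$, coming from the horizontal fibre classes and the exceptional curves $\{E_j\}$ introduced in the resolution. I would first record, in terms of $P$ (resp. the invariant $s$), exactly how many blowings-up are needed and what the intersection numbers $C\cdot E_j$ and $C\cdot\Omega$ are for a $\mathfrak{G}$-invariant irreducible curve $C$ that is \emph{not} of the form $x=\text{const}$. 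The key point is that a vertical curve $x=\text{const.}$ is forced to have small self-intersection and is pinned to the discriminant locus, whereas any other $\mathfrak{G}$-invariant curve, being horizontal, meets each fibre and hence meets the exceptional configuration in a controlled, ``spread-out'' way.

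Next I would quantify this. If $C$ is $\mathfrak{G}$-invariant and horizontal, then $C\cdot(\text{generic fibre})=d$ for some $d\ge 1$, and the multiplicities $m_j=C\cdot E_j$ at the $\ell$ blown-up points satisfy a relation tying $\sum_j m_j$ to $d$ and the degrees of $P,Q$ (via the genus/adjunction formula on $Y$), while simultaneously $\sum_j m_j^2\ge \tfrac{1}{\ell}\bigl(\sum_j m_j\bigr)^2$ by Cauchy--Schwarz. Feeding these into \eqref{1_2}, which demands $C\cdot C=\sum_j m_j^2$ and $C\cdot\Omega=-2-\sum_j m_j$ for \emph{some} blowing-up sequence, I would derive a numerical inequality that can only be satisfied when $\ell$ is small. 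The thresholds $\deg P\le 6$ in part (1) and $s\le 7$ in part (2) should emerge precisely as the boundary where these inequalities become vacuous: once $\deg P\ge 7$ (resp. $s\ge 8$), no assignment of nonnegative integers $m_j$ is compatible with both equations, so no $\mathfrak{G}$-invariant horizontal curve $C$ can play the role of a ``line of the second ruling'' after any further blowing-up.

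Having established that, I would invoke criterion II from Subsection~\ref{idea}: if $K$ were $k$-rational, i.e.\ $X$ birational to $\mathbb{P}^1\times\mathbb{P}^1$ over $k$, there would be two families $\{C_a\},\{C'_a\}$ of $\mathfrak{G}$-invariant irreducible curves, at least one of which is not vertical; for all but finitely many $a\in k$ the corresponding curve $C$ satisfies $C\cdot C=0$, $C\cdot\Omega=-2$ on the biregular model $Z'$, hence \eqref{1_2} on $X$ after the appropriate blowing-up. This contradicts the conclusion of the previous paragraph. Therefore $K$ is not $k$-rational, and along the way we have exhibited the surface $Y$ with the stated property, proving the ``in fact'' clause.

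The main obstacle I anticipate is the bookkeeping in the second step: one must control \emph{simultaneously} the self-intersection drop $\sum_j m_j^2$ and the canonical-class shift $\sum_j m_j$ across an \emph{arbitrary} further blowing-up sequence, not a fixed one, and show the bound is uniform in $C$. Handling the parity cases (whether $\deg P$, $\deg Q$ are even or odd, i.e.\ the role of $s_4$) and the points lying over the discriminant at infinity will require care, since these affect both the number $\ell$ of exceptional curves and the precise value of $C\cdot\Omega$. Getting the arithmetic to cut off exactly at $\deg P=7$ and $s=8$, rather than at some weaker bound, is where the real work lies.
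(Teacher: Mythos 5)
Your overall framing matches the paper's: assume $k$-rationality, take the $\mathfrak{G}$-invariant lines $t=a$, $s=b$ on $\mathbb{P}^1\times\mathbb{P}^1$, push them to the biregular model $Y$, and show that no $\mathfrak{G}$-invariant horizontal curve $C$ can satisfy $(\ref{1_2})$. But the numerical heart of your argument does not work as stated, for two reasons. First, your inequality $\sum_j m_j^2\ge \tfrac{1}{\ell}\bigl(\sum_j m_j\bigr)^2$ points in the wrong direction and carries no information: both $\sum_j m_j^2=C\cdot C$ and $\sum_j m_j=-2-C\cdot\Omega$ are already determined by the class of $C$ in $\Pic(Y)$, the number $\ell$ of blowings-up is unbounded (arbitrarily many $m_j$ may vanish), and even restricting to nonzero $m_j$ the best you get is $\sum m_j^2\ge\sum m_j$, which is adjunction and is automatically consistent. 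What the paper actually needs is an \emph{upper} bound: each $E_j'$ sits over a point of some fibre $x=c_j$, and effectivity of the strict transform, $\widetilde C\cdot(\widetilde{x=c_j})\ge 0$, forces $m_j=C\cdot E_j'\le C\cdot F=2m$ (with monotonicity handling infinitely near points). This gives $\sum_j m_j^2\le 2m\sum_j m_j$, an inequality between the two fixed quantities that is not automatic.

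Second, you never use the structural fact that makes the computation close: $\Pic(Y)^{\mathfrak{G}}$ has rank $2$ with basis $F$ and $\Omega$ (Proposition \ref{prop10}), so $C\equiv\nu F-m\Omega$ and hence $C\cdot C=4m\nu+\omega m^2$, $C\cdot\Omega=-2\nu-m\omega$ with $\omega=\Omega\cdot\Omega=8-r$ (resp.\ $8-s$). Substituting these into $\sum m_j^2\le 2m\sum m_j$ yields $\omega m^2\ge 4m$, which is impossible for $m>0$ once $\omega\le 0$; that is exactly where the thresholds come from ($r\ge 8$ after the even-degree reduction of Subsection \ref{degeven}, so $\deg P\ge 7$ suffices; $s\ge 8$ in the conic bundle case). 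Without identifying $\Pic(Y)^{\mathfrak{G}}=\mathbb{Z}F\oplus\mathbb{Z}\Omega$ and the bound $m_j\le C\cdot F$, the ``bookkeeping'' you flag as the main obstacle is not merely laborious but has no mechanism to terminate: neither adjunction nor Cauchy--Schwarz will produce the cutoff at $\omega=0$.
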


\bigskip
III. Reduction to a del Pezzo surface.

A del Pezzo surface $S$ is biregular to  some successive
blowings-up of the projective plane $\mathbb{P}^2$.

\begin{thm} \label{C}
\hfill\break
$(1)$
If $K=k(x,y,z)$ is a generalized Ch\^atelet surface defined by $(\ref{cha})$
and $3 \leq \deg P \leq 6$, then $K$ is not $k$-rational.
\hfill\break
$(2)$
If $K=k(x,y,z)$ is a general conic bundle defined by $(\ref{cha2})$
and $4 \leq s \leq 7$ ($s$ is as in the last paragraph before
Theorem \ref{mainthm}),
then $K$ is not $k$-rational.
\end{thm}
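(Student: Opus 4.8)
The plan is to exploit the theme announced in this section: when the conic bundle has few degenerate fibres it is, up to $k$-birational equivalence, a del Pezzo surface of small degree, and one may appeal to the classical theorem that a $\mathfrak{G}$-minimal del Pezzo surface of degree at most $4$ is never $k$-rational (Manin \cite{Man67}, Iskovskikh \cite{Isk72}). So the whole task is to produce a $\mathfrak{G}$-minimal del Pezzo model of $X$ with $K^2\le 4$ and to verify its $\mathfrak{G}$-minimality from the hypotheses.

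\emph{Step 1: the del Pezzo model.} First I would put $\pi\colon X\to\mathbb{P}^1_x$ in standard form — in case $(2)$ this costs a preliminary blow-up over the common zeros of $P$ and $Q$, controlled by $s_3$ — and then contract over $k$ every component of a degenerate fibre that is split over the residue field of the point of $\mathbb{P}^1$ beneath it. In case $(1)$ the fibre over a zero $c$ of $P$ is the conic $z^2=ay^2$ over $k(c)$, and $(\mathrm{C1})$--$(\mathrm{C4})$ force $a\notin k(c)^2$ (by $(\mathrm{C4})$ every irreducible factor of $P$ stays irreducible over $k(\sqrt{a})$), so that fibre is non-split; a further non-split degenerate fibre sits over $x=\infty$ exactly when $\deg P$ is odd. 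In case $(2)$ the surviving degenerate fibres are precisely the ones counted by $s=s_1+s_2+s_3+s_4$. Either way one reaches a relatively minimal conic bundle $V\to\mathbb{P}^1$ whose number $n$ of degenerate geometric fibres is $\deg P$ or $\deg P+1$ in case $(1)$ (hence $n\in\{4,6\}$) and is $s$ in case $(2)$ (hence $4\le n\le 7$). Separability of $P$ (and $Q$) makes these $n$ fibres lie over $n$ distinct points, so $-K_V$ is ample and $V$ is a del Pezzo surface of degree $K_V^2=8-n\in\{1,2,3,4\}$. Since $k(V)\cong k(X)=K$ over $k$, it suffices to show $V$ is not $k$-rational, and by the classification of minimal geometrically rational surfaces it is enough to show $V$ is $\mathfrak{G}$-minimal.

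\emph{Step 2: $\mathfrak{G}$-minimality of $V$.} That $V$ is $\mathfrak{G}$-minimal as a conic bundle is the easy half: no component of a degenerate fibre can be contracted $\mathfrak{G}$-equivariantly, since the two components of each (non-split) fibre are conjugate over the residue field and meet each other, while by $(\mathrm{C4})$ (resp.\ by the conditions defining $s_1,s_2,s_3$) the $2\deg P_i$ components lying over the zeros of an irreducible factor $P_i$ form a single $\mathfrak{G}$-orbit and not two disjoint ones. The substantive point is to exclude a $\mathfrak{G}$-stable set of pairwise disjoint $(-1)$-curves \emph{transverse} to $\pi$ whose contraction keeps $V$ smooth and raises its degree; such a contraction would ultimately land on a del Pezzo surface of degree $\ge 5$ or on $\mathbb{P}^2$, hence on a $k$-rational surface. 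To rule this out I would list the lines of $V_{\bar{k}}$ (there are $16$, $27$, $56$ or $240$ of them according as $K_V^2=4,3,2$ or $1$), realise each transverse line as a choice of one component from each degenerate fibre, compute the $\mathfrak{G}$-action on these choices, and check from $(\mathrm{C1})$--$(\mathrm{C4})$ (resp.\ from the conditions defining $s$ and the three alternatives giving $s_4=1$) that no union of $\mathfrak{G}$-orbits is simultaneously pairwise disjoint and of a contractible size. In effect a transverse line is a Galois-consistent global choice of components of the $n$ degenerate fibres, and the hypotheses are exactly what obstructs such a choice.

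The hard part will be Step $2$ for $V$ of degree $4$, i.e.\ $n=4$ (in case $(1)$ this is $\deg P\in\{3,4\}$): there the cohomological obstruction of criterion I is powerless, because by Theorem \ref{A} $H^1(\mathfrak{G},\Pic X)$ vanishes precisely when $P$ is irreducible or a product of two irreducible polynomials of odd degree — and the Ch\^atelet surface with $P$ an irreducible cubic, a surface that is not $k$-rational but is stably $k$-rational (see \cite{BCSS85}), lies in exactly this range — so $\mathfrak{G}$-minimality really has to be established by hand, through the combinatorics of the $16$ lines, before one can invoke the non-rationality of minimal del Pezzo quartics. The analogous bookkeeping in degrees $3,2,1$ also needs care (the larger number of lines being compensated by tighter constraints on contractible configurations), but the reduction of Step $1$ and the appeals to the classification in Steps $1$--$2$ are routine.
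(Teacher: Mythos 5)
There are two genuine gaps in this plan. The first is in Step 1: you assert that the relatively minimal conic-bundle model $V$ is a del Pezzo surface because separability makes the degenerate fibres lie over distinct points. That implication is false. Distinctness of the degenerate fibres says nothing about ampleness of $-K_V$: a relatively minimal conic bundle can carry a section $\Sigma$ with $\Sigma\cdot\Sigma\le -3$, and then by adjunction $-K_V\cdot\Sigma=\Sigma\cdot\Sigma+2<0$. This is exactly why the paper does not produce an unconditional del Pezzo model: in Subsection \ref{deg46} the surface $Y_C$ is constructed only \emph{after} assuming $K$ is $k$-rational, and the proof that $Y_C$ is del Pezzo uses the hypothetical $\mathfrak{G}$-invariant curve $C\equiv \nu^\prime F-m\Omega$ (with $\nu^\prime\le -1$, $m>0$) supplied by that assumption in order to force $\Omega\cdot\Gamma<0$ for every irreducible $\Gamma$. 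You would need a genuine argument in place of ``separable $\Rightarrow$ ample.''

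The second gap is in the final appeal. A conic bundle always has $F$ and $\Omega$ in $\Pic(V)^{\mathfrak{G}}$ (Proposition \ref{prop10} / \ref{prop412}: the invariant Picard group has rank $2$), so $V$ is never $\mathfrak{G}$-minimal as a del Pezzo surface in the rank-one sense required by the Manin--Iskovskikh theorem you cite; that theorem simply does not apply. What is actually needed is the conic-bundle half of the rationality criterion, i.e.\ Theorem \ref{Isk} itself --- precisely the statement this paper sets out to reprove, so the appeal is circular in context. (Even read as an external citation, your Step 2 is incomplete in degrees $\le 3$, where lines transverse to $\pi$ can be multisections and are not parametrized by ``one component per degenerate fibre.'') The paper's actual route is different and self-contained: assuming $k(x,y,z)=k(t,s)$, it writes the class of the image of $t=a$ as $\nu F-m\Omega$, uses the identities $\sum_j m_j^2=4m\nu+\omega m^2$ and $\sum_j m_j=2\nu+m\omega-2$ together with the auxiliary classes $-\tfrac{4}{\omega}\Omega-F$ of Theorem \ref{lem2} to strictly decrease $m$ by further blowings-up and down (Subsection \ref{updown2}, plus the separate three-case analysis for $s=5$), and concludes by infinite descent. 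To salvage your route you would have to (i) prove $-K_V$ ample or pass to a model where it is, and (ii) replace the rank-one del Pezzo theorem by a correctly stated non-rationality criterion for minimal conic bundles with $K^2\le 4$, with a complete minimality verification.
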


In fact, suppose $K$ is $k$-rational,
then there is a del Pezzo surface
$X^{\prime\prime}$ which is birational to $X$. Thus,
$X^{\prime\prime}$ is biregular to some successive blowings-up of
$\mathbb{P}^2.$
From this we can deduce a contradiction.

The proof of rationality when $s \leq 3$ for a general conic bundle
also uses the intersection form.
A crucial fact is that if an irreducible curve
$\Gamma$ satisfies $\Gamma \cdot \Gamma <0$, then $\Gamma$ is
unique in its class.
If it's class is $\mathfrak{G}$-invariant,
$\Gamma$ itself must be $\mathfrak{G}$-invariant.
We can find a $\mathfrak{G}$-invariant
transcendent basis of $\overline{k}(x,u)$ by using the regular
mapping from $X$ to $\mathbb{P}^2$ or $\mathbb{P}^1 \times \mathbb{P}^1$
induced by such a $\Gamma$.

Section \ref{ag} is devoted to preliminary discussions from algebraic geometry,
Section \ref{chCh} to a generalized Ch\^atelet surface and Section \ref{chCo} to a general conic bundle.


\section{Preliminaries from algebraic geometry.} \label{ag}

In this section, we shall state some results in algebraic geometry
without proof. For more details, see for instance Hartshorne
\cite{Har77}, especially Chapter 5 there.

Throughout this section, the ground field $k$ of an algebraic
variety is assumed to be algebraically closed.

\subsection{Birational mapping} \label{birational}

Let $X$ and $X^\prime$ be projective non-singular surfaces,
which are mutually birational by $T: X \rightarrow X^\prime$.
$T$ can not be defined for finite number of points
(which are called fundamental points of $T$)
because the both of numerator and denominator of $T$ becomes zero.
$T$ is not injective on finite number of irreducible curves
(which are called exceptional curves of $T$),
and $T$ maps every irreducible branch of exceptional curves to a point of $X^\prime$,
which is a fundamental point of $T^{-1}$.

The complement $O(X)$ of all fundamental points and all exceptional curves
is a Zariski open set of $X$,
and $T$ maps $O(X)$ biregularly to $O(X^\prime)$ (defined similarly for $T^{-1}$).

\begin{thm}[{\cite[Chapter 5]{Har77}}]

Every birational mapping $T: X \to X^\prime$ becomes biregular
after finite steps of blowing-up at fundamental points of $T$ and
$T^{-1}$ respectively (this is valid only for surfaces, and it
is not true for higher dimensional varieties).
\end{thm}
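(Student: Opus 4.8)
The statement to prove is the classical theorem that any birational map $T\colon X\to X'$ between projective non-singular surfaces over an algebraically closed field factors into a finite sequence of blowings-up (at points) and blowings-down. Since the paper cites Hartshorne's Chapter 5 for this, the plan is to reproduce the standard argument there, adapted to the forward-looking style of the paper. The plan is to reduce everything to the case where $T$ is a morphism and then to the \emph{elimination of indeterminacy}: first I would show that $T$ can be resolved by blowing up finitely many points on $X$, replacing $X$ by some $\widetilde X$ so that $T$ becomes a genuine morphism $\widetilde X\to X'$; then I would analyze a birational \emph{morphism} of non-singular surfaces and show it is itself a composition of blowings-down of $(-1)$-curves.

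For the first reduction I would use the valuative criterion together with the fact that on a non-singular surface every local ring is a two-dimensional regular local ring, hence a UFD: near a fundamental point of $T$ the map is given by a ratio of two functions with no common factor, and a single blowing-up at that point strictly lowers the multiplicity of the base locus (measured by the intersection of the proper transform of a generic member of the linear system with the exceptional curve, which drops by at least one). Iterating, the process terminates after finitely many blowings-up because the relevant non-negative integer strictly decreases; this is exactly the elimination of indeterminacy for maps to projective space restricted here to surfaces. The key input making this work in dimension two and not higher is that the local rings are regular of dimension $\le 2$, so factoriality holds and blowing up a point is a well-behaved, self-inverse-in-spirit operation.

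For the second part I would invoke the factorization of a birational \emph{morphism} $f\colon \widetilde X\to X'$: if $f$ is not an isomorphism it contracts some irreducible curve $E$, and by the structure of such $E$ (using adjunction, $E^2<0$ and $E\cdot K=-2-E^2$, forcing $E^2=-1$, $E\cong\mathbb P^1$) Castelnuovo's contractibility criterion shows $E$ can be blown down to a non-singular surface $X_1$ with $\widetilde X\to X_1$ the blowing-up at a point; then $f$ factors through $X_1$, and one descends on the rank of the Picard group (which drops by exactly one with each contraction), so the process stops after finitely many steps. Combining the two parts, $T$ becomes biregular after finitely many blowings-up at fundamental points of $T$ and of $T^{-1}$, which is the claim. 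The main obstacle — and the step I would treat most carefully — is the termination/finiteness argument in the elimination of indeterminacy: one must exhibit the correct monovariant (the base-point multiplicity, or equivalently the drop in $\dim H^0$ of the defining line bundle, or the self-intersection of the proper transform of a generic curve in the pencil defining $T$) and verify it is non-negative and strictly decreasing under each blowing-up; everything else is an application of Castelnuovo's criterion and standard properties of blowings-up, which I would simply cite from \cite[Chapter 5]{Har77}.
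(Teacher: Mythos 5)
Your sketch is the standard argument of Hartshorne, Chapter V (elimination of indeterminacy by blowing up base points with the self-intersection of the linear system as the strictly decreasing non-negative monovariant, followed by factorization of the resulting birational morphism via contraction of $(-1)$-curves and descent on the Picard rank), which is precisely the reference the paper cites: the paper states this theorem without proof. So your proposal is correct and agrees with the intended source; there is nothing in the paper to diverge from.
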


A concrete example of such blowings-up is given in the discussion in Subsection \ref{biregularization}.

\subsection{Blowing-up}

Let $X$ be a projective non-singular surface, and $P$ be a point on $X$.
Then, there exists uniquely (modulo biregularity) a projective non-singular surface
$\widetilde X$ which satisfies the following $(1), (2)$ and $(3)$.
$\widetilde X$ is called the blowing-up of $X$ at $P$.
\begin{sub}
\item[$(1)$]
$X$ and $\widetilde X$ are mutually birational by $\pi: \widetilde
X \to X$.
\item[$(2)$]
$\pi$ is regular and has no fundamental point.
$\pi$ has a unique exceptional curve $E_p$,
which is biregular to the projective line $\mathbb{P}^1$,
and $\pi$ maps $E_P$ to $P$.
\item[$(3)$]
$\pi^{-1}$ has a unique fundamental point $P$
and has no exceptional curve.
\end{sub}
In other words, $X \setminus \{P\}$ and $\widetilde X \setminus
E_P$ are mapped biregularly and $\pi$ maps $E_P$ to $P$ while
$\pi^{-1}$ is not defined at $P$.

Roughly speaking, $\widetilde X$ is the dilation of a point $P$ to a line $E_P$ in $\widetilde X$.
In the tangent plane of $X$ at $P$,
the direction ratios of tangent vectors correspond to
points on $E_P$.
Thus $E_P$ is the set of direction ratios of tangent vectors at $P$.

\subsection{$\Div(X)$ and $\Pic(X)$.}

Let $X$ be a projective non-singular surface.
The divisor group $\Div(X)$ is defined as the free $\mathbb{Z}$-module
with all irreducible curves on $X$ as basis.
Every irreducible curve $C$ on $X$ induces a valuation $v_C$
on the function field $k(X)$,
and for $f \in k(X)$, the divisor $\sum v_C(f)C$
is called the principal divisor of $f$.

When $f$ runs over $k(X)$,
the principal divisors form a subgroup of $\Div(X)$,
which are called the principal divisor group.
It is isomorphic to $k(X)^\times/k^\times$.

The factor group of $\Div(X)$ by the principal divisor group is called
the divisor class group or Picard group and denoted by $\Pic(X)$.

\begin{rem}
In more general setting,
the definition of Picard group is more complicated,
but for a projective non-singular surface,
it is nothing but the divisor class group.
\end{rem}

Let $\widetilde X$ be the blowing-up of $X$ at $P$.
Let $C$ be an irreducible curve on $X$.
If $C$ does not pass through $P$,
then $\widetilde C:=\pi^{-1}(C)$
is an irreducible curve on $\widetilde X$.
If $C$ passes through $P$,
let $\widetilde C$ be the Zariski closure of
$\pi^{-1}(C \setminus \{P\})$
in $\widetilde X$,
then $\widetilde C$ is an irreducible curve on $\widetilde X$.
Besides $\widetilde C$, the only one irreducible curve
on $\widetilde X$ is $E_P$.
So identifying $C$ and $\widetilde C$,
we have $\Div(\widetilde X)=\Div(X) \oplus \mathbb{Z}$,
where $\mathbb{Z}$ represents the free $\mathbb{Z}$-module
with $E_P$ as the base.

Since $X$ and $\widetilde X$ are birational,
the function fields are the same,
$k(\widetilde X)=k(X)$.
Taking the factor group by the common principal divisor group,
we have $\Pic(\widetilde X) \simeq \Pic(X) \oplus \mathbb{Z}$,
where $\mathbb{Z}$ represents the free $\mathbb{Z}$-module
with $E_P$ as the base.

We shall give the isomorphism more explicitly in the next subsection,
using the intersection forms.

\subsection{Intersection form.} \label{intersection}

\begin{thm}[{{\cite[Chapter 5]{Har77}}}] \label{thmintform}

On $\Div(X) \times \Div(X)$,
there exists uniquely a symmetric $\mathbb{Z}$-bilinear form
$D_1 \cdot D_2$ satisfying the following conditions.
It is called the intersection form.
\hfill\break
$(1)$
If two irreducible curves $C_1$ and $C_2$ do not intersect on $X$,
then $C_1 \cdot C_2 = 0$.
\hfill\break
$(2)$
If $C_1$ and $C_2$ intersects transversally at $n$ points,
then $C_1 \cdot C_2=n$.
Here ``intersects transversally at $P$''
means that both $C_1$ and $C_2$ are non-singular at $P$,
and tangent vectors of $C_1$ and $C_2$ at $P$ are linearly independent.
\hfill\break
$(3)$
If $D$ is a principal divisor,
then $D \cdot D^\prime=0$ for all $D^\prime \in \Div(X)$.
So that the intersection form is defined on $\Pic(X) \times \Pic(X)$.
\end{thm}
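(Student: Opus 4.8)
The plan is to follow the cohomological construction of the intersection pairing, as in \cite[Chapter 5]{Har77}. For divisors $D_1,D_2$ on $X$, write $\mathcal{L}_i=\mathcal{O}_X(D_i)$ and set
\[
D_1 \cdot D_2 := \chi(\mathcal{O}_X)-\chi(\mathcal{L}_1^{-1})-\chi(\mathcal{L}_2^{-1})+\chi(\mathcal{L}_1^{-1}\otimes\mathcal{L}_2^{-1}),
\]
where $\chi$ denotes the Euler characteristic of a coherent sheaf on the complete surface $X$. Since $\mathcal{O}_X(D)$ depends only on the linear equivalence class of $D$, this expression is manifestly symmetric and well-defined on $\Pic(X)\times\Pic(X)$; in particular property $(3)$ is immediate, because a principal divisor gives the trivial line bundle and the four terms cancel in pairs. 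So this single formula produces a symmetric form for which $(3)$ holds; it remains to prove bilinearity and the geometric properties $(1)$, $(2)$, and to check uniqueness.

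Next I would establish $\mathbb{Z}$-bilinearity; by symmetry it suffices to prove additivity in the first variable, $(D_1+D_1')\cdot D_2 = D_1\cdot D_2 + D_1'\cdot D_2$. The standard device is to reduce to the case in which $D_1$, $D_1'$ and $D_1+D_1'$ are all effective and represented by curves: fixing a very ample $H$, one writes any divisor as a difference of very ample divisors, and by Bertini's theorem (valid since $k=\overline{k}$) a general member of a very ample linear system is a smooth irreducible curve. For an effective Cartier divisor $C$ one has the exact sequence $0\to\mathcal{L}_1^{-1}(-C)\to\mathcal{L}_1^{-1}\to\mathcal{L}_1^{-1}|_C\to 0$; additivity of $\chi$ in short exact sequences, together with Riemann--Roch on the curve $C$ (which makes the restricted Euler characteristics linear in the divisor class), then yields the identity after routine bookkeeping.

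Then I would pin down the geometric meaning, giving $(1)$ and $(2)$. For two \emph{distinct} irreducible curves $C_1,C_2$ the Koszul-type sequence
\[
0\to\mathcal{O}_X(-C_1-C_2)\to\mathcal{O}_X(-C_1)\oplus\mathcal{O}_X(-C_2)\to\mathcal{O}_X\to\mathcal{O}_{C_1\cap C_2}\to 0
\]
is exact, where exactness on the right uses that the local equations of $C_1$ and $C_2$ form a regular sequence at each common point (this is exactly where $C_1\neq C_2$ enters). Additivity of $\chi$ along it gives $C_1\cdot C_2=\chi(\mathcal{O}_{C_1\cap C_2})=\sum_{P\in C_1\cap C_2}\dim_k\mathcal{O}_{X,P}/(f_P,g_P)$, a sum of positive local intersection multiplicities. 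This vanishes precisely when $C_1\cap C_2=\emptyset$, which is $(1)$; and each local term equals $1$ exactly when both curves are smooth at $P$ with linearly independent tangents, i.e.\ when the intersection there is transversal, which gives $(2)$.

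Finally, uniqueness: any symmetric bilinear form on $\Div(X)$ satisfying $(1)$--$(3)$ is determined on the classes of smooth curves meeting transversally by $(1)$ and $(2)$, and --- invoking Bertini over $\overline{k}$ once more --- every divisor class is a $\mathbb{Z}$-linear combination of such classes, so the form is unique. The step I expect to be the main obstacle is the reduction in the additivity argument: moving arbitrary divisors into sufficiently general position as differences of smooth curves, and verifying that the Euler-characteristic bookkeeping is independent of the chosen representatives. This is precisely the place where completeness of $X$ and Bertini's theorem --- hence the standing assumption $k=\overline{k}$ in this section --- are essential.
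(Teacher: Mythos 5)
Your proposal is correct, but note that the paper does not prove this statement at all: Section \ref{ag} explicitly states its results ``without proof,'' citing \cite[Chapter 5]{Har77}, so the only comparison available is with Hartshorne's own argument. Your route is genuinely different from that one. Hartshorne (Theorem V.1.1) \emph{defines} $C_1\cdot C_2$ first for nonsingular curves meeting transversally, then uses Bertini together with Serre's theorem to write an arbitrary divisor as a difference of very ample divisors with smooth, pairwise transversal general members, extends by bilinearity, and spends the bulk of the proof showing the result is independent of the choices (via the degree of $\mathcal{O}_X(D)|_C$). You instead start from the Euler--characteristic expression $D_1\cdot D_2=\chi(\mathcal{O}_X)-\chi(\mathcal{L}_1^{-1})-\chi(\mathcal{L}_2^{-1})+\chi(\mathcal{L}_1^{-1}\otimes\mathcal{L}_2^{-1})$ (the Mumford--Kleiman--Beauville approach), which makes symmetry, invariance under linear equivalence, and property $(3)$ completely formal, and recovers the geometric content of $(1)$ and $(2)$ from the Koszul resolution of $\mathcal{O}_{C_1\cap C_2}$ --- with the correct observation that $C_1\neq C_2$ is what makes the local equations a regular sequence, and that the local multiplicity $\dim_k\mathcal{O}_{X,P}/(f_P,g_P)$ equals $1$ exactly at transversal points. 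The trade-off is that your definition front-loads the well-definedness (it is built in) but defers the work to proving bilinearity, where you still need the same Bertini-type reduction to smooth effective representatives and Riemann--Roch on curves that Hartshorne uses; your uniqueness argument via transversal smooth representatives is the same in both treatments. For the purposes of this paper either proof suffices, since all that is used downstream is the existence of the pairing on $\Pic(X)$ together with properties $(1)$--$(3)$.
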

If $C_1$ and $C_2$ intersect at $n$ points,
but not transversally at some point,
then we have $C_1 \cdot C_2 >n$.
So, for every two different irreducible curves $C_1$, $C_2$,
we have $C_1 \cdot C_2 \geq 0$.
But $C \cdot C$ (called the self-intersection number of $C$)
can be $<0$.
Note that $C \cdot C$ is determined indirectly using the condition (3).

The relation of the intersection form and blowing-up is as follows.

First, consider $E_P \cdot \widetilde C$.
From (1) and (2) above, we have
\begin{sub}
\item[$(1^\prime)$]
If $C$ does not pass through $P$,
then $E_P \cdot \widetilde C=0$.
\item[$(2^\prime)$]
If $C$ passes through $P$,
and $C$ is non-singular at $P$,
then $E_P \cdot \widetilde C=1$.
\item[$(3^\prime)$]
Suppose that $C$ passes through $P$,
and $C$ is singular at $P$.
The local equation of $C$ is given by $F(x,y)=0$
where $x$ and $y$ are local coordinates at $P$
with $x=y=0$,
and $F(x,y)$ is a formal power series of $x$ and $y$.
Since $C$ passes through $P$,
the constant term of $F$ is zero.
Since $C$ is singular at $P$,
the coefficients of $x$ and $y$ are also zero.
Let $\nu$ be the smallest integer of $i+j$
such that the coefficient of $x^iy^j$ is not zero,
then $E_P \cdot \widetilde C=\nu$.
\end{sub}

Note that the homogeneous part of degree $\nu$ of $F$ induces a polynomial of degree $\nu$
in $\frac{y}{x}$,
so there are $\nu$ roots of $\frac{y}{x}$.

Using this $E_P \cdot \widetilde C$, we have
\begin{equation} \label{2_1}
\widetilde C_1 \cdot \widetilde C_2=C_1 \cdot C_2 -
(E_P \cdot \widetilde C_1)(E_P \cdot \widetilde C_2).
\end{equation}
For simplicity, suppose that $\widetilde C_1$ and $\widetilde C_2$ do not intersect on $E_P$.
Since $C_1$ passes $(E_P \cdot \widetilde C_1)$ times through $P$
and $C_2$ passes $(E_P \cdot \widetilde C_2)$ times through $P$,
there are $(E_P \cdot \widetilde C_1)(E_P \cdot \widetilde C_2)$
virtual intersection points on $X$.
This verifies the formula $(\ref{2_1})$.
More considerations show that the above formula $(\ref{2_1})$ is valid
even if $\widetilde C_1$ and $\widetilde C_2$ intersect on $E_P$.
Finally we have
\begin{equation}
E_P \cdot E_P=-1,
\end{equation}
which is obtained using the condition (3) in Theorem \ref{thmintform}.

Considering the valuation $v_{{}_C}$, we see that if $D$ is a principal divisor on $X$,
then $\widetilde D+(E_P \cdot \widetilde D)E_P$ is a principal divisor on $\widetilde X$.
This derives the following fact.

Let $\pi^\ast$ be a $\mathbb{Z}$-linear map $\Div(X) \rightarrow \Div(\widetilde X)$
defined by $\pi^\ast(D)=\widetilde D+(E_P \cdot \widetilde D)E_P$.
Then $\pi^\ast$ is injective and maps the principal divisor group
to the principal divisor group.
So taking the factor group, we get the isomorphism
$\Pic(\widetilde X) \simeq \Pic(X) \oplus \mathbb{Z}$.

Even if $D_1 \equiv D_2$ ($\equiv$ means the identity modulo principal divisor group),
\begin{equation}
\widetilde D_1 \equiv \widetilde D_2 +
\big\{(E_P \cdot \widetilde D_2)-(E_P \cdot \widetilde D_1)\big\}E_P.
\end{equation}

\subsection{Canonical divisor}

Let $X$ be a projective non-singular surface.
A canonical divisor of $X$ is defined as follows.

Let $f,g \in k(X)$ be mutually algebraic independent.
Let $C$ be an irreducible curve on $X$ and $P$ be a non-singular point of $C$.
Take a local coordinate $(x,y)$ at $P$
and consider the Jacobian
$\frac{\partial (f,g)}{\partial (x,y)}=
\begin{vmatrix}
\frac{\partial f}{\partial x} & \frac{\partial f}{\partial y} \\
\frac{\partial g}{\partial x} & \frac{\partial g}{\partial y} \\
\end{vmatrix}$,
then we can show that
$v_{{}_C}\Big(\frac{\partial (f,g)}{\partial (x,y)}\Big)$
is independent of the choice of a point $P$
and the choice of a local coordinate $(x,y)$.
Canonical divisor of $(f,g)$
is defined as $\sum v_{{}_C}\Big(\frac{\partial (f,g)}{\partial (x,y)}\Big)C$.

Take another $f_1,g_1 \in k(X)$ mutually algebraic independent.
Then canonical divisor of $(f_1,g_1)$ belongs to the same divisor class
with that of $(f,g)$,
namely all canonical divisors determine the unique divisor class in $\Pic(X)$.
This is called the canonical divisor class of $X$ and denoted by $\Omega$.

\begin{rem}
In more general setting, the definition of the canonical divisor class is more complicated,
but for a projective non-singular surface $X$,
it is nothing but the one defined above.
\end{rem}

\begin{exmp}
For $\mathbb{P}^1 \times \mathbb{P}^1$,
we shall determine the intersection form and the canonical divisor.

$\Pic(\mathbb{P}^1 \times \mathbb{P}^1)$ has rank 2 as a $\mathbb{Z}$-module
with the basis $x=\infty$ and $u=\infty$
(irreducible curves which do not come from irreducible polynomials in
$\overline{k}[x,u]$ are $x=\infty$ and $u=\infty$).
The class of an irreducible curve $C$ of the degree $n$ with respect to $x$ and $m$
with respect to $u$ is $nF+mF^\prime$
where $F$ is the class of $(x=\infty)$ and $F^\prime$ is the class of $(u=\infty)$.
For any $c, c^\prime \in \overline{k}$,
the representatives of $F$ and $F^\prime$ are chosen as $x=c$ and $u=c^\prime$ respectively.

The intersection form on $\mathbb{P}^1 \times \mathbb{P}^1$
is determined by
\begin{equation}
F \cdot F = F^\prime \cdot F^\prime = 0, \, F \cdot F^\prime =1.
\end{equation}
The canonical divisor is
\begin{equation} \label{ex}
\Omega=-2F-2F^\prime.
\end{equation}
Take $f=x$ and $g=u$,
then we have $\frac{\partial (x,u)}{\partial (x,u)}=1$
since $(x,u)$ is a local coordinate except on the lines
$(x=\infty)$ and $(u=\infty)$.
In a neighborhood of the line $(x=\infty)$,
a local coordinate is $(t,u)$
where $t=\frac{1}{x}$,
so $x=\frac{1}{t}$,
then $\frac{\partial (x,u)}{\partial (t,u)}=-\frac{1}{t^2}$,
thus $v_{(x=\infty)}\Big(\frac{\partial (x,u)}{\partial (t,u)}\Big)=-2$.
The similar result holds for the line $(u=\infty)$.
This verifies $(\ref{ex})$.
From $(\ref{ex})$ we see that
\begin{equation}
C \cdot \Omega =-2(m+n), \,
\Omega \cdot \Omega=8.
\end{equation}
\end{exmp}

Return to a general $X$ and we shall consider the relation with the blowing-up.
Let $\widetilde X$ be the blowing-up of $X$ at a point $P$.
Then the canonical divisor of $\widetilde X$ is given by
\begin{equation} \label{OmegaX}
\Omega_{\widetilde X}
=\pi^\ast \Omega_X+E_P.
\end{equation}
This can be derived as follows.
Let $f=x$ and $g=y$,
where $(x,y)$ is a local coordinate of $X$ at $P$
with $x=y=0$ at $P$.
Since $\frac{\partial (f,g)}{\partial (x,y)}=1$,
$\Omega$ does not pass through $P$,
so $\widetilde \Omega \cdot E_p=0$.
On the other hand,
a local coordinate of $\widetilde X$ in a neighborhood of $E_p$
is $(x,t)$ where $t=\frac{y}{x}$,
so $y=tx$, then $\frac{\partial(x,y)}{\partial(x,t)}=x$,
thus $v_{E_P}\big(\frac{\partial(x,y)}{\partial(x,t)}\big)=1$.
This implies $\Omega_{\widetilde{X}}=\widetilde{\Omega}_X+E_P$.

For other canonical divisors,
extending the above relation in the form compatible with
the action of $\pi^\ast$,
we get $(\ref{OmegaX})$ above.

Since $\pi^\ast C_1 \cdot \pi^\ast C_2 =C_1 \cdot C_2$
and $\pi^\ast C \cdot E_P=0$ for any irreducible curve
$C, C_1, C_2$ on $X$,
from $(\ref{OmegaX})$ we have
\begin{eqnarray} \label{widetildeC}
\widetilde{C} \cdot \Omega_{\widetilde X} = C \cdot \Omega_X
+ \widetilde C \cdot E_P, \\
E_P \cdot \Omega_{\widetilde X}=-1, \,
\Omega_{\widetilde X} \cdot \Omega_{\widetilde X}=
\Omega_X \cdot \Omega_X-1. \nonumber
\end{eqnarray}

\subsection{Blowing down}

Blowing-down is the inverse operation of the blowing up.
Let $X$ be a projective non-singular surface,
and assume that there exists an irreducible curve $L$ on $X$
satisfying $L \cdot L=-1$
and $\Omega \cdot L=-1$
($L$ is necessarily biregular to the projective line $\mathbb{P}^1$).

\begin{thm}
There exists a unique (modulo biregularity)
projective non-singular surface $\overline{X}$
such that the blowing-up $\widetilde{\overline{X}}$
at some point $Q \in \overline{X}$
is biregular to $X$,
mapping $E_Q$ to $L$.
\end{thm}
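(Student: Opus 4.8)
The plan is to prove this statement, which is Castelnuovo's contractibility criterion (Hartshorne, Chapter~5), in two parts: uniqueness first, then existence. Uniqueness is the easier half. If $\overline X$ exists as claimed, the blowing-up description furnishes a biregular identification of $X \setminus L$ with $\overline X \setminus \{Q\}$; the contraction morphism $X \to \overline X$ is then pinned down as the one collapsing exactly the curve $L$ to a point and restricting to an isomorphism elsewhere (equivalently, $\overline X$ is the unique normal projective surface obtained from $X \setminus L$ by adjoining one non-singular point in place of $L$). Hence any two candidates $\overline X$ are biregular, compatibly with the contraction of $L$, and from here on I would concentrate on existence.

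For existence I would use the method of linear systems. Fix a very ample divisor $H$ on $X$ with $H^1(X,\mathcal{O}_X(H))=0$ (possible after replacing $H$ by a large multiple, by Serre vanishing), put $m=H\cdot L>0$, and set $H'=H+mL$, so that $H'\cdot L=0$. The first step is to show $H^1(X,\mathcal{O}_X(H+iL))=0$ for $0\le i\le m$, by induction on $i$ using the restriction sequence
\[
0 \to \mathcal{O}_X(H+(i-1)L) \to \mathcal{O}_X(H+iL) \to \mathcal{O}_L(H+iL) \to 0,
\]
together with $L\simeq\mathbb{P}^1$ and $\deg\mathcal{O}_L(H+iL)=H\cdot L+i(L\cdot L)=m-i\ge 0$, so that the restricted bundle has no $H^1$. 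Taking $i=m$ gives $|H'|$ good cohomology; the same type of sequence, comparing $H'$ with $H'-(\text{point})$ and with $H'-L$, shows that $|H'|$ is base-point free, that the associated morphism $\phi=\phi_{|H'|}\colon X\to\mathbb{P}^N$ separates points and tangent directions on $X\setminus L$ (because $H$ already did and $H'\equiv H$ away from $L$), and that $\phi$ is constant on $L$ (because $H'\cdot L=0$). Replacing $\mathbb{P}^N$ by the image, one obtains $\overline X:=\phi(X)$, with $\phi$ an isomorphism over $\overline X\setminus\{Q\}$ and $\phi^{-1}(Q)=L$ for $Q:=\phi(L)$; taking the Stein factorization (or checking $\phi_*\mathcal{O}_X=\mathcal{O}_{\overline X}$ directly) shows $\overline X$ is normal.

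The main obstacle, and the heart of the theorem, is to prove that $\overline X$ is in fact \emph{non-singular} at $Q$, equivalently that $\dim \mathfrak{m}_Q/\mathfrak{m}_Q^2=2$. I would establish this by analysing the infinitesimal neighbourhoods $nL$ of $L$ in $X$: using $L\simeq\mathbb{P}^1$, $L\cdot L=-1$ and the exact sequences $0\to\mathcal{O}_{(n-1)L}\to\mathcal{O}_{nL}\to\mathcal{O}_L(-nL)\to 0$ with $\deg\mathcal{O}_L(-nL)=n$, one computes the sections $H^0(nL,\mathcal{O}_{nL})$ and identifies the completed local ring $\widehat{\mathcal{O}}_{\overline X,Q}$ as a power-series ring in two variables — it is precisely the equality $L\cdot L=-1$ (rather than $\le -2$) that keeps the Zariski tangent space two-dimensional. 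Once $\overline X$ is known to be smooth at $Q$, the curve $L=\phi^{-1}(Q)$ has $L\cdot L=-1$ and $\Omega\cdot L=-1$, and the uniqueness of the blowing-up at a point (together with formula $(\ref{OmegaX})$ and $(\ref{widetildeC})$, which match these numerical invariants) forces $X$ to be biregular to the blowing-up $\widetilde{\overline X}$ of $\overline X$ at $Q$, with $E_Q$ corresponding to $L$. Combined with the uniqueness proved at the outset, this completes the argument.
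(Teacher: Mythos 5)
The paper does not prove this theorem at all: Section~\ref{ag} announces that its algebro-geometric facts are stated without proof and refers to Hartshorne \cite{Har77}, Chapter~5. Your proposal is precisely the standard proof of Castelnuovo's contractibility criterion from that reference (contraction by the linear system $|H+mL|$ with $m=H\cdot L$, smoothness of the image point via the formal neighbourhoods of $L$, then identification of $X$ with the blow-up), so it is the ``right'' argument and the outline is correct. Two small points. First, your exact sequence for the infinitesimal neighbourhoods is written backwards: the correct one is $0\to\mathcal{O}_L\bigl(-(n-1)L\bigr)\to\mathcal{O}_{nL}\to\mathcal{O}_{(n-1)L}\to 0$, the smaller thickening being the quotient and the kernel being the $(n-1)$-st conormal power, of degree $n-1$ since $L\cdot L=-1$; the resulting count $\dim H^0(\mathcal{O}_{nL})=1+2+\cdots+n$, which matches $\dim k[[x,y]]/\mathfrak{m}^n$ and identifies $\widehat{\mathcal{O}}_{\overline X,Q}$ with a two-variable power-series ring, is unaffected. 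Second, the last step should be made explicit rather than attributed to ``uniqueness of the blowing-up'': since $f^{-1}\mathfrak{m}_Q\cdot\mathcal{O}_X=\mathcal{O}_X(-L)$ is invertible, the universal property of blowing up factors $f$ through a birational morphism $X\to\widetilde{\overline X}$ sending $L$ onto $E_Q$; this morphism contracts no curve, hence is an isomorphism.
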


The surface $\overline{X}$ is called the blowing-down of $X$ by $L$.
Let $\varphi$ be the biregular mapping $X \rightarrow
\widetilde{\overline{X}}$,
and $\pi$ be the projection
$\widetilde{\overline{X}} \mapsto \overline{X}$.
For an irreducible curve $C \not= L$ on $X$,
let $\overline{C}$ be the image of $C$ by $\pi \circ \varphi$.
Then $\overline{C}$ is an irreducible curve of $\overline{X}$
and all irreducible curves on $\overline{X}$
are obtained in this way.
So that identifying $C$ with $\overline{C}$,
we get $\Div(X)=\Div(\overline{X}) \oplus \mathbb{Z}$,
where $\mathbb{Z}$ represents the free $\mathbb{Z}$-module
with $L$ as the basis.

Let $\overline{\pi}$ be the $\mathbb{Z}$-linear map from $\Div(X)$ to $\Div(\overline{X})$
defined by $\overline{\pi}(D)=\overline{D-\lambda L}$,
where $\lambda$ is the coefficient of $L$ in $D$.
Then $\overline{\pi}$ is surjective and
maps the principal divisor group
to the principal divisor group bijectively.
The kernel of $\overline{\pi}$
is the free $\mathbb{Z}$-module with $L$ as the basis.
So $\overline{\pi}$ induces the isomorphism
$\Pic(X) \simeq \Pic(\widetilde X) \oplus \mathbb{Z}$.

The intersection form on $\overline{X}$ is given by
\begin{equation}
\overline{D}_1 \cdot \overline{D}_2=
D_1 \cdot D_2 + (D_1 \cdot L)(D_2 \cdot L).
\end{equation}
The canonical divisor of $\overline{X}$ is given by
\begin{equation}
\Omega_{\overline{X}}=
\overline{\pi}(\Omega_X)=\overline{\Omega_X-\lambda L}.
\end{equation}
We have
\begin{eqnarray}
\overline{D} \cdot \Omega_{\overline{X}} &=& D \cdot \Omega_X - D \cdot L, \\
\Omega_{\overline{X}} \cdot \Omega_{\overline{X}} &=&
\Omega_X \cdot \Omega_X+1. \nonumber
\end{eqnarray}

\subsection{Blowing-up and down.} \label{updown}

Let $X$ be a projective non-singular surface and $F$ be an irreducible curve
on $X$ satisfying $F \cdot F=0$ and $F \cdot \Omega=-2$
($F$ is necessarily biregular to the projective line $\mathbb{P}^1$).
Consider the blowing-up $\widetilde X$ at a point $P$ on $F$.
Then we have $\widetilde F \cdot \widetilde F=-1$
and $\widetilde F \cdot \Omega_{\widetilde X}=-1$,
so that we can consider the blowing-down of $\widetilde X$ by $\widetilde F$
and obtain $\overline{\widetilde{X}}$.

$X$ and $\overline{\widetilde X}$ are birational,
but not regular in any direction.
Let $\pi_1$ be the projection $\widetilde X \rightarrow X$
and $\pi_2$ be the projection $\widetilde{\overline{\widetilde{X}}}
\rightarrow \overline{\widetilde{X}}$,
then $\rho=\pi_2 \circ \varphi \circ \pi_1^{-1}$
is the birational mapping from $X$ to $\overline{\widetilde{X}}$.

The fundamental point of $\rho$ is $P$,
and the exceptional curve of $\rho$ is $F$.
On the other hand,
the fundamental point of $\rho^{-1}$ is $Q$,
and the exceptional curve of $\rho^{-1}$ is $\overline{E}_P$
($Q$ is a point on $\overline{E}_P$,
because $\widetilde{\overline{E}}_P \cdot E_Q=
E_P \cdot \widetilde F=1$).

For an irreducible curve $C \not=F$ on $X$,
$\overline{\widetilde{C}}$ is an irreducible curve on $\overline{\widetilde{X}}$,
and besides them,
$\overline{E}_P$ is the only irreducible curve on $\overline{\widetilde{X}}$.
So that $\Div(X) \simeq \Div(\overline{\widetilde{X}})$,
but $F$ is omitted from the basis of $\Div(X)$
and $\overline{E}_P$ is added as the basis of $\Div(\overline{\widetilde{X}})$.

However, we need not replace the basis for $\Pic$.
Let $\rho^\ast=\overline{\pi}_2 \circ \pi_1^\ast$ be
the $\mathbb{Z}$-linear map from $\Div(X)$ to $\Div(\overline{\widetilde{X}})$.
The map $\rho^\ast$ is written as
\begin{equation}
\rho^\ast(D)=\overline{\widetilde{D-\lambda F}}+
(\widetilde D \cdot E_P) \overline{E}_P.
\end{equation}
The map $\rho^\ast$ maps $\Div(X)$ to $\Div(\overline{\widetilde{X}})$ bijectively,
and maps the principal divisor group to the principal divisor group.
So, $\rho^\ast$ induces an isomorphism of $\Pic(X)$ to $\Pic(\overline{\widetilde X})$.
Since $\rho^\ast$ maps $F$ to $\overline{E}_P$,
the divisor class of $F$ is mapped to the divisor class of $\overline{E}_P$.
(More precisely,
for a divisor $D$ on $X$,
$D \equiv F$ on $X$ is equivalent with
$\rho^\ast(D) \equiv \overline{E}_P$.)

The intersection form on $\overline{\widetilde{X}}$ is given as follows.
\begin{eqnarray}
\overline{E}_P \cdot \overline{E}_P=0,\
\overline{\widetilde{C}} \cdot \overline{E}_P=C \cdot F
\text{ for } C \not= F, \nonumber \\
\overline{\widetilde{C}}_1 \cdot \overline{\widetilde{C}}_2=
C_1 \cdot C_2+(C_1 \cdot F)(C_2 \cdot F)
-(C_1 \cdot F)(\widetilde C_2 \cdot E_P) \\
-(\widetilde C_1 \cdot E_P)(C_2 \cdot F). \nonumber
\end{eqnarray}
The canonical divisor of $\overline{\widetilde{X}}$ is given by
\begin{equation}
\Omega_{\overline{\widetilde{X}}}=
\rho^\ast(\Omega_X)+\overline{E}_P=
\overline{\widetilde{\Omega_X-\lambda F}}+
\big\{ (\widetilde{\Omega}_X \cdot E_P)+1\big\}
\overline{E}_P.
\end{equation}
Of course we have
$\Omega_{\overline{\widetilde{X}}} \cdot
\Omega_{\overline{\widetilde{X}}} =
\Omega_X \cdot \Omega_X$
and $\overline{E}_P \cdot \Omega_{\overline{\widetilde X}}=-2$.

\subsection{Iteration of blowings-up and down.}

Let $X$ be a non-singular projective surface,
and $P_1, P_2, \dots, P_r$ be points on $X$.
The successive blowings-up at $\{P_i\}_{1 \leq i \leq r}$
does not depend on the order of the blowings-up
(more precisely, the obtained surface by the blowings-up
in different orders are mutually biregular).

For successive blowings-up on the once blowing-up $E$, (namely
$E_1$ is the blowing-up at $P_1 \in X$, $E_2$ is the blowing-up at
$P_2 \in E_1$, $E_3$ is the blowing-up at $P_3 \in E_2$, and so
on) the order of the blowing-up can not be changed. In this case
$C \cdot E_i$ is monotonically decreasing.

Let $X_1$ be the blowing-up of $X$ at a point $P_1$,
and $Y_1$ be the blowing-down by some $\widetilde{F}_1$,
where $F_1$ is an irreducible curve on $X$
passing through $P_1$
such that $F_1 \cdot F_1=0$ and $F_1 \cdot \Omega=-2$ on $X$.

The blowing-up of $Y_1$ at some point $Q_1$ of $Y_1$ is
biregular with $X_1$, mapping $E_Q$ to $\widetilde{F}_1$,
by the definition of the blowing-down.

Let $X_2$ be the blowing-up of $X_1$ at $P_2 \in X_1 \setminus \widetilde{F}_1$.
Since $X_1 \setminus \widetilde{F}_1$ is biregular
with $Y_1 \setminus \{Q_1\}$,
this induces a blowing-up of $Y_1$ at the corresponding point $P_2^\prime$.
Blow-down again by some $\widetilde{F}_2$,
and let $Y_2$ be the obtained surface.

The blowing-up of $Y_2$ at some point $Q_2$ is biregular
with the blowing-up of $Y_1$ at $P_2^\prime$.
Since $X_2$ is biregular with the successive blowings-up of $Y_1$
at $Q_1$ and $P_2^\prime$,
we see that $X_2$ is biregular with the successive blowings-up of $Y_2$
at $Q_2$ and $Q_1$.

Repeat this $r$-times.
Let $X_r$ be the surface obtained from $X$ by the successive blowings-up at $\{P_i\}$.
After each blowing-up, take a suitable blowing-down,
and after repeating this $r$-times,
let $Y_r$ be the obtained surface.
Then $X_r$ is biregular with the successive blowings-up of $Y_r$ at $\{Q_i\}$.
Here we assume that $P_i$ does not lie on $F_j$ for $j<i$ in $X_{i-1}$
(for simplicity, we omit $\widetilde{}$ and $\overline{\ }$
for blowing-up and down).

\subsection{The surface $Y_{rs}$} \label{pre2}

Let $X_1$ be the blowing up of $\mathbb{P}^1 \times \mathbb{P}^1$ at $(a,b)$.
$\Pic(X_1)$ has rank 3 with basis $F, F^\prime$ and $E_1$,
where $E_1$ is the blowing up of the base point $(a,b)$.
The intersection form is the same as $\Pic(\mathbb{P}^1 \times \mathbb{P}^1)$
for $F$ and $F^\prime$ and $E_1 \cdot F=E_1 \cdot F^\prime=0,
E_1 \cdot E_1=-1$
(we take the representative of $F$ as $x=c \not= a$
and the representative of$F^\prime$ as $u=c^\prime \not= b$).
The class of an irreducible curve $C$ of the degree $n$ with respect to $x$
and $m$ with respect to $u$ is $nF+mF^\prime-m_1E_1$
where $m_1=C \cdot E_1$.
The canonical divisor is $\Omega=-2F-2F^\prime+E_1$,
so $\Omega \cdot \Omega=7$.

Let $Y$ be the blowing down of $X_1$ by $x=a$.
$\Pic(Y)$ has rank 2 with the basis $F$ and $F^\prime$.
But the intersection form is different from that of
$\Pic(\mathbb{P}^1 \times \mathbb{P}^1)$ and
$F \cdot F=0, \ F \cdot F^\prime=F^\prime \cdot F^\prime=1$.
The class of the above mentioned $C$ is $(n-m_1)F+mF^\prime$.
In addition to $x=c \, (c \not=a)$,
$E_1$ also belongs to the class $F$.
The canonical divisor is $\Omega=-F-2F^\prime$,
so $\Omega \cdot \Omega=8$.
For simplicity, we omit $\widetilde{\,}$ and $\overline{\mbox{ }}$
for blowing-up and down.
The confusion is avoided by seeing $C$ is a curve on which surface.

Starting from $Y$, consider a similar blowing up and down
and let $Y_2$ be the obtained surface.
Repeat this procedure and let $Y_r$ be the surface obtained
by $r$-times blowing up and down.
Let $Y_{rs}$ be an $s$-point blow up of $Y_r$.
$\Pic(Y_{rs})$ has rank $s+2$ with the basis
$F, F^\prime$ and $E_i (1 \leq i \leq s)$.
The intersection form is $F \cdot F=0, \ F \cdot F^\prime=1, \
F^\prime \cdot F^\prime=r, \ E_i \cdot F=E_i \cdot F^\prime
=E_i \cdot E_j = 0 (i \not= j)$
and $E_i \cdot E_i=-1$.
The class of the above mentioned $C$ is
$(n-\sum_{j=1}^r m_j^\prime)F+mF^\prime-\sum_{i=1}^s m_iE_i$
with $m_i=C \cdot E_i$ and $m_j^\prime=C \cdot E_j^\prime$
where $E_j^\prime$ is the blowing up used for obtaining $Y_r$.
The canonical divisor is $\Omega=(r-2)F-2F^\prime+\sum_{i=1}^s E_i$,
so $\Omega \cdot \Omega=8-s$.

\subsection{Del Pezzo surface.}

\begin{defn}
A non-singular projective surface $X$ is called a del Pezzo
surface if it is rational (namely, birational with $\mathbb{P}^2$
or $\mathbb{P}^1 \times \mathbb{P}^1$ over $\overline{k}$) and the
anti-canonical divisor is ample. The latter condition means that
$\Omega \cdot \Omega >0$ and $\Omega \cdot \Gamma <0$ for every
irreducible curve $\Gamma$ on $X$.
The degree $\omega$ of a del Pezzo surface $X$ is
defined to be the self intersection number $\Omega \cdot \Omega$.
\end{defn}

The following is a fundamental theorem for a del Pezzo surface.

\begin{thm} \label{lem1}
A del Pezzo surface with $\omega \leq 7$ is biregular with
$(9-\omega)$-point blow up of $\mathbb{P}^2$ where $\omega=\Omega
\cdot \Omega$.
\end{thm}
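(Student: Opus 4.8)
The plan is to carry out the classical minimal-model reduction for del Pezzo surfaces, using only the blowing-up and blowing-down formulas recorded above together with the genus formula $\Gamma\cdot\Gamma+\Gamma\cdot\Omega=2p_a(\Gamma)-2$ for an irreducible curve $\Gamma$ (see \cite[Chapter~5]{Har77}). The first step is to observe that on a del Pezzo surface $X$ every irreducible curve $\Gamma$ with $\Gamma\cdot\Gamma<0$ satisfies $\Gamma\cdot\Gamma=\Gamma\cdot\Omega=-1$ (and $p_a(\Gamma)=0$, so $\Gamma$ is biregular with $\mathbb{P}^1$): ampleness of $-\Omega$ gives $\Gamma\cdot\Omega<0$, and then $\Gamma\cdot\Gamma+\Gamma\cdot\Omega\geq-2$ with two negative-integer summands forces both to equal $-1$. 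Such a $\Gamma$ can therefore be blown down by the theorem on blowing-down recalled above.

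Secondly, I would check that blowing down a $(-1)$-curve $L$ on a del Pezzo surface $X$ produces a del Pezzo surface $\overline X$ with $\Omega_{\overline X}\cdot\Omega_{\overline X}=\Omega_X\cdot\Omega_X+1$. Rationality is inherited because $X$ and $\overline X$ are birational, and $\Omega_X\cdot\Omega_X+1>0$; the one thing to verify is ampleness of $-\Omega_{\overline X}$. Every irreducible curve on $\overline X$ equals $\overline\Gamma$ for a unique irreducible curve $\Gamma\neq L$ on $X$ (as recalled in the blowing-down subsection), and there the formula $\overline\Gamma\cdot\Omega_{\overline X}=\Gamma\cdot\Omega_X-\Gamma\cdot L$ gives $\overline\Gamma\cdot\Omega_{\overline X}\leq\Gamma\cdot\Omega_X<0$, since $\Gamma\neq L$ are distinct irreducible curves and so $\Gamma\cdot L\geq0$. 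Hence $\overline X$ is again a del Pezzo surface, of degree one larger.

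Next I would record that a del Pezzo surface with no $(-1)$-curve has degree $8$ or $9$: by Step~1 such a surface has no irreducible curve of negative self-intersection at all, hence is a minimal rational surface, so by the classification of minimal rational surfaces (\cite[Chapter~5]{Har77}) it is $\mathbb{P}^2$ or a Hirzebruch surface $\mathbb{F}_n$; the cases $n\geq2$ are excluded since $\mathbb{F}_n$ then contains a curve of self-intersection $-n\leq-2$, leaving only $\mathbb{P}^2$ ($\omega=9$) and $\mathbb{F}_0=\mathbb{P}^1\times\mathbb{P}^1$ ($\omega=8$). In particular every del Pezzo surface of degree $\leq7$ contains a $(-1)$-curve, and the del Pezzo surfaces of degree $8$ are exactly $\mathbb{P}^1\times\mathbb{P}^1$ and the one-point blow-up $\mathbb{F}_1$ of $\mathbb{P}^2$.

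Finally I would conclude by induction on $7-\omega$. If $\omega\leq6$, Step~1 provides a $(-1)$-curve on $X$; blowing it down gives, by Step~2, a del Pezzo surface $X_1$ of degree $\omega+1\leq7$, which by the inductive hypothesis is biregular with an $(8-\omega)$-point blow-up of $\mathbb{P}^2$, so $X$, being the one-point blow-up of $X_1$, is biregular with a $(9-\omega)$-point blow-up of $\mathbb{P}^2$. If $\omega=7$, blowing down a $(-1)$-curve on $X$ gives a degree-$8$ del Pezzo surface $X_1$, which by Step~3 is either $\mathbb{F}_1$ or $\mathbb{P}^1\times\mathbb{P}^1$; since $X$ is the one-point blow-up of $X_1$, in the first case $X$ is a two-point blow-up of $\mathbb{P}^2$, and in the second I would invoke the standard biregular equivalence between the one-point blow-up of $\mathbb{P}^1\times\mathbb{P}^1$ and the two-point blow-up of $\mathbb{P}^2$ (contract the strict transforms of the two rulings through the blown-up point); either way $X$ is biregular with a $(9-7)$-point blow-up of $\mathbb{P}^2$. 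The step I expect to be the main obstacle is Step~2: one must be sure that ampleness of the anticanonical class genuinely survives a blow-down, which rests on the fact that every irreducible curve on the blown-down surface lifts to an irreducible curve on $X$ distinct from $L$. The appeal to the classification of minimal rational surfaces in Step~3 is the single genuinely external ingredient, and the $\mathbb{P}^1\times\mathbb{P}^1$ case at $\omega=7$ is only a minor wrinkle.
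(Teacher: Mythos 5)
The paper does not actually prove Theorem \ref{lem1}: it defers entirely to Nagata \cite{Nag60a,Nag60b} and Manin \cite{Man86}. Your argument is the standard proof contained in those references, and it is essentially correct. Step 1 (adjunction plus ampleness of $-\Omega$ forces every irreducible curve of negative self-intersection to satisfy $\Gamma\cdot\Gamma=\Gamma\cdot\Omega=-1$), Step 2 (ampleness survives contraction of a $(-1)$-curve, because every irreducible curve on $\overline X$ is $\overline\Gamma$ for some irreducible $\Gamma\neq L$ and $\overline\Gamma\cdot\Omega_{\overline X}=\Gamma\cdot\Omega_X-\Gamma\cdot L\leq\Gamma\cdot\Omega_X<0$, exactly the formula recorded in the paper's blowing-down subsection), and Step 3 (a del Pezzo surface with no $(-1)$-curve is minimal rational, hence $\mathbb{P}^2$ or $\mathbb{P}^1\times\mathbb{P}^1$) fit together into the induction on $7-\omega$ as you describe; the external ingredients you invoke (adjunction, Castelnuovo contractibility, classification of minimal rational surfaces) are unavoidable and correctly isolated. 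The one point you leave implicit is the classification of degree-$8$ del Pezzo surfaces needed in the base case $\omega=7$: to know that a degree-$8$ surface carrying a $(-1)$-curve is the one-point blow-up $\mathbb{F}_1$ of $\mathbb{P}^2$, you must know that the degree-$9$ del Pezzo surface obtained by contracting it has no further $(-1)$-curve. This closes easily from your own Steps 2 and 3: the chain of contractions must terminate since the Picard rank drops by one at each step, the terminal surface has degree $8$ or $9$ by Step 3, and the degree increases by exactly one per contraction, so a chain starting at degree $9$ has length zero. With that sentence added, the proof is complete and matches the argument in the cited sources.
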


Proof can be found in Nagata \cite{Nag60a,Nag60b} or Manin
\cite{Man86}.

 A del Pezzo surface with $\omega=8$ is biregular with
$\mathbb{P}^1 \times \mathbb{P}^1$ or one point blow up of
$\mathbb{P}^2$.

Conversely, a $d$-point blow up of $\mathbb{P}^2$ is a del Pezzo surface
if and only if $d \leq 8$ and
\begin{sub}
\item[$(1)$]
any 3 points do not lie on the same line $(d \geq 3)$,
\item[$(2)$]
any 6 points do not lie on the same quadratic curve $(d \geq 6)$,
\item[$(3)$]
there exists no cubic curve which passes through all 8 points and singular at one of them
$(d=8)$.
\end{sub}

\begin{thm} \label{lem2}
On a del Pezzo surface $S$,
consider the following condition
$(\ref{cond1})$ on a class $F \in \Pic(S)$:
\begin{equation} \label{cond1}
F \cdot F=0, \, F \cdot \Omega=-2 \,
\mbox{ and } F
\mbox{ contains an irreducible curve.}
\end{equation}
Then we have
\hfill\break
$(1)$
For any point $P \in S$,
there exists a unique curve $C \in F$,
which passes through $P$
($C$ is irreducible except finite number of them),
\hfill\break
$(2)$
$-\Omega-F$ (resp. $-2\Omega-F$, resp. $-4\Omega-F$)
also satisfies the condition $(\ref{cond1})$
for $\omega=4$ (resp. $\omega=2$, resp. $\omega=1$).
\end{thm}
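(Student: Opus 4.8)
The plan is to prove both parts by the standard surface machinery --- Riemann--Roch, Serre duality and adjunction --- together with two special features of a del Pezzo surface $S$: that $-\Omega$ is ample and that $S$ carries only finitely many $(-1)$-curves. Recall also that, $S$ being rational, $\chi(\mathcal{O}_S)=1$ and $H^1(S,\mathcal{O}_S)=0$.

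For $(1)$: Adjunction gives $p_a(F)=1+\tfrac{1}{2}(F\cdot F+F\cdot\Omega)=0$, so an irreducible member $C_0$ of the class $F$ is a smooth rational curve, and $\mathcal{O}_S(F)|_{C_0}$ has degree $F\cdot C_0=0$, hence is trivial. Riemann--Roch gives $\chi(\mathcal{O}_S(F))=1+\tfrac{1}{2}F\cdot(F-\Omega)=2$, and $h^2(\mathcal{O}_S(F))=h^0(\mathcal{O}_S(\Omega-F))=0$ since $(\Omega-F)\cdot(-\Omega)=-\omega-2<0$ makes $\Omega-F$ non-effective; then the exact sequence $0\to\mathcal{O}_S\to\mathcal{O}_S(F)\to\mathcal{O}_{C_0}\to0$ together with $H^1(\mathcal{O}_S)=0$ forces $H^1(\mathcal{O}_S(F))=0$, so $|F|$ is exactly a pencil. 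It is base-point-free: if $C_1\in|F|$ is any member distinct from $C_0$, then $C_0$, being irreducible of class $F$, is not a component of $C_1$, so $\#(C_0\cap C_1)\le C_0\cdot C_1=F\cdot F=0$, whence $C_0\cap C_1=\varnothing$ and the base locus, contained in $C_0\cap C_1$, is empty. Thus $|F|$ defines a morphism $\phi\colon S\to\mathbb{P}^1$ with connected fibres (Stein factorisation, using $h^0(\mathcal{O}_S)=1$) equal to the members of $|F|$. For $P\in S$ the members through $P$ form a non-empty linear subsystem --- it contains $\phi^{-1}(\phi(P))$ --- which, $P$ not being a base point, reduces to a single point; this is the asserted uniqueness. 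Finally, a reducible member $C=\sum a_iC_i$ has $-2=\Omega\cdot C=\sum a_i(\Omega\cdot C_i)$ with every $\Omega\cdot C_i\le-1$, and ruling out $C=2C_1$ by the integrality of $p_a(C_1)$ forces $C=C_1+C_2$ with $C_1,C_2$ distinct $(-1)$-curves; since $S$ has only finitely many $(-1)$-curves, only finitely many members of $|F|$ are reducible, which is the last clause of $(1)$.

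For $(2)$: Put $m=4/\omega\in\{1,2,4\}$ for $\omega\in\{4,2,1\}$ and $F^{*}=-m\Omega-F$. The numerical conditions are forced by $m\omega=4$: $F^{*}\cdot\Omega=-m\omega+2=-2$ and $F^{*}\cdot F^{*}=m^2\omega-4m=m(m\omega-4)=0$. For the remaining assertion, that $F^{*}$ contains an irreducible curve, Riemann--Roch again gives $\chi(\mathcal{O}_S(F^{*}))=1+\tfrac{1}{2}F^{*}\cdot(F^{*}-\Omega)=2$, and $h^2(\mathcal{O}_S(F^{*}))=h^0(\mathcal{O}_S((m+1)\Omega+F))=0$ because $((m+1)\Omega+F)\cdot(-\Omega)=-\omega-2<0$; hence $h^0(\mathcal{O}_S(F^{*}))\ge2$ and $|F^{*}|$ is infinite. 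By the same degree-$(-2)$ analysis as at the end of $(1)$ --- here also using the Hodge index inequality $((-\Omega)\cdot D_i)^2\ge(D_i\cdot D_i)\,\omega$ to exclude, when $\omega=1$, the possibility that a component is numerically anticanonical --- every effective divisor in $|F^{*}|$ is either irreducible or a sum of two distinct $(-1)$-curves. Since $S$ has only finitely many $(-1)$-curves but $|F^{*}|$ is infinite, some member of $|F^{*}|$ is irreducible, so $F^{*}$ satisfies $(\ref{cond1})$.

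The intersection-number bookkeeping is routine; the step I expect to be the real obstacle is establishing that $F^{*}$ contains an \emph{irreducible} curve without circularity. One must resist appealing to Bertini for the smoothness of a general member of $|F^{*}|$ (delicate in positive characteristic) and instead combine $\Omega\cdot D=-2$ with adjunction and the Hodge index theorem to pin the reducible members of $|F^{*}|$ down as unions of two $(-1)$-curves, exploiting their finiteness on a del Pezzo surface.
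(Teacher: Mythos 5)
Your proof is correct, but it takes a genuinely different route from the paper's. The paper disposes of Theorem \ref{lem2} (together with Theorem \ref{lem3}) by explicit enumeration: it invokes Theorem \ref{lem1} to realize $S$ as a $(9-\omega)$-point blow-up of $\mathbb{P}^2$, writes down all classes satisfying $(\ref{cond1})$ in the standard basis of $\Pic(S)$, and checks the two assertions on that finite list. You instead argue intrinsically: adjunction plus Riemann--Roch, with $h^2=0$ forced by ampleness of $-\Omega$ and $h^1=0$ from the restriction sequence to an irreducible member, show that $|F|$ is a base-point-free pencil, which gives $(1)$; the same computation applied to $F^{*}=-\frac{4}{\omega}\Omega-F$ gives $h^0\ge 2$, and the decomposition $\Omega\cdot D=-2$ with $\Omega\cdot D_i\le-1$, adjunction, and the Hodge index bound reduce the reducible members to pairs of $(-1)$-curves, of which a del Pezzo surface has only finitely many, so $|F^{*}|$ has an irreducible member. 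Your approach is classification-free, uniform in $\omega$, and yields a little more (the fibration $S\to\mathbb{P}^1$ and the exact value $h^0(F)=2$); the paper's enumeration stays within the pure intersection-form toolkit of Section \ref{ag} and its explicit lists simultaneously verify Theorem \ref{lem3}, which your argument does not address. One line is worth adding to your part $(1)$: to conclude that the two components of a reducible member are $(-1)$-curves you need $C_i\cdot C_i=-1$ and not merely $\Omega\cdot C_i=-1$; this follows from the Hodge index inequality $1=(\Omega\cdot C_i)^2\ge\omega\,(C_i\cdot C_i)$ that you already invoke in part $(2)$, combined with $p_a(C_i)\ge 0$ and parity (or, more simply, from $F\cdot C_i=0$ for a component of a fibre of the morphism you have just constructed).
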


\begin{thm} \label{lem3}
On a del Pezzo surface $S$,
consider the following condition $(\ref{cond2})$ on a class $\Gamma \in \Pic(S)$:
\begin{equation} \label{cond2}
\Gamma \cdot \Gamma=-1, \, \Gamma \cdot \Omega=-1\,
\mbox{ and } \Gamma
\mbox{ contains an irreducible curve.}
\end{equation}
Obviously the irreducible curve is unique in its class,
so denote it by the same symbol $\Gamma$.
Then $-\Omega-\Gamma$ (resp. $-2\Omega-\Gamma$)
also satisfies the condition $(\ref{cond2})$
for $\omega=2$ (resp. $\omega=1$).
\end{thm}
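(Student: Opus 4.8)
The plan is to mimic the standard argument from the theory of del Pezzo surfaces, comparing the intersection numbers of $\Gamma$ with those of the candidate classes $-\Omega-\Gamma$ (when $\omega=2$) and $-2\Omega-\Gamma$ (when $\omega=1$). First I would compute the numerical invariants. For $\omega=2$, writing $\Gamma' := -\Omega - \Gamma$, the bilinearity of the intersection form together with $\Omega\cdot\Omega = 2$, $\Gamma\cdot\Gamma = -1$, $\Gamma\cdot\Omega = -1$ gives immediately $\Gamma'\cdot\Gamma' = \Omega\cdot\Omega + 2\,\Omega\cdot\Gamma + \Gamma\cdot\Gamma = 2 - 2 - 1 = -1$ and $\Gamma'\cdot\Omega = -\Omega\cdot\Omega - \Gamma\cdot\Omega = -2 + 1 = -1$. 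For $\omega=1$, with $\Gamma'' := -2\Omega - \Gamma$ and $\Omega\cdot\Omega = 1$, the same computation yields $\Gamma''\cdot\Gamma'' = 4\,\Omega\cdot\Omega + 4\,\Omega\cdot\Gamma + \Gamma\cdot\Gamma = 4 - 4 - 1 = -1$ and $\Gamma''\cdot\Omega = -2\,\Omega\cdot\Omega - \Gamma\cdot\Omega = -2 + 1 = -1$. So in each case the two numerical conditions in $(\ref{cond2})$ are verified by a direct calculation.

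The substantive point is the third condition: that the class $\Gamma'$ (resp. $\Gamma''$) actually contains an irreducible curve. I would deduce this from the ampleness of $-\Omega$ on a del Pezzo surface together with the Riemann–Roch theorem on the surface $S$. The Riemann–Roch formula gives $\chi(\mathcal{O}_S(D)) = \chi(\mathcal{O}_S) + \tfrac{1}{2}\,D\cdot(D-\Omega)$, and for a (rational) del Pezzo surface $\chi(\mathcal{O}_S) = 1$. Applying this to $D = \Gamma'$ one gets $\tfrac{1}{2}\Gamma'\cdot(\Gamma'-\Omega) = \tfrac{1}{2}(-1 - (-1)) \cdot\ldots$ — more precisely $\Gamma'\cdot(\Gamma'-\Omega) = \Gamma'\cdot\Gamma' - \Gamma'\cdot\Omega = -1-(-1) = 0$, so $\chi(\mathcal{O}_S(\Gamma')) = 1 > 0$; hence either $\Gamma'$ or $\Omega - \Gamma'$ is effective by Serre duality ($h^2(\mathcal{O}_S(\Gamma')) = h^0(\mathcal{O}_S(\Omega-\Gamma'))$). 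Since $-\Omega$ is ample and $(\Omega - \Gamma')\cdot(-\Omega) = -\Omega\cdot\Omega + \Gamma'\cdot\Omega = -\omega - 1 < 0$, the class $\Omega - \Gamma'$ cannot be effective, so $\Gamma'$ is effective. Finally, an effective divisor $D$ in a class with $D\cdot(-\Omega) = -D\cdot\Omega = 1$ and $D\cdot D = -1$ must be a single irreducible curve: if $D = \sum n_i C_i$ with the $C_i$ irreducible and distinct, then $\sum n_i (C_i\cdot(-\Omega)) = 1$ with each term a positive integer forces exactly one $C_i$ with $n_i = 1$ and $C_i\cdot(-\Omega) = 1$; that $C_i$ then has $C_i\cdot C_i = -1$ and is the desired irreducible curve. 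The same argument applies verbatim to $\Gamma''$ when $\omega=1$, using $(\Omega-\Gamma'')\cdot(-\Omega) = -1 - 1 = -2 < 0$.

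The main obstacle I anticipate is legitimizing the use of Riemann–Roch, Serre duality, and the value $\chi(\mathcal{O}_S) = 1$ in the self-contained, elementary style the paper has adopted so far: the preceding sections develop only the intersection form, canonical class, and blow-up/blow-down formulae, and deliberately avoid sheaf cohomology. There are two ways around this. One is simply to invoke these facts as standard (they are in Hartshorne, already cited). The other — more in the spirit of the paper — is to reduce to the explicit blow-up model: by Theorem \ref{lem1}, $S$ with $\omega \le 7$ is a $(9-\omega)$-point blow-up of $\mathbb{P}^2$, so $\Pic(S)$ has the standard basis $\ell, e_1,\dots,e_{9-\omega}$ with $\Omega = -3\ell + \sum e_i$, and one can enumerate the finitely many classes $\Gamma$ satisfying $(\ref{cond2})$ (the "lines" on $S$) and check the claim by hand; this is a finite but somewhat tedious case analysis, and one must also track which classes contain an \emph{irreducible} curve rather than merely an effective one, which is exactly where the del Pezzo genericity conditions (no 3 points collinear, etc.) enter. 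I would present the cohomological argument as the main line and remark that the blow-up enumeration gives an alternative elementary verification.
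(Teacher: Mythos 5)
Your proposal is correct, but it proves the theorem by a genuinely different route than the paper does. The paper's justification is the one you relegate to a closing remark: it invokes Theorem \ref{lem1} to realize $S$ as a $(9-\omega)$-point blow-up of $\mathbb{P}^2$, writes $\Pic(S)$ in the standard basis with $\Omega=-3\ell+\sum e_i$, and asserts that one can list all classes satisfying $(\ref{cond2})$ and check directly that $\Gamma\mapsto-\Omega-\Gamma$ (resp.\ $-2\Omega-\Gamma$) permutes this list; no details are given. Your main line of argument is instead intrinsic and cohomological: the numerical computations $\Gamma'\cdot\Gamma'=\Gamma'\cdot\Omega=-1$ are immediate from bilinearity (and, as a consistency check, force $\omega=2$ and $\omega=1$ respectively), and effectivity of $\Gamma'$ follows from Riemann--Roch with $\chi(\mathcal{O}_S)=1$ together with Serre duality and the ampleness of $-\Omega$ to kill $h^2$; the final step, that an effective class with $D\cdot(-\Omega)=1$ and $D\cdot D=-1$ is a single irreducible curve, is handled cleanly by decomposing $D=\sum n_iC_i$ and using $C_i\cdot(-\Omega)\geq 1$. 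This buys you two things the paper's sketch does not: you avoid the classification Theorem \ref{lem1} entirely, and you settle the irreducibility-versus-effectivity issue that the enumeration approach leaves implicit (indeed your argument shows the hypothesis ``$\Gamma$ contains an irreducible curve'' in $(\ref{cond2})$ is automatic on a del Pezzo surface). The cost, which you correctly flag, is importing sheaf cohomology into a paper whose Section \ref{ag} deliberately restricts itself to the intersection form and blow-up formulae; since Hartshorne is already cited, this is a matter of style rather than a gap.
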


For a $d$-point blow-up of $\mathbb{P}^2$,
we can write down explicitly all the classes which satisfy
$(\ref{cond1})$ or $(\ref{cond2})$,
and check the validity of Theorem \ref{lem2} and Theorem \ref{lem3}.
Theorem \ref{lem2} and Theorem \ref{lem3} are valid also for a general del Pezzo surface
by Theorem \ref{lem1}.


\section{Generalized Ch\^atelet surface} \label{chCh}

We shall examine the rationality of $K=k(x,y,z)$ as in $(\ref{cha})$
$z^2=ay^2+P(x)$ under the assumptions $(\mathrm{C1}), \dots, (\mathrm{C5})$ in Subsection \ref{main}.

Let $l$ be the splitting field of $P(x)$. Then $\sqrt{a} \in l$
because of the condition $(\mathrm{C3})$. Thus $l(x,y,z)$ is $l$-rational and
$l(x,y,z)=l(x,u)$ where $u=z+\sqrt{a}y$. The field $l$ is a Galois
extension of $k$, and we write $\mathfrak{G}=\Gal(l/k)$ and
$N=\Gal\big(l/k(\sqrt{a})\big)$. The group $\mathfrak{G}$ acts on $x$
trivially, and $N$ acts on $u$ trivially; for any $\sigma \in
\mathfrak{G} \setminus N$, $\sigma :u \mapsto
z-\sqrt{a}y=\frac{P(x)}{u}$.

The automorphism $T:(x,u) \mapsto (x,\frac{P(x)}{u})$ of $l(x,u)$
induces an $\overline{l}$-birational transformation of
$\mathbb{P}^1 \times \mathbb{P}^1$. After successive blowings-up
and blowings-down, we obtain a surface $X$ defined over
$l$, on which $T$ acts as a biregular automorphism.

\subsection{Biregularization of $T$.} \label{biregularization}

Let $T$ be the birational transformation of $\mathbb{P}^1 \times \mathbb{P}^1$
defined by $T: x \mapsto x, u \mapsto \frac{P(x)}{u}$.
Let $r=\deg P$ and $c_1,c_2,\dots,c_r$ be the roots of $P$.
$T$ has $r+1$ fundamental points and $r+1$ exceptional curves.
Fundamental points are $P_i:x=c_i, u=0 \, (1 \leq i \leq r)$ and $P_{r+1}:x=u=\infty$.
Exceptional curves are $x=c_i \, (1 \leq i \leq r)$ and $x=\infty$.

Consider the blowings-up for each $P_i$.
Let $X_1$ be the blowing-up of $\mathbb{P}^1 \times \mathbb{P}^1$ at $P_1$
and $T_1$ be the lifting of $T$ to $X_1$.
$X_1$ is a surface in $\mathbb{P}^1 \times \mathbb{P}^1 \times \mathbb{P}^1$
defined by $\frac{u}{x-c_1}=t_1$.
$E_1$ is the curve $x=c_1, u=0$,
and $\widetilde{x=c_1}$ is the curve $x=c_1, t_1=\infty$.

We write $P(x)=b\prod_{j=1}^r(x-c_j)$.
By $T_1$, each point $(c_1,u,\infty) \in (\widetilde{x=c_1})$
is mapped to $(c_1,0,\frac{b\prod_{j\not=1}(c_1-c_j)}{u}) \in E_1$,
and each point $(c_1,0,t_1) \in E_1$
is mapped to $(c_1,\frac{b\prod_{j \not= 1}(c_1-c_j)}{t_1},\infty)
\in (\widetilde{x=c_1})$.
So $T_1$ maps $E_1$ biregularly to $\widetilde{x=c_1}$.

Next let $X_2$ be the blowing-up of $X_1$ at $P_2$
and $T_2$ be the lifting of $T_1$ to $X_2$.
Repeat this $r$ times
so that $T_r$ maps $E_i$ biregularly to $\widetilde{x=c_i}$
for $1 \leq i \leq r$:

$$\begin{matrix}
X_r & \stackrel{T_r}{\longrightarrow} & X_r \\
\downarrow & & \downarrow \\
\vdots & & \vdots \\
\downarrow & & \downarrow \\
X_2 & \stackrel{T_2}{\longrightarrow} & X_2 \\
\downarrow & & \downarrow \\
X_1 & \stackrel{T_1}{\longrightarrow} & X_1 \\
\downarrow & & \downarrow \\
\mathbb{P}^1 \times \mathbb{P}^1 &
\stackrel{T}{\longrightarrow} & \mathbb{P}^1 \times \mathbb{P}^1. \\
\end{matrix}$$

Now the only fundamental point of $T_r$ is $P_{r+1}: x=u=\infty$,
and the only exceptional curve of $T_r$ is $x=\infty$.

The blowing-up at $P_{r+1}$ does not make $T_{r+1}$ biregular.
Let $X_{r+1}$ be the blowing-up of $X_r$ at $P_{r+1}$
and $T_{r+1}$ be the lifting of $T_r$ to $X_{r+1}$.
$X_{r+1}$ is a surface in $X_r \times \mathbb{P}^1$
defined by $\frac{u}{x}=t_{r+1}$.
$E_{r+1}$ is the curve $x=\infty, u=\infty$
and $\widetilde{x=\infty}$ is the curve $x=\infty, t_{r+1}=0$.

$T_{r+1}$ maps $\widetilde{x=\infty}$ to
one point $P_{r+2} \in E_{r+1}$ defined by $t_{r+1}=\infty$,
and maps $E_{r+1}$ to $P_{r+2}$.
So exceptional curves of $T_{r+1}$ are
$\widetilde{x=\infty}$ and $E_{r+1}$
while the only fundamental point is $P_{r+2}$.

So, blow up again.
Let $X_{r+2}$ be the blowing-up of $X_{r+1}$ at $P_{r+2}$
and $T_{r+2}$ be the lifting of $T_{r+1}$ to $X_{r+2}$.
$X_{r+2}$ is a surface in $X_{r+1} \times \mathbb{P}^1$ defined by
$\frac{t_{r+1}}{x}=t_{r+2}=\frac{u}{x^2}$.
Then $T_{r+2}$ maps $\widetilde{x=\infty}$ to one point $P_{r+3} \in E_{r+2}$
defined by $t_{r+2}=\infty$.
If $r=3$, then $T_5$ maps $E_4$ biregularly to $E_5$,
but if $r>3$, then $T_{r+2}$ maps both of $E_{r+1}$ and $E_{r+2}$ to one point $P_{r+3}$.

Repeating this $r$ times.
Let $X$ be the obtained surface
and $T_{2r}$ be the lifting of $T$ to $X$.
Then $T_{2r}$ becomes biregular,
namely $T_{2r}$ maps $\widetilde{x=\infty}$ to $E_{2r}$,
and $E_{r+i}$ to $E_{2r-i}$ $(1 \leq i \leq r-1)$ biregularly:

$$\begin{matrix}
X & \stackrel{T_{2r}}{\longrightarrow} & X \\
\downarrow & & \downarrow \\
\vdots & & \vdots \\
\downarrow & & \downarrow \\
X_{r+2} & \stackrel{T_{r+2}}{\longrightarrow} & X_{r+2} \\
\downarrow & & \downarrow \\
X_{r+1} & \stackrel{T_{r+1}}{\longrightarrow} & X_{r+1} \\
\downarrow & & \downarrow \\
X_r & \stackrel{T_r}{\longrightarrow} & X_r. \\
\end{matrix}$$

Thus, $T$ becomes biregular after $2r$ blowings-up in total,
once for each $1 \leq i \leq r$, and $r$ times for $P_{r+1}$.
We denote the obtained surface by $X$.

\subsection{Reduction to the even degree case.} \label{degeven}

Without loss of generality,
we can assume that $\deg P=r$ is even,
by the following reason.

Suppose that $\deg P=r$ is odd, and put $r=2s-1$.
Put $x^\prime=\frac{1}{x}, y^\prime=x^{\prime s}y,
z^\prime=x^{\prime s}z$,
then $z^2=ay^2+P(x)$ is re-written as
$z^{\prime 2}=ay^{\prime 2}+x^{\prime2s}P(\frac{1}{x^\prime}).$
When $P(x)=\sum_{i=0}^{2s-1} a_i x^i$ with $a_{2s-1} \not= 0$,
$P_1(x) := x^{\prime 2s}P(\frac{1}{x^\prime})=
\sum_{i=0}^{2s-1} a_i x^{\prime 2s-i}$
is a polynomial with the degree $2s$.
Since $k(x,y,z)=k(x^\prime,y^\prime,z^\prime)$,
the $k$-rationality problem of $k(x,y,z)$ is reduced to
that of $k(x^\prime, y^\prime, z^\prime)$
for the polynomial $P_1(x)$ of even degree.

Since the root of $P_1(x)$ are $0$ and $\{\frac{1}{c_i}\}_{1 \leq i \le r}$,
where $\{c_i\}$ are the roots of $P(x)$,
the conditions $(\mathrm{C1}), \dots, (\mathrm{C5})$ are satisfied for $P_1(x)$ also
(note that we can assume $P(0) \not= 0$
without loss of generality).
When $k$ is a finite field and $|k|$ is small, we may take
a finite extension $k^\prime \supset k$ which satisfies
the conditions $(\mathrm{C1}),\dots, (\mathrm{C5})$,
and continue the argument above.
Note that, if $k(x,y,z)$ is $k$-rational,
then $k^\prime(x,y,z)$ is $k^\prime$-rational.

\subsection{Another biregularization of $T$.} \label{biregT}

In this subsection, after reaching $r$ blowings-up
(this surface is $X_r$ in the subsection \ref{biregularization}),
we shall proceed in another way.
Blow up $X_r$ at the point  $P_{r+1}:x=\infty, u=\infty$ (this surface is $X_{r+1}$,
which is a surface in $X_r \times \mathbb{P}^1$
defined by $\frac{u}{x}=t_{r+1}$),
and then blow-down it by $\widetilde{x=\infty}$.
We denote the obtained surface by $Y_1$.
We lift up and lift down $T: X_r \to X_r$
to get $T:Y_1 \to Y_1$.
$T$ maps $E_{r+1}:x=\infty, u=\infty$ to one point
$P_{r+2} \in E_{r+1}$ defined by $t_{r+1}=\infty$.
So the only fundamental point of $T$ is $P_{r+2}$,
and the only exceptional curve is $E_{r+1}$.
In $\Div(Y_1)$,
$(x=\infty)$ disappears and is replaced by $E_{r+1}$.

Blow up $Y_1$ at $P_{r+2}$
(this surface is in $Y_1 \times \mathbb{P}^1$
defined by $t_{r+2}=\frac{u}{t_{r+1}}=\frac{u}{x^2}$),
and then blow down by $E_{r+1}$.
In the obtained surface $Y_2$,
$E_{r+1}$ disappears and is replaced by $E_{r+2}:x=\infty,t_1=\infty$.
The only fundamental point of $T$ is $P_{r+3} \in E_{r+2}$
defined by $t_{r+2}=\infty$,
and the only exceptional curve is $E_{r+2}$.

For an even $r$, repeat this process $\frac{r}{2}$ times.
On the surface $Y_{\frac{r}{2}}$,
$T$ becomes biregular,
and maps $E_{\frac{3r}{2}}$ biregularly to
$E_{\frac{3r}{2}}$,
as studied in Subsection \ref{biregularization}.
All $E_{r+j}\, (j<\frac{r}{2})$ disappear by the blowings-down.
We shall denote the obtained $Y_{\frac{r}{2}}$ by $Y$.

Thus, for an even $r$, $T$ becomes biregular after $r$ blowings-up
and $\frac{r}{2}$ blowings-up and down.

\subsection{$\Pic(Y)$ as a Galois module.} \label{PicY}

Let $k$ be an algebraically non-closed field,
and $K$ be an algebraic function field with two variables over $k$.
Namely, $K$ is a finite extension of the rational function field with two variables over $k$
such that $k$ is algebraically closed in $K$.

Let $\overline{k}$ be a fixed algebraic closure of $k$.
The $k$-automorphism group of $\overline{k}$ is isomorphic to
$\Gal(k^{\sep}/k)$, where $k^{\sep}$ is the separable closure of $k$,
because every $k$-automorphism of $k^{\sep}$ is extended
uniquely to $\overline{k}$.
$G:=\Gal(k^{\sep}/k)$ acts on
$\overline{k} \otimes_k K$,
assuming that it acts on $K$ trivially,
namely $G \ni \sigma \mapsto \overline{\sigma}=\sigma \otimes \id_K$.

Assume that $\overline{k} \otimes_k K$ is $\overline{k}$-rational,
namely $\overline{k} \otimes_k K=\overline{k}(u,v)$ for some $u,v$.
Let $u^\sigma, v^\sigma$ be the image of $u,v$ by the action of $\overline{\sigma}$,
then we have $\overline{k}(u,v)=\overline{k}(u^\sigma,v^\sigma)$,
so that $u \mapsto u^\sigma, v \mapsto v^\sigma$
induces a $\overline{k}$-automorphism $T_\sigma$
of $\overline{k}(u,v)$.
$T_\sigma$ is different from $\overline{\sigma}$,
because $T_\sigma$ acts trivially on $\overline{k}$.
Let $\widetilde \sigma$ be a $\overline{k}$-automorphism of $\overline{k}(u,v)$
such that $\widetilde \sigma$ acts naturally on $\overline{k}$,
and acts trivially on $u$ and $v$.
Then we have $\overline{\sigma}=T_\sigma \circ \widetilde \sigma$.

$T_\sigma$ induces a birational transformation of $\mathbb{P}^1 \times \mathbb{P}^1$,
while $\widetilde \sigma$ induces a homeomorphic transformation
in Zariski topology of $\mathbb{P}^1 \times \mathbb{P}^1$.
Suppose that after suitable blowings-up or blowings-up and down
of $\mathbb{P}^1 \times \mathbb{P}^1$,
all of $T_\sigma$ become biregular on the obtained surface $Y$.
The lifting of $\widetilde \sigma$ to $Y$ is homeomorphic in Zariski topology.
So the action of $\overline{\sigma}$ induces a permutation of irreducible curves,
and $\Div(Y)$ becomes a permutation $G$-module.

Since the action of $\overline{\sigma}$ keeps the function field
$\overline{k} \otimes_k K=\overline{k}(u,v)$ invariant,
it keeps the principal divisor group invariant,
so taking the factor module,
we see that $\Pic(Y)$ is also a $G$-module.

But since $\Pic(Y)$ is of finite rank as a $\mathbb{Z}$-module,
and since $u,v \in \overline{k}(x,y)$ actually belongs to $l(x,y)$
for some finite extension of $k$,
$\Pic(Y)$ is a $\mathfrak{G}$-module,
where $\mathfrak{G}=\Gal(l/k)$,
$l$ being a sufficiently large finite Galois extension of $k$.

Thus, $\Pic(Y)$ becomes a $\mathfrak{G}$-lattice.
Here a $\mathfrak{G}$-lattice means a free $\mathbb{Z}$-module of finite rank
with the action of $\mathfrak{G}$ as automorphisms.

\subsection{Manin's criterion.} \label{Manin}

Let $K^\prime$ be another algebraic function field with two variables over $k$
such that $\overline{k} \otimes_k K^\prime$ is $\overline{k}$-rational.
Let $\overline{k} \otimes_k K^\prime=\overline{k}(u^\prime,v^\prime)$.
$G=\Gal(k^{\sep}/k)$ acts on $\overline{k} \otimes_k K^\prime$
as $G \ni \sigma \mapsto \overline{\sigma}^\prime=\sigma \otimes \id_{K^\prime}$.

By the discussions in the previous subsection,
$\overline{\sigma}^\prime$ can be written as
$\overline{\sigma}^\prime=T_\sigma^\prime \circ \widetilde{\sigma}^\prime$,
where $T_\sigma^\prime$ is a $\overline{k}$-automorphism of
$\overline{k}(u^\prime,v^\prime)$ and $\widetilde{\sigma}^\prime$
is a $\overline{k}$-automorphism of $\overline{k}(u^\prime,v^\prime)$
which acts on $\overline{k}$ naturally and acts on $u^\prime$ and $v^\prime$ trivially.
$T_\sigma^\prime$ induces a birational transformation of
$\mathbb{P}^1 \times \mathbb{P}^1$.
Suppose that after finite steps of blowings-up (and down),
all $T_\sigma^\prime$ becomes biregular on the obtained surface $Y^\prime$,
so we can regard $\Pic(Y^\prime)$ as a $\mathfrak{G}$-lattice,
where $\mathfrak{G}=\Gal(l/k)$ for sufficiently large finite Galois extension $l$ of $k$.

\begin{prop}
$K$ is $k$-isomorphic to $K^\prime$
if and only if there exists a $\overline{k}$-isomorphism $T$
from $\overline{k} \otimes_k K$ to $\overline{k} \otimes_k K^\prime$
which commutes with the action of $G$,
namely for $\forall \sigma \ni G=\Gal(k^{\sep}/k),
T \circ \overline{\sigma}=\overline{\sigma}^\prime \circ T$.
\end{prop}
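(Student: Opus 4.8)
The plan is to recognize this as ordinary Galois descent, the only care needed being to pass from the infinite profinite group $G$ to a finite Galois quotient, so that the relevant fixed field is exactly $k$. The implication $(\Rightarrow)$ is formal: given a $k$-isomorphism $\phi\colon K\to K'$, put $T=\id_{\overline k}\otimes\phi$. This is a $\overline k$-algebra isomorphism $\overline k\otimes_k K\to\overline k\otimes_k K'$, and since $\overline\sigma=\sigma\otimes\id_K$ and $\overline\sigma'=\sigma\otimes\id_{K'}$ one reads off $T\circ\overline\sigma=\sigma\otimes\phi=\overline\sigma'\circ T$ for all $\sigma\in G$.

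For $(\Leftarrow)$, suppose $T$ is given. First I would descend $T$ to a finite level. Since $K$ and $K'$ are finitely generated over $k$ and $\overline k\otimes_k K'=\varinjlim_{l_0}(l_0\otimes_k K')$ is the directed union over finite subextensions $l_0$ of $\overline k/k$, the image under $T$ of each of finitely many $k$-generators of $1\otimes K$ already lies in $l_0\otimes_k K'$ for some finite $l_0$; doing the same for $T^{-1}$ and enlarging to a finite Galois extension $l/k$, we get $T(l\otimes_k K)\subseteq l\otimes_k K'$ and $T^{-1}(l\otimes_k K')\subseteq l\otimes_k K$. Here $l\otimes_k K$ denotes the subring of $\overline k\otimes_k K$ generated by $l$ and $1\otimes K$; it is a domain finite over the field $K$, hence a field, because $\overline k\otimes_k K$ is a field by hypothesis. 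Thus $T$ restricts to an $l$-isomorphism $l\otimes_k K\xrightarrow{\sim}l\otimes_k K'$, and restricting the commutation relation along the surjection $G\twoheadrightarrow\mathfrak G:=\Gal(l/k)$ yields $T\circ\overline\tau=\overline\tau'\circ T$ for every $\overline\tau\in\mathfrak G$, where now $\overline\tau=\tau\otimes\id_K$ and $\overline\tau'=\tau\otimes\id_{K'}$.

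Now I would take $\mathfrak G$-invariants. Writing an element of $l\otimes_k K$ in the form $\sum_i\alpha_i\otimes a_i$ with the $a_i$ linearly independent over $k$ in $K$, it is $\mathfrak G$-fixed precisely when every $\alpha_i$ lies in $l^{\mathfrak G}=k$; hence $(l\otimes_k K)^{\mathfrak G}=1\otimes K$, which we identify with $K$, and likewise $(l\otimes_k K')^{\mathfrak G}=K'$. Since $T$ commutes with $\mathfrak G$ it carries $\mathfrak G$-invariants to $\mathfrak G$-invariants, so $\phi:=T|_K\colon K\to K'$ is a homomorphism of $k$-algebras; it is injective because $K$ is a field, and $T^{-1}|_{K'}$ is a two-sided inverse, so $\phi$ is a $k$-isomorphism, as desired.

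The main obstacle, and the reason for the detour through $l$, is precisely this invariants computation: one must not attempt it directly over $\overline k$, since in positive characteristic $\overline k^{\,G}$ is the perfect closure of $k$ and is strictly larger than $k$ when $k$ is imperfect, so that $(\overline k\otimes_k K)^{G}$ is in general bigger than $K$ and the naive restriction $T|_K$ need not land in $K'$. Everything else — the formal $(\Rightarrow)$ direction, the finiteness of the field of definition of $T$, and the identification $(l\otimes_k K)^{\mathfrak G}=K$ via $l^{\mathfrak G}=k$ — is routine.
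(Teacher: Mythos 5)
Your proof is correct and follows the same Galois-descent strategy as the paper's own proof: extend scalars via $T=\id_{\overline k}\otimes\phi$ for the forward direction, and take invariants for the converse. The one place you go beyond the paper is worth keeping: the paper simply asserts that the $G$-fixed subfield of $\overline{k}\otimes_k K$ is $K$, which is imprecise when $k$ is imperfect (the fixed field of $G$ on $\overline{k}$ is the perfect closure of $k$, not $k$), and your reduction to a finite Galois level $l/k$ with $l^{\mathfrak{G}}=k$ before taking invariants is exactly the repair needed.
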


\begin{proof}
Suppose that $K$ is $k$-isomorphic to $K^\prime$ and let $T_0$ be the
$k$-isomorphism.
Then $T_0$ is naturally extended to a $\overline{k}$-isomorphism $T$
from $\overline{k} \otimes_k K$ to $\overline{k} \otimes_K K^\prime$,
$T=\id_{\overline{k}} \otimes T_0$.
Evidently $T$ commutes with the action of $G$.

Conversely, suppose that a required $\overline{k}$-isomorphism $T$ exists.
Since $T$ commutes with the action of $G$,
$T$ and $T^{-1}$ map the fixed field of $G$ to each other.
However, the fixed field of $\overline{k} \otimes_k K$
(resp. $\overline{k} \otimes_k K^\prime$) of $G$ is $K$ (resp. $K^\prime$),
and the restriction of $T$ on $K$ becomes a $k$-isomorphism from $K$ to $K^\prime$.
\end{proof}

\begin{defn}
Let $H$ be a finite group and $M$ be an $H$-lattice 
(i.e. a free $\mathbb{Z}$-module of finite rank
with the action of $H$ as automorphisms).
An $H$-lattice $M$ is called permutation if $M$ has a $\mathbb{Z}$-basis
permuted by $H$, i.e. $M \simeq \bigoplus_{1 \leq i \leq m} \mathbb{Z}[H/H_i]$ 
for some subgroups $H_1,\ldots,H_m$ of $H$.
We say that two $H$-lattices $M_1$ and $M_2$ are similar 
if there exist permutation $H$-lattices $P_1$ and $P_2$ such that 
$M_1\oplus P_1\simeq M_2\oplus P_2$. 
\end{defn}

\begin{prop}[Manin {\cite{Man67}}]
Let $\Pic(Y)$ (resp. $\Pic(Y^\prime)$) be the $\mathfrak{G}$-lattice
corresponding to $K$ (resp. $K^\prime$) as in Subsection \ref{PicY}.
If $K$ is $k$-isomorphic to $K^\prime$, then
$\Pic(Y)$ and $\Pic(Y^\prime)$ are similar, i.e. there exist
permutation $\mathfrak{G}$-lattices $P_1$ and $P_2$ such that
$\Pic(Y) \oplus P_1 \simeq \Pic(Y^\prime) \oplus P_2$.
\end{prop}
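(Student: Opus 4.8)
The plan is to translate the algebraic hypothesis into a statement about surfaces and then read off the similarity from the behaviour of the Picard group under blowings-up. By the preceding proposition, a $k$-isomorphism $K\simeq K'$ amounts to a $\overline{k}$-isomorphism $T\colon\overline{k}\otimes_k K\to\overline{k}\otimes_k K'$ with $T\circ\overline{\sigma}=\overline{\sigma}'\circ T$ for all $\sigma$. Enlarging the finite Galois extension $l$ if necessary, I would assume that $Y$, $Y'$, $T$ and $T^{-1}$ are all defined over $l$, that $\mathfrak{G}=\Gal(l/k)$ acts biregularly on $Y$ and on $Y'$, and that $\overline{k}\otimes_k K=\overline{k}(Y)$, $\overline{k}\otimes_k K'=\overline{k}(Y')$. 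Then $T$ corresponds to an $l$-birational map $\phi\colon Y\dashrightarrow Y'$, and the commutation relation says exactly that $\phi$ intertwines the $\mathfrak{G}$-actions: $\phi\circ\overline{\sigma}=\overline{\sigma}'\circ\phi$ as birational maps, for every $\sigma\in\mathfrak{G}$.

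Next I would dominate $Y$ and $Y'$ by a common surface. By the factorization theorem for birational maps of surfaces recalled in Subsection~\ref{birational}, $\phi$ becomes a morphism after finitely many blowings-up at the successive fundamental points of $\phi$; doing the same for $\phi^{-1}$ produces a smooth projective surface $Z$ with birational morphisms $p\colon Z\to Y$ and $q\colon Z\to Y'$, each a composite of blowings-up, such that $q=\phi\circ p$. The key point is that this resolution can be carried out $\mathfrak{G}$-equivariantly: because $\phi$ intertwines the actions, the fundamental locus of $\phi$ on $Y$ is a $\mathfrak{G}$-stable finite set of (possibly infinitely near) points, hence a union of $\mathfrak{G}$-orbits, and blowing up each entire orbit yields a surface on which $\mathfrak{G}$ still acts biregularly and on which $\phi$ has strictly smaller fundamental locus; iterating, we may take $p$ to be $\mathfrak{G}$-equivariant. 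Then $q$ is $\mathfrak{G}$-equivariant as well, since $q\circ\overline{\sigma}=\phi\circ p\circ\overline{\sigma}=\phi\circ\overline{\sigma}\circ p=\overline{\sigma}'\circ\phi\circ p=\overline{\sigma}'\circ q$.

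It then remains to compute $\Pic(Z)$ as a $\mathfrak{G}$-lattice in two ways. Iterating the blow-up formula $\Pic(\widetilde{X})\simeq\Pic(X)\oplus\mathbb{Z}$ of Subsection~\ref{intersection} along $p$, and using that $p$ is $\mathfrak{G}$-equivariant, one obtains an exact sequence of $\mathfrak{G}$-lattices
\begin{equation*}
0\longrightarrow P_Y\longrightarrow\Pic(Z)\xrightarrow{\ p_{*}\ }\Pic(Y)\longrightarrow 0,
\end{equation*}
where $P_Y=\ker p_{*}$ is generated by the classes of the exceptional prime divisors of $p$. Since $\mathfrak{G}$ acts biregularly on $Z$ and $p$ is equivariant, $\mathfrak{G}$ permutes these finitely many exceptional curves, so $P_Y$ is a permutation $\mathfrak{G}$-lattice $\bigoplus_i\mathbb{Z}[\mathfrak{G}/\mathfrak{G}_{P_i}]$, with one summand for each $\mathfrak{G}$-orbit of blown-up points. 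As $p^{*}$ is $\mathfrak{G}$-equivariant and $p_{*}p^{*}=\id$, the sequence splits equivariantly, whence $\Pic(Z)\simeq\Pic(Y)\oplus P_Y$. Applying the same argument to $q\colon Z\to Y'$ gives $\Pic(Z)\simeq\Pic(Y')\oplus P_{Y'}$ with $P_{Y'}$ permutation. Comparing,
\begin{equation*}
\Pic(Y)\oplus P_Y\;\simeq\;\Pic(Z)\;\simeq\;\Pic(Y')\oplus P_{Y'},
\end{equation*}
which is the asserted similarity (take $P_1=P_{Y'}$ and $P_2=P_Y$).

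The main obstacle is the middle step: carrying out the elimination of indeterminacy $\mathfrak{G}$-equivariantly and controlling the infinitely near base points. One must verify that at each stage the fundamental locus is genuinely a finite union of $\mathfrak{G}$-orbits, that blowing up a full orbit preserves biregularity of the action and strictly decreases the fundamental locus, and finally that the $p$-exceptional configuration — a forest of projective lines carrying a $\mathfrak{G}$-action — contributes exactly a permutation summand to $\Pic(Z)$. For the last point one uses, as noted in Subsection~\ref{updown}, that within a single tower the order of the blowings-up is rigid, so $\mathfrak{G}$ can only carry towers to towers level by level and hence permutes the strict transforms of the exceptional primes among themselves; combined with the equivariant splitting by $p^{*}$, this yields the permutation structure of $\ker p_{*}$.
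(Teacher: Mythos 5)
Your proposal is correct and follows essentially the same route as the paper: resolve the $G$-equivariant birational map to a common surface dominating $Y$ and $Y'$ (the paper phrases this as two biregular surfaces $Z$ and $Z'$), observe that equivariance forces the exceptional divisors of the resolution to be permuted by $\mathfrak{G}$ so that they span a permutation sublattice, and split $\Pic(Z)\simeq\Pic(Y)\oplus(\text{permutation})$ via the pullback. Your version is somewhat more careful than the paper's about the equivariant elimination of indeterminacy and the splitting by $p^{*}$, but the underlying argument is identical.
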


\begin{proof}
Assume the existence of a required $\overline{k}$-isomorphism $T$
from $\overline{k}(u,v)$ to $\overline{k}(u^\prime,v^\prime)$.
Then $T$ induces a birational transformation of $\mathbb{P}^1 \times \mathbb{P}^1$.
After suitable blowings-up or blowings-up and down,
$T$ is lifted to a birational map from $Y$ to $Y^\prime$.
Though it may not be biregular on $Y$,
after further suitable blowings-up,
we can reach the surfaces $Z$ and $Z^\prime$,
on which $T$ (and $T^{-1}$) becomes biregular.

Since $T$ is biregular, we have $\Pic(Z) \simeq \Pic(Z^\prime)$
as $\mathbb{Z}$-modules.
Since $T$ commutes with the action of $G$,
(then their liftings commutes also),
$\Pic(Z) \simeq \Pic(Z^\prime)$ as $\mathfrak{G}$-lattices also.

Only remained to prove is that $\Pic(Z) \simeq \Pic(Y) \oplus P$
for some permutation $\mathfrak{G}$-lattice $P$.

Let $\{E_j\}$ be the successive blowings-up to reach $Z$ from $Y$.
Since $T$ commutes with the action of $G$,
the set of fundamental points of $T$ is $G$-invariant,
and the action of $G$ induces permutations of $\{E_j\}$.

Let $\{e_i\}$ be the basis of $\Pic(Y)$ as a free $\mathbb{Z}$-module.
Then $\Pic(Z)$ is a free $\mathbb{Z}$-module
with the basis $\{\pi^\ast e_i\} \cup \{ E_j \}$,
where $\pi^\ast$ is a $\mathbb{Z}$-linear map
from $\Pic(Y)$ to $\Pic(Z)$,
obtained by the iteration of $\pi^\ast$ mentioned at the end of Subsection \ref{intersection}.
Let $M_1$ (resp. $M_2$) be a free $\mathbb{Z}$-module with the basis
$\{\pi^\ast e_i\}$ (resp. $\{E_j\}$).
Then $\Pic(Z) \simeq M_1 \oplus M_2$ as $\mathbb{Z}$-modules.
However, $M_2$ is a permutation $\mathfrak{G}$-lattice
as mentioned above.
We can show that $M_1$ is also a $\mathfrak{G}$-lattice
which is isomorphic to $\Pic(Y)$.
\end{proof}

\begin{cor}
If $K$ is $k$-isomorphic to $K^\prime$, then
$H^1(\mathfrak{G},\Pic(Y)) \simeq H^1(\mathfrak{G},\Pic(Y^\prime))$
and $\widehat H^{-1}(\mathfrak{G},\Pic(Y)) \simeq
\widehat H^{-1}(\mathfrak{G},\Pic(Y^\prime))$,
where $H^1$ is Galois cohomology
and $\widehat H^{-1}$ is Tate cohomology.
\end{cor}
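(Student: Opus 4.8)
The plan is to derive the corollary formally from the preceding proposition, invoking only two standard properties of the functors $H^1(\mathfrak{G},-)$ and $\widehat{H}^{-1}(\mathfrak{G},-)$ on $\mathfrak{G}$-lattices: additivity over finite direct sums, and vanishing on permutation lattices. By the proposition, a $k$-isomorphism $K\simeq K'$ furnishes permutation $\mathfrak{G}$-lattices $P_1,P_2$ together with an isomorphism of $\mathfrak{G}$-lattices $\Pic(Y)\oplus P_1\simeq\Pic(Y')\oplus P_2$. Hence it suffices to show that adjoining a permutation lattice as a direct summand alters neither $H^1$ nor $\widehat{H}^{-1}$.

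First I would record additivity: for $\mathfrak{G}$-lattices $M,N$ one has $H^1(\mathfrak{G},M\oplus N)\simeq H^1(\mathfrak{G},M)\oplus H^1(\mathfrak{G},N)$, and the same for $\widehat{H}^{-1}$, since group cohomology and (as $\mathfrak{G}$ is finite) Tate cohomology commute with finite direct sums of the coefficient module. Next I would treat a transitive permutation module $\mathbb{Z}[\mathfrak{G}/\mathfrak{H}]$. By Shapiro's lemma $H^1(\mathfrak{G},\mathbb{Z}[\mathfrak{G}/\mathfrak{H}])\simeq H^1(\mathfrak{H},\mathbb{Z})=\mathrm{Hom}(\mathfrak{H},\mathbb{Z})=0$, because $\mathfrak{H}$ is finite and $\mathbb{Z}$ is torsion-free; likewise $\widehat{H}^{-1}(\mathfrak{G},\mathbb{Z}[\mathfrak{G}/\mathfrak{H}])\simeq\widehat{H}^{-1}(\mathfrak{H},\mathbb{Z})$, and the latter is the kernel of the norm endomorphism $|\mathfrak{H}|\colon\mathbb{Z}\to\mathbb{Z}$ modulo the augmentation ideal, hence $0$ for the same reason. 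Decomposing an arbitrary permutation lattice $P$ into transitive summands and applying additivity gives $H^1(\mathfrak{G},P)=\widehat{H}^{-1}(\mathfrak{G},P)=0$.

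Combining these, I would compute
\begin{equation*}
H^1(\mathfrak{G},\Pic(Y))\simeq H^1(\mathfrak{G},\Pic(Y)\oplus P_1)\simeq H^1(\mathfrak{G},\Pic(Y')\oplus P_2)\simeq H^1(\mathfrak{G},\Pic(Y')),
\end{equation*}
where the two outer isomorphisms use additivity together with the vanishing on $P_1$ and $P_2$, and the middle one is the $\mathfrak{G}$-isomorphism supplied by the proposition; the identical chain with $\widehat{H}^{-1}$ in place of $H^1$ yields the second assertion.

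I expect no substantive obstacle here: the corollary is a purely formal consequence of the proposition. The only points that need a little care are ensuring that $\Pic(Y)\oplus P_1\simeq\Pic(Y')\oplus P_2$ is read as an isomorphism of $\mathfrak{G}$-lattices rather than merely of abelian groups (which is exactly what the proof of the proposition delivers), and using the correct normalization of Tate cohomology so that $\widehat{H}^{-1}(\mathfrak{H},\mathbb{Z})=0$ is transparent; beyond this, Shapiro's lemma and additivity of cohomology may be cited without further comment.
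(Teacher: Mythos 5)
Your proposal is correct and follows the same route as the paper: the paper derives the corollary in one line from the vanishing of $H^1(\mathfrak{G},P)$ and $\widehat H^{-1}(\mathfrak{G},P)$ for a permutation $\mathfrak{G}$-lattice $P$, which is exactly the fact you establish (via Shapiro's lemma and additivity) before concluding with the same chain of isomorphisms. You have merely supplied the standard details that the paper leaves implicit.
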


This comes from $H^1(\mathfrak{G},P)=\widehat H^{-1}(\mathfrak{G},P)=0$
for a permutation $\mathfrak{G}$-lattice $P$.

Especially, if $K$ is $k$-rational,
then $H^1(\mathfrak{G},\Pic(Y))=\widehat H^{-1}(\mathfrak{G},\Pic(Y))=0$
by the following reason.

The $k$-rationality of $K$ means that $K$ is $k$-isomorphic to
the two dimensional rational function field $K^\prime=k(x,y)$.
In this case,
$\overline{k} \otimes_k K^\prime=\overline{k}(x,y)$
and $\overline{\sigma}^\prime$ acts trivially on $x$ and $y$.
So, $Y^\prime=\mathbb{P}^1 \times \mathbb{P}^1$
and $\Pic(Y^\prime)$ is a trivial $\mathfrak{G}$-lattice,
so that $H^1(\mathfrak{G},\Pic(Y^\prime))=\widehat H^{-1}(\mathfrak{G},\Pic(Y^\prime))=0$.
In other words, $H^1(\mathfrak{G},\Pic(Y)) \not=0$
or $\widehat H^{-1}(\mathfrak{G},\Pic(Y)) \not= 0$
is a criterion for the $k$-irrationality of $K$.

\subsection{\bf Proof of Theorem \ref{A}} \label{pfA}
As mentioned in Subsection \ref{idea}, Theorem \ref{A} was proved already by
Colliot-Th\'el\`ene and Sansuc \cite[Proposition 1 (v)]{San81}
(see also \cite{CTS94} ). The following proof is included for the convenience
of the reader.

Let $K$ be the quadratic extension of $k(x,y)$ defined by
$(\ref{cha})$: $z^2=ay^2+P(x)$ with conditions $(\mathrm{C1}),\dots, (\mathrm{C5})$
in Subsection \ref{main}. Then $\overline{k} \otimes_k
K=\overline{k}(x,u)$ where $u=z+\sqrt{a}y$. For $\sigma \in
G=\Gal(k^{\sep}/k)$, $T_\sigma$ is either the identity or equal to
$T: x \mapsto x, u \mapsto \frac{P(x)}{u}$ according to whether
$\sqrt{a}$ is invariant by $\sigma$ or not. $T$ induces a
birational transformation of $\mathbb{P}^1 \times \mathbb{P}^1$,
and it becomes biregular on the obtained
surface $Y$, mentioned in Subsection \ref{biregT}.
Then $\Pic(Y)$ is a free $\mathbb{Z}$-module of rank $r+2$ with
the basis $E_i (1 \leq i \leq r)$, $F$ and $F^\prime$.
(We assume that $r$ is even.)

We shall determine the structure of $\Pic(Y)$ as a $\mathfrak{G}$-lattice,
where $\mathfrak{G}=\Gal(l/k)$,
$l$ being the splitting field of $P(x)$ over $k$.

As studied in Subsection \ref{biregT},
$T$ maps $E_i$ to $(\widetilde{x=c_i})$
for $1 \leq i \leq r$,
and $(\widetilde{u=c})$ to $(\widetilde{u=\frac{P(x)}{c}})$.
The next question is what divisor classes
they belong to.

Let $\pi^\ast$ and $\rho^\ast$ be a $\mathbb{Z}$-linear map from $\Div(\mathbb{P}^1 \times \mathbb{P}^1)$
to $\Div(Y)$,
obtained by the iteration of $\pi^\ast$ mentioned at the end of Subsection \ref{intersection} and Subsection \ref{updown}.
Then we have
\begin{eqnarray}
\pi^\ast(x=c_i) &=& (\widetilde{x=c_i})+E_i \text{ for } 1 \leq i \leq r, \nonumber \\
\rho^\ast(u=\frac{P(c)}{c}) &=& (\widetilde{u=\frac{P(c)}{c}})+\sum_{j=1}^{r}E_j+\frac{r}{2}F.
\end{eqnarray}
So that in $\Pic(Y)$, we have
\begin{eqnarray}
(\widetilde{x=c_i}) &\equiv& F-E_i, \\
(\widetilde{u=\frac{P(x)}{c}}) &\equiv& F^\prime+\frac{r}{2}F-\sum_{j=1}^r E_j. \nonumber
\end{eqnarray}
Therefore, the action of $\overline{\sigma}=T_\sigma \circ \widetilde{\sigma}$
on $\Pic(Y)$ is represented by the following matrix $g_\sigma$,
with $E_1, E_2, \dots, E_r, F, F^\prime$
as the basis in this order.
\begin{eqnarray} \label{even1}
\text{For } \sigma \in N=\Gal(l/k(\sqrt{a})), &
g_\sigma=
\begin{pmatrix}
A_\sigma & 0 \\
0 & I_2 \\
\end{pmatrix}, \\
\text{for } \sigma \in \mathfrak{G} \setminus N, &
g_\sigma=
\begin{pmatrix}
-A_\sigma & 1 & 0 \\
0 & 1 & 0 \\
-1 & \frac{r}{2} & 1
\end{pmatrix}, \nonumber
\end{eqnarray}
where  $1$ (resp. $0$, $-1$) stands for the matrix whose entries are all $1$
(resp. $0$,$-1$).

$A_\sigma$ is the permutation matrix of the permutation of $\{c_i\}$
induced by $\sigma$.
Suppose that $P(x)$ is a product of $r^\prime$ irreducible polynomials.
Then, the set of roots $\{c_i\}$ of $P(x)$ is divided into $r^\prime$ blocks,
each of which consists of the roots of the same irreducible component.
Each block is a transitive part by the action of $\mathfrak{G}$.
Since each irreducible component is assumed to be irreducible
also over $k(\sqrt{a})$,
the action of $N$ is also transitive on each block.
The block is called even (resp. odd),
when the degree of the corresponding irreducible polynomial is even (resp. odd).

Let $M_0$ be the submodule spanned by
$\{E_i | 1 \leq i \leq r\}$.
An element of $M_0$ is written as $\sum_{i=1}^r a_iE_i$, $a_i \in \mathbb{Z}$.
Let $s_j$ be the sum of $a_i$
when $i$ runs over the $j$-th block.
Let $M_e$ be the submodule of $M_0$,
consisting of elements such that
$\sum_{j=1}^{r^\prime} s_j$ is even.
Let $M_b$ be the submodule of $M_0$,
consisting of elements such that $s_j$ is even for every $j$.
We have $M_0/M_e \simeq \mathbb{Z}/2\mathbb{Z},
M_0/M_b \simeq (\mathbb{Z}/2\mathbb{Z})^{r^\prime}$ and
$M_e/M_b \simeq (\mathbb{Z}/2\mathbb{Z})^{r^\prime-1}$,
where $r^\prime$ is the number of the blocks.

By definition, we have
$\widehat H^{-1}(\mathfrak{G},\Pic(Y))=Z/B$,
where $Z$ and $B$ are submodules of $\Pic(Y)$ defined by
\begin{equation}
Z=\ker(\sum_{\sigma} g_\sigma), \quad
B \text{ is the module spanned by }
\bigcup_{\sigma} \im(\sigma-\id),
\end{equation}
where the summation and the union are taken over $\sigma \in \mathfrak{G}$.

The sum $\sum g_\sigma$ is zero
except the last row and the $(r+1)$-th column,
so that the rank of $Z$ is $r$ and the projection to $M_0$
(projection as $\mathbb{Z}$-modules) is injective.
Let $Z^\prime$ and $B^\prime$ be the images of the projection of $Z$ and $B$
respectively,
then $Z/B \simeq Z^\prime/B^\prime$.

We see that $Z^\prime=M_e$ and $B^\prime=M_b+(\sum_{i=1}^r E_i)\mathbb{Z}$.
Since $M_e/M_b \simeq (\mathbb{Z}/2\mathbb{Z})^{r^\prime-1}$,
and since $\sum_{i=1}^r E_i \in M_b$
if and only if odd block does not exist,
we have
\begin{eqnarray}
\widehat H^{-1}(\mathfrak{G},\Pic(Y)) \simeq \begin{cases}
(\mathbb{Z}/2\mathbb{Z})^{r^\prime-1}
& {\text{ if odd block does not exist,}} \\
(\mathbb{Z}/2\mathbb{Z})^{r^\prime-2}
& {\text{ if odd blocks exist.}}
\end{cases}
\end{eqnarray}
As for $H^1(\mathfrak{G},\Pic(Y))$,
we proceed as follows.
In general for a $\mathfrak{G}$-lattice $M$,
$H^1(\mathfrak{G},M)$ is isomorphic to $\widehat H^{-1}$
of the dual lattice $M^\prime$.
So that as for $H^1(\mathfrak{G},\Pic(Y))$,
it suffices to calculate $\widehat H^{-1}$
for the transposed matrix of $(\ref{even1})$.
The calculation shows that
$H^1(\mathfrak{G},\Pic(Y)) \simeq \widehat H^{-1}(\mathfrak{G},\Pic(Y))$,
though the matrix $(\ref{even1})$ is not symmetric.

Thus, the proof of Theorem \ref{A} has been completed.
Note that when $\deg P$ is odd,
$\widehat H^{-1}(\mathfrak{G},\Pic(Y)) \simeq (\mathbb{Z}/2\mathbb{Z})^{r^\prime-1}$,
because the reduction in Subsection \ref{degeven} implies that $r^\prime$ increases by $1$
and odd block obviously exist,
so we have $(r^\prime+1)-2=r^\prime-1$.


\subsection{$\mathfrak{G}$-invariant classes} \label{Ginv}

From the matrices $(\ref{even1})$,
we have the followings.

\begin{prop} \label{prop10}
The submodule $\Pic(Y)^\mathfrak{G}$ of $\mathfrak{G}$-invariant classes
has rank 2 with the basis $F$ and $\Omega$.
We have $F \cdot F=0, F \cdot \Omega=-2$ and $\Omega \cdot \Omega=8-r$.
\end{prop}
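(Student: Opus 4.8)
The plan is to read off $\Pic(Y)^{\mathfrak{G}}$ directly from the matrices $(\ref{even1})$, then to recognise the canonical class $\Omega$ as a second generator, and finally to compute the three intersection numbers from the intersection form of $Y$ recorded in Subsection \ref{pre2}.

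First I would solve the invariance equations. Writing a class as $v=\sum_{i=1}^r a_iE_i+bF+cF'$, invariance under $N=\Gal(l/k(\sqrt{a}))$ forces the $a_i$ to be constant on each Galois block, since by $(\mathrm{C4})$ each block is a transitive $N$-set. Taking then any $\sigma\in\mathfrak{G}\setminus N$, whose action by $(\ref{even1})$ is $E_i\mapsto F-E_{\sigma(i)}$, $F\mapsto F$, $F'\mapsto F'+\frac{r}{2}F-\sum_jE_j$, and using that $\sigma$ permutes the blocks (so $\sum_i a_iE_{\sigma(i)}=\sum_i a_iE_i$), one gets $\overline\sigma(v)=\big(\sum_i a_i+b+\frac{cr}{2}\big)F-\sum_i(a_i+c)E_i+cF'$; the equation $\overline\sigma(v)=v$ then forces $a_i=-c/2$ for every $i$ (the $F$-coordinate matching automatically). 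In particular $c$ must be even, and writing $c=2m$ yields $v=bF+m\,v_0$ with $v_0:=2F'-\sum_{i=1}^rE_i$; no further $\sigma$ imposes a new condition, as one checks that every such $v$ is indeed $\mathfrak{G}$-invariant. Hence $\Pic(Y)^{\mathfrak{G}}$ is free of rank $2$ with $\mathbb{Z}$-basis $F$ and $v_0$.

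Next I would bring in $\Omega$. The surface $Y$ of Subsection \ref{biregT} arises from $\mathbb{P}^1\times\mathbb{P}^1$ by the $r$ point blow-ups at $P_1,\dots,P_r$ together with the $\frac{r}{2}$ elementary transformations at infinity; as these take place at disjoint loci they commute, so $Y$ is biregular to the surface $Y_{rs}$ of Subsection \ref{pre2} with ``$r$'' there equal to $\frac{r}{2}$ and ``$s$'' equal to $r$. Thus $\Omega=(\frac{r}{2}-2)F-2F'+\sum_{i=1}^rE_i=(\frac{r}{2}-2)F-v_0$, which is also seen to be $\mathfrak{G}$-invariant directly from $(\ref{even1})$. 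Since $v_0=(\frac{r}{2}-2)F-\Omega$, the pair $\{F,\Omega\}$ is obtained from $\{F,v_0\}$ by a unimodular change of basis, so it too is a $\mathbb{Z}$-basis of $\Pic(Y)^{\mathfrak{G}}$. Finally, Subsection \ref{pre2} records $F\cdot F=0$, $F\cdot F'=1$, $F'\cdot F'=\frac{r}{2}$, $E_i\cdot E_j=-\delta_{ij}$ and $E_i\cdot F=E_i\cdot F'=0$ on $Y$, from which $F\cdot F=0$, $F\cdot\Omega=-2(F\cdot F')=-2$, and $\Omega\cdot\Omega=8-s=8-r$ (equivalently, by expanding $\Omega=(\frac{r}{2}-2)F-2F'+\sum E_i$). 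This completes the proof.

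The step I expect to require the most care is the identification of $Y$ with $Y_{rs}$: one must confirm that the elementary transformations of Subsection \ref{biregT} are of exactly the type analysed in Subsection \ref{pre2} — blow up a point lying on a curve of class $F$, then blow down its proper transform — and that interchanging them with the finite blow-ups is legitimate. If one prefers to avoid this, the intersection numbers $F\cdot F$, $F\cdot F'$, $F'\cdot F'$ on $Y$ can instead be obtained ab initio from the blow-up and blow-down formulas of Subsections \ref{intersection} and \ref{updown}; this is routine but longer, and leaves the invariants computation of the first step untouched.
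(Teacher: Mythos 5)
Your proof is correct and follows essentially the same route as the paper: both extract the rank-$2$ invariant lattice from the matrices $(\ref{even1})$, substitute the expression $\Omega=(\tfrac{r}{2}-2)F-2F'+\sum_{i=1}^rE_i$ coming from Subsection \ref{pre2}, and evaluate the three intersection numbers from the $Y_{rs}$ intersection form. The only (minor) difference is that you solve the fixed-point equations directly and so obtain the invariant lattice exactly, whereas the paper computes the image of the norm $\sum_\sigma g_\sigma$ and passes to $\Pic(Y)^{\mathfrak{G}}$ by the finite-index (saturation) observation; your generator $2F'-\sum E_i$ and the paper's $2F'+\tfrac{r}{2}F-\sum E_i$ differ by $\tfrac{r}{2}F$ and span the same lattice together with $F$, since $r$ is even.
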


\begin{proof}
From $(\ref{even1})$, $\sum_\sigma g_\sigma$ is represented by
\begin{equation} \label{matsg}
\sum_\sigma g_\sigma=\frac{|\mathfrak{G}|}{2}
\begin{pmatrix}
O_r & 1 & 0 \\
0 & 2 & 0 \\
-1 & \frac{r}{2} & 2
\end{pmatrix}
\end{equation}
with the basis $E_i$ and $F, F^\prime$,
where $O_r$ is the $r \times r$ matrix
whose entries are all zero.

Since the image of $\sum_\sigma g_\sigma$ is contained in $\Pic(Y)^\mathfrak{G}$
with finite index, we see that $\Pic(Y)^\mathfrak{G}$ is generated by $F$
and $2F^\prime+\frac{r}{2}F-\sum_{i=1}^rE_i$.
But since $\Omega_Y=-2F^\prime+(\frac{r}{2}-2)F+\sum_{i=1}^rE_i$
as stated in Subsection \ref{pre2},
$\Pic(Y)^\mathfrak{G}$ is generated by $F$ and $\Omega$.

$F \cdot F=0$ etc. is easily obtained by the intersection form of $Y_{rs}$ stated before.
\end{proof}

Note that if a curve $C$ is $\mathfrak{G}$-invariant,
then its class is also $\mathfrak{G}$-invariant.
The converse is not true,
because $C$ can be moved in the same class.


\subsection{\bf Proof of Theorem \ref{B}} \label{pfB}

If $k(x,y,z)$ is $k$-rational, write $k(x,y,z)=k(t,s)$ for some
$t$ and $s$. Thus $\overline{l}(x,u)=\overline{l}(t,s)$
with $\mathfrak{G}$-invariant $t,s$.

Let $Y$ be the algebraic surface obtained in
Subsection \ref{biregT}. It follows that $Y$ is
$k$-birational with $\mathbb{P}^1 \times \mathbb{P}^1$. Here
``$k$-birational'' means that there exists a birational mapping
$\mathbb{P}^1 \times \mathbb{P}^1 \rightarrow Y$ which commutes
with the action of $\mathfrak{G}$, where $\mathfrak{G}$ acts trivially on
$t$ and $s$.

Let $\Phi$ be a $k$-birational mapping $\mathbb{P}^1 \times \mathbb{P}^1
\rightarrow Y$.
After finite steps of blowings-up of $\mathbb{P}^1 \times \mathbb{P}^1$ and $Y$
respectively, $\Phi$ is lifted to a biregular mapping
$Z \rightarrow Z^\prime$.

For $a,b \in k$,
the lines $t=a$ and $s=b$ are $\mathfrak{G}$-invariant in
$\mathbb{P}^1 \times \mathbb{P}^1$,
so that their images are also $\mathfrak{G}$-invariant in $Y$ or in $Z^\prime$.
Suppose that $t=a$ is not an exceptional curve of $\Phi$
and does not pass through a fundamental point of $\Phi$,
then the values of intersection form
$(t=a) \cdot (t=a)=0, (t=a) \cdot \Omega=-2$
are kept invariant under the blowings-up,
so the image $C$ in $Z^\prime$ also satisfies
$C \cdot C=0$ and $C \cdot \Omega=-2$ in $Z^\prime$.
\hfill\break
$Z^\prime$ is obtained from $Y$ by successive blowings-up
$\{E_j^\prime\}$.
By each blowing-up, $C \cdot C$ is decreased by $(C \cdot E_j^\prime)^2$
and $\Omega \cdot C$ is increased by $C \cdot E_j^\prime$,
by $(\ref{2_1})$ and $(\ref{widetildeC})$.

Thus we have on the surface $Y$
\begin{eqnarray} \label{CC}
C \cdot C=\sum_j m_j^2, \\
\Omega \cdot C=-2-\sum_j m_j, \nonumber
\end{eqnarray}
where $m_j=C \cdot E_j^\prime$.
We represent the class of $C$ with $\nu F-m\Omega$, then we have
\begin{eqnarray} \label{CC2}
C \cdot C &=& 4m\nu+\omega m^2, \\
\Omega \cdot C &=& -2\nu-m\omega
\text{ where } \omega=\Omega \cdot \Omega. \nonumber
\end{eqnarray}
Combining $(\ref{CC})$ with $(\ref{CC2})$, we get
\begin{eqnarray} \label{meq}
\sum_j m_j^2=4m\nu+\omega m^2, \\
\sum_j m_j=2\nu+m\omega-2. \nonumber
\end{eqnarray}
From $C \cdot F=2m$, we have $m \geq 0$,
and $m=0$ means that $C$ is $(x=c)$ for some $c \in k$.
For $m>0$, we have $0 \leq m_j \leq 2m$
by the following reason.

Let $(x=c_j)$ be the line passing through the base point of $E_j^\prime$.
$\widetilde C \cdot (\widetilde{x=c_j}) \geq 0$
on the blowing-up surface implies $C \cdot E_j^\prime \leq C \cdot (x=c_j)
=C \cdot F=2m$.
Note that for successive blowings-up $\{E_j^\prime\}$
(namely $E_1^\prime$ is the blowing-up at $P_1 \in Y$,
$E_2^\prime$ is the blowing-up at $P_2 \in E_1^\prime$,
$E_3^\prime$ is the blowing-up at $P_3 \in E_2^\prime$,
and so on),
$C \cdot E_j^\prime$ is monotone decreasing.
Any successive blowing-up also satisfies
$C \cdot E_j^\prime \leq 2m$.

Thus we have $0 \leq \sum_j m_j^2 \leq 2m\sum_j m_j$,
so that
\begin{equation}
4m\nu+\omega m^2 \leq 2m(2\nu+m\omega-2)
\end{equation}
which yields $\omega m^2 \geq 4m$.
This is impossible if $\omega \leq 0$ and $m > 0$.
Since $\omega=8-r$,
if $r \geq 8$,
then any $\mathfrak{G}$-invariant curve other than $x=$const.
cannot become the image of $t=a$.

The same holds for $s=b$. Since $t$ and $s$ are algebraically
independent, at least one of $t$ and $s$ depends on $u$ so that $m
\geq 1$. Thus we reach a contradiction.
\hfill $\square$


\subsection{Reduction to a del Pezzo surface} \label{deg46}

Since we are assuming that $r$ is even,
the remained cases are $r=6$ and $r=4$.
We shall continue the discussion for these cases.

Let $P_1^\prime$ be a point on $Y$ such that $m_1 > m$.
The point $P_1^\prime$ lies on $(x=c)$ for some $c$.
Note that $c \not= c_i$,
since $C \cdot E_i=C \cdot \Omega=m$,
so that for any point $P$ on $(x=c_i)$,
$C \cdot E_P >m$ can not occur.

Let $Y_1$ be the blowing-up of $Y$ at $P_1^\prime$,
and $Y_1^\prime$ be the blowing-down of $Y_1$
by $x=c$.
$Y_1$ is biregular with the blowings-up of $Y_1^\prime$
at some $Q_1$, where
\begin{equation}
C \cdot E_{Q_1} =C \cdot F-C \cdot E_1^\prime=2m-m_1<m.
\end{equation}

Let $P_2^\prime$ be a point on $Y_1$ such that $m_2 >m$.
The point $P_2^\prime$ does not lie on $(x=c)$,
because of $C \cdot E_1^\prime >m$.

Since $Y_1 \setminus (x=c)$ is biregular
with $Y_1^\prime \setminus \{Q_1\}$,
the blowing-up at $P_2^\prime$ induces the blowing-up of $Y_1^\prime$
at the correspoinding point $P_2^{\prime\prime}$.
After the blowing-up at $P_2^{\prime\prime}$,
blow-down by $(x=c^\prime)$
passing through $P_2^{\prime\prime}$.
Let $Y_2^\prime$ be the obtained surface.

Repeat this procedure until all $E_j^\prime$ with $m_j>m$ are eliminated.
We shall denote the obtained surface
by $Y_C$
($Y_C$ depends on $C$).
Then the successive blowings-up of $Y$ at $\{P_j^\prime\}$
is obtained by the successive blowings-up of $Y_C$ as explained in Subsection \ref{updown},
but this time every blowing-up
satisfies $\mu_j := C \cdot E_{Q_j} \leq m$.

On $Y_C$, we have $C \equiv \nu^\prime F-m\Omega$
where $\nu^\prime=\nu-\sum^\prime (m_j-m)$.
Here the summation $\sum^\prime$ is taken over such $j$ that $m_j>m$.

The self intersection number $C \, \cdot \, C$ on $Y_C$ decreases from that on $Y$ by
$4m \sum^\prime (m_j-m)$, and we have
\begin{eqnarray}
\sum_j \mu_j^2=4\nu^\prime m+m^2\omega, \\
\sum_j \mu_j=2\nu^\prime+m\omega-2, \nonumber
\end{eqnarray}
where $\mu_j=\min(m_j,2m-m_j) \leq m$.

This time $4\nu^\prime m+m^2\omega \leq
m (2\nu^\prime+m\omega-2)$ yields $\nu^\prime \leq -1$.
Since $C \cdot C \geq 0$ implies $\nu^\prime \geq -\frac{m\omega}{4}$,
we must have $\frac{m\omega}{4} \geq 1$,
namely $m \geq \frac{4}{\omega}$.

\begin{prop}
The surface $Y_C$ is a del Pezzo surface.
\end{prop}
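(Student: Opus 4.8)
The plan is to verify the three defining properties of a del Pezzo surface for $Y_C$: that it is rational, that $\Omega_{Y_C}\cdot\Omega_{Y_C}>0$, and that $\Omega_{Y_C}\cdot\Gamma<0$ for every irreducible curve $\Gamma$ on $Y_C$. The first two are quick. By construction $Y_C$ is obtained from $Y$ by a finite sequence of operations, each consisting of a blowing-up at a point $P_j^\prime$ lying on a fibre $(x=c)$ with $c\neq c_i$, followed by the blowing-down of the strict transform of $(x=c)$; since such a fibre lies in the class $F$ with $F\cdot F=0$ and $F\cdot\Omega=-2$, this is precisely the blowing-up and down of Subsection~\ref{updown}. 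Hence each step preserves $\Omega\cdot\Omega$, so $\Omega_{Y_C}\cdot\Omega_{Y_C}=\Omega_Y\cdot\Omega_Y=8-r=\omega$, which equals $2$ or $4$ because $r\in\{4,6\}$; in particular $\omega>0$. Moreover each step carries the fibration $Y\to\mathbb{P}^1$ given by $x$ to a fibration $Y_C\to\mathbb{P}^1$ whose fibres are again of class $F$, and it changes neither the number nor the type of reducible fibres; on $Y$ these are exactly the $r$ fibres $\widetilde{(x=c_i)}+E_i$ over the roots of $P$, each a union of two curves of self-intersection $-1$. So $Y_C\to\mathbb{P}^1$ is a conic bundle with $r$ degenerate fibres, and $Y_C$ is rational, being birational to $Y$.

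For the remaining condition I would argue by contradiction, pitting a putative ``bad'' curve against the pencil of images of the lines $t=a$. Suppose $\Gamma$ is an irreducible curve on $Y_C$ with $\Omega\cdot\Gamma\geq 0$. For all but finitely many $a$, the image $C_a$ of $t=a$ under $\Phi$ is an irreducible curve distinct from $\Gamma$, and $C_a$ lies in the class $\nu^\prime F-m\Omega$ already analysed, for which we know $\nu^\prime\leq -1$ and $m\geq 4/\omega\geq 1$. Likewise, choosing an irreducible fibre $F_c$ (with $c$ not a root of $P$) distinct from $\Gamma$ gives $F\cdot\Gamma=F_c\cdot\Gamma\geq 0$. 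Then $0\leq C_a\cdot\Gamma=\nu^\prime(F\cdot\Gamma)-m(\Omega\cdot\Gamma)\leq 0$, since $\nu^\prime<0$, $F\cdot\Gamma\geq 0$, $m>0$ and $\Omega\cdot\Gamma\geq 0$; hence $C_a\cdot\Gamma=0$, and as both summands are $\leq 0$ we conclude $F\cdot\Gamma=0$ and $\Omega\cdot\Gamma=0$. But $F\cdot\Gamma=0$ forces $\Gamma$ to be contained in a fibre of $Y_C\to\mathbb{P}^1$, and every irreducible vertical curve is either an irreducible fibre, with $\Omega\cdot\Gamma=-2$, or one of the two components of a degenerate fibre, with $\Omega\cdot\Gamma=-1$; either way $\Omega\cdot\Gamma<0$, contradicting $\Omega\cdot\Gamma=0$. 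Thus no such $\Gamma$ exists, and together with $\omega>0$ and rationality this proves $Y_C$ is del Pezzo.

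The substantive points to pin down are all in the first half: that the blow-up/blow-down steps producing $Y_C$ really are the elementary transformations of Subsection~\ref{updown}, carried out only at points of smooth fibres $(x=c)$, $c\neq c_i$ (which the preceding discussion already guarantees), so that $\Omega\cdot\Omega=\omega$ and the conic bundle structure with exactly $r$ degenerate fibres survive; and that for generic $a$ the member $C_a$ of the pencil is irreducible, of class $\nu^\prime F-m\Omega$, and can be chosen away from $\Gamma$ and from a fixed smooth fibre. I expect the main obstacle to be precisely the exclusion of $\Omega\cdot\Gamma=0$ for a vertical $\Gamma$: this is exactly where it matters that the degenerate fibres consist of two $(-1)$-curves and nothing more degenerate. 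Once that is in hand, the inequality $0\leq C_a\cdot\Gamma\leq 0$ does the rest, using only the bounds $\nu^\prime\leq -1$ and $m\geq 1$ established just before the statement.
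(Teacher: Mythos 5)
Your proof is correct, and the first half (the elementary transformations of Subsection \ref{updown} preserve $\Omega\cdot\Omega$, so $\omega=8-r>0$, and the reduction to checking $\Omega\cdot\Gamma<0$) matches what the paper takes for granted. The interesting divergence is in the key step. Both arguments begin the same way: intersecting $\Gamma$ with a curve of class $\nu'F-m\Omega$ and with $F$, and using $\nu'\le -1$, $m\ge 1$, forces $F\cdot\Gamma=0$ and $\Omega\cdot\Gamma=0$ in the only problematic case. To rule that case out, the paper argues arithmetically: the $\mathfrak{G}$-invariant effective divisor $\sum_\sigma\Gamma_\sigma$ lies in $\Pic(Y_C)^{\mathfrak{G}}$, which has rank $2$ with basis $F,\Omega$ (Proposition \ref{prop10}, carried over to $Y_C$), and since it pairs to zero with both generators and the form is nondegenerate there, it would be numerically trivial, which is impossible for a nonzero effective divisor. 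You instead argue geometrically: $F\cdot\Gamma=0$ forces $\Gamma$ to be vertical for the conic fibration $x:Y_C\to\mathbb{P}^1$, and every vertical irreducible curve has $\Omega\cdot\Gamma\in\{-1,-2\}$ because the only reducible fibres are the $r$ pairs of $(-1)$-curves over the roots of $P$, which survive the elementary transformations untouched (the centres $P_j'$ lie off the fibres $x=c_i$, as established in Subsection \ref{deg46}). Your route costs a little extra bookkeeping --- you must check that the fibre structure, including the classification of degenerate fibres, persists through the blowings-up and down, and that for generic $a$ the member $C_a$ of the pencil is irreducible of class $\nu'F-m\Omega$ and distinct from $\Gamma$ (the paper sidesteps the latter by using the single curve $C$ and disposing of $\Gamma=C$ via $\Omega\cdot C\le -2$) --- but it buys independence from the Galois action and from the computation of $\Pic(Y_C)^{\mathfrak{G}}$, and it identifies exactly which curves satisfy $F\cdot\Gamma=0$. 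Both proofs are complete once these respective inputs are granted.
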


\begin{proof}
We must check only $\Omega \cdot \Gamma<0$.

Since $\Omega \cdot F=-2$ and $\Omega \cdot C \leq -2$,
we can suppose that $\Gamma \not\in F$ and $\Gamma \not= C$.
Then $\Gamma \cdot C \geq 0$ implies
$\nu^\prime F \cdot \Gamma-m \Omega \cdot \Gamma \geq 0$,
namely $\nu^\prime F \cdot \Gamma \geq m\Omega \cdot \Gamma$.
Since $\nu^\prime \leq -1$, we have $\Omega \cdot \Gamma \leq 0$,
and $\Omega \cdot \Gamma=0$ is possible only when $F \cdot \Gamma=0$.

Let $\Gamma_\sigma$ be the image of $\Gamma$ by the action of $\sigma \in \mathfrak{G}$.
Then $\sum_\sigma \Gamma_\sigma$ is $\mathfrak{G}$-invariant.
Since the intersection form is kept by the action of $\mathfrak{G}$,
we have $F \cdot \sum_\sigma \Gamma_\sigma=$
\break
$\Omega \cdot \sum_\sigma \Gamma_\sigma=0$,
so $\sum_\sigma \Gamma_\sigma \equiv 0$ in $\Pic(Y_C)$.
But any principal divisor can not be an integral divisor
(= positive linear combination of irreducible curves),
so this is impossible, and the proposition has been proved.
\end{proof}

\subsection{Further blowings-up and down to reach a contradiction.} \label{updown2}

For $r=6$ or $r=4$ (namely for $\omega=2$ or $\omega=4$),
let $F_1$ be $-\frac{4}{\omega}\Omega-F$ mentioned in Theorem \ref{lem2} (2).

We can choose $F_1$ and $\Omega$ as the basis of $\Pic(Y_C)^\mathfrak{G}$,
and we have
$$C \equiv \nu^\prime F-m \Omega=
-\nu^\prime F_1- \big(m+\frac{4\nu^\prime}{\omega}\big)\Omega.$$
Put $m_1=m+\frac{4\nu^\prime}{\omega}$,
then $-1 \geq \nu^\prime \geq -\frac{m\omega}{4}$
yields $0 \leq m_1 < m$.

The case $m_1=0$ is discarded by the following reason.
$m_1=0$ implies $C \cdot C=0$,
so $\mu_j=0$ for any $j$.
Let $C^\prime$ be the image of $s=b$,
then we have $C \cdot C^\prime=1$ on $Z^\prime$,
but since $\mu_j=0$,
we have $C \cdot C^\prime=1$ also on $Y_C$.
On the other hand, $F_1 \cdot F_1=0$ and
$F_1 \cdot \Omega=-2$ imply that $F_1 \cdot D$ is even
for any $D \in \Pic(Y_C)^\mathfrak{G}$.
This is a contradiction.

For $E_j^{\prime\prime}$ such that $\mu_j>m_1$,
after the blowing-up $E_j^{\prime\prime}$,
blow down by the curve $\Gamma_j$
which belongs to $F_1$ and passes through $P_j$
(=base point of $E_j^{\prime\prime}$).
(Such a curve $\Gamma_j$ exists by Theorem \ref{lem2} (1).)
On the obtained surface $X_1$, we have
$$C \equiv \nu_1 F_1-m_1 \Omega \, (\nu_1 \leq -1).$$
Repeat this procedure.
Since $m>m_1>m_2>\cdots$ is monotone decreasing,
after finite steps, we reach $m_l=0$ or $1 \leq m_l < \frac{4}{\omega}$.
In the former case,
$C \equiv F_l$ on $X_l$,
so $C^\prime \cdot C=1$ is impossible for any other $C^\prime \in \Pic(X_l)^\mathfrak{G}$.
In the latter case, we can not reach $C \cdot C=0, C \cdot \Omega=-2$
by further blowings-up.

Anyway, $k(x,y,z)$ can not be $k$-rational.


\section{Conic bundles.} \label{chCo}

\subsection{Preliminaries.} \label{pre1}

First, recall that the function field of a conic bundle over $\mathbb{P}_k^1$
as in (\ref{cha2})
may be written as $K=k(x,y,z)$ with a relation
$z^2=Py^2+Q$ where $P, Q$ are some non-zero separable polynomials
in $k[x]$.
As before, we assume that $\ch k \not=2$.

Note that the rationality problem for the pair $(P,Q)$ is equivalent with
that for $(Q,P)$,
because by putting $z=yz^\prime$ and $y=\frac{1}{y^\prime}$,
$z^2=Py^2+Q$ is rewritten as $z^{\prime2}=P+Qy^{\prime2}$.
It is equivalent also with that of $(P,Q^\prime)$
where $Q^\prime=Q(F^2-PG^2), F,G \in k[x]$.
By putting $z=Fz^\prime+PGy^\prime,
y=Gz^\prime+Fy^\prime,
z^2-Py^2=Q$ is rewritten as
$(F^2-PG^2)(z^{\prime2}-Py^{\prime2})=Q$.
When $F=0, G=1$, it is equivalent with that for $(P,-PQ)$.

When $\deg P=0$ or $\deg Q=0$ or $P/Q=$const.,
the equation is reduced to the previous section.
Thus we will assume $\deg P \geq 1$, $\deg Q \geq 1$,
and $P/Q \not= c \, (c \in k)$.

\begin{prop} \label{prop41}
If there exist $A(x), B(x), C(x) \in k[x]$
such that
\begin{equation} \label{ABC}
A^2P+B^2Q=C^2,
\end{equation}
then $k(x,y,z)$ is $k(x)$-rational
(note that none of $A, B, C$ is zero under the assumption above).
\end{prop}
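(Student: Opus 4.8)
The plan is to recognize $(\ref{ABC})$ as providing a $k(x)$-rational point on the conic $z^2 - P(x)y^2 = Q(x)$, viewed as a conic over the field $k(x)$, and then to parametrize this conic stereographically from that point. Since a smooth conic with a rational point over a field $F$ is $F$-rational, this will yield $k(x,y,z) = k(x)(t)$ for a single element $t$, which is precisely the $k(x)$-rationality of $K$.

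Concretely, I would first divide $(\ref{ABC})$ by $B(x)^2$ — legitimate since $B \ne 0$, as noted in the statement (indeed if $B = 0$ then $A^2 P = C^2$ would force $P$ to be a square in $k[x]$, contradicting that $P$ is separable of positive degree). This gives $(A/B)^2 P + Q = (C/B)^2$, so the point $(y_0, z_0) := (A/B,\, C/B) \in k(x)^2$ satisfies $z_0^2 = P y_0^2 + Q$ and lies on the conic. Next, set
\[
t := \frac{z - z_0}{\,y - y_0\,} \in k(x,y,z),
\]
which is well defined because $y$ is transcendental over $k(x)$ while $y_0 \in k(x)$, so $y \ne y_0$. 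Substituting $z = z_0 + t(y - y_0)$ into $z^2 = Py^2 + Q$ and cancelling $Q = z_0^2 - P y_0^2$, the relation factors as
\[
(y - y_0)\bigl(t^2(y - y_0) + 2 z_0 t - P(y + y_0)\bigr) = 0 ,
\]
and the vanishing of the second factor, which is linear in $y$, gives
\[
y = \frac{y_0(t^2 + P) - 2 z_0 t}{t^2 - P}, \qquad z = z_0 + t\,(y - y_0),
\]
both of which lie in $k(x)(t)$. Hence $k(x,y,z) = k(x)(t)$; and since $k(x,y,z)$ has transcendence degree $2$ over $k$, hence $1$ over $k(x)$, the element $t$ is necessarily transcendental over $k(x)$, so $K$ is $k(x)$-rational.

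There is no genuine obstacle here: the argument is simply the classical stereographic projection of a conic from a rational point, carried out over the function field $k(x)$ rather than over $k$. The only points deserving a word of care are the non-vanishing of $B$ (so that $(y_0,z_0)$ is defined), the well-definedness of $t$ (equivalently $y \notin k(x)$), and the fact that $t^2 - P \ne 0$ in $k(x)(t)$ so the displayed formula for $y$ is meaningful — all three are immediate.
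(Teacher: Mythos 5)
Your argument is correct and is essentially the paper's own proof: the substitution $u=\frac{Bz+C}{By+A}$ used there is exactly the stereographic projection of the conic $z^2=Py^2+Q$ over $k(x)$ from the rational point $(-A/B,-C/B)$, which differs from your chosen point $(A/B,C/B)$ only by a sign. The paper's explicit formulas $(\ref{ABC2})$ match yours up to this sign and a M\"obius change of the parameter.
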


\begin{proof}
From $(\ref{ABC})$,
$z^2=Py^2+Q$ is rewritten as
$B^2z^2=B^2Py^2+C^2-A^2P$,
namely as $B^2z^2-C^2=P(B^2y^2-A^2)$.
Put $Bz+C=z^\prime$ and $By+A=y^\prime$,
then we have $z^\prime(z^\prime-2C)=Py^\prime(y^\prime-2A)$.
Put $z^\prime=uy^\prime$,
then $u(u-\frac{2}{y^\prime}C)=P(1-\frac{2}{y^\prime}A)$,
so $y^\prime \in k(x,u)$.
This implies that $k(x,y,z)=k(x,y^\prime,z^\prime)
=k(x,y^\prime,u)=k(x,u)$, so Proposition \ref{prop41} holds.
Explicitly writing, we have
\begin{equation} \label{ABC2}
y=\frac{-Au^2+2Cu-AP}{B(u^2-P)}, \,
z=\frac{Cu^2-2APu+CP}{B(u^2-P)}.
\end{equation}
\end{proof}

\begin{prop} \label{prop42}
For a sufficiently large Galois extension $l$ of $k$,
$l(x,y,z)$ is $l(x)$-rational.
\end{prop}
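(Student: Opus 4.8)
The plan is to reduce the assertion to Proposition \ref{prop41}. That proposition, whose proof carries over verbatim with $k$ replaced by any field $l \supseteq k$, says that if one can find $A(x),B(x),C(x) \in l[x]$ with $A^2P+B^2Q=C^2$, then $l(x,y,z)$ is $l(x)$-rational; moreover the standing hypotheses $\deg P \ge 1$, $\deg Q \ge 1$, $P/Q \notin k$ persist over $l$ and force any such \emph{nonzero} triple to have $A,B,C$ all nonzero, so the proposition really does apply. Thus it suffices to solve $A^2P+B^2Q=C^2$ nontrivially over a suitable finite (Galois) extension of $k$.

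First I would solve $A^2P+B^2Q=C^2$ over the algebraic closure $\overline k$. Write $p=\deg P$, $q=\deg Q$, fix a large integer $M$, and look for $A,B,C \in \overline k[x]$ of degrees at most $\lfloor (M-p)/2 \rfloor$, $\lfloor (M-q)/2 \rfloor$, $\lfloor M/2 \rfloor$ respectively. Then $A^2P+B^2Q-C^2$ has degree $\le M$, so the equation amounts to a system of $M+1$ equations, each homogeneous of degree $2$, in the coefficients of $A,B,C$, of which there are $\big(\lfloor(M-p)/2\rfloor+1\big)+\big(\lfloor(M-q)/2\rfloor+1\big)+\big(\lfloor M/2\rfloor+1\big)$. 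For $M$ sufficiently large this number exceeds $M+1$, so the common zero locus of these forms is a Zariski-closed cone in affine space over $\overline k$ of dimension $\ge 1$; being a positive-dimensional cone over an algebraically closed field, it contains a point other than the origin. (This is precisely the instance of Tsen's theorem that $\overline k(x)$ is a $C_1$ field, applied to the ternary form $PT_1^2+QT_2^2-T_3^2$; one could simply quote it instead.)

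This yields a nonzero triple $A,B,C \in \overline k[x]$, and by the remark in the first paragraph these are individually nonzero (spelling it out: $A=0$ would make $Q=(C/B)^2$ a square, impossible for a square-free polynomial of positive degree; likewise $B \ne 0$; and $C=0$ would give $PA^2=-QB^2$, whence, after cancelling $\gcd(A,B)$ and using that $P$ and $Q$ are square-free, $P/Q$ is a nonzero constant, contrary to $P/Q \notin k$). The finitely many coefficients of $A,B,C$ lie in a finite extension $l_0$ of $k$; let $l$ be a finite Galois extension of $k$ containing $l_0$ — e.g.\ the normal closure of $l_0/k$, which we may of course enlarge (in positive characteristic one takes a finite normal extension). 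Then $A,B,C \in l[x]$ and Proposition \ref{prop41} gives that $l(x,y,z)$ is $l(x)$-rational, which is the assertion.

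The only real content is the existence of $(A,B,C)$ over $\overline k$; the rest is bookkeeping. The point to watch in the dimension count is that the equations are genuinely homogeneous in the coefficient variables, which is exactly what makes a positive-dimensional solution locus over $\overline k$ automatically contain a nonzero point. As an alternative to the dimension count one can imitate Iskovskikh's reduction: using the equivalences $(P,Q) \sim (Q,P) \sim \big(P,\,Q(F^2-PG^2)\big)$ from Subsection \ref{pre1}, repeatedly decrease $\deg P+\deg Q$ by choosing $F$ over a large $l$ (where leading coefficients are squares) to kill the top-degree part of $P$, until one of the two polynomials is constant and $l(x,y,z)$ is manifestly $l(x)$-rational; but the dimension count is shorter.
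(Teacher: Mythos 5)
Your construction of a nonzero triple $(A,B,C)$ over $\overline{k}$ by the dimension count is fine (over an algebraically closed field the projective dimension theorem gives a nontrivial common zero of $M+1$ forms in more than $M+1$ variables, regardless of degrees), and it matches the paper's appeal to Tsen in characteristic zero. The reduction to Proposition \ref{prop41} and the verification that $A,B,C$ are individually nonzero are also correct.

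The gap is in positive characteristic, exactly at the point you wave away with ``(in positive characteristic one takes a finite normal extension).'' The dimension count only produces coefficients in $\overline{k}$, and these may be inseparable over $k$; a finite extension containing them then need not embed in \emph{any} Galois extension of $k$ (normal $\neq$ Galois without separability), whereas the proposition asserts, and the rest of the paper essentially uses, that $l/k$ is Galois with group $\mathfrak{G}=\Gal(l/k)$ acting on $\Pic(Y)$. Nor can you argue that a positive-dimensional solution variety over $k$ automatically has $k^{\sep}$-points: the curve $T_1^p+aT_2^p+bT_3^p=0$ over $\mathbb{F}_p(a,b)$ has no separable points at all. So you genuinely need an extra argument to descend the solution from $\overline{k}(x)$ to $k^{\sep}(x)$. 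The paper supplies it: the class of the quaternion algebra attached to $(P,Q)_2$ over $k^{\sep}(x)$ is split by $\overline{k}(x)$, which is purely inseparable over $k^{\sep}(x)$, hence the class is $p^n$-torsion; being a quaternion class it is also $2$-torsion; since $p\neq 2$ it is trivial, so $A,B,C$ exist already over $k^{\sep}[x]$ and hence over a finite \emph{separable} (so Galois-closable) extension of $k$. Your proof needs this step (or an equivalent one, e.g.\ Springer's theorem applied to an odd-degree closed point of the conic) to be complete; in characteristic zero, where $\overline{k}=k^{\sep}$, your argument is complete as written.
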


\begin{proof}
It suffices to show the existence of $A,B$ and $C$ in $k^{\sep}[x]$
satisfying $(\ref{ABC})$ where $k^{\sep}$ is the separable closure of $k$;
that is, we will show that the Hilbert norm-residue symbol $(P,Q)_2$ over the field
$k^{\sep}(x)$ is trivial.

Let $\overline{k}$ be a fixed algebraic closure of $k$.
Thus $\overline{k}$ is a purely inseparable extension of $K^{\sep}$.

If $\ch k=0$, then $\overline{k}=k^{\sep}$.
By Tsen's theorem, the field $\overline{k}(x)$ is a $C_1$-field
\cite[page 22]{Gre69}.
Hence there are polynomials $A, B, C \in \overline{k}[x]$
such that $C \not= 0$ and
$(A/C)^2P+(B/C)^2Q=1$.

Now suppose that $\ch k=p>0$.
Remember that $p \not= 2$.
Let $\mathfrak{A}$ be the quaternion algebra over $k^{\sep}(x)$
corresponding to the Hilbert norm-residue symbol $(P,Q)_2$
over the field $k^{\sep}(x)$.
Since the Brauer group $\Br\big(\overline{k}(x)\big)=0$ by another theorem
of Tsen \cite[page 4]{Gre69},
$\mathfrak{A}$ is split by some finite purely inseparable extension of $k^{\sep}(x)$.
Thus $p^n[\mathfrak{A}]=0$ for some non-negative integer $n$
where $[\mathfrak{A}]$ denotes the similarity class of $[\mathfrak{A}]$
in the Brauer group.
Because $\mathfrak{A}$ is a quaternion algebra,
it is necessary that $2[\mathfrak{A}]=0$.
Thus $[\mathfrak{A}]=0$ and the Hilbert norm-residue symbol
$(P,Q)_2$ is trivial.
\end{proof}

\begin{prop} \label{prop43}
If $A, B, C \in l[x]$ satisfy $(\ref{ABC})$,
then for any $f \in l[x]$,
the following $A_1, B_1, C_1$ also satisfy $(\ref{ABC})$:
\begin{eqnarray} \label{ABC3}
A_1 &=& A(f^2-Q), \nonumber \\
B_1 &=& Bf^2+2Cf+BQ, \\
C_1 &=& Cf^2+2BQf+CQ. \nonumber
\end{eqnarray}
\end{prop}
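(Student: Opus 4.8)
The plan is to verify the desired relation $A_1^2 P + B_1^2 Q = C_1^2$ as a polynomial identity, making visible the multiplicativity that the formulas $(\ref{ABC3})$ secretly encode. First I would rewrite the hypothesis $(\ref{ABC})$ as $C^2 - Q B^2 = A^2 P$, that is, as the statement that in the commutative ring $R := l[x][t]/(t^2 - Q)$ the element $C + Bt$ has norm $N(C+Bt) := (C+Bt)(C-Bt) = C^2 - Q B^2$ equal to $A^2 P$. Here $t$ is a purely formal square root of $Q$; we do not assume $Q$ is a square in $l[x]$, and every manipulation takes place inside $R$, so the argument is the same for arbitrary $Q$.

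The key observation is that $(\ref{ABC3})$ is just the description of the product $(C + Bt)(f + t)^2$ in $R$, together with the scalar factor $A_1 = A(f^2 - Q) = A\cdot N(f+t)$. Indeed, since $(f+t)^2 = (f^2+Q) + 2ft$, expanding $(C+Bt)\big((f^2+Q)+2ft\big)$ gives $l[x]$-component $C(f^2+Q) + 2BQf = C_1$ and $t$-coefficient $B(f^2+Q) + 2Cf = B_1$, i.e. $C_1 + B_1 t = (C+Bt)(f+t)^2$. Granting this, multiplicativity of the norm on $R$ yields
\[
C_1^2 - Q B_1^2 \;=\; N(C_1 + B_1 t) \;=\; N(C+Bt)\, N(f+t)^2 \;=\; A^2 P\,(f^2-Q)^2 \;=\; A_1^2 P,
\]
which is precisely $A_1^2 P + B_1^2 Q = C_1^2$.

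If one prefers to bypass the formal ring, the same fact follows by brute expansion: writing $B_1 = B(f^2+Q) + 2Cf$ and $C_1 = C(f^2+Q) + 2BQf$, the cross terms $4BCQf(f^2+Q)$ cancel in $B_1^2 Q - C_1^2$, leaving $(QB^2 - C^2)(f^2+Q)^2 + 4Qf^2(C^2 - QB^2) = (QB^2 - C^2)\big[(f^2+Q)^2 - 4Qf^2\big]$, and then the elementary factorization $(f^2+Q)^2 - 4Qf^2 = (f^2-Q)^2$ together with $QB^2 - C^2 = -A^2 P$ finishes it. There is no genuine obstacle here: the statement is a pure polynomial identity. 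The only point worth flagging is exactly that factorization $(f^2+Q)^2 - 4Qf^2 = (f^2-Q)^2$, which is what forces $A_1 = A(f^2-Q)$ to be the correct choice for the first component and explains the otherwise mysterious shape of $(\ref{ABC3})$.
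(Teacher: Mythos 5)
Your proof is correct. The paper's own proof is a one-line direct verification: it regards $A_1,B_1,C_1$ as quadratic polynomials in $f$ and compares coefficients of $A_1^2P+B_1^2Q$ and $C_1^2$ --- which is exactly your fallback brute-force expansion (and your cancellation of the cross terms $4BCQf(f^2+Q)$ and the factorization $(f^2+Q)^2-4Qf^2=(f^2-Q)^2$ fill in the details the paper omits). Your primary argument, however, is genuinely different in spirit: by working in $R=l[x][t]/(t^2-Q)$ and observing that $C_1+B_1t=(C+Bt)(f+t)^2$ while $A_1=A\cdot N(f+t)$, you reduce the identity to multiplicativity of the norm $N(a+bt)=a^2-Qb^2$, which holds since $t\mapsto -t$ is a ring automorphism of $R$ whether or not $Q$ is a square. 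This buys a conceptual explanation of where the otherwise unmotivated formulas $(\ref{ABC3})$ come from (they are the orbit of $C+Bt$ under multiplication by squares of norms), and it connects naturally to the surrounding material, where $(\ref{ABC})$ is interpreted as the vanishing of the Hilbert symbol $(P,Q)_2$; the paper's coefficient comparison is shorter to state but explains nothing. Both are complete proofs of the proposition.
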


\begin{proof}
Regarding $A_1, B_1, C_1$ as quadratic polynomials of $f$,
the comparison of the coefficients of $A_1^2P+B_1^2Q$ and $C_1^2$
leads to the proof.
\end{proof}

\begin{prop} \label{prop44}
$A, B, C \in l[x]$ in {\rm Proposition \ref{prop43}}
can be chosen as the following conditions hold:
\begin{sub}
\item[$(1)$]
Any two of $A, B, C$ are mutually disjoint;
\item[$(2)$]
$\deg B$ is sufficiently large;
\item[$(3)$]
$B$ is disjoint with $Q$;
\item[$(4)$]
all zeros of $B$ are simple;
\item[$(5)$]
$b_0=1$ where $b_0$ is the coefficient of the highest degree term of $B$.
\end{sub}
\end{prop}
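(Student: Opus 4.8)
The plan is to obtain the required $A,B,C$ from an arbitrary solution of $(\ref{ABC})$ by one application of the transformation of Proposition \ref{prop43}, with the free polynomial $f$ chosen generically. By Proposition \ref{prop42}, after enlarging $l$ if necessary (in particular if $l$ is a finite field with too few elements), there exist $A,B,C\in l[x]$ with $C\neq0$ and $A^2P+B^2Q=C^2$, and then none of $A,B,C$ is zero. First I would clear common factors: if an irreducible $\pi\in l[x]$ divides two of $A,B,C$ then it divides all three, since $\pi\mid A,\ \pi\mid B$ give $\pi^2\mid C^2$; $\pi\mid A,\ \pi\mid C$ give $\pi^2\mid B^2Q$, whence $\pi\mid B$ by separability of $Q$; and $\pi\mid B,\ \pi\mid C$ give $\pi\mid A$ by separability of $P$. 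Dividing out all common factors, I may assume $A,B,C$ pairwise coprime, which is $(1)$; and the same dichotomy shows that, after the transformation, pairwise coprimality of $A_1,B_1,C_1$ is equivalent to $\gcd(A_1,B_1,C_1)=1$.

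Next I apply Proposition \ref{prop43} with a parameter $f\in l[x]$, obtaining $(A_1,B_1,C_1)$. If $f$ is monic of large degree the leading term of $B_1=Bf^2+2Cf+BQ$ is that of $Bf^2$, so $\deg B_1=\deg B+2\deg f$ can be made to exceed any prescribed bound, with $b_0$ unchanged; this gives $(2)$ and leaves the low-degree part of $f$ free. Conditions $(1)$, $(3)$ (and an auxiliary coprimality $\gcd(B_1,B)=1$ used below) are congruence conditions on $f$ modulo the finitely many irreducible factors $\pi$ of $ABPQ$. Modulo a factor of $Q$ one has $B_1\equiv f(Bf+2C)\pmod{\pi}$, and since $\gcd(B,C)=1$ the factor $Bf+2C$ vanishes in $l[x]/(\pi)$ for at most one $f$, so $\pi\mid B_1$ only for $f$ in at most two residue classes; modulo a factor of $A$ one has $\pi\nmid B$, so $B_1$ is a genuine quadratic in $f$ over $l[x]/(\pi)$, again excluding only finitely many classes; and using $A^2P=C^2-B^2Q$ one checks that every irreducible factor of $\gcd(A_1,B_1,C_1)$ divides $APQ$ and is excluded by finitely many classes of $f$. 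By the Chinese Remainder Theorem---enlarging $l$ once more if some residue field is too small---I may take $f$ monic of large degree avoiding all forbidden classes; this keeps $(2)$, secures $(3)$ and $\gcd(B_1,B)=1$, and gives $\gcd(A_1,B_1,C_1)=1$, hence $(1)$.

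It remains to secure $(4)$, the separability of $B_1$, which I expect to be the main obstacle. The key fact, immediate from $(\ref{ABC})$, is that $B_1$ regarded as a quadratic polynomial in $f$ has discriminant $(2C)^2-4B\cdot BQ=4A^2P$; equivalently $B\cdot B_1=(Bf+C)^2-A^2P$ is the norm from $l(x)(\sqrt{P})$ to $l(x)$ of $Bf+C-A\sqrt{P}$. Hence, at a repeated root $x_0$ of $B_1$ with $A(x_0)P(x_0)B(x_0)\neq0$, the value $f(x_0)$ is forced to equal one of $\bigl(-C(x_0)\pm A(x_0)\sqrt{P(x_0)}\bigr)/B(x_0)$, after which $B_1'(x_0)=0$ determines $f'(x_0)$ in terms of $x_0$; the finitely many $x_0$ with $A(x_0)P(x_0)B(x_0)=0$ only add finitely many more congruence conditions of the above type (using $\gcd(B,C)=1$). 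A dimension count for the incidence locus $\{(f,x_0):B_1(x_0)=B_1'(x_0)=0,\ A(x_0)P(x_0)B(x_0)\neq0\}$ then shows that $\mathrm{disc}_x(B_1)$ is not the zero polynomial in the coefficients of $f$, so a generic $f$---still monic of large degree and satisfying all the congruence conditions above---makes $B_1$ separable. Finally, dividing $(A,B,C)$ by $b_0$ preserves $(\ref{ABC})$ and $(1)$--$(4)$ while achieving $b_0=1$, which is $(5)$. The crux is thus the non-vanishing of $\mathrm{disc}_x(B_1)$ as a function of $f$; the discriminant identity $4A^2P$, together with the pairwise coprimality of $A,B,C$, is the lever, since it confines the potential obstructions to the branch points of $w^2=P$ and the zeros of $A$, which are finite in number and easy to avoid.
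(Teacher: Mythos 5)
Your proof is correct and follows the same overall strategy as the paper: clear the common factor of $A,B,C$ to obtain pairwise coprimality (the same dichotomy argument via separability of $P$ and $Q$), apply Proposition \ref{prop43} with a suitably generic $f$, and normalize the leading coefficient at the end. The difference is in how genericity is implemented. The paper specializes sequentially along one-parameter families --- first $f=\alpha x^n$ to force $\deg B_1$ large while preserving coprimality, then $f=\alpha$ constant to secure disjointness from $Q$ and simplicity of the zeros --- each time invoking a resultant in $\alpha$ which is asserted (for condition $(4)$, with no computation) not to vanish identically. You instead take a single $f$ of large degree chosen by the Chinese Remainder Theorem to avoid finitely many residue classes, and for condition $(4)$ you supply precisely the ingredient the paper omits: the identity $BB_1=(Bf+C)^2-A^2P$, equivalently $\mathrm{disc}_f(B_1)=4A^2P$, which confines a double root of $B_1$ to the branch locus of $w^2=P$ and the zeros of $A$ and forces $\bigl(f(x_0),f'(x_0)\bigr)$ into a finite set there, so that a dimension count on the incidence variety shows the bad $f$ form a proper closed subset. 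Your route costs a little extra bookkeeping --- one should note that the affine family of $f$ satisfying the congruences still, for $\deg f$ large, surjects onto the jets $\bigl(f(x_0),f'(x_0)\bigr)$ at the relevant points, so the genericity argument can be run inside that family --- but it buys a complete justification of $(4)$, which in the paper is a one-line assertion about a resultant, and it organizes conditions $(1)$ and $(3)$ more transparently as congruence conditions modulo the irreducible factors of $ABPQ$.
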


\begin{proof}
\hfill\break
$(1)$
Dividing by $\gcd(A, B, C)$,
we can assume that $A, B, C$ are mutually disjoint.
Then any two of them are already mutually disjoint,
because a common zero of two of them is necessarily a zero
of the third one.
Note that all zeros of $P, Q$ are simple.
\hfill\break
$(2)$
Suppose that $A, B, C$ satisfy $(1)$.
Let $f=\alpha x^n$.
Then as shown below,
$B_1$ and $C_1$ in $(\ref{ABC3})$ are mutually disjoint except finite number of $\alpha$.
Taking $n$ so large,
we get $A_1, B_1, C_1$ which satisfy (1) and (2).
$B_1$ and $C_1$ are mutually disjoint if their resultant is not zero.
The resultant is a polynomial of $\alpha$
(with fixed $B$, $C$, $Q$),
so the number of zeros is finite
unless the resultant is identically zero.
\hfill\break
$(3)$
Suppose that $A, B, C$ satisfy $(1)$ and $(2)$.
Let $f=\alpha \in k^\times$.
Then $B_1$ is disjoint with $Q$ except finite number of $\alpha$.
If $B(c)=Q(c)=0$,
then $B_1(c)=2\alpha C(c) \not= 0$ for $\alpha \not= 0$.
If $B(c) \not= 0, Q(c)=0$,
then $B_1(C) \not= 0$ for
$\alpha \not=0, \alpha \not= -\frac{2C(c)}{B(c)}$.
This proves (3).
\hfill\break
$(4)$
The above $B_1$ has no multiple zero except finite number of $\alpha$.
The proof is similar with that of (2).

We can check that the resultant of $B_1$ and $B_1^\prime$
($=$ the derivative of $B_1$) is not identically zero
as a polynomial of $\alpha$.

Finally dividing by a constant,
we can set $b_0=1$.
This is the claim of (5)
\end{proof}

Hereafter we shall always assume the conditions $(1), \dots, (5)$
of Proposition \ref{prop44}
for $A, B, C$.

\subsection{Biregularization of $T$.}

Let $A, B, C$ and $A_1, B_1, C_1$ be as in Proposition \ref{prop44}.
(Note that $A_1, B_1, C_1$ are arbitrary,
and may not be in the form of $(\ref{ABC3})$.)

\begin{prop} \label{prop3}
Let $u=\frac{Bz+C}{By+A}$ and $u_1=\frac{B_1z+C_1}{B_1y+A_1}$.
Then we have
\begin{equation} \label{ABC4}
u_1=\frac{Du+EP}{Eu+D}
\end{equation}
where $D,E \in l[x]$ are mutually disjoint and
$\frac{D}{E}=\frac{B_1C+BC_1}{A_1B-AB_1}=
\frac{(A_1B+AB_1)P}{BC_1-B_1C}.$
\end{prop}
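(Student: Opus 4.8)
The plan is to substitute the explicit rational expressions for $y$ and $z$ in terms of $u$ into the definition of $u_1$, and then to locate a common linear factor of the resulting numerator and denominator. From Proposition \ref{prop41} (formula $(\ref{ABC2})$) we have
\[
y=\frac{-Au^2+2Cu-AP}{B(u^2-P)},\qquad z=\frac{Cu^2-2APu+CP}{B(u^2-P)}.
\]
Plugging these into $u_1=\dfrac{B_1z+C_1}{B_1y+A_1}$, the factor $B(u^2-P)$ cancels from numerator and denominator, leaving $u_1=N/M$ where
\[
N=(B_1C+BC_1)u^2-2AB_1Pu+(B_1C-BC_1)P,\quad
M=(A_1B-AB_1)u^2+2B_1Cu-(AB_1+A_1B)P
\]
are quadratic in $u$ with coefficients in $l[x]$.

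The key observation is geometric: the coordinate $u_1$ is indeterminate (of shape $0/0$) at the point $(y,z)=(-A_1/B_1,-C_1/B_1)$ of the conic $z^2=Py^2+Q$, whose value in the $u$-coordinate is $u_0:=\dfrac{B_1C-BC_1}{AB_1-A_1B}$; hence both $N$ and $M$ must vanish at $u_0$. I would verify this by direct substitution, using only $C^2=A^2P+B^2Q$ and $C_1^2=A_1^2P+B_1^2Q$, or equivalently the identity
\[
B_1^2C^2-B^2C_1^2=(A^2B_1^2-A_1^2B^2)P=(AB_1-A_1B)(AB_1+A_1B)P,
\]
after which both $N(u_0)$ and $M(u_0)$ collapse to $0$. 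Consequently $L:=(AB_1-A_1B)u-(B_1C-BC_1)$ is a common factor of $N$ and $M$ in $l[x][u]$.

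Dividing out $L$, the quotients $N/L$ and $M/L$ are linear in $u$; computing their coefficients and simplifying the one nontrivial term via the displayed identity (namely $(B_1C+BC_1)\cdot\dfrac{B_1C-BC_1}{AB_1-A_1B}=(AB_1-A_1B)P$) yields
\[
u_1=\frac{N}{M}=\frac{(B_1C+BC_1)u+(A_1B-AB_1)P}{(A_1B-AB_1)u+(B_1C+BC_1)}.
\]
Thus one sets $D=B_1C+BC_1$ and $E=A_1B-AB_1$, dividing both by $\gcd(D,E)$ if necessary (which leaves $u_1$ unchanged), so that $\dfrac{D}{E}=\dfrac{B_1C+BC_1}{A_1B-AB_1}$; the second expression $\dfrac{(A_1B+AB_1)P}{BC_1-B_1C}$ follows by cross-multiplying and invoking $B_1^2C^2-B^2C_1^2=(A^2B_1^2-A_1^2B^2)P$ once more.

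The only genuine obstacle is recognizing the common factor $L$; once it is identified as coming from the base point of the $u_1$-parametrization, the rest is a routine polynomial manipulation driven entirely by the two relations $C^2=A^2P+B^2Q$ and $C_1^2=A_1^2P+B_1^2Q$. A purely mechanical alternative is to verify the cross-product identity $N\cdot\big((A_1B-AB_1)u+(B_1C+BC_1)\big)=M\cdot\big((B_1C+BC_1)u+(A_1B-AB_1)P\big)$ directly by expansion, but this conceals why the formula holds.
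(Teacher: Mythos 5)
Your proof is correct, and it takes a genuinely more computational route than the paper's. The paper first invokes the field-theoretic fact that $l(x,u)=l(x,y,z)=l(x,u_1)$ forces $u_1$ to be a fractional linear function of $u$ over $l(x)$, and then pins it down by evaluating at $u=\infty$ (where $(y,z)=(-A/B,\,C/B)$, giving $u_1=\frac{B_1C+BC_1}{A_1B-AB_1}$) and at $u=0$ (where $(y,z)=(A/B,\,-C/B)$, giving $u_1=\frac{-B_1C+BC_1}{A_1B+AB_1}=\frac{(A_1B-AB_1)P}{B_1C+BC_1}$ via the identity $B^2C_1^2-B_1^2C^2=(A_1^2B^2-A^2B_1^2)P$), from which it reads off the shape $\frac{Du+EP}{Eu+D}$. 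You instead substitute $(\ref{ABC2})$ wholesale, obtain a ratio of two quadratics in $u$, and cancel the common linear factor attached to the base point $(y,z)=(-A_1/B_1,\,-C_1/B_1)$ of the $u_1$-pencil; I checked that $N(u_0)=M(u_0)=0$ and that the quotients are exactly $(B_1C+BC_1)u+(A_1B-AB_1)P$ and $(A_1B-AB_1)u+(B_1C+BC_1)$. Your version is longer but in one respect more complete: the two evaluations at $u=0,\infty$ by themselves only determine $u_1$ up to the one-parameter family $\frac{Du+\mu EP}{Eu+\mu D}$, so the paper's ``from these facts, we obtain'' tacitly uses one further input (e.g.\ the fixed point $u=u_1=\sqrt{P}$, or equivariance under $u\mapsto P/u$); your explicit division closes that gap. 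Both arguments turn on the same identity $B_1^2C^2-B^2C_1^2=(A^2B_1^2-A_1^2B^2)P$. One small slip: your parenthetical simplification should read $(B_1C+BC_1)\cdot\frac{B_1C-BC_1}{AB_1-A_1B}=(AB_1+A_1B)P$, not $(AB_1-A_1B)P$; this is what your own displayed identity gives, and the final formula is unaffected.
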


\begin{proof}
Since $l(x,y,z)=l(x,u)=l(x,u_1)$,
$u_1$ should be a linear fraction of $u$
with $l(x)$-coefficients.

When $u=\infty$, from $(\ref{ABC2})$ we have $y=-\frac{A}{B}, z=\frac{C}{B}$
so that $u_1=\frac{B_1C+BC_1}{A_1B-AB_1}$.
When $u=0$, we have $y=\frac{A}{B}, z=-\frac{C}{B}$
so that $u_1=\frac{-B_1C+BC_1}{A_1B+AB_1}$.
Since $(B^2A_1^2-B_1^2A^2)P=B ^2C_1^2-B_1^2C^2$,
we have $\frac{-B_1C+BC_1}{A_1B+AB_1}=
\frac{(A_1B-AB_1)P}{BC_1+B_1C}$.
From these facts, we obtain
$u_1=\frac{Du+EP}{Eu+D}$
where
\begin{equation}
E \text{ and } D \text{ are mutually disjoint and }
\frac{D}{E}=\frac{B_1C+BC_1}{A_1B-AB_1}=
\frac{(A_1B+AB_1)P}{BC_1-B_1C}.
\end{equation}
\end{proof}

Note that at least one of $\frac{B_1C+BC_1}{A_1B-AB_1}$
and $\frac{(A_1B+AB_1)P}{BC_1-B_1C}$ is not $\frac{0}{0}$
(i.e. at least one of
$B_1C+BC_1$, $A_1B-AB_1$,
$(A_1B+AB_1)P$ and $BC_1-B_1C$
is not zero).

Let $T$ be the birational mapping
\begin{equation} \label{T}
T: \mathbb{P}^1 \times \mathbb{P}^1
\rightarrow \mathbb{P}^1 \times \mathbb{P}^1,
T: x \mapsto x, u \mapsto \frac{Du+EP}{Eu+D}.
\end{equation}
Then $T=\id$ if and only if $E=0$,
so if and only if $A=A_1, B=B_1, C=C_1$
under the assumptions (1) and (5) of Proposition \ref{prop44}.
In the following discussions,
we shall investigate how to blow-up
$\mathbb{P}^1 \times \mathbb{P}^1$ to make $T$ biregular.

Since $T$ keeps $x$ unchanged,
an exceptional curve of $T$ is in the form of $x=c$.

\begin{prop} \label{prop4}
Let $T$ be the map as in $(\ref{T})$.
Then $x=c \, (c \not= \infty)$ is an exceptional curve of $T$
only if $c$ is a zero of $BB_1PQ$.
\end{prop}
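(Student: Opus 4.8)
The plan is to analyze when the birational map $T: x \mapsto x$, $u \mapsto \frac{Du+EP}{Eu+D}$ fails to be defined along a whole vertical fibre $x = c$. For a fixed value $x = c$ with $c \neq \infty$, the map $T$ restricts to the Möbius transformation $u \mapsto \frac{D(c)u + E(c)P(c)}{E(c)u + D(c)}$ on the fibre $\{x=c\} \times \mathbb{P}^1$. This restriction degenerates — i.e. collapses the fibre to a single point, making $x=c$ exceptional — precisely when the $2\times 2$ matrix $\left(\begin{smallmatrix} D(c) & E(c)P(c) \\ E(c) & D(c)\end{smallmatrix}\right)$ has rank $\leq 1$, that is, when its determinant $D(c)^2 - E(c)^2 P(c)$ vanishes while not all four entries vanish. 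So the first step is to make this precise: $x=c$ is exceptional for $T$ only if $D(c)^2 - E(c)^2 P(c) = 0$ (the case where all entries vanish is excluded since $D$ and $E$ are mutually disjoint, so they cannot share the zero $c$).

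The key computational step is then to express $D^2 - E^2 P$ in terms of the data $A, B, C$ and $A_1, B_1, C_1$ using the two formulas for $D/E$ from Proposition \ref{prop3}. From $\frac{D}{E} = \frac{B_1C + BC_1}{A_1B - AB_1}$ we get that $D^2 - E^2 P$ is proportional to $(B_1C + BC_1)^2 - (A_1B - AB_1)^2 P$, and expanding this using $A^2P + B^2Q = C^2$ and $A_1^2 P + B_1^2 Q = C_1^2$ should collapse it to something of the form $(\text{const})\cdot B^2 B_1^2 Q$ up to the common factor. More precisely, I expect the identity $(B_1C+BC_1)^2 - (A_1B - AB_1)^2 P = (\text{cross terms}) + B^2 C_1^2 + B_1^2 C^2 - A_1^2 B^2 P - A^2 B_1^2 P$, and substituting $C^2 - A^2 P = B^2 Q$, $C_1^2 - A_1^2 P = B_1^2 Q$ reduces the non-cross part to $B^2 B_1^2 Q + B_1^2 B^2 Q + \cdots$; the cross term $2 B B_1 C C_1$ together with $+ 2 A A_1 B B_1 P$ (from $-(A_1 B - A B_1)^2 P$) should combine with the relations to leave a clean multiple of $B B_1 Q$ or $B^2 B_1^2 Q$. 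Once we know $D^2 - E^2 P$ divides (a constant times) $B^2 B_1^2 Q$ — or rather is divisible by the relevant product after clearing the $\gcd$ that was divided out to make $D, E$ disjoint — any zero $c$ of $D^2 - E^2 P$ must be a zero of $B B_1 Q$, hence of $B B_1 P Q$.

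There is a subtlety about the factor $P$ appearing in the statement. Although the argument above suggests $c$ must be a zero of $B B_1 Q$, one must be careful: clearing the common factor of $B_1 C + B C_1$ and $A_1 B - A B_1$ to obtain the reduced fraction $D/E$ may remove some zeros, and one should double-check whether a zero of $P$ can arise as an exceptional fibre through the second expression $\frac{D}{E} = \frac{(A_1 B + A B_1)P}{BC_1 - B_1 C}$; at such a point $D/E$ could behave differently. The safe route is to show directly that $D^2 - E^2 P$ (before any reduction) is divisible by a polynomial all of whose roots lie among the roots of $B B_1 P Q$, and that the $\gcd$ removed in forming $D, E$ does not introduce new roots outside this set. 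Including $P$ in the stated divisor $B B_1 P Q$ gives a safe over-estimate that certainly holds.

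I expect the main obstacle to be the algebraic identity itself: carefully expanding $(B_1 C + B C_1)^2 - (A_1 B - A B_1)^2 P$ and using both Châtelet relations to recognize the result as a multiple of $B B_1 Q$ (times possibly $P$), while tracking exactly which common factors were divided out in Proposition \ref{prop3}. This is routine but error-prone; a cleaner alternative is to argue geometrically that $T$ is an isomorphism on each fibre $x = c$ whenever the conic $z^2 = P(c) y^2 + Q(c)$ is smooth and both parametrizations $u = \frac{Bz+C}{By+A}$ and $u_1 = \frac{B_1 z + C_1}{B_1 y + A_1}$ are well-defined and non-degenerate there, which fails only when $P(c) = 0$, $Q(c) = 0$, or one of $B(c) = 0$, $B_1(c) = 0$ causes a parametrization to degenerate — giving exactly the condition that $c$ is a zero of $B B_1 P Q$.
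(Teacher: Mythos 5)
Your reduction of the problem to the vanishing of $D(c)^2-E(c)^2P(c)$ is exactly the paper's starting point, but the central computational claim in your main route is false. Expanding with the two Ch\^atelet relations gives
\begin{equation*}
(B_1C+BC_1)^2-(A_1B-AB_1)^2P
= B_1^2(C^2-A^2P)+B^2(C_1^2-A_1^2P)+2BB_1(CC_1+AA_1P)
= 2BB_1\bigl(AA_1P+BB_1Q+CC_1\bigr),
\end{equation*}
which is \emph{not} a constant multiple of $B^2B_1^2Q$: the cross terms survive as the factor $AA_1P+BB_1Q+CC_1$. That factor vanishes precisely at the points where $B_1C+BC_1$ and $A_1B-AB_1$ have a common zero (away from zeros of $PQ$ one checks $(AA_1P+BB_1Q)^2-(CC_1)^2=-PQ(A_1B-AB_1)^2$), i.e.\ at the zeros of the $\gcd$ $g$ that you cancel to form $D$ and $E$. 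So the identity really reads $g^2(D^2-E^2P)=2BB_1(AA_1P+BB_1Q+CC_1)$, and by itself it cannot bound the zeros of $D^2-E^2P$ --- exactly the ``gcd subtlety'' you flag but do not resolve. The paper's fix is the step you are missing: it uses \emph{both} representations of $D/E$ from Proposition \ref{prop3}, observing that $D^2-E^2P$ divides both $(B_1C+BC_1)^2-(A_1B-AB_1)^2P$ and $(A_1B+AB_1)^2P^2-(BC_1-B_1C)^2P$, and that in the combination ($P$ times the first, minus the second) the troublesome cross terms cancel, leaving $P\{(B^2C_1^2+B_1^2C^2)-(A_1^2B^2+A^2B_1^2)P\}=2B^2B_1^2PQ$. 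Without invoking the second representation (or an equivalent device), your computation stalls.

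Your closing geometric sketch, by contrast, is essentially sound and is a genuinely different route from the paper's divisibility trick: the fibre over $x=c$ is the conic $z^2=P(c)y^2+Q(c)w^2$, which is smooth iff $P(c)Q(c)\neq 0$; the relation $A^2P+B^2Q=C^2$ says that $(y:z:w)=(A(c):C(c):-B(c))$ lies on it, and $u=\frac{Bz+Cw}{By+Aw}$ is the projection from that point, degenerate exactly when $B(c)=0$ (the two linear forms become proportional); likewise for $u_1$ and $B_1$. Hence $T|_{x=c}=u_1\circ u^{-1}$ is an isomorphism of $\mathbb{P}^1$ whenever $B(c)B_1(c)P(c)Q(c)\neq 0$, which is the proposition. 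If you take this route you should say explicitly why the generic identity $u_1=\frac{Du+EP}{Eu+D}$ specializes to the fibre, but this is a cleaner argument than the algebra and buys a conceptual explanation of why exactly the four factors $B$, $B_1$, $P$, $Q$ appear. As written, however, the proposal's primary argument has a gap and the alternative is only an outline.
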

\begin{proof}
$x=c$ is an exceptional curve of $T$
if and only if $c$ is a zero of $D^2-E^2P$.
However, $D^2-E^2P$ divides both of
$(BC_1+B_1C)^2-(A_1B-AB_1)^2P$ and
$(A_1B+AB_1)^2P^2-(BC_1-B_1C)^2P$,
so it divides $P\{(B^2C_1^2+B_1^2C^2)-(A_1^2B^2+A^2B_1^2)P\}
=2B^2B_1^2PQ$.
\end{proof}

\begin{rem}
The curve $x=\infty$ is an exceptional curve if and only if $\deg (EP) > \deg D$.
\end{rem}

\begin{prop}
Let $T$ be the map as in $(\ref{T})$.
\hfill\break
$(1)$
When $P(c)=0$ and $Q(c) \not= 0$, $x=c$ is an exceptional curve of $T$
if and only if
$\frac{C}{B}=-\frac{C_1}{B_1}$ at $x=c$.
\hfill\break
$(2)$
When $P(c)=Q(c)=0, Q(c) \not= 0$, $x=c$ is an exceptional curve of $T$
if and only if
$\frac{A}{B}=-\frac{A_1}{B_1}$ at $x=c$.

Moreover, $x=c$ is mapped biregularly to the blowing up $E_c$
of the point $(c,0)$.
\end{prop}

\begin{proof}
$P(c)=0$ implies $B(c) \not=0, B_1(c) \not=0$ and
$Q=\left(\frac{C}{B}\right)^2=\left(\frac{C_1}{B_1}\right)^2$ at $x=c$,
so $\frac{C}{B}=\pm\frac{C_1}{B_1}$ at $x=c$.
If $Q(c) \not= 0$, we have $D(c)=0$
if and only if $\frac{C}{B}=-\frac{C_1}{B_1}$ at $x=c$,
hence we get (1).
If $P(c)=Q(c)=0$,
none of $A,A_1,B,B_1$ is zero at $x=c$
but $C(c)=C_1(c)=0$,
thus we get $\left(\frac{A}{B}\right)^2=
\left(\frac{A_1}{B_1}\right)^2=-\frac{Q}{P}$ at $x=c$,
and similar discussion as above leads to the result (2).

In both cases, $c$ is a simple zero of $D^2-E^2P$,
since $E(c) \not=0$ and $c$ is a simple zero of $P$.
$T$ maps $x=c$ to a point $(c,0)$,
so $T$ maps $x=c$ biregularly to the blowing up $E_c$ at $(c,0)$ biregularly.
\end{proof}

\begin{prop} \label{prop6}
Let $T$ be the map as in $(\ref{T})$.
When $Q(c)=0$ and $P(c) \not= 0$,
$x=c$ is an exceptional curve of $T $if and only if
$\frac{C}{A}=-\frac{C_1}{A_1}$ at $x=c$.
Moreover, $x=c$ is mapped to the blowing up $E_c$ at the point
$(c,-C(c)/A(c))$.
\end{prop}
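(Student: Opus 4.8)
The plan is to imitate the argument already used in Proposition \ref{prop4} and in the preceding proposition (the case $P(c)=0$): translate ``$x=c$ is an exceptional curve of $T$'' into the vanishing of $D^2-E^2P$ at $c$, and then evaluate everything at $c$ by means of the two defining relations $A^2P+B^2Q=C^2$ and $A_1^2P+B_1^2Q=C_1^2$. Recall from the proof of Proposition \ref{prop4} that, writing $T$ as in $(\ref{T})$ with $D,E\in l[x]$ coprime, the curve $x=c$ ($c\neq\infty$) is an exceptional curve of $T$ if and only if $(D^2-E^2P)(c)=0$, and that $D^2-E^2P$ divides $2B^2B_1^2PQ$; from Proposition \ref{prop3} we have the two presentations $\frac{D}{E}=\frac{B_1C+BC_1}{A_1B-AB_1}=\frac{(A_1B+AB_1)P}{BC_1-B_1C}$. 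Now suppose $Q(c)=0$ and $P(c)\neq 0$. Then $(\ref{ABC})$ gives $C(c)^2=A(c)^2P(c)$, and since $A,C$ are coprime by Proposition \ref{prop44}(1) we get $A(c)\neq 0$, $C(c)\neq 0$, and likewise $A_1(c)\neq 0$, $C_1(c)\neq 0$; also $B(c)\neq 0$, $B_1(c)\neq 0$ because $B,B_1$ are coprime to $Q$ by Proposition \ref{prop44}(3). Hence $\big(\frac{C}{A}\big)^2=P=\big(\frac{C_1}{A_1}\big)^2$ at $x=c$, so $\frac{C}{A}=\pm\frac{C_1}{A_1}$ at $x=c$, and the point of the proposition is to pin down the sign.

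Next I would establish the bilinear identities, valid as identities in $l[x]$ by direct expansion using $C^2=A^2P+B^2Q$ and $C_1^2=A_1^2P+B_1^2Q$:
\begin{equation*}
(B_1C+BC_1)^2-(A_1B-AB_1)^2P=2BB_1\big(BB_1Q+CC_1+AA_1P\big),
\end{equation*}
\begin{equation*}
(A_1B+AB_1)^2P-(BC_1-B_1C)^2=2BB_1\big(AA_1P-BB_1Q+CC_1\big).
\end{equation*}
Writing $\frac{D}{E}$ in lowest terms from whichever of the two presentations has nonvanishing denominator at $c$, these show that $D^2-E^2P$ agrees at $c$, up to a factor not vanishing at $c$, with $2BB_1(BB_1Q+CC_1+AA_1P)$, respectively with $2PBB_1(AA_1P-BB_1Q+CC_1)$. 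Evaluating $BB_1Q+CC_1+AA_1P$ at $x=c$: since $Q(c)=0$ and $A(c)A_1(c)P(c)=\pm C(c)C_1(c)$, with the same sign as in $\frac{C}{A}=\pm\frac{C_1}{A_1}$ at $c$ (use $P(c)=C(c)^2/A(c)^2$), this equals $C(c)C_1(c)(1\pm 1)$, which is nonzero in the $+$ case and zero in the $-$ case because $B(c)B_1(c)C(c)C_1(c)\neq 0$. Thus $x=c$ is an exceptional curve of $T$ if and only if $\frac{C}{A}=-\frac{C_1}{A_1}$ at $x=c$. The $+$ case gives non-exceptionality directly (either the factor $C(c)C_1(c)(1+1)$ above is nonzero, or $A_1B-AB_1$ vanishes at $c$ and then $B_1C+BC_1$ does not, so $E$ acquires a pole at $c$ and $D(c)\neq 0$); and in the $-$ case one checks that at most one of the denominators $A_1B-AB_1$, $BC_1-B_1C$ can vanish at $c$, so at least one presentation is in lowest terms there. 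Since $D^2-E^2P\mid 2B^2B_1^2PQ$, and $Q$ has only simple zeros while $P(c),B(c),B_1(c)\neq 0$, the zero of $D^2-E^2P$ at such a $c$ is simple.

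For the ``moreover'' part: when $x=c$ is exceptional, $T$ collapses $x=c$ to a single point $(c,D(c)/E(c))$, since $u_1=\frac{Du+EP}{Eu+D}$ is constant $=D(c)/E(c)$ on $x=c$ (as $D(c)^2=E(c)^2P(c)$). Substituting $C_1=-A_1C/A$ at $c$ into either presentation of $D/E$ yields $D(c)/E(c)=-C(c)/A(c)$ (using $A^2P=C^2$ at $c$ in the second one). So $T$ maps $x=c$ to the point $(c,-C(c)/A(c))$, a fundamental point of $T^{-1}$; blowing it up produces $E_c$, and by the simplicity of the zero of $D^2-E^2P$ at $c$ the curve $x=c$ is mapped biregularly onto $E_c$. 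The step I expect to be the main obstacle is the bookkeeping in the second paragraph: $D/E$ has two presentations and neither need be globally reduced, so one must argue that at $x=c$ one of them is already in lowest terms in order to read off both the order of vanishing of $D^2-E^2P$ and the value $D(c)/E(c)$, keeping careful track of the subcase where $E$ itself vanishes at $c$ (which forces non-exceptionality). The bilinear identities themselves are a routine consequence of $(\ref{ABC})$.
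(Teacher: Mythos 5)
Your proof is correct and follows essentially the same route as the paper's: both reduce exceptionality to the vanishing of $D^2-E^2P$ at $x=c$, use $(\ref{ABC})$ and Proposition \ref{prop44} to get $A(c),A_1(c),B(c),B_1(c)\neq 0$ and $\frac{C}{A}=\pm\frac{C_1}{A_1}$ at $c$, decide the sign via the two presentations of $D/E$, and deduce simplicity of the zero from $D^2-E^2P\mid 2B^2B_1^2PQ$ together with $P(c)\neq 0$ and the separability of $Q$. Your bilinear identities and the lowest-terms bookkeeping merely make explicit a step the paper's proof handles by substituting $C_1=\pm A_1C/A$ directly into $D/E$ and observing that the result is $-\frac{C}{A}$ in the minus case and $\neq\pm\frac{C}{A}$ in the plus case.
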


\begin{proof}
$Q(c)=0$ implies $A(c)\not=0, A_1(c) \not=0$
and $P=\left(\frac{C}{A}\right)^2=\left(\frac{C_1}{A_1}\right)^2$
at $x=c$,
so $\frac{C}{A}=\pm\frac{C_1}{A_1}$ at $x=c$.
$D^2-E^2P=0$ is equivalent with
$\left(\frac{C}{A}\right)^2=\left(\frac{D}{E}\right)^2$.
We see that if $\frac{C}{A}=-\frac{C_1}{A_1}$,
then $\frac{D}{E}=-\frac{C}{A}$,
but if $\frac{C}{A}=\frac{C_1}{A_1}$,
then $\frac{D}{E} \not= \pm \frac{C}{A}$.
This leads to the first statement of Proposition \ref{prop6}.

Since $D^2-E^2P$ divides $B^2B_1^2PQ$ and $BB_1$ is disjoint with $Q$,
$c$ is a simple zero of $D^2-E^2P$.
Since $T$ maps $x=c$ to a point $(c,-C(c)/A(c))$,
this leads to the second statement of Proposition \ref{prop6}.
\end{proof}

\begin{prop}
Let $T$ be the map as in $(\ref{T})$ and $A, B, C,A_1, B_1, C_1$ be as in Proposition \ref{prop44}.
When $c$ is a zero of $BB_1$,
 $x=c$ is an exceptional curve of $T$
 if and only if
 \hfill\break
$(1)$
 only one of $B(c)$ or $B_1(c)$ is zero, or
 \hfill\break
$(2)$
 $B(c)=B_1(c)=0$ and $\frac{C}{A}=-\frac{C_1}{A_1}$ at $x=c$.

In case $(1)$,
$x=c$ is mapped to the blowing up at
$(c, -C(c)/A(c))$ or $(c,C_1(c)/A_1(c))$.
In case $(2)$,
$x=c$ is mapped to the blowing up of order 2
at $(c, -C(c)/A(c))$.
\end{prop}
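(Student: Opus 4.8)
\emph{Proof plan.} The whole argument is local at $x=c$ and rests on the criterion extracted in the proof of Proposition~\ref{prop4}: setting $\delta(x):=D(x)^2-E(x)^2P(x)$, the fibre $x=c$ (for finite $c$) is an exceptional curve of $T$ exactly when $\delta(c)=0$ (equivalently, when the matrix $\begin{pmatrix}D(c)&E(c)P(c)\\ E(c)&D(c)\end{pmatrix}$ of the linear-fractional map $T|_{x=c}$ is singular), and then $T$ contracts $x=c$ to the point $\bigl(c,\,D(c)/E(c)\bigr)$ (read $\infty$ when $E(c)=0$); moreover a simple (resp.\ double) zero of $\delta$ at $c$ is what forces a single blowing-up (resp.\ a chain of two blowings-up) of that point to biregularize $T$ there. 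The key computational device is that, since by Proposition~\ref{prop3} $D,E\in l[x]$ are coprime with $D/E=\frac{B_1C+BC_1}{A_1B-AB_1}$, one has (up to a nonzero constant)
\[
\delta=\frac{(B_1C+BC_1)^2-(A_1B-AB_1)^2P}{g^2}=\frac{2BB_1\,R}{g^2},\qquad R:=AA_1P+CC_1+BB_1Q,
\]
where $g=\gcd(B_1C+BC_1,\,A_1B-AB_1)$ and the second equality is a polynomial identity obtained from $C^2-A^2P=B^2Q$ and $C_1^2-A_1^2P=B_1^2Q$. Hence, with $v_c$ the order of vanishing at $x=c$, $v_c(\delta)=v_c(B)+v_c(B_1)+v_c(R)-2v_c(g)$, which I will evaluate case by case. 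I also record the consequences of the normalisations of Proposition~\ref{prop44} (imposed on both triples): if $B(c)=0$ then $Q(c)\ne 0$, $A(c)\ne 0$, $C(c)\ne 0$ and, evaluating $(\ref{ABC})$, $P(c)=(C(c)/A(c))^2\ne0$; similarly for $B_1$; so when $c$ is a common zero of $B$ and $B_1$ one gets $C(c)/A(c)=\varepsilon\,C_1(c)/A_1(c)$ for some sign $\varepsilon=\pm1$.

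\emph{Case (1): exactly one of $B(c),B_1(c)$ vanishes.} Say $B(c)=0\ne B_1(c)$; the other subcase is symmetric (the ratio $D/E$ is antisymmetric under interchanging the two triples) and produces the contraction point $(c,C_1(c)/A_1(c))$. Here $(B_1C+BC_1)(c)=B_1(c)C(c)\ne0$, so $v_c(g)=0$ and $D(c)/E(c)=\frac{B_1(c)C(c)}{-A(c)B_1(c)}=-C(c)/A(c)$; then $\delta(c)=E(c)^2\bigl((C(c)/A(c))^2-P(c)\bigr)=0$, so $x=c$ is exceptional and $T$ contracts it to $(c,-C(c)/A(c))$. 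For the order: applying $(\ref{ABC})$ to the second triple, $C_1(c)^2=A_1(c)^2P(c)+B_1(c)^2Q(c)$ with $B_1(c)^2Q(c)\ne0$, so $C_1(c)/A_1(c)\ne-C(c)/A(c)$, i.e.\ $A(c)C_1(c)+A_1(c)C(c)\ne0$; hence $R(c)=\frac{C(c)}{A(c)}\bigl(A(c)C_1(c)+A_1(c)C(c)\bigr)\ne0$ and $v_c(\delta)=v_c(B)=1$. A single blowing-up of $(c,-C(c)/A(c))$ therefore biregularizes $T$ there and sends $x=c$ isomorphically onto the exceptional divisor $E_c$.

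\emph{Case (2): $B(c)=B_1(c)=0$.} Writing $B=(x-c)\beta$, $B_1=(x-c)\beta_1$ (simple zeros, $\beta(c),\beta_1(c)\ne0$) one computes $R(c)=(1+\varepsilon)A_1(c)C(c)^2/A(c)$. If $\varepsilon=+1$, then $R(c)\ne0$, so $v_c(R)=0$; since $(x-c)$ divides both $B_1C+BC_1$ and $A_1B-AB_1$ we have $v_c(g)\ge1$, whence $v_c(\delta)=2-2v_c(g)\le0$ and therefore $\delta(c)\ne0$: $x=c$ is \emph{not} exceptional. Together with Case (1) this gives the ``only if'' direction: an exceptional fibre over a zero of $BB_1$ is in case (1), or has $B(c)=B_1(c)=0$ with $\varepsilon=-1$, which is case (2). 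If $\varepsilon=-1$, then $R(c)=0$; the crucial point is that in fact $v_c(R)\ge2$. Differentiating $(\ref{ABC})$ and its analogue for the second triple at $x=c$ (where $B,B_1$ vanish) yields two formulas for $P'(c)$; eliminating $P'(c)$ and using $\varepsilon=-1$ gives the relation $A_1(c)C'(c)+A(c)C_1'(c)=\frac{C(c)}{A(c)}\bigl(A_1(c)A'(c)-A(c)A_1'(c)\bigr)$, and feeding this into $(AA_1P+CC_1)'(c)$ makes it vanish; as $v_c(BB_1Q)=2$ this yields $v_c(R)\ge2$. One then checks $v_c(g)=1$ — both $B_1C+BC_1$ and $A_1B-AB_1$ vanish to order exactly $1$ at $c$, except in the degenerate configuration $A(c)\beta_1(c)=A_1(c)\beta(c)$, which is disposed of by passing to the alternative expression $D/E=\frac{(A_1B+AB_1)P}{BC_1-B_1C}$ of Proposition~\ref{prop3}, whose denominator does not vanish at $c$ there. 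Consequently $v_c(\delta)=2$ and $D(c)/E(c)=-C(c)/A(c)$, so $x=c$ is exceptional, $T$ contracts it to $(c,-C(c)/A(c))$, and biregularizing $T$ there requires a chain of two blowings-up, i.e.\ the blowing-up of order $2$ at $(c,-C(c)/A(c))$.

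\emph{Main obstacle.} Everything reduces to routine local algebra except the order statement in Case (2) with $\varepsilon=-1$: proving $v_c(\delta)=2$ (not merely $\ge1$) needs the derivative identity above — the one place where the two Pell-type relations $A^2P+B^2Q=C^2$ and $A_1^2P+B_1^2Q=C_1^2$ genuinely interact — together with careful control of the common factor $g$, in particular the degenerate sub-configuration where $B_1C+BC_1$ and $A_1B-AB_1$ acquire an extra common root at $c$ and one must switch to the second normal form of $D/E$ to read off $v_c(\delta)$.
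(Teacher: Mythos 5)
Your overall route is the paper's own: evaluate $D/E$ at $c$ and read off the order of vanishing of $\delta=D^2-E^2P$, and your identity $(B_1C+BC_1)^2-(A_1B-AB_1)^2P=2BB_1R$ with $R=AA_1P+CC_1+BB_1Q$ is a correct sharpening of the divisibility used in the proof of Proposition~\ref{prop4}. Case (1) and the non-exceptional half of Case (2) are complete, and indeed more detailed than the paper's one-line justifications. The gap is in the exceptional half of Case (2). From $v_c(\delta)=v_c(B)+v_c(B_1)+v_c(R)-2v_c(g)$, what you actually prove --- $v_c(R)\ge 2$ and $v_c(g)=1$ --- gives only $v_c(\delta)=v_c(R)\ge 2$; the sentence ``Consequently $v_c(\delta)=2$'' does not follow. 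Since ``blowing up of order $2$'' means precisely that $\delta$ vanishes to order exactly $2$ (order $3$ would force a third blowing-up), the upper bound is part of the claim, and it amounts to $v_c(R)=2$, i.e.\ to the non-cancellation of the $t^2$-coefficient of $AA_1P+CC_1$ against $B'(c)B_1'(c)Q(c)$, which your derivative identity does not address. A second, smaller defect: in the degenerate sub-configuration $A(c)\beta_1(c)=A_1(c)\beta(c)$ you assert that the denominator $BC_1-B_1C$ of the alternative expression ``does not vanish at $c$''; it does vanish there (both $B$ and $B_1$ do), only to order exactly one, and after the switch you face the same $\ge$ versus $=$ problem for $S=BB_1Q-CC_1-AA_1P$.

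Both defects are repairable inside your framework by factoring over $l(x)[\sqrt P]$, which is licit since $P(c)=(C(c)/A(c))^2\ne 0$. With the branch $\sqrt P(c)=C(c)/A(c)$ one has
\[
2BB_1R=\bigl[B_1(C+A\sqrt P)+B(C_1-A_1\sqrt P)\bigr]\,\bigl[B_1(C-A\sqrt P)+B(C_1+A_1\sqrt P)\bigr],
\]
and $(C-A\sqrt P)(C+A\sqrt P)=B^2Q$ (likewise for the second triple) gives $v_c(C+A\sqrt P)=v_c(C_1-A_1\sqrt P)=0$ and $v_c(C-A\sqrt P)=v_c(C_1+A_1\sqrt P)=2$; hence the two brackets have $v_c$ equal to $1$ and $3$ respectively, with leading coefficients that are nonzero multiples of $A(c)\beta_1(c)-A_1(c)\beta(c)$, settling the non-degenerate case. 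For a uniform argument covering the degenerate sub-configuration as well, the relation $\frac{u-\sqrt P}{u+\sqrt P}=\frac{B(z-\sqrt P\,y)}{C+A\sqrt P}$ (an immediate consequence of $(\ref{ABC})$) yields $\frac{D-E\sqrt P}{D+E\sqrt P}=\frac{B_1(C+A\sqrt P)}{B(C_1+A_1\sqrt P)}$; taking $v_c$ of the cross-multiplied identity and using that $D,E$ are coprime forces $v_c(D-E\sqrt P)=0$ and $v_c(D+E\sqrt P)=2$, hence $v_c(\delta)=2$ and $(D/E)(c)=-C(c)/A(c)$ in all sub-cases at once.
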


\begin{proof}
If $B(c)=0$ and $B_1(c) \not=0$,
then neither of $A$ nor $C$ is zero and $P=\left(\frac{C}{A}\right)^2$ at $x=c$.
On the other hand, $\frac{D}{E}=\frac{B_1C+BC_1}{A_1B-AB_1}
=-\frac{C}{A}$ at $x=c$.
From this, $c$ is a simple zero of $D^2-E^2P$
(simplicity comes from that $c$ is a simple zero of $B$)
and we get (1).

If $B(c) \not= 0$ and $B_1(c)=0$,
similar discussions hold,
replacing $-C/A$ by $C_1/A_1$.

If $B(c)=B_1(c)=0$,
we have $P=\left(\frac{C}{A}\right)^2=\left(\frac{C_1}{A_1}\right)^2$
at $x=c$,
so $\frac{C}{A}=\pm\frac{C_1}{A_1}$ at $x=c$.
When $\frac{C}{A}=-\frac{C_1}{A_1}$
we have $\frac{D}{E}=-\frac{C}{A}$ at $x=c$,
and $c$ is a double zero of $D^2-E^2P$.
Thus $x=c$ is an exceptional curve of $T$
and $T$ maps $x=c$ to the blowing up of order 2 at
$(c,-C(c)/A(c))$.
The fundamental point of $T$ is $(c, C(c)/A(c) )$.
Then $T$ maps $E_c^\prime$,
the blowing-up at this point,
to $E_c$, the blowing up at the image point.
When $\frac{C}{A}=\frac{C_1}{A_1}$ at $x=c$,
we have $\frac{D}{E} \not= \pm \frac{C}{A}$ at $x=c$,
and $x=c$ is not an exceptional curve of $T$.
\end{proof}

The curve $x=\infty$ may or may not be an exceptional curve of $T$,
but we have the following:

\begin{prop} \label{prop8}
Let $T$ be the map as in $(\ref{T})$ and $F_\infty$ be the blowing up of order $\lfloor d_P/2 \rfloor
=\max\{ m \in \mathbb{Z} \, | \, m \leq d_P/2 \}$ at $(\infty,\infty) \in \mathbb{P}^1 \times \mathbb{P}^1$.
Then $T$ maps $F_\infty$ to $F_\infty$ except the following three cases:
\hfill\break
$(1)$
$d_P$ is even, $d_Q$ is odd and
$\frac{a_0}{c_0}=-\frac{a_{1,0}}{c_{1,0}}$;
\hfill\break
$(2)$
$d_P$ is odd, $d_Q$ is even and
$c_0=-c_{1,0}$;
\hfill\break
$(3)$
$d_P$ is odd, $d_Q$ is odd and
$a_0=-a_{1,0}$.

Here, $a_0$ (resp. $c_0$, $a_{1,0}$, $c_{1,0}$) is the coefficient of the highest degree term
of $A$ (resp. $C$, $A_1$, $C_1$).
In these three cases,
$T$ maps $F_\infty$ to $E_\infty$,
once more blowing up of $F_\infty$.
\end{prop}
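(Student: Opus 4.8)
The plan is to analyze $T$ locally at the point $(\infty,\infty)$, in the same spirit in which Propositions \ref{prop4} and \ref{prop6} analyze $T$ over a finite zero $c$ of $BB_1PQ$. Concretely, I would pass to the local coordinate $t=1/x$, follow the chain of blowings-up — or, as in Subsection \ref{biregT}, blowings-up and down — that resolves $T$ over $t=0$, and decide, from the leading terms of $D$, $E$, $P$, whether the induced map on the last exceptional curve $F_\infty$ is a nondegenerate fractional-linear automorphism of $\mathbb{P}^1$, in which case $T$ carries $F_\infty$ biregularly onto $F_\infty$, or whether it collapses $F_\infty$, in which case one further blowing-up $E_\infty$ is needed and $T$ carries $F_\infty$ biregularly onto $E_\infty$. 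The precise fundamental point on $E_\infty$ is then read off exactly as in Proposition \ref{prop6}.

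The first step is a degree and leading-coefficient computation for $A,B,C$ (and likewise $A_1,B_1,C_1$), hence for $D,E$. Comparing the three highest-degree terms in the identity $(\ref{ABC})$ $A^2P+B^2Q=C^2$ and using the normalisations of Proposition \ref{prop44} — pairwise disjointness, $B$ disjoint from $Q$, leading coefficient of $B$ (and of $B_1$) equal to $1$ — one finds that the parities of $d_P:=\deg P$ and $d_Q:=\deg Q$ completely dictate the situation: when $d_P,d_Q$ are both even the top terms of $C^2$ and $B^2Q$ coincide, so $\deg C=\deg B+d_Q/2$, $c_0^2=q_0$, and $2\deg A+d_P<2\deg B+d_Q$; when $d_P$ is even and $d_Q$ odd parity forces $2\deg A+d_P>2\deg B+d_Q$, so $\deg C=\deg A+d_P/2$ and $c_0^2=a_0^2p_0$; when $d_P$ is odd and $d_Q$ even one gets $2\deg A+d_P<2\deg B+d_Q$, $\deg C=\deg B+d_Q/2$, $c_0^2=q_0$; and when $d_P,d_Q$ are both odd the top terms of $A^2P$ and $B^2Q$ must cancel, so $2\deg A+d_P=2\deg B+d_Q$ with $a_0^2p_0+q_0=0$ and $\deg C=\deg B+(d_Q-1)/2$. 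From $D/E=(B_1C+BC_1)/(A_1B-AB_1)$ one then reads off $\deg D-\deg E$ and the leading-coefficient ratio $\rho=d_0/e_0$ in each case.

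The second step is the local analysis. Writing $R=D/E$ and $u=v/t^{\lfloor d_P/2\rfloor}$ for the chart around the last exceptional curve, $T:u\mapsto (Ru+P)/(u+R)$ becomes to leading order a fractional-linear map $v\mapsto v_1$ whose determinant and whose target level in the chain are governed by $\deg D-\deg E$ relative to $\lfloor d_P/2\rfloor$. In the generic subcase this map is nondegenerate and of target level $\lfloor d_P/2\rfloor$, so $F_\infty\mapsto F_\infty$; the degenerate possibility, in which the target level rises by one and one must introduce $E_\infty$, occurs exactly when a single leading coefficient vanishes. Using $c_0^2=a_0^2p_0$ one checks that this cancellation amounts to $a_0/c_0=-a_{1,0}/c_{1,0}$ in the $(d_P$ even, $d_Q$ odd$)$ case; using that the leading term of $B_1C+BC_1$ is $c_0+c_{1,0}$ it amounts to $c_0=-c_{1,0}$ in the $(d_P$ odd, $d_Q$ even$)$ case; using that the leading term of $A_1B-AB_1$ is $a_{1,0}-a_0$ it amounts to $a_0=-a_{1,0}$ in the $(d_P$ odd, $d_Q$ odd$)$ case; and when $d_P,d_Q$ are both even the relevant cancellation is impossible, so $F_\infty\mapsto F_\infty$ always. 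This reproduces the three exceptional cases of the statement, and in each of them the same leading-term computation pins down the image point on $E_\infty$.

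The hardest part is the bookkeeping of the first step: separating the four parity cases correctly, confirming that the resolving chain over $(\infty,\infty)$ has length exactly $\lfloor d_P/2\rfloor$ even when $d_P$ is odd (where, unlike the even case of Subsection \ref{biregularization}, the chain is not symmetric), and tracking the signs through the blowings-up so that the cancellation condition carries the correct minus sign, as it does in Propositions \ref{prop4} and \ref{prop6}. A useful cross-check, and an alternative route, is the substitution $x\mapsto 1/x$ of Subsection \ref{degeven}: it turns $x=\infty$ into a simple zero of the transformed $P$, of the transformed $Q$, of both, or of neither according to the parities of $d_P,d_Q$, and thereby reduces Proposition \ref{prop8} to the statements already proved for finite $c$ — at the cost of recording how that substitution acts on $A,B,C$ and on the point $(\infty,\infty)$.
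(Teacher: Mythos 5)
Your proposal follows essentially the same route as the paper: the paper's proof sets $r=\deg D-\deg E$, compares the orders at $x=\infty$ of $u$ and $u_1=\frac{Du+EP}{Eu+D}$ with $d_P/2$, and extracts the leading-coefficient relations ($c_0^2=q_0$ when $d_Q$ is even, $a_0^2=-q_0/p_0$ when $d_P,d_Q$ are both odd, $d_0^2=e_0^2p_0$ in the $d_P$ even case) from the parity analysis of $A^2P+B^2Q=C^2$, exactly as in your two steps. The only caveat is your assertion that when $d_P,d_Q$ are both even the top terms of $C^2$ and $B^2Q$ must coincide --- parity alone does not force this, since the leading term of $C^2$ could a priori match that of $A^2P$ instead --- but the paper's own treatment of that case is equally terse (``we can verify that\dots''), and the conclusion $F_\infty\mapsto F_\infty$ agrees.
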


\begin{proof}
Let $r=\deg D-\deg E$.
By checking the order of infinity at $x=\infty$ of $u$ and $u_1=\frac{Du+EP}{Eu+D}$,
we see that when $d_P$ is odd,
$F_\infty$ is mapped to $F_\infty$
if $r > \frac{d_P}{2}$
and to $E_\infty$ if $r < \frac{d_P}{2}$.

When $d_Q$ is even,
$d_C-d_A>d_P/2$ and $c_0^2=c_{10}^2=q_0$.
From this we see that
$r<\frac{d_P}{2}$
if and only if $c_0=-c_{10}$.
When $d_Q$ is odd,
$d_C-d_A<d_P/2$ and $a_0^2=a_{10}^2=-q_0/p_0$.
From this we see that $r < \frac{d_P}{2}$ if and only if $a_0=-a_{10}$.
Note that $a_0^2=a_{10}^2$ always.

When $d_P$ is even,
$T$ maps $F_\infty$ to $F_\infty$ whenever $r \not= \frac{d_P}{2}$.
If $r=\frac{d_P}{2}$,
$u \sim \lambda x^r$ implies
$u_1 \sim \frac{\lambda d_0+e_0p_0}{\lambda e_0+d_0} x^r$,
so that $T$ maps $F_\infty$ to $F_\infty$
unless $d_0^2-e_0^2p_0=0$.
But we can verify that
$r=\frac{d_P}{2}$ and $d_0^2-e_0^2p_0=0$
are equivalent with that
$d_Q$ is odd and $\frac{a_0}{c_0}=-\frac{a_{10}}{c_{10}}$.
In this case, we need once more blowing up,
and $T$ maps $F_\infty$ to $E_\infty$.
\end{proof}


\subsection{Construction of $Y$.} \label{pre3}

For a given $A, B, C$,
$u=\frac{By+A}{Bz+C}$ is mapped to $\frac{B^\sigma y+A^\sigma}{B^\sigma z+C^\sigma}$
by the action of $\sigma \in \mathfrak{G}=\Gal(l/k)$.
Here $A^\sigma$ is the polynomial obtained from $A$
by replacing all coefficients to its conjugates by $\sigma$.

We shall construct a non-singular projective surface $Y$ on which
$\mathfrak{G}$ acts in a Zariski homeomorphic way.
Starting from $\mathbb{P}^1 \times \mathbb{P}^1$,
we repeat blowings-up and blowings-down.

The automorphism $\Phi_\sigma$ of $l(x,u)$
is induced by the point transformation $\Psi_\sigma$ of
$\mathbb{P}^1 \times \mathbb{P}^1$
as $(\Phi_\sigma f)(x,y)=\Big(f\big(\Psi_\sigma^{-1}(x,y)\big)\Big)^\sigma$.
Here $\Psi_\sigma=\tau_\sigma \circ \widetilde \sigma$,
$\widetilde \sigma: (x,u) \mapsto (x^\sigma, u^\sigma)$
and $\tau_\sigma: (x,u) \mapsto
(x, \frac{D_\sigma u-E_\sigma P}{-E_\sigma u+D_\sigma})$,
where
\begin{equation}
E_\sigma \text{ and } D_\sigma \text{ are mutually disjoint and }
\frac{D_\sigma}{E_\sigma}=\frac{BC^\sigma+B^\sigma C}{A^\sigma B-AB^\sigma}
=\frac{(A^\sigma B+AB^\sigma)P}{BC^\sigma-B^\sigma C}.
\end{equation}

\begin{prop} \label{prop9}
The Galois group $\mathfrak{G}$ acts on some $Y_{rs}$
in a Zariski homeomorphic way.
($Y_{rs}$ is defined in Subsection \ref{pre2}, and $r$ and $s$ are given in the proof.)
\end{prop}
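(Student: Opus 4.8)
The plan is to show that the finitely many birational self-maps $\tau_\sigma$ ($\sigma\in\mathfrak{G}$) of $\mathbb{P}^1\times\mathbb{P}^1$, which together with the Zariski homeomorphisms $\widetilde\sigma$ assemble into the semilinear action $\overline\sigma=\tau_\sigma\circ\widetilde\sigma$ on $l(x,u)=l\otimes_k K$, can be simultaneously biregularized by a finite sequence of blowings-up and blowings-down starting from $\mathbb{P}^1\times\mathbb{P}^1$ and ending at a surface of the shape $Y_{rs}$ of Subsection \ref{pre2}. Once every $\tau_\sigma$ is biregular on such a $Y_{rs}$, and provided the blow-up/blow-down locus is $\mathfrak{G}$-stable, each $\widetilde\sigma$ lifts to a Zariski homeomorphism of $Y_{rs}$; hence so does each $\overline\sigma$, which is precisely the assertion. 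The integers $r$ and $s$ are then read off from the construction: $r$ is the height of the blow-up-and-blow-down tower needed over $(\infty,\infty)$, and $s$ is the total number of ordinary blowings-up performed over finite points of $\mathbb{P}^1\times\mathbb{P}^1$.

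First I would localize the indeterminacy. Since each $\tau_\sigma$ fixes $x$, its exceptional curves are vertical lines. By Proposition \ref{prop4}, an exceptional line $x=c$ with $c\neq\infty$ can occur only when $c$ is a zero of $P\,Q\,B\,B^\sigma$; as $\sigma$ ranges over $\mathfrak{G}$ these $c$ all lie in the zero set of $P\,Q\prod_{\rho\in\mathfrak{G}}B^\rho$, a polynomial with coefficients in $k$ whose root set is therefore a finite $\mathfrak{G}$-stable set. For each such $c$ the propositions following Proposition \ref{prop4} say precisely which point $(c,v_{c,\sigma})$ over $c$ (or order-$2$ infinitely near configuration over it, as in the case analysis around Proposition \ref{prop6}) must be blown up so that $x=c$ becomes biregular under $\tau_\sigma$ — and it is never necessary to blow a finite line down. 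At $x=\infty$ the relevant object is, by Proposition \ref{prop8}, the order-$\lfloor d_P/2\rfloor$ tower $F_\infty$ at $(\infty,\infty)$, which $\tau_\sigma$ carries to itself except in the three special cases of that proposition, where exactly one further blow-up is needed. The normalizations of Proposition \ref{prop44} (simple zeros of $B$, $B$ prime to $Q$, $b_0=1$, $A,B,C$ pairwise coprime) guarantee all the relevant zeros are simple, so the process is finite and does not require re-iteration.

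Next I would perform the blow-ups. Over the finite bad points I perform the ordinary blowings-up (with their order-$2$ towers where Proposition \ref{prop6} and the subsequent proposition require) dictated above; there are finitely many, and applying $\widetilde\rho$ permutes the points $(c,v_{c,\sigma})$ among one another — this is checked from the explicit formula for $D_\sigma/E_\sigma$ together with the twisted cocycle identity $\tau_{\rho\sigma}=\tau_\rho\circ\bigl(\widetilde\rho\,\tau_\sigma\,\widetilde\rho^{-1}\bigr)$ — so this finite configuration is $\mathfrak{G}$-stable; let $s$ be the number of blowings-up it contributes. Over $(\infty,\infty)$ I perform $r$ alternating blow-up-and-blow-down operations exactly as in the model computation of Subsection \ref{biregT}, where $r=\lfloor d_P/2\rfloor$, or $\lfloor d_P/2\rfloor+1$ if one of the three exceptional cases of Proposition \ref{prop8} occurs for some $\sigma$; these operations take place away from the finite blow-ups and leave them untouched. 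Because every finite step is an ordinary blow-up and the only blow-downs are the ones packaged into the tower at infinity, the surface produced is an $s$-point blow-up of the surface $Y_r$ of Subsection \ref{pre2}, i.e. a $Y_{rs}$. A case check, using Proposition \ref{prop3} through Proposition \ref{prop8}, then confirms that on this $Y_{rs}$ every $\tau_\sigma$ is biregular; and since the blow-up/blow-down locus is $\mathfrak{G}$-stable, each $\widetilde\sigma$ — hence each $\overline\sigma=\tau_\sigma\circ\widetilde\sigma$ — acts on $Y_{rs}$ by Zariski homeomorphisms, exactly as in the pattern of Subsections \ref{PicY} and \ref{pfA}.

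The main obstacle is the verification that no blow-down over a finite point is ever forced, so that all of the genuine blow-downs can be confined to the single tower over $(\infty,\infty)$ and the end result is literally one of the surfaces $Y_{rs}$ rather than a less structured iterated blow-up/blow-down of $\mathbb{P}^1\times\mathbb{P}^1$; closely tied to this is the check that the finite blow-up configuration is $\mathfrak{G}$-stable, so that the homeomorphisms $\widetilde\sigma$ actually lift. Both reduce to bookkeeping with the explicit linear-fractional formulas under the normalizations of Proposition \ref{prop44}, and the needed local analysis is already contained in Proposition \ref{prop3} through Proposition \ref{prop8}; the proof is then essentially the assembly of those local statements into a single global surface.
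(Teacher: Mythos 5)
Your outline gets the general shape right (resolve the indeterminacy of the $\tau_\sigma$ along vertical fibres, check $\mathfrak{G}$-stability of the centres, read off $r$ and $s$), but it goes wrong at exactly the point you flag as ``the main obstacle'': it is \emph{not} true that no blow-down over a finite fibre is needed, and the paper's proof is built around performing such blow-downs. For a general $\tau_\sigma$ the fundamental point on an exceptional fibre $x=c$ (where $E_\sigma u+D_\sigma=0$) and the point to which that fibre is contracted (where $u=D_\sigma/E_\sigma$) are \emph{different} points whenever $D_\sigma(c)\neq 0$ --- this happens at every conjugate of a zero of $B$ and at those zeros of $PQ$ for which the relevant value is a square in $k(c)$ --- so a single blow-up does not biregularize $\tau_\sigma$ there; what does work is the elementary transformation (blow up one point of the fibre, then blow down the strict transform of $x=c$), and the paper performs precisely these at all such $c$. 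These finite elementary transformations are counted in $r$: the paper has $r=r_1+r_2+r_3+r_4+r_5$ with $r_4=\deg B$ (large, by Proposition \ref{prop44}) and only $r_5=\lfloor \deg P/2\rfloor$ coming from infinity, whereas you take $r$ to be the contribution at infinity alone.

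The second, related gap is that you never invoke the dichotomy that actually determines $s$. The paper shows that for a zero $c$ of $P$ the stabilizer $H_c$ behaves in one of two ways: if $Q(c)\notin k(c)^2$ then exactly half of the elements of $H_c$ interchange $x=c$ and $E_c$, so both curves must survive as distinct basis elements and the fibre contributes a genuine exceptional class to $s$; if $Q(c)\in k(c)^2$ the action is consistent on the pair and the fibre can be collapsed by an elementary transformation into the class $F$, contributing to $r$ instead (and similarly for zeros of $Q$ and common zeros). Your construction blows up at \emph{every} finite bad point without this distinction and would therefore produce an $s$ equal to the total number of zeros of $PQ\prod_{\rho}B^{\rho}$, not the number $s_1+s_2+s_3+s_4$ of Theorem \ref{mainthm}; likewise you fold the extra blow-up at infinity (the three cases of Proposition \ref{prop8}) into $r$, whereas the paper records it as $s_4=1$ because half of $\mathfrak{G}$ interchanges $E_\infty$ and $F_\infty$. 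Since $\Omega\cdot\Omega=8-s$ on $Y_{rs}$ and the whole rationality criterion downstream is stated in terms of this $s$, identifying $s$ with the square/non-square count is the substantive content of the proposition, not bookkeeping that can be deferred.
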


\begin{proof}
$\Psi_\sigma=\tau_\sigma \circ \widetilde \sigma$ is Zariski homeomorphic
except on the line $x=c$,
where $c$ is a zero of $P$ or a zero of $Q$ or a conjugate of a zero of $B$ or $c=\infty$.

Let $s=s_1+s_2+s_3+s_4$
where $s_1$ (resp. $s_2$, resp. $s_3$) is the number of
$c \in k^{\sep}$
such that $P(c)=0$ and
$Q(c) \not\in k(c)^2$
(resp. $Q(c)=0$ and $P(c) \not\in k(c)^2$,
resp. $P(c)=Q(c)=0$ and $-\frac{Q}{P}(c) \not\in k(c)^2$),
and let $E_c$ be the blowing up at $(c,0)$
(resp. $(c, C(c)/A(c)$), resp. $(c,0)$).
One more $E$ at $(\infty,\infty)$ is added for three cases
(i), (ii), (iii) stated later.
So, $s_4=1$ for these cases and $0$ otherwise.

Suppose that $P(c)=0$ and $Q(c) \not\in k(c)^2$.
Let $H_c=\{ \sigma \in \mathfrak{G} | c^\sigma=c\}$.
Since $Q(c)=\big(\frac{C(c)}{B(c)}\big)^2 \in k(c) \setminus k(c)^2$,
we see that $\frac{C}{B}(c)=\frac{C^\sigma}{B^\sigma}(c)$
for half ones of $\sigma \in H_c$
and $\frac{C}{B}(c)=-\frac{C^\sigma}{B^\sigma}(c)$
for other half ones.

From this, we see that for any conjugates $c^\prime$ and $c^{\prime\prime}$ of $c$,
half ones of $\Psi_\sigma$ such that $c^{\prime\sigma}=c^{\prime\prime}$ map
$x=c^\prime$ to $x=c^{\prime\prime}$, $E_{c^\prime}$ to $E_{c^{\prime\prime}}$
and other half ones of $\Psi_\sigma$ map
$x=c^\prime$ to $E_{c^{\prime\prime}}$, $E_{c^\prime}$ to $x=c^{\prime\prime}$.
Note that on the blown up surface,
the class of $x=c^\prime$ is $F-E_{c^\prime}$.
So the number of $\Psi_\sigma$ which map
$E_{c^\prime}$ to $E_{c^{\prime\prime}}$
is the same with the number of $\Psi_\sigma$
which map $E_{c^\prime}$ to $F-E_{c^{\prime\prime}}$.

Let $r=r_1+r_2+r_3+r_4+r_5$.
Here $r_5=\deg P/2$ or $(\deg P-1)/2$ according to whether $\deg P$ is even or odd.
$r_4=\deg B$ and $r_1,r_2,r_3$ are given below.

Suppose that $P(c)=0, Q(c) \not=0, Q(c) \in k(c)^2$.
Then $Q(c)=\big(\frac{C(c)}{B(c))}\big)^2 \in k(c)^2$
implies $\frac{C}{B}(c)=\frac{C^\sigma}{B^\sigma}(c)$
for all $\sigma \in H_c$.
From this, we see that for any conjugates of $c^\prime$ and $c^{\prime\prime}$
of $c$, all $\Psi_\sigma$ such that $c^{\prime\sigma}=c^{\prime\prime}$
map $x=c^\prime$ to $x=c^{\prime\prime}$,
$E_{c^\prime}$ to $E_{c^{\prime\prime}}$
or all $\Psi_\sigma$ map $x=c^\prime$ to $E_{c^{\prime\prime}}$,
$E_{c^\prime}$ to $x=c^{\prime\prime}$.
So, we can divide such $c$ into two blocks,
so that if $c^\prime$ and $c^{\prime\prime}$ are in the same block,
all $\Psi_\sigma$ map $x=c^\prime$ to $x=c^{\prime\prime}$,
$E_{c^\prime}$ to $E_{c^{\prime\prime}}$
and if $c^{\prime\prime}$ belongs to the different block with $c^\prime$,
all $\Psi_\sigma$ map $x=c^\prime$ to $E_{c^{\prime\prime}}$,
$E_{c^\prime}$ to $x=c^{\prime\prime}$.

Let $r_1$ be the number of $c$ in the second block.
When $c$ is in the first block,
let $F_c$ be $(x=c)$.
When $c$ is in the second block,
let $F_c$ be the blowing down of $E_c$ by $(x=c)$.
Then $\Psi_\sigma$ maps always $F_{c^\prime}$ to $F_{c^{\prime\prime}}$.
Note that on the blown-up and down surface,
the class of $F_c$ is $F$.
So $\Psi_\sigma$ induces the transformation in the same class $F$.

The same discussions hold for $Q(c)=0, P(c) \not=0, P(c) \in k(c)^2$
and also for $P(c)=Q(c)=0, -\frac{Q}{P}(c) \in k(c)^2$.

When $B(c)=0$, (then $P(c) \not= 0$ and $Q(c) \not= 0$),
let $F_c$ be the blowing up at $(c, C(c)/A(c))$,
blown down by $(x=c)$ afterwards.
When $c$ is a conjugate of a zero of $B$ and $B(c) \not= 0$,
let $F_c$ be $(x=c)$.
Then $\Psi_\sigma$ map $F_{c^\prime}$ to $F_{c^{\prime\prime}}$ always.
Thus $\Psi_\sigma$ induces an automorphism of $\Pic$
which keeps the class $F$ unchanged.

For $x=\infty$, let $F_\infty$ be the blowing up of order $\deg P/2$
or $(\deg P-1)/2$ at $(\infty,\infty)$,
blown down by $E_j$ (=blowings up of smaller orders) afterwards.
(See the discussions in Subsection \ref{biregT}.)
Then the class of $F_\infty$ is $F$.
$\Psi_\sigma$ maps $F_\infty$ to $F_\infty$
except the following three cases:
\begin{sub}
\item[(i)] $\deg P$ even, $\deg Q$ odd and $p_0 \not\in k^2$;
\item[(ii)] $\deg P$ odd, $\deg Q$ even and $q_0 \not\in k^2$;
\item[(iii)] $\deg P$ odd, $\deg Q$ odd and $-q_0/p_0 \not\in k^2$.
\end{sub}
In these cases, take once more blowing-up $E_\infty$ of $F_\infty$,
then half ones of $\sigma \in \mathfrak{G}$ map
$E_\infty $ to $E_\infty$ and other half ones map $E_\infty$ to $F_\infty$.

The results are summarized as follows.
The blowings up are only $E_c$ such that $P(c)=0, Q(c) \not\in k(c)^2$,
or $Q(c)=0, P(c) \not\in k(c)^2$
or $P(c)=Q(c)=0, -\frac{P}{Q}(c) \not\in k(c)^2$.
The blowings up and down afterwards are at other zeros of $BPQ$.
(For the zeros of $PQ$, only at $c$ in  the second block.)

For $x=\infty$, the blowings up and down $F_\infty$ is added,
and the blowing up $E_\infty$ is added in some cases
((i), (ii), (iii) mentioned above).

Thus we reach the desired surface $Y_{rs}$.
\end{proof}

\subsection{$\Pic(Y)$ and $\Pic(Y)^\mathfrak{G}$.}

From the discussion in the previous subsection,
we can determine $\Pic(Y)$ and $\Pic(Y)^\mathfrak{G}$ as follows.

$\Pic(Y)$ is of rank $s+2$ as a $\mathbb{Z}$-module
with the basis $E_i \, (1 \leq i \leq s)$, $F$ and  $F^\prime$.
The action of $\sigma \in \mathfrak{G}$ is represented
as the following matrix with the basis above in this order:
\begin{equation} \label{matg}
g_\sigma=
\begin{pmatrix}
A_\sigma & X_\sigma & 0 \\
0 & 1 & 0 \\
U_\sigma & \alpha_\sigma & 1
\end{pmatrix}.
\end{equation}
Here $A_\sigma$ is an $s \times s$ matrix
whose entries are $0$, $1$ or $-1$.
The matrix obtained by replacing all the entries $-1$'s of $A_\sigma$ by 1
is the permutation matrix
which represents the permutation of $c_i$ induced by $\sigma$.
For any $(i,j)$, $1 \leq i,j \leq s$,
the number of $A_\sigma$ where $(i,j)$-entry is $1$
is equal with the number of $A_\sigma$ whose $(i,j)$-entry is $-1$.

$X_\sigma$ is a column vector whose entries are $0$ or $1$.
The $j$-th entry is $0$ or $1$ according to whether the only one
non-zero entry of $A_\sigma$ in $j$-th row is $1$ or $-1$.

$U_\sigma$ is a row vector whose entries are $-1$ or $0$.
The $i$-th entry is $-1$ or $0$ according to whether
$(x=c_i)$ is an exceptional curve of $T_\sigma$ or not,
so according to whether the only on non-zero entry of $A_\sigma$
in $i$-th column is $-1$ or $1$.

$\alpha_\sigma$ is some integer.
Its value is determined in Subsection \ref{s1}.
From $(\ref{matg})$,
$\sum_\sigma g_\sigma$ is calculated as follows:
\begin{equation}
\sum_\sigma g_\sigma=
\frac{|\mathfrak{G}|}{2}
\begin{pmatrix}
O_s & 1 & 0 \\
0 & 2 & 0 \\
-1 & \alpha & 2
\end{pmatrix}
\end{equation}
where $O_s$ is the $s \times s$ matrix whose all entries are zero.
$\frac{|\mathfrak{G}\,|}{2}\alpha=\sum_\sigma \alpha_\sigma$,
so $\alpha$ may not be an integer.

This matrix is the same with $(\ref{matsg})$ except the $(s+2,s+1)$ entry.
(But $r$ in $(\ref{matsg})$ is replaced by $s$ here.)
So, Proposition \ref{prop10} holds.
For the sake of convenience, we shall write it again.

\begin{prop} \label{prop412}
The submodule $\Pic(Y)^\mathfrak{G}$ of $\mathfrak{G}$-invariant classes
has rank 2 with the basis $F$ and $\Omega$.
We have $F \cdot F=0, \, F \cdot \Omega=-2$
and $\Omega \cdot \Omega=8-s$.
\end{prop}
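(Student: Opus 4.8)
The plan is to rerun, essentially verbatim, the argument proving Proposition \ref{prop10}: as the text already notes, the matrix $\sum_\sigma g_\sigma$ displayed just above differs from $(\ref{matsg})$ only in its $(s+2,s+1)$ entry, where the (as yet undetermined) value $\alpha$ replaces $r/2$, and one checks that this entry is immaterial to everything below. I would first record two standard facts about a $\mathfrak{G}$-lattice $M$: the norm endomorphism $N=\sum_{\sigma\in\mathfrak{G}}\sigma$ satisfies $|\mathfrak{G}|\,M^{\mathfrak{G}}\subseteq N(M)\subseteq M^{\mathfrak{G}}$, so $N(M)$ has finite index in $M^{\mathfrak{G}}$ and in particular the two have the same rank; and $M^{\mathfrak{G}}$ is saturated in $M$, since $nv\in M^{\mathfrak{G}}$ with $v\in M$ forces $\sigma v=v$ for all $\sigma$.

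Applying this to $M=\Pic(Y)$ and reading the rows of $\sum_\sigma g_\sigma$ off the displayed formula, one finds $N(\Pic(Y))=\tfrac{|\mathfrak{G}|}{2}\bigl(\mathbb{Z}F+\mathbb{Z}(2F'+\alpha F-\sum_{i=1}^{s}E_i)\bigr)$, a lattice of rank $2$; hence $\Pic(Y)^{\mathfrak{G}}$ has rank $2$, and after tensoring with $\mathbb{Q}$ we get $\Pic(Y)^{\mathfrak{G}}\otimes\mathbb{Q}=\langle F,\ 2F'-\sum_{i=1}^{s}E_i\rangle_{\mathbb{Q}}$.

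Next, $F$ is $\mathfrak{G}$-invariant because the $F$-row of every $g_\sigma$ is $(0,\dots,0,1,0)$. Using $\Omega_Y=(r-2)F-2F'+\sum_{i=1}^{s}E_i$ from Subsection \ref{pre2}, the canonical class $\Omega$ is an integral divisor lying in $\langle F,\ 2F'-\sum E_i\rangle_{\mathbb{Q}}=\Pic(Y)^{\mathfrak{G}}\otimes\mathbb{Q}$, so by saturation $\Omega\in\Pic(Y)^{\mathfrak{G}}$; thus $\mathbb{Z}F\oplus\mathbb{Z}\Omega\subseteq\Pic(Y)^{\mathfrak{G}}$. To promote this to an equality I would observe that $\mathbb{Z}F\oplus\mathbb{Z}\Omega$ is itself saturated in $\Pic(Y)$: writing $\beta\Omega+\gamma F$ in the basis $(E_1,\dots,E_s,F,F')$, every $E_i$-coordinate equals $\beta$, so $n\mid(\beta\Omega+\gamma F)$ forces $n\mid\beta$ and then $n\mid\gamma$ (this uses $s\ge1$, which suffices for the applications). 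Two saturated sublattices of equal rank, one inside the other, coincide, so $\Pic(Y)^{\mathfrak{G}}=\mathbb{Z}F\oplus\mathbb{Z}\Omega$; as $F$ and $\Omega$ are clearly independent, they are a basis. The three intersection numbers $F\cdot F=0$, $F\cdot\Omega=-2$ and $\Omega\cdot\Omega=8-s$ are then immediate from the intersection form of $Y_{rs}$ recalled in Subsection \ref{pre2}, the apparent $r$-dependence of $\Omega\cdot\Omega$ cancelling.

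There is no real obstacle here, the content being that of Proposition \ref{prop10}; the only points needing a moment's care are the passage from ``finite index'' to ``basis'', which is taken care of by the two saturation observations, and the remark that the undetermined entry $\alpha$ never enters, since neither the rank of $\sum_\sigma g_\sigma$, nor the $\mathfrak{G}$-invariance of $F$ and $\Omega$, nor the saturation computations, depend on its value.
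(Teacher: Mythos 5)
Your proof is correct and follows essentially the same route as the paper, which simply observes that the matrix $\sum_\sigma g_\sigma$ agrees with the one in Proposition \ref{prop10} up to the immaterial entry $\alpha$ and then repeats that argument (norm map has finite index in $\Pic(Y)^{\mathfrak G}$, rewrite the second generator via $\Omega_Y=(r-2)F-2F'+\sum E_i$, read off the intersection numbers on $Y_{rs}$). The only difference is that you make explicit the saturation arguments needed to pass from ``finite index'' to ``$F,\Omega$ form a basis,'' a step the paper leaves implicit; your caveat that this requires $s\ge 1$ matches the remark following the proposition.
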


\begin{rem}
Proposition \ref{prop412} holds for the case $s>0$.
For the case $s=0$, $\Pic(Y)$ is of rank 2
and every class is $\mathfrak{G}$-invariant.
\end{rem}

\subsection{Irrationality for $s \geq 6$ and $s=4$.}

The discussions in Subsection \ref{Ginv} to Subsection \ref{updown2}
rely only on the structure of $\Pic(Y)^\mathfrak{G}$.
Since $\Pic(Y)^\mathfrak{G}$ is the same,
the discussions there can be applied to prove
the irrationality of $k(x,y,z)$
(but $r$ in Subsection \ref{Ginv} to Subsection \ref{updown2} should be replaced by $s$ here).

By the discussions in Subsection \ref{Ginv} and Subsection \ref{pfB},
$k(x,y,z)$ is not $k$-rational when $s \geq 8$.
By the discussions in Subsection \ref{deg46} and Subsection \ref{updown2},
$k(x,y,z)$ is not $k$-rational when $s=6$ or $4$.
This argument can be applied also for $s=7$,
putting $\omega=1$ instead of $\omega=2$ or $4$.
See Theorem \ref{lem2}.

Only remained are the proof of irrationality for $s=5$
and the proof of rationality for $s \leq 3$.

\subsection{The case $s=5$.}

Assume that $k(x,y,z)$ is $k$-rational,
and we shall derive a contradiction.
Let $Y^\prime$ be the del Pezzo surface obtained from $Y$
(see Subsection \ref{deg46}).
On the surface $Y^\prime$, the class $D=-F-\Omega_{Y^\prime}$ satisfies
$D \cdot D=D \cdot \Omega_{Y^\prime}=-1$.
Similarly as Theorem \ref{lem2} and Theorem \ref{lem3},
we can verify that there exists a unique irreducible curve $L$ which belongs to $D$.
Let $Y^{\prime\prime}$ be the blowing down of $Y^\prime$ by $L$.
Then, both of $F$ and $-\Omega_{Y^\prime}$ in $\Pic(Y^\prime)$ are mapped to
$-\Omega_{Y^{\prime\prime}}$ in $\Pic(Y^{\prime\prime})$.
Thus, $\Pic(Y^{\prime\prime})^\mathfrak{G}$ has rank 1 with $\Omega_{Y^{\prime\prime}}$ as its basis.

We shall write $Y^{\prime\prime}$ as $Y$.
The problem is rewritten on the surface $Y$ as follows.
$Y$ is a del Pezzo surface with $\omega=4$.
$\mathfrak{G}$ acts Zariski homeomorphic on $Y$
and $\Pic(Y)^\mathfrak{G}$ has rank 1 with the basis $\Omega$.

If $k(x,y,z)$ is $k$-rational,
there exist $\mathfrak{G}$-invariant irreducible curves $C, C^\prime$ on $Y$
such that we can reach $C \cdot C=0, C \cdot \Omega=-2, C \cdot C^\prime=1$
after some blowings-up $\{E_j^\prime\}$.

Suppose $C \equiv -m\Omega$ in $\Pic(Y)$,
then $C \cdot C=4m^2$ and $C \cdot \Omega=-4m$.
So we have
$$\sum_j m_j^2=4m^2,
\sum_j m_j=4m-2$$
where $m_j=C \cdot E_j^\prime$.
Especially we have $m>0$.

\begin{prop}
Put $m_1=\max_j m_j$.
Then
\hfill\break
$(1)$
$m_1>m$,
\hfill\break
$(2)$
the number of $j$ such that $m_j=m_1$
is at most $3$.
\end{prop}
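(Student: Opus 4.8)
The plan is to derive both statements from the two numerical identities $\sum_j m_j^2=4m^2$ and $\sum_j m_j=4m-2$ already recorded, together with two elementary facts: $m_j\ge 0$ for every $j$ (each $E_j'$ is an exceptional curve, hence an irreducible curve distinct from the strict transform of $C$, so $C\cdot E_j'\ge 0$), and $m\ge 1$ (on the del Pezzo surface $Y$ the anticanonical $-\Omega$ is ample, so $C\cdot(-\Omega)=(-m\Omega)\cdot(-\Omega)=m\,\Omega\cdot\Omega=4m>0$). No geometry beyond the set-up of Subsection \ref{deg46} is needed.

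For part (1) I would argue by contradiction: were $m_j\le m$ for all $j$, then $m_j^2\le m\,m_j$, and summation would give $4m^2=\sum_j m_j^2\le m\sum_j m_j=m(4m-2)=4m^2-2m$, forcing $m\le 0$ and contradicting $m\ge 1$. Hence $m_1:=\max_j m_j>m$; since $m_1,m\in\mathbb{Z}$ this sharpens to $m_1\ge m+1$.

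For part (2) I would simply count. Let $k$ be the number of indices $j$ with $m_j=m_1$. Since all $m_j$ are non-negative, $4m-2=\sum_j m_j\ge k\,m_1\ge k(m+1)$ by part (1). If $k\ge 4$ this reads $4m-2\ge 4(m+1)=4m+4$, i.e. $-2\ge 4$, which is impossible; therefore $k\le 3$.

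There is essentially no obstacle: the whole content sits in the two displayed identities, whose justification is the same bookkeeping used in the proof of Theorem \ref{B} — each blowing-up $E_j'$ lowers $C\cdot C$ by $m_j^2$ and raises $C\cdot\Omega$ by $m_j$ (by $(\ref{2_1})$ and $(\ref{widetildeC})$), starting from $C\cdot C=4m^2$, $C\cdot\Omega=-4m$ on $Y$, which follow from $\Omega\cdot\Omega=4$ and $C\equiv-m\Omega$. One could also record $m_j\le 2m$ by intersecting $C$ with a curve of class $F$ through the $j$-th blow-up centre, exactly as in Subsection \ref{pfB}, but this refinement is not needed for (1) or (2).
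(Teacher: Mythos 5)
Your proof is correct and follows essentially the same route as the paper: both parts are deduced from the identities $\sum_j m_j^2=4m^2$ and $\sum_j m_j=4m-2$ by comparing $\sum_j m_j^2$ with $m\sum_j m_j$ for (1) and by bounding the number of maximal terms against $\sum_j m_j$ for (2). The only cosmetic differences are that you phrase (1) as a contradiction rather than as the ratio inequality $\sum_j m_j^2/\sum_j m_j>m$, and you justify $m\geq 1$ via ampleness of $-\Omega$ rather than via nonnegativity of $\sum_j m_j$.
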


\begin{proof}
\hfill\break
$(1)$
It suffices to show that $\sum m_j^2 \big/ \sum m_j>m$,
namely $4m^2 \big/(4m-2) >m$.
However,
$\frac{4m^2}{4m-2}=m\frac{4m}{4m-2}>m$ is evident.
\hfill\break
$(2)$
Suppose that the number of the desired $j$ is $q$, then we have
$qm_1 \leq \sum m_j=4m-2$,
so that $q \leq \frac{4m-2}{m_1} < \frac{4m_1-2}{m_1} < 4$.
\end{proof}

Since the family of the blowings-up $\Phi$ commutes with the action of $\mathfrak{G}$,
the configuration of $\{E_j^\prime\}$ is $\mathfrak{G}$-invariant,
and the action of $\mathfrak{G}$ induces a permutation of $j$
with $m_j=m_{\sigma(j)}$ since $C$ is $\mathfrak{G}$-invariant.
This implies that the base point $P_1$ of $E_1^\prime$
has at most three conjugates including $P_1$ itself.

\begin{prop}
When $s=5$, $k(x,y,z)$ is not $k$-rational.
\end{prop}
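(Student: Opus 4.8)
The plan is to argue by contradiction, continuing the reduction just made. Suppose $k(x,y,z)$ is $k$-rational. Then, in the notation above, we are handed a del Pezzo surface $Y$ of degree $\omega = 4$ with $\Pic(Y)^{\mathfrak G} = \mathbb Z\Omega$, two $\mathfrak G$-invariant irreducible curves $C$ and $C'$ on $Y$, and a $\mathfrak G$-stable tower of blow-ups $\{E_j'\}$ leading to a surface $Z'$ on which $C\cdot C = 0$, $C\cdot\Omega = -2$, $C\cdot C' = 1$; hence $C \equiv -m\Omega$ with $m > 0$, $\sum_j m_j^2 = 4m^2$, $\sum_j m_j = 4m-2$, and $m_1 := \max_j m_j > m$ is attained for at most three indices. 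In particular the first base point $P_1$, together with its $k$-conjugates, forms a $\mathfrak G$-stable set $S_1 = \{P_1,\dots,P_q\}$ of $q \le 3$ points of $Y$.

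First I would shrink $Y$. On $Y$ one has $C\cdot L = m$ for every $(-1)$-curve $L$, and $C\cdot F_0 = 2m$ for every irreducible curve $F_0$ of a class satisfying $(\ref{cond1})$, because $\Omega\cdot L = -1$ and $\Omega\cdot F_0 = -2$. Applying $\widetilde C\cdot\widetilde\Gamma \ge 0$ after blowing up a point $P_i$ — where $C$ meets the exceptional curve with multiplicity $m_1 > m$ — one sees that no $P_i$ lies on a $(-1)$-curve and that no two of the $P_i$ lie on a common curve of class satisfying $(\ref{cond1})$. By the numerical characterization of del Pezzo surfaces recalled in Section \ref{ag}, the blow-up $Y_1$ of $Y$ along $S_1$ is again del Pezzo, of degree $\omega_1 = 4 - q \in \{1,2,3\}$, and $Z'$ is obtained from $Y_1$ by the remaining blow-ups, each meeting $C$ with multiplicity strictly less than $m_1$ (multiplicities are non-increasing along a tower). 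Here $\Pic(Y_1)^{\mathfrak G}$ has rank $2$, spanned by $\pi^*\Omega_Y$ and the sum $\mathcal E$ of the exceptional curves over $S_1$, with intersection form $\operatorname{diag}(4,-q)$.

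Next I would run the descent of Subsections \ref{Ginv}--\ref{updown2} on $Y_1$, starting from $C \equiv -m\,\pi^*\Omega_Y - m_1\mathcal E$. Rewriting this in terms of $\Omega_{Y_1} = \pi^*\Omega_Y + \mathcal E$ — and, when $q = 1$ (the only case in which $\operatorname{diag}(4,-1)$ has an isotropic vector), also in terms of a $\mathfrak G$-invariant class satisfying $(\ref{cond1})$ — and using that every irreducible curve on the del Pezzo surface $Y_1$ has self-intersection $\ge -1$, together with $m_1 \ge m+1$, one checks that the coefficient of $-\Omega$ in the class of $C$ has strictly dropped. Iterating the blow-up-at-a-maximal-multiplicity-orbit followed by blow-down construction of Subsection \ref{updown2} — each step legitimate because the intermediate surface stays del Pezzo and, by the same counting argument used above, the orbit blown up has at most three points — this coefficient keeps decreasing until one reaches either a stage where $C$ lies in a class satisfying $(\ref{cond1})$, which is impossible since such a class meets every $\mathfrak G$-invariant divisor in an even number (cf. Subsection \ref{updown2}) whereas $C\cdot C' = 1$, or a stage where $C\cdot C = 0$, $C\cdot\Omega = -2$ can no longer be reached by blow-ups, exactly as in Subsection \ref{updown2}. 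Either way the $k$-rationality of $k(x,y,z)$ is contradicted.

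The main obstacle is keeping this iteration inside the del Pezzo range: at every stage one must verify both that the blown-up orbit is disjoint from the $(-1)$-curves and from the curves in classes satisfying $(\ref{cond1})$ (so the surface stays del Pezzo) and that this orbit has at most $\omega_i$ points (so the degree never drops below $1$). In particular, when $q = 1$ one lands on a del Pezzo surface of degree $3$, for which Theorem \ref{lem2}(2) supplies no second class of type $(\ref{cond1})$; there one must descend one more step — blowing up the orbit of the second-largest multiplicity of $C$, or playing the companion curve $C'$, whose own maximal-multiplicity orbit likewise has at most three points, against $C$ — to reach degree $\le 2$ before the argument of Subsection \ref{updown2} applies. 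Once this bookkeeping is carried out, the contradiction with $C\cdot C' = 1$ completes the proof.
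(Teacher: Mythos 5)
Your proposal follows the paper's strategy in outline: pass to the degree-$4$ del Pezzo surface with $\Pic(Y)^{\mathfrak G}=\mathbb Z\Omega$, write $C\equiv -m\Omega$, extract $\sum_j m_j^2=4m^2$, $\sum_j m_j=4m-2$, deduce $m_1>m$ with the maximum attained on a $\mathfrak G$-orbit of $q\le 3$ points, blow up that orbit, check the result is still del Pezzo, and descend to a strictly smaller $m$. All of that matches the paper. The gap is that the descent itself — the quantitative core — is never carried out, and the one mechanism you commit to does not exist in two of the three cases. On $Y_1$ the invariant Picard group is spanned by $\Omega_{Y_1}$ and $\mathcal E$ with $\Omega_{Y_1}^2=4-q$, $\Omega_{Y_1}\cdot\mathcal E=\mathcal E\cdot\mathcal E=-q$; a short computation shows that an integral class satisfying $(\ref{cond1})$ exists in this lattice only for $q=1$ (namely $F=-\Omega_{Y_1}-E_1'$): for $q=2$ one needs $a^2-2ab-b^2=0$ and for $q=3$ one needs $a^2-6ab-3b^2=0$, neither of which has a nonzero integral solution. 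So "running the descent of Subsection \ref{updown2}," which pivots between two classes of type $(\ref{cond1})$, is simply unavailable when $q=2$ or $3$. The paper uses a different device there: by Theorem \ref{lem3} the classes $-E_i'-\Omega$ (for $\omega=2$) and $-E_i'-2\Omega$ (for $\omega=1$) each contain a unique irreducible $(-1)$-curve $L_i$, and blowing down the $L_i$ returns to a degree-$4$ del Pezzo on which $C\equiv-(3m-2m_1)\Omega$, resp. $C\equiv-(7m-6m_1)\Omega$, both coefficients $<m$ because $m_1>m$. This blow-down step, and the explicit verification that the new coefficient is smaller, are absent from your argument.

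Conversely, the case you flag as problematic, $q=1$, is the one that works exactly as in Subsection \ref{updown2}: on the degree-$3$ surface the single class $F=-E_1'-\Omega$ of type $(\ref{cond1})$ suffices. One writes $C\equiv(m_1-m)F-(2m-m_1)\Omega$, eliminates the remaining $E_j'$ with $m_j>2m-m_1$ by blowing up and then blowing down along the curve of class $F$ through the base point, reaches $C\equiv\nu F-(2m-m_1)\Omega$ with $\nu\le-1$, and finally blows down the unique irreducible curve in the class $-F-\Omega$ (type $(\ref{cond2})$) to land on a degree-$4$ del Pezzo with $C\equiv-(2m-m_1+\nu)\Omega$ and $2m-m_1+\nu<m$. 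No second class of type $(\ref{cond1})$ is needed. Your proposed substitute — blowing up the orbit of second-largest multiplicity, or "playing $C'$ against $C$" — rests on a false premise: the bound $q\le 3$ comes from $qm_1\le 4m-2<4m_1$ and applies only to the orbit realizing the \emph{maximal} multiplicity; the set of indices realizing the second-largest multiplicity can be arbitrarily large, so that fallback does not keep the surface in the del Pezzo range. As written, none of the three cases is actually closed.
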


\begin{proof}
We split the proof into three cases:
\hfill\break
$(\mathrm{I})$
When $P_1$ is a $\mathfrak{G}$-invariant point.

Let $\widetilde Y$ be the blowing-up of $Y$ at $P$.
We have $\Omega_{\widetilde Y} \cdot \Omega_{\widetilde Y}=3$.
We shall show that $\widetilde Y$ is a del Pezzo surface.
Let $\Gamma$ be an irreducible curve on $\widetilde Y$.
If $\Gamma=E_1^\prime$,
then $\Gamma \cdot \Omega_{\widetilde Y}=-1$.
Other $\Gamma$ comes from an irreducible curve on $Y$,
and $\Gamma \cdot \Omega_{\widetilde Y}=\Gamma \cdot \Omega_Y+m_1^\prime$
where $m_1^\prime=\Gamma \cdot E_1^\prime$.
Suppose that $\Gamma \cdot \Omega_Y=-\alpha, \alpha>0$,
then $\Gamma \cdot C=m\alpha$ and $C \cdot E_1^\prime=m_1$
imply that $m\alpha \geq m_1m_1^\prime$,
so that $m_1^\prime \leq \frac{m\alpha}{m_1}<\alpha$,
which means $\Gamma \cdot \Omega_{\widetilde Y}<0$.
Thus $\widetilde Y$ is a del Pezzo surface.

$\Pic(\widetilde Y)^\mathfrak{G}$ has rank 2 with $\Omega_{\widetilde Y}$ and $E_1^\prime$
as its basis.
Put $F=-E_1^\prime-\Omega_{\widetilde Y}$,
then $F \cdot F=0, F \cdot \Omega_{\widetilde Y}=-2$ and $\Pic(\widetilde Y)^\mathfrak{G}$ is
generated by $F$ and $\Omega_{\widetilde Y}$.
Since $\Omega_{\widetilde Y}=\Omega_Y+E_1^\prime$,
we have $C \equiv -m\Omega_{\widetilde Y}-(m_1-m)E_1^\prime
=(m_1-m)F-(2m-m_1)\Omega_{\widetilde Y}$.
If $2m-m_1=0$, then $C \cdot C=0$ on $\widetilde Y$
and $C \cdot C^\prime=1$ can not happen for other
$\mathfrak{G}$-invariant irreducible curve $C^\prime$
as mentioned in Subsection \ref{updown2}.
Otherwise $2m-m_1>0$.
For all $j \geq 2$
such that $m_j>2m-m_1$, after blowing-up $E_j^\prime$,
blow down by the irreducible curve which belongs to $F$
and passes through the base point of $E_j^\prime$.
On the obtained surface $\overline{\widetilde Y}$,
we have $C \equiv \nu F-(2m-m_1)\Omega$ with $\nu \leq -1$.

Let $Z$ be the blow down of $\overline{\widetilde Y}$ by the irreducible curve
belonging to $-F-\Omega$.
Then $Z$ is a del Pezzo surface with $\omega=4$,
and we have $C \equiv -(2m-m_1+\nu)\Omega$ in $\Pic(Z)$.
This is the same situation with the original $Y$,
but $m$ is replaced by $\mu=2m-m_1+\nu <m+\nu <m$.
\hfill\break
$(\mathrm{II})$
When $P_1$ and $P_2$ are mutually conjugate.

Let $\widetilde{\widetilde Y}$ be the blowing-up of $Y$ at $P_1$ and $P_2$.
We have $\Omega_{\widetilde{\widetilde Y}} \cdot \Omega_{\widetilde{\widetilde Y}}=2$,
and $\widetilde{\widetilde Y}$ is a del Pezzo surface by the same reason as $(\mathrm{I})$.
$\mathfrak{G}$ acts Zariski homeomorphic on $\widetilde{\widetilde Y}$
and $\Pic(\widetilde{\widetilde Y})^\mathfrak{G}$ has rank 2
with $\Omega_{\widehat{\widehat Y}}$ and $E_1^\prime+E_2^\prime$
as its basis.
Since $\Omega_{\widetilde{\widetilde Y}}=\Omega_Y+E_1^\prime+E_2^\prime$,
we have
$$C \equiv -m\Omega_{\widetilde{\widetilde Y}}-(m_1-m)(E_1^\prime+E_2^\prime).$$
By Theorem \ref{lem3}, for $i=1,2$, there exists a unique irreducible curve $L_i$ such that
$L_i \equiv -E_i^\prime-\Omega_{\widetilde{\widetilde Y}}$
and $L_i \cdot L_i=L_i \cdot \Omega_{\widetilde{\widetilde Y}}=-1$.
Let $Z$ be the blowing down of $\widetilde{\widetilde Y}$ by $L_1$ and $L_2$.
Since all of $E_1^\prime, E_2^\prime$ and $-\Omega_{\widetilde{\widetilde Y}}$
in $\Pic(\widetilde{\widetilde Y})$ are mapped to $-\Omega_Z$ in $\Pic(Z)$,
we have
$C \equiv -(3m-2m_1)\Omega_Z$ in $\Pic(Z)$.
This is the same situation with the original $Y$,
but $m$ is replaced by $\mu=3m-2m_1<m$.
\hfill\break
$(\mathrm{III})$
When $P_1, P_2$ and $P_3$ are mutually conjugate.

Let $\widetilde{\widetilde{\widetilde Y}}$ be the blowing-up of $Y$
at $P_1, P_2$ and $P_3$.
We have $\Omega \cdot \Omega=1$ on $\widetilde{\widetilde{\widetilde Y}}$,
and $\widetilde{\widetilde{\widetilde Y}}$ is a del Pezzo surface by the same reason as $(\mathrm{I})$.
$\mathfrak{G}$ acts Zariski homeomorphic on $\widetilde{\widetilde{\widetilde Y}}$
and $\Pic(\widetilde{\widetilde{\widetilde Y}})^\mathfrak{G}$ has rank 2
with $\Omega$ and $E_1^\prime+E_2^\prime+E_3^\prime$ as its basis.

By Theorem \ref{lem3}, for $i=1,2,3$, there exists a unique irreducible curve $L_i$
such that $L_i \equiv -E_i^\prime-2\Omega$ and
$L_i \cdot L_i=L_i \cdot \Omega=-1$.
Let $Z$ be the blowing-down of $\widetilde{\widetilde{\widetilde Y}}$ by
$L_1$, $L_2$ and $L_3$.
Then we have
$C \equiv -(7m-6m_1)\Omega_Z$ in $\Pic(Z)$,
and $\mu=7m-6m_1<m$.

In any cases of $(\mathrm{I})$, $(\mathrm{II})$ and $(\mathrm{III})$, we can replace $Y$ with $Z$ with a smaller value of $m$.
Repeat this procedure.
After finite steps, we reach $m \leq 0$,
which means that $C \cdot C=0, C \cdot \Omega=-2$
can not be reached by any blowings-up $\{E_j^\prime\}$.
\end{proof}

\subsection{Impossibility of $s=1$.} \label{s1}

Consider the action of $\mathfrak{G}$ on $\Pic(Y)$.

The image of $\Psi_\sigma-\id$ should be contained in the kernel of $\sum_\sigma \Psi_\sigma$.
$\Psi_\sigma-\id$ maps $F^\prime$ to $-\sum^\prime E_i+\alpha_\sigma F$
for some integer $\alpha_\sigma$,
the sum $\sum^\prime$ being taken over such $i$ that
$(x=c_i)$ is an exceptional curve of $\tau_\sigma$.

$\sum \Psi_\sigma$ maps $E_i$ to
$\frac{|\mathfrak{G}|}{2}F (1 \leq i \leq s)$,
and $F$ to $|\mathfrak{G}|F$,
so $-\sum^\prime E_i+\alpha_\sigma F$ to
$\frac{|\mathfrak{G}|}{2}(-n_\sigma+2\alpha_\sigma)F$
where $n_\sigma$ is the number of $i$
such that $(x=c_i)$ is an exceptional curve of $\tau_\sigma$.

From this we see that $n_\sigma=2\alpha_\sigma$,
so $n_\sigma$ is even.

Suppose that $s \not= 0$,
and $x=c_1$ is an exceptional curve of $\tau_\sigma$.
Then, since $n_\sigma \geq 2$,
there exists at least one more exceptional curve,
so that $s \geq 2$.
This shows that $s \not=0$ implies $s \geq 2$,
so the case $s=1$ never happens.

\subsection{The case where $s=0$.} \label{s0}

The action $\tau_\sigma \circ \widetilde \sigma$ is Zariski homeomorphic on $Y$,
which is obtained from $\mathbb{P}^1 \times \mathbb{P}^1$
by $r$-times blowing up and down.
$\Pic(Y)$ is of rank 2 with the basis $F, F^\prime$.
We have $F \cdot F=0, F \cdot F^\prime=1, F^\prime \cdot F^\prime=r$.
Every class is $\mathfrak{G}$-invariant.

\begin{prop} \label{prop415}
If there exists a $\mathfrak{G}$-invariant curve
linear in $u$, namely $G_1(x)u+G_2(x)=0$,
then $l(x,u)=l(x,w)$
with some $\mathfrak{G}$-invariant $w$.
\end{prop}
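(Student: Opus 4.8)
The plan is to recognise $l(x,u)$ as the function field of a conic over $k(x)$ for which the hypothesis furnishes a $k(x)$-rational point, and then to straighten the $\mathfrak{G}$-action by two applications of Hilbert's Theorem~90.

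Put $L=l(x)$. Since $l/k$ is Galois with group $\mathfrak{G}$ and $x$ is transcendental over $k$, the extension $L/k(x)$ is again Galois with group $\mathfrak{G}$, and $\mathfrak{G}$ acts on $E:=l(x,u)=L(u)$ extending its action on $L$, with $E^{\mathfrak{G}}=k(x,y,z)$. For each $\sigma$ the element $\sigma(u)$ is a generator of $E$ over $L$, hence a fractional linear function of $u$ with coefficients in $L$; thus $\mathfrak{G}$ acts on the generic fibre $\mathbb{P}^1_L$ through a cocycle valued in $\mathrm{PGL}_2(L)$. It is essential here that this is a $\mathrm{PGL}_2$- and not a $\mathrm{GL}_2$-cocycle, so that one cannot simply descend to a $\mathfrak{G}$-fixed linear form in $u$; indeed in the non-rational cases no $\mathfrak{G}$-invariant curve linear in $u$ exists. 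Dividing $G_1,G_2$ by their greatest common divisor we may assume $\gcd(G_1,G_2)=1$ and $G_1\ne 0$, so that $G_1u+G_2=0$ defines the place $P_\Gamma$ of $E/L$ given by $u=-G_2/G_1\in\mathbb{P}^1(L)$, a place of degree one; the hypothesis that the curve $\Gamma$ is $\mathfrak{G}$-invariant says precisely that $P_\Gamma$ is fixed by $\mathfrak{G}$.

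First I would change the fibre coordinate, setting $u':=1/(G_1u+G_2)$, so that $L(u')=E$ and $P_\Gamma=\{u'=\infty\}$. Each $\sigma$ permutes the places of $E/L$ and fixes $P_\Gamma$, which is the only pole of $u'$; hence the only pole of $\sigma(u')$ is again $P_\Gamma=\{u'=\infty\}$, so $\sigma(u')$ is affine in $u'$: $\sigma(u')=\alpha_\sigma u'+\beta_\sigma$ with $\alpha_\sigma\in L^{\times}$ and $\beta_\sigma\in L$. From $\sigma\tau(u')=\sigma(\alpha_\tau u'+\beta_\tau)$ one reads off $\alpha_{\sigma\tau}=\alpha_\sigma\,\sigma(\alpha_\tau)$, so $(\alpha_\sigma)$ is a cocycle in $Z^1(\mathfrak{G},L^{\times})$. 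By Hilbert~90 there is $\lambda\in L^{\times}$ with $\alpha_\sigma=\sigma(\lambda)/\lambda$; replacing $u'$ by $\tilde u:=u'/\lambda$ turns the action into a pure translation $\sigma(\tilde u)=\tilde u+\gamma_\sigma$ with $\gamma_\sigma:=\beta_\sigma/\sigma(\lambda)$, and now $\gamma_{\sigma\tau}=\gamma_\sigma+\sigma(\gamma_\tau)$, i.e. $(\gamma_\sigma)\in Z^1(\mathfrak{G},L)$ for the additive group. Since $H^1(\mathfrak{G},L)=0$ (normal basis theorem), there is $\mu\in L$ with $\gamma_\sigma=\sigma(\mu)-\mu$, and then $w:=\tilde u-\mu$ satisfies $\sigma(w)=w$ for every $\sigma\in\mathfrak{G}$.

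Finally, $w$ is obtained from $u$ by a chain of invertible transformations over $L$, so $l(x,w)=L(w)=L(u)=l(x,u)$, and $w$ is $\mathfrak{G}$-invariant; this is the assertion. (Since $x$ is also $\mathfrak{G}$-invariant and $[l(x,u):k(x,y,z)]=|\mathfrak{G}|$, one then obtains $k(x,y,z)=k(x,w)$, which is presumably how the proposition will be applied.) The only delicate point is the reformulation in the second paragraph — translating ``$\mathfrak{G}$-invariant curve linear in $u$'' into ``$\mathfrak{G}$-fixed degree-one place'', and thence, after moving it to infinity, into an action by affine transformations; after that the argument is the routine double use of Hilbert~90.
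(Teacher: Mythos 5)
Your argument is correct and follows essentially the same route as the paper: move the invariant curve to a fixed point of the fibre coordinate so that the induced M\"obius action becomes affine, then straighten the affine action to obtain a $\mathfrak{G}$-invariant coordinate $w$. The only difference is cosmetic --- the paper sends the invariant curve to $v=0$ and then inverts, and it cites \cite{HK95} for the final straightening step, which you instead carry out explicitly by the two applications of Hilbert's Theorem 90 (multiplicative, then additive via the normal basis theorem).
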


\begin{proof}
Put $v=G_1(x)u+G_2(x)$,
then $(x,v)$ is a transcendent basis of $l(x,u)$,
and the curve $v=0$ is $\mathfrak{G}$-invariant.
Since $\tau_\sigma$ keeps $x$ invariant,
$\tau_\sigma$ maps $v$ to a linear fraction of $v$
with $l(x)$-coefficients, namely
$$\tau_\sigma:
v \mapsto \frac{\alpha_\sigma(x)v+\beta_\sigma(x)}{\gamma_\sigma(x)v+\delta_\sigma(x)},
\alpha_\sigma, \beta_\sigma, \gamma_\sigma, \delta_\sigma \in l[x].$$
But since $v=0$ is invariant,
we have $\beta_\sigma=0$ for any $\sigma$.
Put $v^\prime=\frac{1}{v}$, then
$$\tau_\sigma: v^\prime \mapsto
\frac{\delta_\sigma(x)v^\prime+\gamma_\sigma(x)}{\alpha_\sigma(x)}.$$
Namely $\tau_\alpha: v^\prime \mapsto
\alpha_\sigma(x)v^\prime+\beta_\sigma(x),
\alpha_\sigma, \beta_\sigma \in l(x)$.
From this we see that some $w=\alpha(x)v^\prime+\beta(x)$
($\alpha, \beta \in l(x), \alpha \not=0$)
is $\mathfrak{G}$-invariant (see \cite{HK95}).
Thus we obtained a $\mathfrak{G}$-invariant transcendent basis $(x,w)$.
\end{proof}

\begin{prop} \label{prop16}
When $s=0$, $k(x,y,z)$ is $k$-rational
except the following case.
Both of $\deg P$ and $\deg Q$ are even and $a^2p_0+b^2q_0=c^2$
has no non-zero solution $(a,b,c)$ in $k$.
\end{prop}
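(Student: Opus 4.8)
The plan is to reduce the claim to the vanishing of a quaternion symbol over $k(x)$ and then establish that vanishing by a residue computation. When $s=0$ the class $F\in\Pic(Y)$ is $\mathfrak G$-invariant with $F\cdot F=0$ and $F\cdot\Omega=-2$, and $F$ contains the irreducible curves $x=c$; hence $F$ is the fibre class of a $\mathfrak G$-equivariant conic bundle $Y\to\mathbb P^1_x$ with trivial $\mathfrak G$-action on the base, whose generic fibre is the conic $w^2=Pv_1^2+Qv_2^2$ over $k(x)$. If that conic has a $k(x)$-rational point it is $k(x)$-isomorphic to $\mathbb P^1_{k(x)}$, so $k(x,y,z)$ is $k(x)$-rational and hence $k$-rational; equivalently, a nonzero triple $A,B,C\in k[x]$ with $A^2P+B^2Q=C^2$ exists and Proposition~\ref{prop41} applies directly. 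Since having a $k(x)$-point amounts to the triviality of the Hilbert symbol $(P,Q)_2\in\Br\big(k(x)\big)$ (which by Proposition~\ref{prop42} is in any case killed over a suitable finite extension), the proposition reduces to the assertion: \emph{if $s=0$ and we are not in the exceptional case, then $(P,Q)_2=0$ in $\Br\big(k(x)\big)$.}

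The main step --- and where I expect the real work --- is to compute the residue of $(P,Q)_2$ at every closed point of $\mathbb P^1_k$. At a finite point $x=c$ with $P(c)Q(c)\ne0$ both $P$ and $Q$ are units there, so the residue vanishes. At $x=c$ with $P(c)=0\ne Q(c)$ the residue is the class of $Q(c)$ in $k(c)^\times/k(c)^{\times2}$, trivial because $s_1=0$; symmetrically, $Q(c)=0\ne P(c)$ gives a residue trivial because $s_2=0$; at a common zero $P(c)=Q(c)=0$, necessarily simple since $P$ and $Q$ are separable, the residue is the class of $-\frac{Q}{P}(c)$, trivial because $s_3=0$. At $x=\infty$ the tame symbol equals, modulo squares in $k$, the quantity $(-1)^{(\deg P)(\deg Q)}p_0^{-\deg Q}q_0^{\deg P}$; this is a square when $\deg P$ and $\deg Q$ are both even, and otherwise reduces to $p_0$, $q_0$, or $-q_0/p_0$ in the three parity cases, each of which is a square in $k$ exactly because $s_4=0$ (conditions $(\mathrm{i})$, $(\mathrm{ii})$, $(\mathrm{iii})$). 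Thus $(P,Q)_2$ is unramified on all of $\mathbb P^1_k$ and therefore lies in the image of $\Br(k)$; specialising at $x=\infty$, where modulo squares in $k((1/x))$ one has $P\equiv p_0$ and $Q\equiv q_0$ in the both-even case while in each odd case one of $P$, $Q$, $-PQ$ becomes a square (again using $s_4=0$), identifies this constant class as $(p_0,q_0)_{2,k}$ when $\deg P$ and $\deg Q$ are both even, and as $0$ otherwise.

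Combining: if at least one of $\deg P,\deg Q$ is odd, or both are even and $c^2=p_0a^2+q_0b^2$ has a nonzero solution in $k$, then $(P,Q)_2=0$ over $k(x)$, so the required $A,B,C\in k[x]$ exist and $k(x,y,z)$ is $k(x)$-rational, hence $k$-rational; this is exactly the complement of the exceptional case, which proves the proposition. (For the converse, needed in Theorem~\ref{mainthm}$(4)$: in the exceptional case $(P,Q)_2=(p_0,q_0)_{2,k}\ne0$, so $Y\to\mathbb P^1_x$ has no $k$-rational section, and non-rationality follows by rerunning the intersection-form argument of Subsections~\ref{deg46}--\ref{updown2} on $Y$, now with $\Omega\cdot\Omega=8$, reducing $Y$ to a $k$-minimal del Pezzo surface of degree $8$ and checking that the absence of a $k$-point on $c^2=p_0a^2+q_0b^2$ obstructs the existence of a $\mathfrak G$-invariant irreducible curve $C$ with $C\cdot C=0$ and $C\cdot\Omega=-2$.) The delicate bookkeeping is concentrated at $x=\infty$: one must check that the three parity-plus-leading-coefficient conditions defining $s_4$ are precisely what kills both the tame symbol at $\infty$ and the resulting constant Brauer class, and I expect that to absorb most of the effort.
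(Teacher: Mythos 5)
Your argument is correct for the direction the proposition actually needs (rationality whenever $s=0$ and we are outside the exceptional case), but it is a genuinely different route from the paper's. The paper never touches $\Br\big(k(x)\big)$: it works on the surface $Y$, produces curves in the classes $\nu F+F^\prime$ by a dimension count on pairs $(G_1,G_2)$ with $G_1u+G_2=0$, observes that a curve with $\Gamma\cdot\Gamma<0$ is unique in its class hence $\mathfrak{G}$-invariant (settling $r$ odd and the case where the linear system has dimension $\ge 3$), and in the remaining case identifies the rationality question with the existence of a $\mathfrak{G}$-invariant point on $F_\infty$, which after normalising $a_0=0$, $c_0=\sqrt{q_0}$ becomes the norm equation $\lambda_1^2-q_0\lambda_2^2=p_0$. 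Your residue computation is the cleaner and more conceptual counterpart: the four conditions $s_1=s_2=s_3=s_4=0$ are visibly the vanishing of the tame symbols of $(P,Q)_2$ at the zeros of $P$, of $Q$, at common zeros, and at $x=\infty$ (your formulas for the residues are right, including $-\tfrac{P}{Q}(c)\equiv-\tfrac{Q}{P}(c)$ modulo squares and the parity analysis of $(-1)^{\deg P\deg Q}p_0^{-\deg Q}q_0^{\deg P}$ at infinity), and the identification of the unramified class with $(p_0,q_0)_{2,k}$ in the both-even case, and with $0$ otherwise via $(P,Q)=(P,-PQ)$, matches the paper's norm condition exactly. What your approach buys is brevity and an explanation of where $s$ comes from, at the cost of importing the Faddeev/purity machinery, which the paper only uses over $k^{\sep}(x)$ in Proposition \ref{prop42}; what the paper's approach buys is self-containedness and, crucially, the converse. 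On that converse your parenthetical sketch is the one genuinely weak point: ``rerunning the intersection-form argument of Subsections \ref{deg46}--\ref{updown2} with $\Omega\cdot\Omega=8$'' does not work, since those arguments derive their contradictions from $\omega\le 0$ or from the del Pezzo reduction with $\omega\le 4$. The natural completion of your own method is much shorter: $Q=z^2-Py^2$ is a norm from $K(\sqrt{P})$, so $(P,Q)_2$ dies in $\Br(K)$; if $(P,Q)_2$ is the constant class $(p_0,q_0)_{2,k}\ne 0$, then a nonzero element of $\Br(k)$ would be split by $K$, which is impossible when $K$ is $k$-rational because $\Br(k)$ injects into the Brauer group of a rational function field. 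With that replacement your proof establishes the same equivalence as the paper's.
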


\begin{proof}
Assume that some class
$\nu F+F^\prime$ with $\nu<-\frac{r}{2}$
contains an irreducible curve $\Gamma$,
then $\Gamma \cdot \Gamma=2\nu+r<0$,
so that $\Gamma$ is $\mathfrak{G}$-invariant
because $\Gamma$ is the unique irreducible curve in this class.
Therefore $k(x,y,z)$ is $k$-rational by Proposition \ref{prop415}.

When $r$ is odd, such $\nu$ exists.
Let $d=\frac{r-1}{2}$. Then $G_1(x)u+G_2(x)$ with
\break
$\deg G_1, \deg G_2 \leq d$
has $2(d+1)=r+1$ coefficients.
The condition that $G_1(x)u+G_2(x)=0$ passes through $r$ points
yields $r$ linear equations on these coefficients,
so there exists a non-zero solution.
Suppose that the curve passes through $r$ blown up points,
then $\nu=d-r=\frac{r-1}{2}-r=-\frac{r+1}{2}<-\frac{r}{2}$.
Since $\deg B$ is sufficiently large by Proposition \ref{prop44} (2),
we have $\deg B>d$,
so neither $G_1$ nor $G_2$ is zero.
If $G_1$ and $G_2$ are not mutually disjoint,
divide by GCD to get an irreducible curve.
Thus when $r$ is odd,
$k(x,y,z)$ is $k$-rational.

Suppose that $r$ is even and let $d=\frac{r}{2}$.
Similar discussion as above shows that the curves
$G_1(x)u+G_2(x)=0$ with $\deg G_1, \deg G_2 \leq d$
which pass through all $r$ blown up points
form a vector space of at least two dimensional.

If the dimension is 3 or more,
there exists a solution among them such that the coefficients
of the highest degree terms of $G_1$ and $G_2$ are zero,
and for such a solution, we have $\nu<d-r=\frac{r}{2}-r=-\frac{r}{2}$,
so the problem is reduced to the solved case.

Suppose that the desired solutions are two dimensional,
then the family of desired curves is parametrized by $\mathbb{P}^1$.
They are mutually disjoint in $Y$ because of $\Gamma \cdot \Gamma=0$,
and their union covers all $Y$.
The relation that $\Gamma$ passes through $P$ defines
a one-to-one correspondence between a point $P$ on $F_\infty$
and a curve $\Gamma$ in this family,
because of $\Gamma \cdot F_\infty=1$.

Consider the action of $\Psi_\sigma$ on $F_\infty$.
If there exists a $\mathfrak{G}$-invariant point on $F_\infty$,
then the corresponding curve is $\mathfrak{G}$-invariant,
so $l(x,u)$ has a transcendent basis $(x,w)$
with some $\mathfrak{G}$-invariant $w$.

On the contrary, if $\mathfrak{G}$-invariant point does not exist
on $F_\infty$, then $\mathfrak{G}$-invariant $\Gamma$ does not exist,
so $\mathfrak{G}$-invariant point does not exist on $Y$ at all,
because the curves are mutually disjoint.
Hence $l(x,u)$ can never have a $\mathfrak{G}$-invariant
transcendent basis.
Thus the rationality of $k(x,y,z)$ over $k$ is equivalent with
the existence of $\mathfrak{G}$-invariant point on $F_\infty$.

If $\deg P$ is odd and $\deg Q$ is even,
then $s=0$ implies $q_0 \in k^2$,
so that $c_0=\sqrt{q_0} \in k$
and $\tau_\sigma$ on $F_\infty$ is given by
$\lambda \mapsto \lambda+\frac{a_0-a_0^\sigma}{2c_0}{p_0}$.
So $\lambda=\frac{a_0p_0}{2c_0}$ is invariant by
$\Phi_\sigma=\tau_\sigma \circ \widetilde{\sigma}$
for all $\sigma \in \mathfrak{G}$.
If both of $\deg P$ and $\deg Q$ are odd,
similar arguments show that
$\lambda=\frac{c_0}{2a_0}$ is $\mathfrak{G}$-invariant.

If $\deg P$ is even and $\deg Q$ is odd,
we have $p_0 \in k^2$ and $\frac{c_0}{a_0} =\sqrt{p_0} \in k$.
$\tau_\sigma$ on $F_\infty$ is given by
$\lambda \mapsto \frac{(c+c^\sigma)\lambda+(a-a^\sigma)p_0}
{(a-a^\sigma)\lambda+(c+c^\sigma)}$,
so $\lambda=\sqrt{p_0}$ is $\mathfrak{G}$-invariant.

Assume that both of $\deg P$ and $\deg Q$ are even.
The rationality of $k(x,y,z)$ is independent of
the choice of $A,B,C$ such that $A^2P+B^2Q=C^2$,
so we can choose convenient ones.

Let $a_0=0, c_0=\sqrt{q_0}$,
then $c_0^\sigma=\pm c_0$.
If $q_0 \in k^2$,
$\tau_\sigma$ on $F_\infty$ is the identity for all $\sigma \in \mathfrak{G}$,
so every $\lambda \in k$ is $\mathfrak{G}$-invariant.
If $\sqrt{q_0} \not\in k$,
every $\mathfrak{G}$-invariant $\lambda$
must belong to $k(c_0)=k(\sqrt{q_0})$.

If $c_0^\sigma=-c_0$, we have $d_{\sigma0}=0$,
so $\tau_\sigma$ is $\lambda \mapsto \frac{p_0}{\lambda}$
on $F_\infty$.
Let $\lambda=\lambda_1+c_0 \lambda_2$, $\lambda_1,\lambda_2 \in k$,
and $\overline{\lambda}=\lambda_1-c_0\lambda_2$.
Then $\lambda$ is $\mathfrak{G}$-invariant
if and only if $\lambda \overline{\lambda}=p_0$,
namely $\lambda_1^2-q_0\lambda_2^2=p_0$.
The existence of such $\lambda_1,\lambda_2$ is equivalent with
the existence of non-zero $(a,b,c) \in k \times k \times k$
such that $a^2p_0+b^2q_0=c^2$.
\end{proof}

\subsection{The case where $s=2$.} \label{s2}

The surface $Y$, on which $\mathfrak{G}$ acts in a Zariski homeomorphic way,
is of type $Y_{r2}$.
$\Pic(Y)$ is of rank 4 with the basis $F,F^\prime,E_1$ and $E_2$.
The intersection form is $F \cdot F=0,
F \cdot F^\prime=1, F^\prime \cdot F^\prime=r$.
$F \cdot E_i=F^\prime \cdot E_i=0,
E_i \cdot E_i=-1 \, (i=1,2)$
and $E_1 \cdot E_2=0$.

First, we shall show that there exists an irreducible curve $\Gamma$
such that $\Gamma \cdot \Gamma<0$ and $\Gamma \cdot F=1$.

Let $Y_{r0}$ be the surface before the blowings-up $E_1$ and $E_2$.
The discussions in the proof of Proposition \ref{prop16} show the followings.

When $r$ is odd, there exists an irreducible curve $\Gamma$
such that $\Gamma \cdot \Gamma <0$ and $\Gamma \cdot F=1$ on $Y_{r0}$.
Since $\Gamma \cdot \Gamma$ decreases by any blowing-up,
we have $\Gamma \cdot \Gamma<0$ on $Y$.

When $r$ is even, there exists a family of irreducible curves
such that $\Gamma \cdot \Gamma=0$ and $\Gamma \cdot F=1$ on $Y_{r0}$.
Among them, there exists a curve $\Gamma$ which passes through $P_1$
(= the base point of $E_1$), and we have $\Gamma \cdot \Gamma<0$ on $Y$.

\begin{prop} \label{prop17}
There exist exactly two irreducible curves
$\Gamma_1$ and $\Gamma_2$ which satisfy $\Gamma_i \cdot \Gamma_i<0$
and $\Gamma_i \cdot F=1 \, (i=1,2)$.
They are mutually conjugate,
and their classes are
\begin{equation}
\Gamma_1: F^\prime-\frac{r}{2}F-E_1, \,
\Gamma_2:F^\prime-\frac{r}{2}F-E_2
\end{equation}
when $r$ is even.

The surface $Y_{r0}$ (before the blowings-up $E_1, E_2$)
is biregular with $\mathbb{P}^1 \times \mathbb{P}^1$.

The odd $r$ case can never happen.
\end{prop}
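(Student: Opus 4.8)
The plan is to determine exactly which classes in $\Pic(Y)$ can contain an irreducible curve $\Gamma$ with $\Gamma\cdot\Gamma<0$ and $\Gamma\cdot F=1$, and then to realise the surviving classes by curves lifted from $Y_{r0}$. In the basis $F,F^\prime,E_1,E_2$ of $\Pic(Y_{r2})$ the condition $\Gamma\cdot F=1$ forces the coefficient of $F^\prime$ to be $1$, so write $\Gamma\equiv aF+F^\prime+c_1E_1+c_2E_2$; since $\Gamma\neq E_i$ (as $E_i\cdot F=0$), irreducibility gives $\Gamma\cdot E_i=-c_i\geq0$. Using $\Omega_Y=(r-2)F-2F^\prime+E_1+E_2$ one computes
\[
\Gamma\cdot\Gamma+\Gamma\cdot\Omega_Y=-c_1(c_1+1)-c_2(c_2+1)-2 ,
\]
so the adjunction inequality $\Gamma\cdot\Gamma+\Gamma\cdot\Omega_Y\geq-2$ (equality iff $\Gamma$ is rational) forces $c_1,c_2\in\{0,-1\}$. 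Hence $\Gamma$ is the proper transform of an irreducible curve $C$ on $Y_{r0}$ of class $aF+F^\prime$ meeting $P_i$ with multiplicity $-c_i$, with $\Gamma\cdot\Gamma=C\cdot C-c_1^2-c_2^2$ and $C\cdot C=2a+r$ on $Y_{r0}$.

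Next I would pin down $Y_{r0}$. If $r$ is odd --- or if $r$ is even but the curves produced in the proof of Proposition \ref{prop16} span more than a pencil --- then that proof, applied to $Y_{r0}$, yields an irreducible curve $\Gamma_0\equiv\nu F+F^\prime$ with $\Gamma_0\cdot\Gamma_0<0$. It is unique in its class, and that class is $\mathfrak{G}$-fixed (since $\mathfrak{G}$ preserves $F$, the intersection form and effectivity, and there is only one such class), so $\Gamma_0$ is $\mathfrak{G}$-invariant; as $\Gamma_0\cdot F=1$ it is linear in $u$, and Proposition \ref{prop415} then makes $k(x,y,z)$ $k$-rational. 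Thus in the situation under study (where $k(x,y,z)$ is not $k$-rational) $r$ is even and the proof of Proposition \ref{prop16} instead provides a base-point-free pencil $\{\Gamma_\lambda\}$ of irreducible curves on $Y_{r0}$ with $\Gamma_\lambda\cdot\Gamma_\lambda=0$, $\Gamma_\lambda\cdot F=1$, mutually disjoint and covering $Y_{r0}$; this is a second ruling, so $Y_{r0}\cong\mathbb{P}^1\times\mathbb{P}^1$, with rulings $F$ and $G:=F^\prime-\frac{r}{2}F$. Consequently every irreducible curve $C\equiv aF+F^\prime$ satisfies $C\cdot C=2\bigl(a+\tfrac{r}{2}\bigr)\geq0$, with equality precisely for the fibres $\Gamma_\lambda$.

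Combining these, $\Gamma\cdot\Gamma<0$ forces $C\cdot C=0$ and $(c_1,c_2)\neq(0,0)$, so $C$ is a fibre $\Gamma_\lambda$ and the only candidates for $\Gamma$ are the proper transforms of the fibre $\Gamma_{\lambda_1}$ through $P_1$ and of the fibre $\Gamma_{\lambda_2}$ through $P_2$, of classes $F^\prime-\frac{r}{2}F-E_1$ and $F^\prime-\frac{r}{2}F-E_2$ and self-intersection $-1$. It remains to exclude $\lambda_1=\lambda_2$: a fibre through both of the conjugate points $P_1,P_2$ would be the unique fibre through each, hence $\mathfrak{G}$-invariant, so Proposition \ref{prop415} would once more force $k$-rationality. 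Therefore $\lambda_1\neq\lambda_2$; the two candidates are then distinct, both actually occur (the proper transform of the fibre through each $P_i$ is irreducible with the stated class), and any $\sigma\in\mathfrak{G}$ with $\sigma P_1=P_2$ interchanges them since the pencil $\{\Gamma_\lambda\}$ is $\mathfrak{G}$-stable; hence there are exactly two such curves and they are mutually conjugate.

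The step I expect to be the main obstacle is making the passage $Y_{r0}\cong\mathbb{P}^1\times\mathbb{P}^1$ fully rigorous rather than merely quoted: one has to read off from the proof of Proposition \ref{prop16} that, for even $r$ in the non-rational case, the $\Gamma_\lambda$ really do form a $\mathbb{P}^1$-pencil sweeping out $Y_{r0}$ --- this is what upgrades minimality of $Y_{r0}$ to $Y_{r0}\cong\mathbb{P}^1\times\mathbb{P}^1$ and fixes the self-intersections $2(a+\tfrac{r}{2})$ of the classes $aF+F^\prime$ --- and to funnel every degenerate configuration (a negative curve on $Y_{r0}$, the coincidence $\lambda_1=\lambda_2$, or odd $r$) uniformly through Proposition \ref{prop415} so as to conclude $k$-rationality and thereby eliminate it.
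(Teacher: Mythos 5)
Your opening reduction is sound: the adjunction computation $\Gamma\cdot\Gamma+\Gamma\cdot\Omega_Y=-c_1(c_1+1)-c_2(c_2+1)-2\geq -2$ correctly forces $c_1,c_2\in\{0,-1\}$, so every candidate is the proper transform of a curve $C\equiv aF+F^\prime$ on $Y_{r0}$ through a subset of $\{P_1,P_2\}$. But everything after that rests on a mechanism that does not work. You repeatedly discharge the degenerate cases (odd $r$, a negative curve in a class $\nu F+F^\prime$, a fibre through both $P_1$ and $P_2$) by declaring the relevant curve ``unique in a $\mathfrak{G}$-fixed class, hence $\mathfrak{G}$-invariant'' and invoking Proposition \ref{prop415}. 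This is false in the $s=2$ setting: $\mathfrak{G}$ acts biregularly on $Y=Y_{r2}$, not on $Y_{r0}$, and for the $\sigma$ with $E_1\mapsto F-E_1$ (which exists precisely because $s=2$) one has $F^\prime\mapsto F^\prime+F-E_1-E_2$, so a class $aF+F^\prime+c_1E_1+c_2E_2$ is sent to one whose $E_1$-coefficient is $-(1+c_1)$. No integer satisfies $c_1=-(1+c_1)$, so \emph{no} class with $\Gamma\cdot F=1$ is $\mathfrak{G}$-fixed and no such irreducible curve can be $\mathfrak{G}$-invariant; uniqueness on $Y_{r0}$ does not rescue this, since $\Psi_\sigma$ is only birational on $Y_{r0}$ and does not preserve self-intersection there (for odd $r$ the conjugate of the curve in class $F^\prime-\frac{r+1}{2}F$ lands in $F^\prime-\frac{r-1}{2}F-E_1-E_2$). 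This single error invalidates your elimination of odd $r$, of the ``more than a pencil'' case, and of $\lambda_1=\lambda_2$ simultaneously. Independently of that, your proof is conditional on $k(x,y,z)$ being non-$k$-rational, whereas the proposition is unconditional and must be: the paper uses the curves $\Gamma_1,\Gamma_2$, the coordinate $v$, and the identification $Y_{r0}\cong\mathbb{P}^1\times\mathbb{P}^1$ to \emph{prove} the subsequent rationality criteria for $s=2$ (the norm condition on $\kappa$), so a proof valid only in the irrational case would break that development.

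The paper's argument avoids all of this by working with conjugate \emph{pairs}: given an irreducible $\Gamma_1\equiv F^\prime+\nu F-E_1$, its conjugate $\Gamma_2$ lies in $F^\prime+\nu F-E_2$, and $0\leq\Gamma_1\cdot\Gamma_2=\Gamma_1\cdot\Gamma_1+1$ forces $\Gamma_1\cdot\Gamma_1=-1$ and $\nu=-\frac{r}{2}$, hence $r$ even; the ratio $v$ of the two defining equations then gives a morphism $(x,u)\mapsto(x,v)$ that is biregular on $Y_{r0}$, which is how $Y_{r0}\cong\mathbb{P}^1\times\mathbb{P}^1$ is \emph{deduced} rather than assumed; and the odd-$r$ case is excluded not via rationality but because $\tau_\sigma\colon v\mapsto\varphi(x)/v$ would require $\varphi$ to have two zeros and no pole, which is impossible for a rational function. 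If you want to keep your classification-first structure, you must replace each appeal to ``$\mathfrak{G}$-invariant curve $\Rightarrow$ Proposition \ref{prop415}'' with an argument of this conjugate-pair type.
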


\begin{proof}
We choose $\sigma \in \mathfrak{G}$ so that it maps $E_1$ to $F-E_1$.
Then it maps $E_2$ to $F-E_2$
(note that $n_\sigma=2$ as stated in Subsection \ref{s1}),
and maps $F^\prime$ to $F^\prime+F-E_1-E_2$
because $F$ and $\Omega=-2F^\prime+(\frac{r}{2}-2)F+E_1+E_2$ are invariant.
So $\sigma$ maps $F^\prime+\nu F$ to $F^\prime+(\nu+1)F-E_1-E_2$ and
maps $F^\prime+\nu F-E_1$ to $F^\prime+\nu F-E_2$.

Suppose that an irreducible curve $\Gamma_1$ belongs to $F^\prime+\nu F-E_1$,
then its conjugate $\Gamma_2$
belongs to $F^\prime+\nu F-E_2$.
Since $0 \leq \Gamma_1 \cdot \Gamma_2=\Gamma_1 \cdot \Gamma_1+1$,
we have $\Gamma_1 \cdot \Gamma_1=-1$
and $\nu=-\frac{r}{2}$, $r$ being even.

Let $v$ be the ratio of the defining equation of $\Gamma_1$ and $\Gamma_2$:
\begin{eqnarray}
v=\frac{f_2(x)u+g_2(x)}{f_1(x)u+g_1(x)}, \\
\Gamma_i: f_i(x)u+g_i(x)=0. \nonumber
\end{eqnarray}
Then, since $\Gamma_1 \cdot \Gamma_2=0$,
$v$ is not $\frac{0}{0}$ at any point of $Y$,
so the birational mapping $(x,u) \mapsto (x,v)$ is regular
from $Y$ to $\mathbb{P}^1 \times \mathbb{P}^1$.
Since it is injective on $Y_{r0}$,
it defines a biregular mapping from $Y_{r0}$ to $\mathbb{P}_1 \times \mathbb{P}_1$.
On the surface $Y$,
there exist two exceptional curves $E_1$ and $E_2$,
and $E_i$ is mapped to $\varepsilon_i$,
where $\varepsilon_1$ is the blowing up at $(c_1, \infty)$
and $\varepsilon_2$ is the blowing up at $(c_2,0)$.

$\sigma$ maps $(v=0)$ to $(v=\infty)$,
so $\tau_\sigma$ is written in terms $v$ as
\begin{equation} \label{eq412}
\tau_\sigma: v \mapsto \frac{\varphi(x)}{v}, \,
\varphi(x) \in l(x).
\end{equation}
But $\tau_\sigma$ is regular on $x \not= c_1,c_2$,
so $\varphi(x)$ has a pole at $c_1$
and a zero at $c_2$,
so that $\varphi=\alpha\frac{x-c_2}{x-c_1}$,
$\alpha \in l$,
thus
\begin{equation} \label{eq413}
\tau_\sigma: v \mapsto \alpha\frac{x-c_2}{x-c_1}\frac{1}{v}
\end{equation}
(if $c_1=\infty$, $\varphi(x)=\alpha(x-c_2)$).

Similar discussion shows that if $r$ were odd,
there would exist two irreducible curves $\Gamma_1$, $\Gamma_2$
whose classes are $F^\prime-\frac{r+1}{2}F$
and $F^\prime-\frac{r-1}{2}F-E_1-E_2$.
Let $v$ be the ratio of the defining equations of $\Gamma_1$ and $\Gamma_2$,
then we would have $(\ref{eq412})$,
but this time $\varphi(x)$ has two zeros and no pole.
Since for any rational function,
the number of zeros are equal with the number of poles (including $x=\infty$),
this is impossible.
Thus $r$ can never be odd.
\end{proof}

Let $\mathfrak{G}_0$ be the subgroup of $\mathfrak{G}$
which acts trivially on $\Pic(Y)$.
Let $\overline{\mathfrak{G}}=\mathfrak{G}/\mathfrak{G}_0$.
Then $\Pic(Y)$ is actually a $\overline{\mathfrak{G}}$-lattice.
Since both of $\Gamma_1, \Gamma_2$ are $\mathfrak{G}_0$-invariant,
$v$ is also $\mathfrak{G}_0$-invariant,
so we can consider only the
$\overline{\mathfrak{G}}$-action on $v$.

\begin{prop}
When $c_1, c_2 \in k \cup \{\infty\}$,
$k(x,y,z)$ is $k$-rational.
\end{prop}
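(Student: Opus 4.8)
The plan is to take the model of Proposition \ref{prop17}, compute the $\mathfrak{G}$-action on the coordinate $v$ completely explicitly, descend to the fixed field, and recognise the outcome as the function field of a conic bundle over $\mathbb{P}^1_x$ whose two degenerate fibres sit over $k$-rational points, which forces $k$-rationality.

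\emph{Step 1 (the action).} By Proposition \ref{prop17}, $Y_{r0}\cong\mathbb{P}^1\times\mathbb{P}^1$ in coordinates $(x,v)$ with $\Gamma_1=(v=0)$, $\Gamma_2=(v=\infty)$, and $Y$ is the blow-up of $Y_{r0}$ at $\varepsilon_1=(c_1,\infty)$ and $\varepsilon_2=(c_2,0)$. Let $N\subseteq\mathfrak{G}$ be the set of $\sigma$ fixing the classes of $E_1$ and $E_2$; since $n_\sigma$ is even (Subsection \ref{s1}) this is the same as the set of $\sigma$ not interchanging $E_1$ with $F-E_1$, and since $\Gamma_1,\Gamma_2$ have negative self-intersection it is the set of $\sigma$ fixing $\Gamma_1$ and $\Gamma_2$ as curves. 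As $\frac{C}{B}(c_1)^2=Q(c_1)=\pi_1\notin k^2$ and $c_1\in k$, the element $\frac{C}{B}(c_1)=\sqrt{\pi_1}$ lies in $l\setminus k$, $N=\Gal\big(l/k(\sqrt{\pi_1})\big)$ has index $2$, and $F_1:=l^{N}=k(\sqrt{\pi_1})$. For $\sigma\in N$ one has $\sigma(v)=\gamma_\sigma(x)\,v$ with $\gamma_\sigma\in l(x)^{\times}$, and for $\sigma\notin N$ one has $\sigma(v)=\alpha_\sigma\frac{x-c_2}{x-c_1}\frac1v$ by $(\ref{eq413})$ (or $\alpha_\sigma(x-c_1)\frac1v$ if $c_2=\infty$, the case $s_4=1$); the function $\frac{x-c_2}{x-c_1}$ lies in $k(x)$ exactly because $c_1,c_2\in k\cup\{\infty\}$.

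\emph{Step 2 (descent).} The family $\{\gamma_\sigma\}_{\sigma\in N}$ is a $1$-cocycle for $N=\Gal\big(l(x)/F_1(x)\big)$ on $l(x)^{\times}$, so by Hilbert 90 there is $\psi\in l(x)^{\times}$ with $v':=v/\psi(x)$ fixed by $N$; then $l(x,v)^{N}=F_1(x,v')$, rational over $F_1$. A short computation with $\sigma^2\in N$ and the normality of $N$ shows that every $\sigma\notin N$ acts on $v'$ by $v'\mapsto\eta(x)/v'$ for one and the same $\eta(x)\in k(x)^{\times}$. Hence $K=l(x,v)^{\mathfrak{G}}=F_1(x,v')^{\langle\sigma\rangle}$, where $\sigma$ negates $\sqrt{\pi_1}$, fixes $x$, and sends $v'\mapsto\eta(x)/v'$. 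Put $w=v'+\eta(x)/v'$ and $w_2=\sqrt{\pi_1}\,(v'-\eta(x)/v')$; these are $\mathfrak{G}$-invariant, $v'=\tfrac12(w+w_2/\sqrt{\pi_1})$ gives $F_1(x,w,w_2)=F_1(x,v')$, and a degree count (using that $k$ is algebraically closed in $K$, hence in $k(x,w,w_2)$) yields $K=k(x,w,w_2)$ with the single relation $w^2-\tfrac1{\pi_1}w_2^2=4\eta(x)$. Thus $K$ is the function field over $k(x)$ of the conic attached to the quaternion symbol $(\pi_1,\eta(x))$, equivalently of the conic bundle $w^2-\tfrac1{\pi_1}w_2^2=4\eta(x)$ over $\mathbb{P}^1_x$.

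\emph{Step 3 (finish), the main obstacle.} The symbol $(\pi_1,\eta(x))$ is $k$-birationally the same as $(P,Q)$, so it is ramified over $\mathbb{P}^1_k$ exactly at the two $s=2$ places, namely $x=c_1$ and $x=c_2$ (with $c_2$ replaced by $x=\infty$ when $s_4=1$); these are $k$-rational points, and since $\pi_1\notin k^{\times2}$ the residue at each is $\pi_1$. Comparing with the symbol $(\pi_1,t)$ for the degree-one function $t=\frac{x-c_1}{x-c_2}$ (or $t=x-c_1$), which has the same residues, the difference $(\pi_1,\eta(x)/t)$ is unramified on $\mathbb{P}^1_k$, hence comes from $\Br(k)$, hence—being split by $\sqrt{\pi_1}$—equals $(\pi_1,e)$ for some $e\in k^{\times}$; therefore $(\pi_1,\eta(x))=(\pi_1,e\,t)$ in $\Br(k(x))$, so $K$ is $k(x)$-isomorphic to the function field of $w^2-\pi_1w_2^2=e\,t$. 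Now $t=\tfrac1e(w^2-\pi_1w_2^2)$ expresses $x$ as a rational function of $w$ and $w_2$, so $K=k(w,w_2)$ is $k$-rational. The delicate points are that $\eta(x)$ is only well defined modulo norms from $k(\sqrt{\pi_1})(x)$ (so one argues with ramification data rather than with a fixed $\eta$), the bookkeeping of the cases $s_4=1$ and $c_i=\infty$, and the fact that the Hilbert 90 normalisation in Step 2 may raise $\deg\eta$; alternatively one may bypass $\Br$ by choosing $\psi$ in Step 2 so as to keep $\eta$ of the form $e\,\frac{x-c_1}{x-c_2}$ directly, after which the same substitution finishes the proof.
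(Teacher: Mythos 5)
Your proof is correct, but the way you close the argument is genuinely different from the paper's. The paper's route is shorter because Proposition \ref{prop17} has already pinned down the action completely: the nontrivial element of $\overline{\mathfrak{G}}$ sends $v\mapsto\alpha\frac{x-c_2}{x-c_1}\frac1v$ with $\alpha$ a \emph{constant} (extracted in $(\ref{eq413})$ from the fact that $\tau_\sigma$ is regular off $x=c_1,c_2$, so $\varphi$ has exactly one zero and one pole). Consequently $w_1^2-\frac1{\pi_1}w_2^2=4\alpha\frac{x-c_2}{x-c_1}$ is already a degree-one function of $x$, so $l(x,v)=l(w_1,w_2)$ and $K=k(w_1,w_2)$ immediately --- no Brauer theory is needed. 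You instead allow an arbitrary $\eta\in k(x)^\times$ (because your Hilbert-90 renormalisation of $v$ may destroy the explicit form), identify $K$ as the function field of the conic attached to $(\pi_1,\eta)$ over $k(x)$, and recover a degree-one representative by Faddeev's residue calculus. That step is sound, and --- contrary to the worry in your last sentence --- it is insensitive to the indeterminacy of $\eta$: whatever $\eta$ you end up with, $(\pi_1,\eta)=(P,Q)$ in $\Br\big(k(x)\big)$ because both conics have function field $K$, so its ramification is forced to be the two $k$-rational places counted by $s$, and the rest of your argument goes through. Your Step 2 is also more careful than the paper, which simply asserts that $v$ is $\mathfrak{G}_0$-invariant, whereas you trivialise the $1$-cocycle $\gamma_\sigma$ by Hilbert 90 and verify via $\sigma^2\in N$ and normality of $N$ that $\eta$ lands in $k(x)$. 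One small slip: when $c_2=\infty$ the correct form is $\varphi=\alpha/(x-c_1)$ (pole at $c_1$, zero at $\infty$), not $\alpha(x-c_1)$; this does not affect your Step 3, which never uses the explicit shape of $\eta$. In summary: the paper buys the conclusion from the explicit geometry of $\tau_\sigma$; you buy it from the structure of quaternion algebras over $k(x)$; both work.
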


\begin{proof}
In this case,
$c_1$ can never be moved by any $\sigma \in \mathfrak{G}$,
so $|\overline{\mathfrak{G}}|=2$
and the only non-trivial element of $\overline{\mathfrak{G}}$
is the above $\sigma$.
Thus it suffices to find a $\sigma$-invariant transcendental basis.

Obviously $w_1=v+\alpha\frac{x-c_2}{x-c_1}\frac{1}{v}$
and $w_2=\sqrt{\pi_1}\{v-\alpha\frac{x-c_2}{x-c_1}\frac{1}{v}\}$
are $\sigma$-invariant.
Here $\pi_1$ is one of $P(c_1), Q(c_1), -P/Q(c_1),
p_0, q_0$ or $-p_0/q_0$
according to the situation,
and $(x=c_1)$ is an exceptional curve of $\tau_\sigma$
if and only if $\sigma$ maps $\sqrt{\pi_1}$ to $-\sqrt{\pi_1}$.

However, we have $l(x,y,z)=l(x,v)=l(\alpha\frac{x-c_2}{x-c_1},v)
=l(\alpha\frac{x-c_2}{x-c_1}\frac{1}{v},v)=l(w_1,w_2)$.
Thus $(w_1,w_2)$ is a transcendental basis,
and the rationality of $k(x,y,z)$ has been proved.
\end{proof}

\begin{rem}
Both of $C: v_1=$const.
and $C^\prime: v_2=$const.
are in the class $-F-\Omega$ as shown below.

The defining equations of $C$ and $C^\prime$ are linear combinations of
$(x-c_2)f_1f_2$, $(x-c_2)f_1^2$ and $(x-c_1)f_2^2$
whose classes are $2F^\prime+(1-r)F-E_1-2E_2$,
$2F^\prime+(1-r)F-2E_1-E_2$
and $2F^\prime+(1-r)F-E_1-2E_2$ respectively.
So the class of non-trivial linear combination is
$2F^\prime+(1-r)F-E_1-E_2=-\Omega-F$.

From this we see that
$C \cdot C=C^\prime \cdot C^\prime=C \cdot C^\prime=2$,
$\Omega \cdot C=\Omega \cdot C^\prime=-4$.
Consider three point blow-up of $Y$.
If $C \cdot E_1^\prime=C^\prime \cdot E_1^\prime
=C \cdot E_2^\prime=C^\prime \cdot E_3^\prime=1$ and
$C \cdot E_3^\prime=C^\prime \cdot E_2^\prime=0$,
then we reach $C \cdot C=C^\prime \cdot C^\prime=0$,
$C \cdot C^\prime=1$, $\Omega \cdot C=\Omega \cdot C^\prime=-2$
on the blown-up surface $Z^\prime$.
\end{rem}

When $c_1, c_2$ are mutually conjugate (namely when $(x-c_1)(x-c_2)$
is irreducible over $k$),
the analysis is more complicated.
This time, $|\overline{\mathfrak{G}}|=4$
and the non-trivial elements of $\overline{\mathfrak{G}}$
is given as follows:
\begin{eqnarray}
\sigma_1: & E_1 \mapsto F-E_1, & E_2 \mapsto F-E_2, \nonumber \\
\sigma_2: & E_1 \leftrightarrow E_2, & \\
\sigma_3: & E_1 \mapsto F-E_2, & E_2 \mapsto F-E_1. \nonumber
\end{eqnarray}
Note that $n_\sigma$ is even as stated in Subsection \ref{s1}.

The fixed field of $\mathfrak{G}_0$ is $l=k(c,\sqrt{\pi_1})$
and $\Gal(l/k)=\overline{\mathfrak{G}} \simeq C_2 \times C_2$.
Let $k_i$ be the fixed field of $\sigma_i$ ($i=1,2,3$).
They are quadratic extensions of $k$ contained in $l$.
Write $k_i$ as $k_i=k(\sqrt{e_i}), \, e_i \in k$.
Then $e_1=(c_1-c_2)^2$
($=(c_1+c_2)^2-4c_1c_2$).
$l$ is a vector space over $k$
with the basis $1, \sqrt{e_1}, \sqrt{e_2}$ and $\sqrt{e_3}$.
Since $\sigma_1$ and $\sigma_3$ maps $\sqrt{\pi_1}$ to $-\sqrt{\pi_1}$,
we have $\sqrt{\pi_1} \in k\sqrt{e_2}$
so that we can set $e_2=\pi_1$ (especially $\pi_1 \in k$),
and $e_3=e_1e_2$.

By similar discussions as deriving $(\ref{eq413})$ in Proposition \ref{prop17},
we can determine the action of $\overline{\mathfrak{G}}$ on $v$ as follows:
\begin{eqnarray} \label{eq415}
\tau_{\sigma_1}: & v \mapsto \alpha \frac{x-c_2}{x-c_1}\frac{1}{v}, & \alpha \in k_1, \nonumber \\
\tau_{\sigma_2}: & v \mapsto \beta \frac{1}{v}, & \beta \in k_2, \\
\tau_{\sigma_3}: & v \mapsto \gamma \frac{x-c_1}{x-c_2}v, & \gamma\gamma^{\sigma_3}=1. \nonumber
\end{eqnarray}
The problem is to find a $\overline{\mathfrak{G}}$-invariant transcendental basis of $l(x,v)$.
By a suitable constant multiplication of $v$,
we can set $\gamma=1$.
Then from $\sigma_3=\sigma_1\sigma_2=\sigma_2\sigma_1$,
we have $\alpha=\beta \in k$.
So, we set $\alpha=\beta=\kappa \in k$
and $\gamma=1$ in $(\ref{eq415})$.

\begin{prop} \label{prop419}
$k(x,y,z)$ is $k$-rational
if and only if $\kappa \in N_{k_1/k}(k_1)N_{k_2/k}(k_2)$.
\end{prop}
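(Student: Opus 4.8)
The plan is to reduce the rationality question to the existence of a $\overline{\mathfrak{G}}$-invariant transcendental basis of $l(x,v)$, using the explicit action in $(\ref{eq415})$ with $\alpha=\beta=\kappa\in k$ and $\gamma=1$. The surface $Y_{r0}$ is biregular to $\mathbb{P}^1\times\mathbb{P}^1$ with coordinates $(x,v)$ by Proposition \ref{prop17}, so $k(x,y,z)$ is $k$-rational if and only if there is a $\overline{\mathfrak{G}}$-invariant pair $(w_1,w_2)$ of transcendence degree $2$ over $k$ with $l(x,v)=l(w_1,w_2)$; by Manin's criterion (Proposition following the proof of the permutation-lattice statement in Subsection \ref{Manin}) it suffices to construct such a basis to prove rationality, and on the other side one uses the cohomological invariants.

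\textbf{Proof of rationality when $\kappa\in N_{k_1/k}(k_1)N_{k_2/k}(k_2)$.} Suppose $\kappa=N_{k_1/k}(\lambda_1)N_{k_2/k}(\lambda_2)$ with $\lambda_i\in k_i$. First I would use the freedom of replacing $v$ by $\mu(x)v$ for a suitable $\mu\in l(x)^\times$ (which changes the constants $\alpha,\beta,\gamma$ by coboundary-type factors, keeping $l(x,v)=l(x,\mu v)$) to normalize $\kappa$ down to $1$: concretely, replace $v$ by $\lambda_1\lambda_2 v$ or an appropriate $x$-dependent rescaling absorbing $\frac{x-c_1}{x-c_2}$ partially, so that after normalization $\tau_{\sigma_1}:v\mapsto \frac{x-c_2}{x-c_1}\frac1v$, $\tau_{\sigma_2}:v\mapsto\frac1v$, $\tau_{\sigma_3}:v\mapsto\frac{x-c_1}{x-c_2}v$. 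Once $\kappa=1$, I would exhibit the invariants directly: set $x_1=\frac{x-c_1}{x-c_2}$ (note $x_1^{\sigma_1}=x_1^{\sigma_2}=x_1$ while $\sigma_3$ fixes it too after checking, or more precisely track how $\overline{\mathfrak{G}}$ permutes $c_1,c_2$), and look for $w$ of the form $v+\frac{x_1}{v}$ type expressions and a second invariant involving $\sqrt{e_1},\sqrt{e_2}$; the point is that $l=k(\sqrt{e_1},\sqrt{e_2})$ and Hilbert 90 (in the form used via \cite{HK95}, cf.\ Proposition \ref{prop415}) guarantees that the $1$-cocycle $\sigma\mapsto(\text{multiplier of }v)$ is a coboundary over $l(x)$ exactly when $\kappa$ lies in the stated norm group. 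I would then check $l(x,y,z)=l(x,v)=l(w_1,w_2)$ by expressing $x$ and $v$ rationally in $w_1,w_2$, and conclude $k(x,y,z)$ is $k$-rational by Manin's criterion / descent of the fixed field.

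\textbf{Proof of non-rationality when $\kappa\notin N_{k_1/k}(k_1)N_{k_2/k}(k_2)$.} Here I would argue that no $\overline{\mathfrak{G}}$-invariant transcendental basis exists, equivalently that the relevant cohomological obstruction is nonzero. The cocycle $c_\sigma\in l(x)^\times$ defined by the multiplier of $v$ under $\tau_\sigma$ represents a class in $H^1(\overline{\mathfrak{G}},l(x)^\times/l^\times)$ (or, after analyzing the $x$-dependence, a class measured by $\kappa$ modulo norms), and rationality would force this class to vanish. The computation of $H^1$ for $\overline{\mathfrak{G}}\simeq C_2\times C_2$ acting on the relevant module splits the obstruction into the two norm conditions from $k_1$ and $k_2$, whose simultaneous vanishing is precisely $\kappa\in N_{k_1/k}(k_1)N_{k_2/k}(k_2)$; I would phrase this via the quaternion symbol $(\pi_1,\kappa)_{2,k}$ or an explicit Brauer-group / corestriction argument, and tie it to $\widehat H^{-1}(\mathfrak{G},\Pic(Y))\neq 0$, which is a $k$-birational invariant by the Corollary in Subsection \ref{Manin}.

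\textbf{The main obstacle} I expect is the bookkeeping in the non-rationality direction: one must show that \emph{every} choice of transcendental basis, not just the natural one coming from $\Gamma_1,\Gamma_2$, fails to be $\overline{\mathfrak{G}}$-invariant, which requires knowing that $\Gamma_1,\Gamma_2$ are the only negative curves with $\Gamma\cdot F=1$ (Proposition \ref{prop17}) so that the birational model $(x,v)$ is essentially forced, and then that the residual $C_2\times C_2$-descent obstruction is genuinely governed by the single parameter $\kappa$ modulo the product of the two norm groups. Carefully verifying that the normalization $\gamma=1$, $\alpha=\beta=\kappa$ captures all the data (i.e.\ that further rescalings cannot change the class of $\kappa$ in $k^\times/N_{k_1/k}(k_1)N_{k_2/k}(k_2)$) is the delicate computational heart of the argument.
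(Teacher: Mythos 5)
Your ``if'' direction is in the right spirit (exhibit explicit $\overline{\mathfrak{G}}$-invariants), but the normalization you lean on does not work as stated: to preserve $\gamma=1$ under a rescaling $v\mapsto\mu v$ with $\mu$ a constant you are forced to take $\mu\in k_3=k(\sqrt{e_3})$, and then $\kappa$ changes only by $N_{k_3/k}(\mu)$, i.e.\ by the subgroup $N_{k_3/k}(k_3^\times)$, not by all of $N_{k_1/k}(k_1^\times)N_{k_2/k}(k_2^\times)$; an $x$-dependent rescaling that ``absorbs $\frac{x-c_1}{x-c_2}$ partially'' would introduce new poles and destroy the normal form. The paper avoids any normalization: it keeps $\kappa$ and writes down the invariants directly, $w_1=(x-c_1)v+\frac{\kappa(x-c_2)}{v}$ and $w_2=\frac{1}{\sqrt{e_3}}\{(x-c_1)v-\frac{\kappa(x-c_2)}{v}\}$, which together with $x'=2x-c_1-c_2$ satisfy $w_1^2-e_3w_2^2=\kappa(x'^{2}-e_1)$. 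Since $v$ and $x$ are recovered from $(x',w_1,w_2)$ over $l$, one gets $k(x,y,z)=k(x',w_1,w_2)$, the function field of an explicit quadric surface, and rationality becomes the classical question of whether that quadric has a $k$-point, which is exactly the condition $\kappa\in N_{k_1/k}(k_1^\times)N_{k_2/k}(k_2^\times)$. This one reduction settles both implications at once.

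The ``only if'' direction as you propose it has a genuine gap: the $\mathfrak{G}$-lattice $\Pic(Y)$ is determined purely by the combinatorial data of how $\sigma_1,\sigma_2,\sigma_3$ permute $E_1,E_2,F,F'$, and this data is the same for every value of $\kappa$. Hence $H^1(\mathfrak{G},\Pic(Y))$ and $\widehat H^{-1}(\mathfrak{G},\Pic(Y))$ cannot distinguish the rational from the irrational cases here (this is a standard situation, as with irreducible $P$ in Theorem \ref{A}, where these groups vanish yet the surface is irrational), so tying the obstruction to $\widehat H^{-1}(\mathfrak{G},\Pic(Y))\neq 0$ cannot succeed. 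The obstruction that actually does the work is the absence of a $k$-rational point on the quadric $w_1^2-e_3w_2^2=\kappa(x'^{2}-e_1)$: a $k$-rational surface over an infinite field has a Zariski-dense set of $k$-points, so no point implies not rational. Your worry about having to rule out ``every choice of transcendental basis'' dissolves once the field is identified with the function field of this quadric, since rationality of a quadric surface is governed entirely by the existence of a point.
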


\begin{proof}
We see that $(x-c_1)v$ is $\sigma_3$-invariant
and is mapped to $\kappa\frac{x-c_2}{v}$ by $\sigma_1$ and $\sigma_2$.
So
\begin{eqnarray} \label{eq416}
w_1 &=& (x-c_1)v+\frac{\kappa(x-c_2)}{v}, \\
w_2 &=& \frac{1}{\sqrt{e_3}} \{(x-c_1)v-\frac{\kappa(x-c_2)}{v}\} \nonumber
\end{eqnarray}
are $\overline{\mathfrak{G}}$-invariant.
Note that $\sqrt{e_3}$ is $\sigma_3$-invariant
and is mapped to $-\sqrt{e_3}$ by $\sigma_1$ and $\sigma_2$.

We shall eliminate $v$ from $(\ref{eq416})$,
then we obtain $w_1^2-e_3w_2^2=\kappa(x^{\prime2}-e_1)$
where $x^\prime=2x-c_1-c_2$.
From this we see that $k(x^\prime,w_1,w_2)$
is rational if and only if the corresponding quadratic form has
non-zero solution in $k$,
namely if and only if
$\kappa \in N_{k_1/k}(k_1)N_{k_2/k}(k_2)$.
\end{proof}

\begin{prop}
$k(x,y,z)$ is $k$-rational
if and only if it is $k_2$-rational, where $k_2=k(\sqrt{\pi_1})$.
\end{prop}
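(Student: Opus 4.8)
The ``only if'' direction is immediate: if $K:=k(x,y,z)$ is $k$-rational then $K\otimes_k k_2$ is $k_2$-rational, and since $k$ is algebraically closed in $K$ and $k_2/k$ is separable we have $K\otimes_k k_2=k_2(x,y,z)$. So the content lies entirely in the ``if'' direction. If $c_1,c_2\in k$ then $K$ is already $k$-rational by the case $c_1,c_2\in k$ treated above and there is nothing to prove; hence we may assume $c_1,c_2$ are $k$-conjugate. Then, as in the preceding discussion, $l=k(c_1,\sqrt{\pi_1})$ is a biquadratic extension of $k$ with quadratic subfields $k_1=k(\sqrt{e_1})$, $k_2=k(\sqrt{\pi_1})$ and $k_3=k(\sqrt{e_3})$, where $e_3=e_1\pi_1$ (using $\pi_1\notin k^2$ one checks that $e_1,\pi_1,e_1\pi_1$ are pairwise independent modulo squares), with $\Gal(l/k)=\{\id,\sigma_1,\sigma_2,\sigma_3\}$ and $\sigma_i$ fixing $k_i$.

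The plan is to read off both rationality statements from the single relation
\[
w_1^2-e_3w_2^2=\kappa(x'^2-e_1)
\]
obtained in the proof of Proposition \ref{prop419}, which presents $K=k(x',w_1,w_2)$ as a degree-$2$ generalized Ch\^atelet surface over $k$ (here $a=e_3$, $b=\kappa$, $c=-\kappa e_1\ne0$). By the degree-$2$ rationality criterion (Theorem~6.7 of \cite{HKO94}), $K$ is $k$-rational iff $-\kappa e_1\in k^2-e_3k^2-\kappa k^2$; since $-\kappa e_1\ne0$ and $\langle1,-e_3\rangle,\langle1,-e_1\rangle$ are anisotropic norm forms, this is equivalent to isotropy over $k$ of the quaternary form $\langle1,-e_3\rangle\perp(-\kappa)\langle1,-e_1\rangle$, hence to $\kappa\in N_{k_1/k}(k_1^\times)\,N_{k_3/k}(k_3^\times)$. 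Over $k_2$ the same relation, after the substitution $w_2':=\sqrt{\pi_1}\,w_2$ (legitimate since $\sqrt{\pi_1}\in k_2$ and $e_3=e_1\pi_1$), becomes $w_1^2-e_1w_2'^2=\kappa(x'^2-e_1)$; by the same criterion over $k_2$, $k_2(x,y,z)$ is $k_2$-rational iff the $2$-fold Pfister form $\langle\langle e_1,\kappa\rangle\rangle=\langle1,-e_1\rangle\perp(-\kappa)\langle1,-e_1\rangle$ is isotropic over $k_2$, i.e.\ iff $\kappa\in N_{l/k_2}(l^\times)$ (equivalently $(\kappa,e_1)_{2,k_2}=0$).

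The crux is then the purely field-theoretic identity
\[
N_{k_1/k}(k_1^\times)\,N_{k_3/k}(k_3^\times)=N_{l/k_2}(l^\times)\cap k^\times ,
\]
after which $K$ is $k$-rational $\iff\kappa\in N_{k_1/k}(k_1^\times)N_{k_3/k}(k_3^\times)\iff\kappa\in N_{l/k_2}(l^\times)\iff k_2(x,y,z)$ is $k_2$-rational, as required. The inclusion $\subseteq$ is transitivity of the norm: $\Gal(l/k_2)=\langle\sigma_2\rangle$, and $\sigma_2$ restricts to the non-trivial automorphism of each of $k_1$ and $k_3$, so $N_{k_i/k}(k_i^\times)=N_{l/k_2}(k_i^\times)\subseteq N_{l/k_2}(l^\times)$ for $i=1,3$. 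For $\supseteq$ I would argue by Hilbert's Theorem~90: given $\kappa=N_{l/k_2}(\nu)$ with $\nu\in l^\times$ and $\kappa\in k^\times$, the relation $\sigma_1(\kappa)=\kappa$ forces $w:=\nu/\sigma_1(\nu)=\sigma_3(\nu)/\sigma_2(\nu)$, whence $\sigma_1(w)=\sigma_2(w)=w^{-1}$ and $\sigma_3(w)=w$; thus $w\in k_3^\times$ with $N_{k_3/k}(w)=w\,\sigma_1(w)=1$, so $w=\delta/\sigma_1(\delta)$ for some $\delta\in k_3^\times$; then $\eta:=\nu/\delta$ is $\sigma_1$-fixed, so $\eta\in k_1^\times$, and $\kappa=N_{l/k_2}(\delta)\,N_{l/k_2}(\eta)=N_{k_3/k}(\delta)\,N_{k_1/k}(\eta)$ lies in $N_{k_1/k}(k_1^\times)N_{k_3/k}(k_3^\times)$. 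This last identity is the only point of substance; the remaining steps are routine bookkeeping with quadratic forms over $k$ and $k_2$.
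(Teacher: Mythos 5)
Your proof is correct, but it takes a genuinely different route from the paper's. The paper works directly with the cocycle data $(\alpha,\beta,\gamma)$ of $(\ref{eq415})$: it first identifies $k_2$-rationality with $\beta\in N_{l/k_2}(l^\times)$, rescales $v$ so that $\beta=1$, deduces from $\sigma_3=\sigma_1\sigma_2=\sigma_2\sigma_1$ that $\kappa=N_{k_1/k}(\alpha^{-1}+1)\in N_{k_1/k}(k_1^\times)$, and then invokes Proposition \ref{prop419} to get $k$-rationality (the converse implication being the trivial one, as in your first sentence). You instead encode both rationality statements as isotropy of the single quaternary form $\langle 1,-e_3,-\kappa,\kappa e_1\rangle$, over $k$ and over $k_2$ respectively, and reduce the proposition to the norm identity $N_{k_1/k}(k_1^\times)N_{k_3/k}(k_3^\times)=N_{l/k_2}(l^\times)\cap k^\times$; your Hilbert 90 argument for the inclusion $\supseteq$ (the element $w=\nu/\sigma_1(\nu)$ lies in $k_3^\times$ and has norm $1$, etc.) checks out. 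Your route is longer but makes the mechanism transparent: the form has discriminant $e_2$ modulo squares, so its isotropy over $k$ and over $k_2=k(\sqrt{e_2})$ are equivalent, and that equivalence \emph{is} the proposition. One point deserves flagging: your intermediate criterion ``$K$ is $k$-rational iff $\kappa\in N_{k_1/k}(k_1^\times)N_{k_3/k}(k_3^\times)$'' does not literally agree with Proposition \ref{prop419}, which asserts $\kappa\in N_{k_1/k}(k_1)N_{k_2/k}(k_2)$. Your version is the one actually dictated by the form $\langle 1,-e_3,-\kappa,\kappa e_1\rangle$, whose two anisotropic binary pieces represent exactly $N_{k_3/k}(k_3^\times)$ and $\kappa N_{k_1/k}(k_1^\times)$, and it is the only version consistent with the statement being proved; the subscript $2$ in Proposition \ref{prop419} appears to be a slip for $3$. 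Since $N_{k_1/k}(k_1^\times)$ is contained in $N_{k_1/k}(k_1^\times)N_{k_3/k}(k_3^\times)$, the paper's own subsequent use of Proposition \ref{prop419} is unaffected by this correction.
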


\begin{rem}
Since $s=0$ over $k_2$,
$k(x,y,z)$ is $k_2$-rational except the following case.

Both of $\deg P$ and $\deg Q$ are even
and $a^2p_0+b^2q_0=c^2$ has non-zero solution in $k(\sqrt{\pi_1})$
(note that the above $k_2$ is denoted by $k_1$
in Theorem \ref{mainthm}).
\end{rem}

\begin{proof}
The action of $\sigma_2$ depends only on $v$,
independent of $x$,
so $k(x,y,z)$ is $k_2$-rational if and only if
$\beta=\lambda\lambda^{\sigma_2}$
for some $\lambda \in l$.
This means that by a suitable constant multiplication of $v$,
we can set $\beta=1$.

When $\beta=1$,
from $\sigma_3=\sigma_1\sigma_2=\sigma_2\sigma_1$,
we have $\gamma=\frac{1}{\alpha}=\alpha^{\sigma_3}$.
Then by a multiplication of $v$ by $\gamma+1$,
$\gamma$ is reduced to $1$
and $\beta=1$ is reduced to
$\kappa=N_{k_1/k}(\frac{1}{\alpha}+1) \in N_{k_1/k}(k_1)$.
The proof is completed by virtue of Proposition \ref{prop419}.
\end{proof}

\subsection{The case where $s=3$.} \label{s3}

Similar discussions as in the proof of Proposition \ref{prop17}
show the following:

\begin{prop} \label{props3a}
There exist four irreducible curves $\Gamma$
such that $\Gamma \cdot \Gamma<0$ and $\Gamma \cdot F=1$.
When $r$ is even, they are
$\Gamma_1 \equiv F^\prime -\frac{r}{2}F-E_1$,
$\Gamma_2 \equiv F^\prime-\frac{r}{2}F-E_2$,
$\Gamma_3 \equiv F^\prime-\frac{r}{2}F-E_3$ and
$\Gamma_4 \equiv F^\prime+(1-\frac{r}{2})F-E_1-E_2-E_3$.

When $r$ is odd, they are
$\Gamma_1 \equiv F^\prime-\frac{r-1}{2}F-E_2-E_3$,
$\Gamma_2 \equiv F^\prime-\frac{r-1}{2}F-E_1-E_3$,
$\Gamma_3 \equiv F^\prime-\frac{r-1}{2}F-E_1-E_2$ and
$\Gamma_4 \equiv F^\prime-\frac{r+1}{2}F$.

They are mutually conjugate by the action of $\overline{\mathfrak{G}}$
and satisfy $\Gamma_i \cdot \Gamma_i=\Gamma_i \cdot \Omega=-1$
and $\Gamma_i \cdot \Gamma_j=0 \, (i \not= j)$.
\end{prop}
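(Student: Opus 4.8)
The plan is to follow the proof of Proposition~\ref{prop17} one exceptional curve further. We work on the surface $Y=Y_{r3}$ of Subsection~\ref{pre2}: $\Pic(Y)$ has rank $5$ with basis $F,F^\prime,E_1,E_2,E_3$, intersection form $F\cdot F=0$, $F\cdot F^\prime=1$, $F^\prime\cdot F^\prime=r$, $F\cdot E_i=F^\prime\cdot E_i=0$, $E_i\cdot E_i=-1$, $E_i\cdot E_j=0$ for $i\neq j$, and canonical class $\Omega=(r-2)F-2F^\prime+E_1+E_2+E_3$. The first observation is that an irreducible curve $\Gamma$ with $\Gamma\cdot F=1$ has class $F^\prime+\nu F-\sum_i m_iE_i$ with $m_i=\Gamma\cdot E_i\geq 0$ (since $\Gamma\neq E_i$, as $\Gamma\cdot F=1\neq 0=E_i\cdot F$), so that $\Gamma\cdot\Gamma=2\nu+r-\sum_i m_i^2$. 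Hence the condition $\Gamma\cdot\Gamma<0$ together with irreducibility confines the data $(\nu;m_1,m_2,m_3)$ to a short list of classes, and by the uniqueness of an irreducible curve inside a class of negative self-intersection (used already in Subsection~\ref{s0}) it suffices to decide which of these classes is effective.

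The Galois action then cuts the list down exactly as in Proposition~\ref{prop17}. One picks $\sigma\in\mathfrak{G}$ realizing $E_1\mapsto F-E_1$; since $n_\sigma$ is even (Subsection~\ref{s1}, so $n_\sigma\in\{0,2\}$ here) the matrix $(\ref{matg})$ flips precisely one further basis vector, and, $F$ and $\Omega$ being $\mathfrak{G}$-invariant, $F^\prime$ is forced to $F^\prime+F-E_i-E_j$ for the flipped pair $\{i,j\}$ (with $\alpha_\sigma=n_\sigma/2=1$). If $\Gamma$ is irreducible with $\Gamma\cdot\Gamma<0$ and $\Gamma\cdot F=1$, each $\mathfrak{G}$-conjugate $\Gamma^\tau$ has the same properties and so is the unique curve in its class; imposing $0\leq\Gamma\cdot\Gamma^\tau$ for each conjugate and tracking how conjugation permutes the $m_i$ and shifts $\nu$ forces $\Gamma\cdot\Gamma=-1$ and pins the class down to: $\nu=-\frac r2$ with exactly one $m_i=1$, together with $\nu=1-\frac r2$ and all $m_i=1$, when $r$ is even; and $\nu=-\frac{r-1}{2}$ with exactly two $m_i=1$, together with $\nu=-\frac{r+1}{2}$ and all $m_i=0$, when $r$ is odd. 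Closing up under $\overline{\mathfrak{G}}$ produces precisely the four classes in the statement, and each is effective: it contains a curve $G_1(x)u+G_2(x)=0$ linear in $u$ through the relevant blown-up points, produced by the same dimension count as in the proof of Proposition~\ref{prop16} (with $\deg B$ large so that $G_1,G_2\neq 0$, and dividing by $\gcd(G_1,G_2)$ to get irreducibility). Uniqueness in a negative class then yields exactly four curves, forming a single $\overline{\mathfrak{G}}$-orbit.

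The numerical claims $\Gamma_i\cdot\Gamma_i=\Gamma_i\cdot\Omega=-1$ and $\Gamma_i\cdot\Gamma_j=0$ for $i\neq j$ are then obtained by substituting each of the four classes into the intersection form and the expression for $\Omega$ on $Y_{r3}$; this is routine and identical in spirit in the even and odd cases. I would also flag the one place where the $s=3$ analysis genuinely differs from the $s=2$ one: in Proposition~\ref{prop17} the odd-$r$ configuration was eliminated by a zero/pole count on the ratio $v$ of the defining equations of a conjugate pair, which there differed by a class $F-E_1-E_2$ involving $F$, forcing $v$ to transform by a rational function of $x$ with two zeros and no poles; here a conjugate pair among the $\Gamma_i$ can be chosen to differ by a difference $E_i-E_j$ of exceptional classes, the zero/pole balance is restored, and the odd-$r$ case is not excluded.

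\textbf{Main obstacle.} The subtle point is exhaustiveness of the enumeration: ruling out every other class $F^\prime+\nu F-\sum m_iE_i$ of negative self-intersection that might still be effective (for instance some $m_i\geq 2$, or $\nu$ far below $-r/2$), and verifying that the admissible choices of $\sigma$ --- which depend both on the permutation induced on $\{c_1,c_2,c_3\}$ and on which pair of columns of $A_\sigma$ carries the $-1$'s --- really fuse the four classes into one orbit rather than several. This is exactly the bookkeeping carried out for two exceptional curves in Proposition~\ref{prop17}, now one dimension larger, and it is where the work lies.
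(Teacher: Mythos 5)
Your proposal is correct and takes essentially the same route as the paper: the paper's own proof of this proposition consists only of the sentence ``Similar discussions as in the proof of Proposition \ref{prop17} show the following,'' and your adaptation --- writing any candidate as $F^\prime+\nu F-\sum_i m_iE_i$, forcing $m_i\in\{0,1\}$ and $\Gamma\cdot\Gamma=-1$ from $0\le\Gamma\cdot\Gamma^\sigma=\Gamma\cdot\Gamma+1$ for a conjugate under a $\sigma$ with $n_\sigma=2$, getting effectivity from the linear-system count of Proposition \ref{prop16}, and using uniqueness of an irreducible curve in a class of negative self-intersection --- is precisely that argument, spelled out in more detail than the paper provides. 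Your closing remark explaining why the zero/pole obstruction that killed odd $r$ in Proposition \ref{prop17} disappears here (conjugate classes now differ by $E_i-E_j$ rather than by $F-E_1-E_2$) correctly accounts for the paper retaining both parities in the $s=3$ statement.
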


The action of $\overline{\mathfrak{G}}$ induces a permutation of $\Gamma_i$,
we can blow down $Y$ by $\Gamma_i$
and $\overline{\mathfrak{G}}$ still acts on the blown down surface $Y^\prime$
in a Zariski homeomorphic way.
Since $\Omega_Y^\prime \cdot \Omega_Y^\prime=5+4=9$,
$Y^\prime$ is biregular with the projective plane $\mathbb{P}^2$.
So $\overline{\mathfrak{G}}$ acts on $\mathbb{P}^2$ in a Zariski homeomorphic way,
but a biregular transformation of $\mathbb{P}^2$
is nothing but a linear transformation of the homogeneous coordinate
$(\xi,\eta,\zeta)$, since $\Aut(\mathbb{P}^2) \simeq PGL(3,\overline{k})$.

Each of $\Gamma_i$ is mapped to a point $P_j$ on $\mathbb{P}^2$ $(1 \leq j \leq 4)$.
By checking the intersection form,
 we see that $\{P_j\}$ is not colinear
(= \, any three points do not lie on the same line),
so that we can choose $P_1$ as $(1,0,0)$,
$P_2$ as $(0,1,0)$, $P_3$ as $(0,0,1)$ and $P_4$ as $(1,1,1)$.

\begin{prop}
When $s=3$,
$k(x,y,z)$ is $k$-rational.
\end{prop}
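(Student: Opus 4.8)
The plan is to apply Manin's criterion from Subsection~\ref{PicY}: it suffices to exhibit a $\mathfrak{G}$-invariant transcendence basis of $l(x,u)$ over $l$. Starting from the surface $Y$ of Subsection~\ref{pre3}, I would first invoke Proposition~\ref{props3a}, by which the four mutually conjugate curves $\Gamma_1,\dots,\Gamma_4$ satisfy $\Gamma_i\cdot\Gamma_i=\Gamma_i\cdot\Omega=-1$ and $\Gamma_i\cdot\Gamma_j=0$, so they can be blown down simultaneously; since their configuration is $\overline{\mathfrak{G}}$-stable, the resulting surface $Y'$ carries a Zariski-homeomorphic $\overline{\mathfrak{G}}$-action, and because $\Omega_{Y'}\cdot\Omega_{Y'}=9$ we have $Y'\cong\mathbb{P}^2_{\overline{k}}$ with $\overline{\mathfrak{G}}$ acting through $\Aut(\mathbb{P}^2)=\mathrm{PGL}_3$, composed as usual with the pure Galois action $\widetilde\sigma$ on $l$. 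The images $P_1,\dots,P_4$ of the $\Gamma_i$ are in general position and, as in the excerpt, may be taken to be $(1,0,0),(0,1,0),(0,0,1),(1,1,1)$. Since $\mathrm{PGL}_3$ acts simply transitively on such frames, $\overline{\mathfrak{G}}$ embeds into $S_4$ and each $\tau_\sigma$ is the unique projectivity realizing its permutation of the $P_j$, in particular defined over the prime field; moreover the image in $S_4$ is transitive by Proposition~\ref{props3a}.

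\textbf{Reduction to a monomial action.} The geometric key is that four points in general position have a \emph{diagonal triangle}: the three pairs of opposite lines $\{P_iP_j,P_kP_l\}$ meet in three non-collinear points $Q_1,Q_2,Q_3$ (here $(1,1,0),(1,0,1),(0,1,1)$, using $\ch k\neq2$). This triangle is $\overline{\mathfrak{G}}$-invariant and rational, so after one further change of homogeneous coordinates over the prime field the $\overline{\mathfrak{G}}$-action becomes \emph{monomial} on $\mathbb{P}^2$: in affine coordinates $p,q$ the quotient $S_4/V_4\cong S_3$ acts by permuting $\xi:\eta:\zeta$, while the $V_4$-part acts by the sign changes $(p,q)\mapsto(\pm p,\pm q)$, everything still composed with $\widetilde\sigma$ on $l$. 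Thus the problem is reduced to the $k$-rationality of the fixed field $l(p,q)^{\overline{\mathfrak{G}}}$ of an explicit monomial group action twisted by the Galois action on the constants.

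\textbf{Finishing.} I would conclude by the two-step mechanism already used for $s=2$. First take invariants under the normal sign-subgroup (preimage of $V_4\cap\overline{\mathfrak{G}}$): exactly as in Proposition~\ref{prop415} and Proposition~\ref{prop419}, this subgroup acts on suitable functions only through sign twists, its invariants are cut out by norm conditions, and the field is replaced by a transcendence-degree-$2$ rational extension of a subfield of $l$ on which the residual group acts through a subgroup of $S_3$ by coordinate permutations of a new $\mathbb{P}^2$. The remaining invariants are then produced by elementary symmetric functions (the quotient of $\mathbb{P}^2$ by permutations of homogeneous coordinates being rational over the base), giving the desired $\mathfrak{G}$-invariant transcendence basis; alternatively one may simply quote the rationality theorem for purely monomial two-variable actions from \cite{HK95}. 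By the criterion of Subsection~\ref{PicY}, $k(x,y,z)$ is then $k$-rational.

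\textbf{Main obstacle.} I expect the crux to be twofold. First, one must perform both normalizations — frame to $(1,0,0),\dots,(1,1,1)$, then diagonal triangle to the coordinate triangle — compatibly with the $k$-structure, verifying that no residual cocycle of scalars in $l^\times$ survives so that the monomial part of the action is genuinely untwisted; this is a descent ("no-name") argument. Second, and more essentially, unlike the cases $s\le2$ one must check that the conic emerging at the end of the sign-subgroup reduction is always $k$-isotropic, i.e. that there is no Brauer obstruction in the case $s=3$; this should come from the parity and transitivity of the orbit $\{\Gamma_i\}$ for odd $s$, but making this precise is the real content of the statement. (As usual, if $k$ is a very small finite field one first passes to a finite extension as in Subsection~\ref{degeven}.)
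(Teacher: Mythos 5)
Your reduction — blow down the four conjugate curves of Proposition \ref{props3a} to reach $\mathbb{P}^2$ with four points in general position, pass to the diagonal triangle, and view the action as a monomial action twisted by the Galois action on constants — is exactly the geometric skeleton of the paper's proof (the paper's coordinates $\xi'=-\xi+\eta+\zeta$, $\eta'=\xi-\eta+\zeta$, $\zeta'=\xi+\eta-\zeta$ are precisely the diagonal-triangle coordinates). The gap is in the finishing step, and you have flagged it yourself: you expect the sign-subgroup reduction to end in a norm condition, i.e.\ a conic that must be shown $k$-isotropic, and you defer the proof of isotropy to ``parity and transitivity of the orbit,'' admitting that making this precise is the real content. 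It is: in the $s=2$ case the analogous conic ($\kappa\in N_{k_1/k}(k_1)N_{k_2/k}(k_2)$ in Proposition \ref{prop419}) is genuinely obstructed in general, so without an argument specific to $s=3$ your scheme proves nothing.

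The paper's resolution is that for $s=3$ no conic ever appears, because the twisting cocycle is visibly a coboundary. Concretely: $\sigma$ sends $E_i$ to $F-E_i$ exactly when $\sqrt{\pi_i}^{\,\sigma}=-\sqrt{\pi_i}$, and the induced permutation of $\{P_1,\dots,P_4\}$ (a double transposition in case (I)) changes the sign of the diagonal coordinate $\xi'$ by exactly the same character. Hence $\sqrt{\pi_1}\xi'$, $\sqrt{\pi_2}\eta'$, $\sqrt{\pi_3}\zeta'$ are already $\overline{\mathfrak{G}}$-invariant (case (I)), and the ratios give the invariant transcendence basis with no norm condition to check. This identification of the monomial sign character with the quadratic character of $k(\sqrt{\pi_i})/k$ is the missing idea. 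A secondary omission: when the $c_i$ are not all in $k$ (cases (II) and (III) of the paper), the residual permutation part of the action is semilinear, and one needs the explicit descent combinations such as $\sqrt{\pi_1}\xi'+\sqrt{\pi_2}\eta'$, $(c_1-c_2)(\sqrt{\pi_1}\xi'-\sqrt{\pi_2}\eta')$, or $\sum_i c_i^{\,j}\sqrt{\pi_i}\,\xi_i'$; ``elementary symmetric functions'' or a citation of \cite{HK95} handles a split permutation action but not, by itself, this twisted one, nor does it rule out the degenerate configuration (III-3), which the paper must exclude by a separate group-order argument.
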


\begin{proof}
We shall divide the proof into three subcases.

\hfill\break
$(\mathrm{I})$
The case where all $c_i \in k$ or $\infty$ $(1 \leq i \leq 3)$.

In this case, $|\overline{\mathfrak{G}}|=4$
and three non-trivial elements of $\overline{\mathfrak{G}}$ are as follows:
\begin{eqnarray*}
\sigma_1: & E_1 \mapsto E_1, E_2 \mapsto F-E_2, E_3 \mapsto F-E_3, \\
\sigma_2: & E_1 \mapsto F-E_1, E_2 \mapsto E_2, E_3 \mapsto F-E_3, \\
\sigma_3: & E_1 \mapsto F-E_1, E_2 \mapsto F-E_2, E_3 \mapsto E_3.
\end{eqnarray*}
The action of $\sigma_1$ switches $\Gamma_1$ and $\Gamma_4$,
and $\Gamma_2$ and $\Gamma_3$ (whether $r$ is even or odd),
so it switches $P_1$ and $P_4$, and $P_2$ and $P_3$ on $\mathbb{P}^2$.
Namely $\sigma_1$ induces the permutation $(1 4)(2 3)$ of $\{P_j\}$.
Similarly $\sigma_2$ induces $(2 4)(1 3)$ and $\sigma_3$ induces $(3 4)(1 2)$.
The biregular transformation of $\mathbb{P}^2$
which induces the above permutation of $\{P_j\}$ is described as follows:
$$\sigma_1=\begin{pmatrix}
-1 & 0 & 0 \\
-1 & 0 & 1 \\
-1 & 1 & 0
\end{pmatrix},
\sigma_2=\begin{pmatrix}
0 & -1 & 1 \\
0 & -1 & 0 \\
1 & -1 & 0
\end{pmatrix},
\sigma_3=\begin{pmatrix}
0 & 1 & -1 \\
1 & 0 & -1 \\
0 & 0 & -1
\end{pmatrix}.$$
Namely, the action of $\sigma_1$ is
$\xi \mapsto -\xi, \eta \mapsto \zeta-\xi, \zeta \mapsto \eta-\xi$, etc.

Put $\xi^\prime=-\xi+\eta+\zeta, \eta^\prime=\xi-\eta+\zeta,
\zeta^\prime=\xi+\eta-\zeta$.
Then $\xi^\prime$ is mapped to $\xi^\prime$ by $\sigma_1$
and mapped to $-\xi$ by $\sigma_2$ and $\sigma_3$.
The similar holds for $\eta^\prime$ and $\zeta^\prime$ also.

On the other hand, let $\pi_i$ be one of
$P(c_i), Q(c_i), -\frac{Q}{P}(c_i), p_0, q_0, -\frac{q_0}{p_0}$,
according to the situation.
Then $\pi_i \in k \setminus k^2$
and $E_i$ is mapped to $x=c_i$ if and only if
$\sqrt{\pi_i}^\sigma=-\sqrt{\pi_i}$.
So $\sqrt{\pi_1}$ is mapped to $\sqrt{\pi_1}$ by $\sigma_1$
and mapped to $-\sqrt{\pi_1}$ by $\sigma_2$ and $\sigma_3$.
Hence $\sqrt{\pi_1} \xi^\prime$ is
invariant by all $\sigma \in \overline{\mathfrak{G}}$.

Similarly $\sqrt{\pi_2}\eta^\prime$ and $\sqrt{\pi_3}\zeta^\prime$
are also $\overline{\mathfrak{G}}$-invariant.
Thus, $\xi^{\prime\prime}=\sqrt{\pi_1}\xi^\prime$,
$\eta^{\prime\prime}=\sqrt{\pi_2}\eta^\prime$,
$\zeta^{\prime\prime}=\sqrt{\pi_3}\zeta^\prime$
become a $\mathfrak{G}$-invariant homogeneous coordinate of $\mathbb{P}^2$,
so they yield a $\mathfrak{G}$-invariant transcendent basis
$(v_1,v_2)$ of $\overline{k}(x,u)$ by
$v_1=\xi^{\prime\prime}/\zeta^{\prime\prime}$, $v_2=\eta^{\prime\prime}/\zeta^{\prime\prime}$.

\noindent
$(\mathrm{II})$
The case where $c_1$ and $c_2$ are conjugates,
and $c_3 \in k$ or $\infty$.

Let $\overline{\mathfrak{G}}_1$ be the subgroup of $\overline{\mathfrak{G}}$,
trivial on $k(c_1)=k(c_2)$.
We have $[\overline{\mathfrak{G}}:\overline{\mathfrak{G}}_1]=2$,
and $\overline{\mathfrak{G}}_1$ acts as mentioned in $(\mathrm{I})$.
But if $\sqrt{\pi_3} \in k(c_1)$,
$\sqrt{\pi_3}^\sigma=-\sqrt{\pi_3}$ is impossible for $\sigma \in \overline{\mathfrak{G}}_1$,
so $|\overline{\mathfrak{G}}_1|=2, |\overline{\mathfrak{G}}|=4$.
If $\sqrt{\pi_3} \not\in k(c_1)$,
then $|\overline{\mathfrak{G}}_1|=4, |\overline{\mathfrak{G}}|=8$.

\noindent
$(\mathrm{II-1})$
When $\sqrt{\pi_3} \not\in k(c_1)$.

As shown in $(\mathrm{I})$, $\sqrt{\pi_1}\xi^\prime, \sqrt{\pi_2}\eta^\prime, \sqrt{\pi_3}\zeta^\prime$
are $\overline{\mathfrak{G}}_1$-invariant.
$\overline{\mathfrak{G}} \simeq D_4$ and contains
$\sigma_4: E_1 \mapsto E_2, E_2 \mapsto E_1, E_3 \mapsto E_3$,
which induces the permutation $(1 2)$ of $\{P_j\}$.
The corresponding biregular transformation of $\mathbb{P}^2$ is described by
$\begin{pmatrix}
0 & 1 & 0 \\
1 & 0 & 0 \\
0 & 0 & 1
\end{pmatrix}$,
namely $\sigma_4$ maps as $\xi \rightarrow \eta, \eta \mapsto \xi, \zeta \mapsto \zeta$.

$\sigma_4$ maps as $\xi^\prime \rightarrow \eta^\prime, \eta^\prime \rightarrow \xi^\prime,
\zeta^\prime \rightarrow \zeta^\prime$.
On the other hand, since $\sigma_4 \not\in \overline{\mathfrak{G}}_1$
and is of order $2$,
$\sigma_4$ maps $\sqrt{\pi_1}$ to $\sqrt{\pi_2}$, $\sqrt{\pi_2}$ to $\sqrt{\pi_1}$, or
$\sqrt{\pi_1}$to $-\sqrt{\pi_2}$, $\sqrt{\pi_2}$ to $-\sqrt{\pi_1}$.

Suppose that $\sqrt{\pi_1} \mapsto \sqrt{\pi_2}, \sqrt{\pi_2} \mapsto \sqrt{\pi_1}$.
Then the action of $\sigma_4$ switches
$\sqrt{\pi_1}\xi^\prime$ and $\sqrt{\pi_2}\eta^\prime$,
so $\sqrt{\pi_1}\xi^\prime+\sqrt{\pi_2}\eta^\prime$ and
$(c_1-c_2)(\sqrt{\pi_1}\xi^\prime-\sqrt{\pi_2}\eta^\prime)$ are
$\overline{\mathfrak{G}}$-invariant
(If $\sqrt{\pi_1} \mapsto -\sqrt{\pi_2}, \sqrt{\pi_2} \mapsto -\sqrt{\pi_1}$,
then it suffices only to replace $\sqrt{\pi_2}$ with $-\sqrt{\pi_2}$).

\noindent
$(\mathrm{II-2})$
When $\sqrt{\pi_3} \in k(c_1)$.

In this case, $\overline{\mathfrak{G}}$ contains $\sigma_5:E_1 \mapsto E_2, E_2 \mapsto F-E_1,
E_3 \mapsto F-E_3$, so that
$\overline{\mathfrak{G}}$ is a cyclic group of order 4.

Since this $\overline{\mathfrak{G}}$ is a proper subgroup of $\overline{\mathfrak{G}}$ in $(\mathrm{II-1})$,
the basis obtained in $(\mathrm{II-1})$ is $\overline{\mathfrak{G}}$-invariant in this case also.
Namely $\xi^{\prime\prime}=\sqrt{\pi_1}\xi^\prime+\sqrt{\pi_2}\eta^\prime$,
$\eta^{\prime\prime}=(c_1-c_2)(\sqrt{\pi_1}\xi^\prime-\sqrt{\pi_2}\eta^\prime)$,
$\zeta^{\prime\prime}=\sqrt{\pi_3}\zeta^\prime$
become a $\overline{\mathfrak{G}}$-invariant homogeneous coordinate of $\mathbb{P}^2$, and
$v_1=\xi^{\prime\prime}/\zeta^{\prime\prime}$, $v_2=\eta^{\prime\prime}/\zeta^{\prime\prime}$.
is a $\mathfrak{G}$-invariant transcendent basis of $\overline{k}(x,u)$.

\noindent
$(\mathrm{III})$
The case where all $c_i$ are mutually conjugate $(1 \leq i \leq 3)$.

Let $K=k(c_1,c_2,c_3)$ be the smallest decomposition field.
Let $\overline{\mathfrak{G}}_1$ be the subgroup of $\overline{\mathfrak{G}}$,
trivial on $K$.

\noindent
$(\mathrm{III-1})$
When $[K:k]=6$ and $\sqrt{\pi_i} \not\in K (1 \leq i \leq 3)$.

Then $\overline{\mathfrak{G}}_1$ is $\overline{\mathfrak{G}}$ in $(\mathrm{I})$,
and $\overline{\mathfrak{G}}/\overline{\mathfrak{G}}_1 \simeq \mathfrak{S}_3$.
(So $\overline{\mathfrak{G}} \simeq \mathfrak{S}_4$).
$\sqrt{\pi_1}\xi^\prime, \sqrt{\pi_2}\eta^\prime, \sqrt{\pi_3}\zeta^\prime$ are
$\overline{\mathfrak{G}}_1$-invariant
as proved in $(\mathrm{I})$.
A permutation of $E_i$ induces a permutation of $P_i (1 \leq i \leq 3)$,
so induces a permutation of $\xi^\prime, \eta^\prime, \zeta^\prime$.
Then $\sqrt{\pi_1}, \sqrt{\pi_2}, \sqrt{\pi_3}$ are also permuted
in the same way.
So $\sqrt{\pi_1}\xi^\prime+\sqrt{\pi_2}\eta^\prime+\sqrt{\pi_3}\zeta^\prime$
is $\overline{\mathfrak{G}}$-invariant.
Similarly $c_1\sqrt{\pi_1}\xi^\prime+c_2\sqrt{\pi_2}\eta^\prime+c_3\sqrt{\pi_3}\zeta^\prime$, and
$c_1^2\sqrt{\pi_1}\xi^\prime+c_2^2\sqrt{\pi_2}\eta^\prime+c_3^2\sqrt{\pi_3}\zeta^\prime$
are $\overline{\mathfrak{G}}$-invariant.

\noindent
$(\mathrm{III-2})$
When $[K:k]=3$.

The proof is same as in $(\mathrm{III-1})$, except $\mathfrak{S}_3$ is replaced by $C_3$.
(So $\overline{\mathfrak{G}} \simeq \mathfrak{A}_4$).

\noindent
$(\mathrm{III-3})$
When $[K:k]=6$ and $\sqrt{\pi_i} \in K$.

Then $|\overline{\mathfrak{G}}|=6, \overline{\mathfrak{G}} \simeq \mathfrak{S}_3$.
$\sigma_5$ in $(\mathrm{II-2})$ is contained in $\overline{\mathfrak{G}}$.
The order of $\sigma_5$ is $4$.
This contradicts with $|\overline{\mathfrak{G}}|=6$.
Therefore this case can never happen
(under the assumption $s=3$).

\end{proof}

\begin{rem}
Both of $C: v_1=$const.
and $C^\prime: v_2=$const.
are in the class $-F-\Omega$ as shown below.

Let $h_i(x,u)=f_i(x)u+g_i(x)$ be the defining polynomials of $\Gamma_i$.

When $r$ is even, $\xi=0$ on $\mathbb{P}^2$ decomposes into
three curves $x=c_1, \Gamma_2, \Gamma_3$ on $Y$,
therefore $\xi=a(x-c_1)h_2h_3$ with some $a \in \overline{k}$.
Similarly $\eta=b(x-c_2)h_1h_3, \zeta=c(x-c_3)h_1h_2$.
The defining equations of $C$ and $C^\prime$ are linear combinations of
$(x-c_1)h_2h_3, (x-c_2)h_1h_3, (x-c_3)h_1h_2$
whose class is all $2F^\prime+(1-r)F-E_1-E_2-E_3=-\Omega-F$.

When $r$ is odd, $\xi=0$ on $\mathbb{P}^2$
decomposes into three curves $E_1, \Gamma_2, \Gamma_3$ on $Y$,
therefore $\xi=ah_2h_3$.
Similarly $\eta=bh_1h_3, \zeta=ch_1h_2$.
The defining equations of $C$ and $C^\prime$ are linear combinations of
$h_2h_3, h_1h_3, h_1h_2$ whose classes are
$2F^\prime+(1-r)F-2E_1-E_2-E_3, 2F^\prime+(1-r)F-E_1-2E_2-E_3,
2F^\prime+(1-r)F-E_1-E_2-2E_3$ respectively.
A non-trivial linear combination is in the class $-\Omega-F$.
\end{rem}

From this we see that $C \cdot C=C^\prime \cdot C^\prime=C \cdot C^\prime=1$,
$\Omega \cdot C=\Omega \cdot C^\prime=-3$.
Consider two point blow-up of $Y$.
If $C \cdot E_1^\prime=C^\prime \cdot E_2^\prime=1$
and $C \cdot E_2^\prime=C^\prime \cdot E_1^\prime=0$,
then we reach $C \cdot C=C^\prime \cdot C^\prime=0,
C \cdot C^\prime=1, \Omega \cdot C=\Omega \cdot C^\prime=-2$
on the blown up surface $Z^\prime$.

\section*{Acknowledgement}
The author would like to thank
Ming-chang Kang who gave many valuable advices and comments.
He also would like to thank J.-L. Colliot-Th\'el\`ene who gave useful comment
for Theorem \ref{A}.

\end{document}